\newcounter{notes}%
\title{Hitchin components for orbifolds}
\author{Daniele Alessandrini}
\address{Department of Mathematics, Columbia University, 2990 Broadway, New York, NY, 10027, USA.}
\email{daniele.alessandrini@gmail.com}
\author{Gye-Seon Lee}
\address{Department of Mathematics, Sungkyunkwan University, 2066 Seobu-ro, Jangan-gu, Suwon, Gyeonggi-do, 16419, South Korea.}
\email{gyeseonlee@skku.edu}
\author{Florent Schaffhauser}
\address{Departamento de Matemáticas, Universidad de Los Andes, Carrera 1 \#18A-12, 111 711 Bogot\'a, Colombia \& Institut de Recherche Mathématique Avancée, Universit\'e de Strasbourg, 7 rue Ren\'e Descartes, 67 000 Strasbourg, France.}
\email{florent@uniandes.edu.co, schaffhauser@math.unistra.fr}
\newcommand{\PSL}{\mathbf{PSL}}
\newcommand{\SL}{\mathbf{SL}}
\newcommand{\GL}{\mathbf{GL}}
\newcommand{\PGL}{\mathbf{PGL}}
\newcommand{\PO}{\mathbf{PO}}
\newcommand{\PSO}{\mathbf{PSO}}
\newcommand{\PU}{\mathbf{PU}}
\newcommand{\PSp}{\mathbf{PSp}}
\newcommand{\Sp}{\mathbf{Sp}}
\newcommand{\OO}{\mathbf{O}}
\newcommand{\GG}{\mathbf{G}_2}
\newcommand{\R}{\mathbb{R}}
\newcommand{\C}{\mathbb{C}}
\newcommand{\Z}{\mathbb{Z}}
\newcommand{\PP}{\mathbf{P}}
\newcommand{\HH}{\mathbf{H}}
\newcommand{\cT}{\mathcal{T}}
\newcommand{\cE}{\mathcal{E}}
\newcommand{\cF}{\mathcal{F}}
\newcommand{\cA}{\mathcal{A}}
\newcommand{\cG}{\mathcal{G}}
\newcommand{\cM}{\mathcal{M}}
\newcommand{\cP}{\mathcal{P}}
\newcommand{\cD}{\mathcal{D}}
\newcommand{\fg}{\mathfrak{g}}
\newcommand{\fh}{\mathfrak{h}}
\newcommand{\fk}{\mathfrak{k}}
\newcommand{\fp}{\mathfrak{p}}
\newcommand{\fu}{\mathfrak{u}}
\renewcommand{\sp}{\mathfrak{sp}}
\newcommand{\so}{\mathfrak{so}}
\newcommand{\Aut}{\mathrm{Aut}}
\newcommand{\Out}{\mathrm{Out}}
\newcommand{\Hom}{\mathrm{Hom}}
\newcommand{\End}{\mathrm{End}}
\newcommand{\Int}{\mathrm{Int}}
\newcommand{\Hit}{\mathrm{Hit}}
\newcommand{\Rep}{\mathrm{Rep}}
\newcommand{\id}{\mathrm{Id}}
\newcommand{\Fix}{\mathrm{Fix}}
\newcommand{\Ad}{\mathrm{Ad}}
\newcommand{\ad}{\mathrm{ad}}
\newcommand{\rk}{\mathrm{rk}}
\renewcommand{\phi}{\varphi}
\renewcommand{\rho}{\varrho}
\renewcommand{\sl}{\mathfrak{sl}}
\renewcommand{\H}{\mathbf{H}}
\newcommand{\lra}{\longrightarrow}
\newcommand{\lmt}{\longmapsto}
\newcommand{\bs}{\backslash}
\newcommand{\Ga}{\Gamma}
\newcommand{\ga}{\gamma}
\newcommand{\Si}{\Sigma}
\newcommand{\si}{\sigma}
\newcommand{\sit}{\widetilde{\sigma}}
\newcommand{\Om}{\Omega}
\newcommand{\piY}{\pi_1Y}
\newcommand{\piX}{\pi_1X}
\newcommand{\Yt}{\widetilde{Y}}
\newcommand{\Xt}{\widetilde{X}}
\newcommand{\xt}{\widetilde{x}}
\newcommand{\RP}{\mathbb{R}\mathbf{P}}
\newcommand{\dbar}{\overline{\partial}}
\newcommand{\eps}{\varepsilon}
\newcommand{\fgC}{\mathfrak{g}_{\mathbb{C}}}
\renewcommand{\geq}{\geqslant}
\renewcommand{\leq}{\leqslant}
\newcommand{\cL}{\mathcal{L}}
\newcommand{\cB}{\mathcal{B}}
\newcommand{\te}{\tilde{e}}
\newcommand{\fgm}{\fg^{(d)}}
\newcommand{\fgmC}{\fg^{(d)}_\C}
\newcommand{\Ecan}{\cE_\mathrm{can}}
\newcommand{\ov}[1]{\overline{#1}}
\newtheorem{proposition}{Proposition}[section]
\newtheorem{theorem}[proposition]{Theorem}
\newtheorem{corollary}[proposition]{Corollary}
\newtheorem{lemma}[proposition]{Lemma}
\theoremstyle{definition}
\newtheorem{definition}[proposition]{Definition}
\newtheorem{example}[proposition]{Example}
\newtheorem{remark}[proposition]{Remark}
\date{\today}
\numberwithin{equation}{section}
\numberwithin{figure}{section}
\subjclass[2010]{Primary 22E40, 57M50; Secondary 30F30, 53C07.}
\keywords{Teichm\"{u}ller space, Hitchin component, Orbifold, Real projective structure, Coxeter group}
\begin{document}

\begin{abstract}
We extend the notion of Hitchin component from surface groups to orbifold groups and prove that this gives new examples of higher Teichm\"{u}ller spaces. We show that the Hitchin component of an orbifold group is homeomorphic to an open ball and we compute its dimension explicitly. We then give applications to the study of the pressure metric, cyclic Higgs bundles, and the deformation theory of real projective structures on $3$-manifolds.
\end{abstract}

\maketitle

\contentsline {section}{\tocsection {}{1}{Introduction}}{1}{section.1}
\contentsline {subsection}{\tocsubsection {}{1.1}{Hitchin components}}{1}{subsection.1.1}
\contentsline {subsection}{\tocsubsection {}{1.2}{Results in the orbifold case}}{2}{subsection.1.2}
\contentsline {subsection}{\tocsubsection {}{1.3}{Applications}}{3}{subsection.1.3}
\contentsline {subsubsection}{\tocsubsubsection {}{1.3.1}{Rigidity}}{4}{subsubsection.1.3.1}
\contentsline {subsubsection}{\tocsubsubsection {}{1.3.2}{Geodesics for the pressure metric and one-parameter families of representations of surface groups}}{4}{subsubsection.1.3.2}
\contentsline {subsubsection}{\tocsubsubsection {}{1.3.3}{Cyclic Higgs bundles}}{4}{subsubsection.1.3.3}
\contentsline {subsubsection}{\tocsubsubsection {}{1.3.4}{Projective structures on Seifert manifolds}}{5}{subsubsection.1.3.4}
\contentsline {subsection}{\tocsubsection {}{}{Acknowledgments}}{5}{section*.2}
\contentsline {section}{\tocsection {}{2}{Hitchin representations for orbifolds}}{6}{section.2}
\contentsline {subsection}{\tocsubsection {}{2.1}{Hyperbolic 2-orbifolds}}{6}{subsection.2.1}
\contentsline {subsection}{\tocsubsection {}{2.2}{Principal representation}}{7}{subsection.2.2}
\contentsline {subsection}{\tocsubsection {}{2.3}{Hitchin representations}}{7}{subsection.2.3}
\contentsline {subsection}{\tocsubsection {}{2.4}{Restriction to subgroups of finite index}}{9}{subsection.2.4}
\contentsline {subsection}{\tocsubsection {}{2.5}{Properties of Hitchin representations for closed orbifolds}}{10}{subsection.2.5}
\contentsline {subsection}{\tocsubsection {}{2.6}{Orbifolds with boundary}}{12}{subsection.2.6}
\contentsline {section}{\tocsection {}{3}{Hitchin's equations in an equivariant setting}}{14}{section.3}
\contentsline {subsection}{\tocsubsection {}{3.1}{From orbifold representations to equivariant flat bundles}}{14}{subsection.3.1}
\contentsline {subsection}{\tocsubsection {}{3.2}{From equivariant flat bundles to equivariant harmonic bundles}}{16}{subsection.3.2}
\contentsline {subsection}{\tocsubsection {}{3.3}{From equivariant Higgs bundles to equivariant harmonic bundles}}{18}{subsection.3.3}
\contentsline {subsection}{\tocsubsection {}{3.4}{Non-Abelian Hodge correspondence}}{21}{subsection.3.4}
\contentsline {section}{\tocsection {}{4}{Parameterization of Hitchin components}}{22}{section.4}
\contentsline {subsection}{\tocsubsection {}{4.1}{Equivariance of the Hitchin fibration}}{22}{subsection.4.1}
\contentsline {subsection}{\tocsubsection {}{4.2}{Invariant Hitchin representations}}{24}{subsection.4.2}
\contentsline {section}{\tocsection {}{5}{Invariant differentials}}{25}{section.5}
\contentsline {subsection}{\tocsubsection {}{5.1}{Regular differentials on orbifolds}}{25}{subsection.5.1}
\contentsline {subsection}{\tocsubsection {}{5.2}{The dimension of Hitchin components}}{27}{subsection.5.2}
\contentsline {subsection}{\tocsubsection {}{5.3}{Approximation formula}}{28}{subsection.5.3}
\contentsline {section}{\tocsection {}{6}{Applications}}{29}{section.6}
\contentsline {subsection}{\tocsubsection {}{6.1}{Rigidity phenomena}}{29}{subsection.6.1}
\contentsline {subsection}{\tocsubsection {}{6.2}{Geodesics for the pressure metric}}{31}{subsection.6.2}
\contentsline {subsection}{\tocsubsection {}{6.3}{Cyclic Higgs bundles}}{32}{subsection.6.3}
\contentsline {subsection}{\tocsubsection {}{6.4}{Projective structures on Seifert-fibered 3-manifolds}}{33}{subsection.6.4}
\contentsline {section}{\tocsection {}{}{References}}{36}{section*.3}
\contentsline {section}{\tocsection {}{}{Appendices}}{37}{appendix.A}

\section{Introduction}

\subsection{Hitchin components}

Teichm\"{u}ller space is the deformation space of hyperbolic structures on a closed orientable surface $X$ of genus $g \geqslant 2$. It can also be seen as a connected component of the representation space
$ \Rep(\piX,\PGL(2,\R)) := \Hom(\piX,\PGL(2,\R))/\PGL(2,\R) $
consisting of the conjugacy classes of discrete and faithful representations of $\piX$ into $\PGL(2,\R)$. It is well-known that Teichm\"{u}ller space is homeomorphic to an open ball of dimension $(6g-6)$. In 1992, N.\ Hitchin \cite{Hitchin_Teich} replaced the group $\PGL(2,\R)$ by the split real form $G$ of a complex simple Lie group, and found a special component of $\Rep(\piX,G) := \Hom(\piX,G)/G$ homeomorphic to an open ball of dimension $(2g-2) \dim G$, which is now called the \emph{Hitchin component} of the surface group $\piX$ into $G$. The geometry of the representations in the Hitchin components was studied by Goldman \cite{Goldman90} and Choi and Goldman \cite{CG93} for $G = \PGL(3,\R)$, Labourie \cite{Labourie} and Guichard \cite{Guichard_hyperconvex} for $G=\PGL(n,\R)$, Guichard and Wienhard \cite{GW} for $G=\PGL(4,\R)$ and \cite{GW2} in higher generality. From these works, we see that the Hitchin components share many properties with Teichm\"uller space, and they are part of an interesting family of spaces called \textit{higher Teichm\"uller spaces} (see Wienhard \cite{Wienhard_ICM} for a survey of this theory).

In this paper, we generalize the notion of Hitchin components of surface groups to a more general family of groups, namely fundamental groups of compact $2$-dimensional orbifolds with negative orbifold Euler characteristic. This is a large family, consisting of all groups isomorphic to a convex cocompact subgroup of $\PGL(2,\R)$. It contains in particular the fundamental groups of all surfaces of finite type (with or without boundary, orientable or not), and the $2$-dimensional hyperbolic Coxeter groups. The first instance of Hitchin components for orbifold fundamental groups was studied by Thurston \cite{Thurston_notes} who showed that the Teichm\"uller space (i.e.\ the space of hyperbolic structures on a closed orbifold $Y$) is a connected component of $\Rep(\piY,\PGL(2,\R))$, consisting of the conjugacy classes of discrete and faithful representations of the orbifold fundamental group $\piY$ into $\PGL(2,\R)$, and described its topology. Then Choi and Goldman \cite{CG} studied the Hitchin component for $\piY$ in $\PGL(3,\R)$, describing its topology and showing that it is the deformation space of convex real projective structures on $Y$. Finally, Labourie and McShane \cite{Labourie_McShane} introduced $\PGL(n,\R)$-Hitchin components for orientable surfaces with boundary, in order to generalize McShane-Mirzakhani identities from hyperbolic geometry to arbitrary cross ratios. Here, we study the general case. Given a complex simple Lie algebra $\fg_\C$, we fix a split real form $\fg$ and denote by $\tau$ the corresponding involution of $\fg_\C$. We then define $G = \Int(\fg_\C)^\tau$ to be the group of real points of $G_\C = \Int(\fg_\C)$, the adjoint group of $\fg_\C$. For the classical Lie algebras, $G$ is one of the groups $\PGL(n,\R)$, $\PO(m,m+1)$, $\PSp^{\pm}(2m,\R)$, and $\PO^{\pm}(m,m)$. A representation of $\piY$ in $G$ is said to be \emph{Fuchsian} if it is the composition of a discrete, faithful, convex cocompact representation of $\piY$ in $\PGL(2,\R)$ with the \emph{principal representation} $\kappa:\PGL(2,\R) \lra G$, and \emph{Hitchin representations} are deformations of Fuchsian representations (see Definitions \ref{Hit_rep_def} and \ref{Hitchin_rep_bdry_case}). The space of Hitchin representations of $\piY$ in $G$ up to conjugation will be called the \emph{Hitchin component}, and denoted by $\Hit(\piY,G)$.

\subsection{Results in the orbifold case}

We first extend to the orbifold case known dynamical and geometric properties of $\PGL(n,\R)$-Hitchin representations of surface groups \cite{Labourie,Guichard_hyperconvex,Labourie_McShane,GW2}.

\begin{theorem}[Sections \ref{section:properties} and \ref{boundary_case}]\label{theorem:intro_A}
Let $Y$ be a compact connected $2$-orbifold of negative Euler characteristic. A Hitchin representation $\rho : \piY \lra \PGL(n,\R)$ is $B$-Anosov, where $B$ is an arbitrary Borel subgroup of $\PGL(n,\R)$, and it is discrete, faithful and strongly irreducible. Moreover, for all $\gamma$ of infinite order in $\piY$, the element $\rho(\gamma)$ is diagonalizable with distinct real eigenvalues. If $Y$ is closed, a representation of $\piY$ in $\PGL(n,\R)$ is Hitchin if and only if it is hyperconvex.
\end{theorem}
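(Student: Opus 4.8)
The plan is to reduce the orbifold statement to the known surface-group case by passing to a torsion-free finite-index subgroup, using Selberg's lemma. First I would fix a torsion-free normal subgroup $\Gamma_0 \leq \piY$ of finite index, which corresponds to a finite cover $X \to Y$ by a closed (or, in the boundary case, compact) surface of genus $g \geq 2$. The key input is the result stated in Section \ref{subsection:restriction} (Subsection 2.4) that the restriction of a Hitchin representation $\rho : \piY \to \PGL(n,\R)$ to $\Gamma_0 = \pi_1 X$ is again a Hitchin representation, i.e.\ $\rho|_{\Gamma_0} \in \Hit(\pi_1 X, \PGL(n,\R))$. By the classical results of Labourie \cite{Labourie} and Guichard \cite{Guichard_hyperconvex} (and Labourie--McShane \cite{Labourie_McShane} in the boundary case), $\rho|_{\Gamma_0}$ is then $B$-Anosov, discrete, faithful, hyperconvex, and sends every infinite-order element to a loxodromic (proximal, real-split) element with distinct real eigenvalues of the same sign. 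Since $B$-Anosov and discreteness and (for the eigenvalue statement) the dynamical behavior of a given $\gamma$ depend only on $\rho$ restricted to a finite-index subgroup — the Anosov property of $\rho$ is equivalent to that of $\rho|_{\Gamma_0}$ because the geodesic flow of $\piY$ is a finite quotient of that of $\Gamma_0$ — all of these properties immediately lift to $\rho$ itself. Faithfulness of $\rho$ follows since $\rho|_{\Gamma_0}$ is faithful and, on the finite part, the image is already pinned down: a nontrivial element of $\ker\rho$ would meet $\Gamma_0$ trivially, so it would be torsion, but one checks using the explicit description of Hitchin representations near the Fuchsian locus (or the fact that finite-order elements act with the correct order on the limit curve) that torsion elements are not killed.

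Next I would handle strong irreducibility. Here I would argue that $\rho$ is irreducible: any $\rho(\piY)$-invariant subspace is a fortiori $\rho(\Gamma_0)$-invariant, hence $0$ or $\R^n$ by irreducibility of the Hitchin representation $\rho|_{\Gamma_0}$. For \emph{strong} irreducibility I would show that every finite-index subgroup $\Gamma' \leq \piY$ still restricts to a Hitchin (hence irreducible) representation: intersecting $\Gamma'$ with $\Gamma_0$ gives a finite-index subgroup of $\pi_1 X$, which still has negative Euler characteristic and is the fundamental group of a finite cover of $X$, and strong irreducibility of $\rho|_{\Gamma_0}$ (again from \cite{Labourie}) forces $\rho|_{\Gamma' \cap \Gamma_0}$, and a fortiori $\rho|_{\Gamma'}$, to be irreducible. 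An equivalent route, which may be cleaner to write, is to use that $\rho|_{\Gamma_0}$ remains irreducible in restriction to any finite-index subgroup and then invoke that strong irreducibility is inherited upward along finite-index inclusions. The diagonalizability-with-distinct-real-eigenvalues claim for infinite-order $\gamma$ is then immediate from the same statement applied to $\gamma^k \in \Gamma_0$ for a suitable $k \geq 1$, since a matrix some power of which is diagonalizable with distinct positive eigenvalues is itself diagonalizable with distinct real eigenvalues.

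Finally, for the closed case I must prove the equivalence ``Hitchin $\iff$ hyperconvex.'' The forward direction follows from hyperconvexity of $\rho|_{\Gamma_0}$ together with the fact (Labourie \cite{Labourie}, Guichard \cite{Guichard_hyperconvex}) that a representation of $\piY$ is hyperconvex precisely when it admits a continuous, $\rho$-equivariant hyperconvex Frenet curve $\partial_\infty \piY \to \RP^{n-1}$ refining the boundary map of the restriction; such a curve for $\rho|_{\Gamma_0}$ is automatically $\piY$-equivariant because the boundary map of an Anosov representation is canonical and hence invariant under the (finite) deck group, so it descends. Conversely, if $\rho$ is hyperconvex then so is $\rho|_{\Gamma_0}$, whence $\rho|_{\Gamma_0}$ is Hitchin by the surface-group characterization; to conclude that $\rho$ itself is Hitchin I would use that $\Hit(\piY,\PGL(n,\R))$ is exactly the preimage of $\Hit(\pi_1 X,\PGL(n,\R))$ under the (finite-index) restriction map, a fact established in Section \ref{section:properties} — equivalently, the Hitchin component of $\piY$ is a union of connected components of $\Rep(\piY,\PGL(n,\R))$ characterized by the property that a finite-index restriction is Hitchin. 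I expect the main obstacle to be precisely this last point: making sure that ``Hitchin'' for the orbifold group is genuinely detected on a finite-index subgroup (so that hyperconvexity, which is a condition invisible to finite covers in the naive sense, really does propagate back), and dealing carefully with the boundary case where one must also track the behavior on the peripheral (and, for orbifolds, the cone/corner) elements — but this is exactly the content of Subsections 2.4 and 2.5, which we may invoke.
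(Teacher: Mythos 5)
For the closed case your plan is essentially the paper's own argument: restrict to a finite-index surface subgroup (Lemma \ref{rest_of_Hitchin_is_Hitchin}), quote Labourie and Guichard there, promote the Anosov property back to $\piY$ via \cite[Corollary 3.4]{GW2}, get discreteness from the Anosov property, treat infinite-order elements by the power trick, obtain strong irreducibility from the fact that every finite-index restriction is still Hitchin, and, for the hyperconvex characterization, use uniqueness of the equivariant Frenet curve for one direction and the statement ``restriction Hitchin $\Rightarrow$ Hitchin'' (Corollary \ref{rep_whose_rest_is_Hitchin_is_itself_Hitchin}, which rests on Theorem \ref{main_obs_about_Hit_comp_for_orbifolds}) for the other; you correctly identify that last point as the deep input, and invoking it is exactly what the paper does. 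One local weakness: your faithfulness step on torsion elements is only gestured at. The paper settles it in Proposition \ref{Hitchin_rep_faithful} by a continuity argument (for $\ga$ of finite order the eigenvalues of $\rho(\ga)$ are roots of unity, hence locally constant along the path to a Fuchsian representation, where they are visibly nontrivial); your alternative via injectivity of the $\piY$-equivariant limit map against the nontrivial boundary action of an elliptic element also works, but it is an idea to be written out, not a proof as stated.

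The genuine gap is the boundary case, which the statement explicitly includes. There, ``Hitchin'' is defined by extension to the closed mirror orbifold $mY$ (Definition \ref{Hitchin_rep_bdry_case}), and the paper proves the properties through that extension: since $\piY$ has \emph{infinite} index in $\pi_1(mY)$, the Anosov property of $\rho=\ov{\rho}|_{\piY}$ does not follow from a finite-index argument but from the quasi-isometric-embedding criterion of \cite{GGKW_Anosov_actions}, and strong irreducibility is extracted from \cite[Lemma 5.12]{GW2}. Your route---pass to a torsion-free finite-index subgroup $\Gamma_0<\piY$, a surface-with-boundary group, and cite Labourie--McShane---needs two inputs that the sections you invoke do not supply: first, that $\rho|_{\Gamma_0}$ is Hitchin in the sense of \cite{Labourie_McShane} (the restriction results of Section 2.4 are proved for closed orbifolds; for orbifolds with boundary one must track the loxodromic boundary condition along the deformation, which the paper does with \cite[Theorem 1.9]{FG} inside the proof of Proposition \ref{Hitchin_comp_for_orbifold_with_boundary}); and second, that Labourie--McShane Hitchin representations of surface-with-boundary groups are $B$-Anosov and strongly irreducible---facts the paper itself obtains only through the doubling/mirror construction together with \cite{GGKW_Anosov_actions} and \cite[Lemma 5.12]{GW2}, not as off-the-shelf results. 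So the boundary half of the theorem requires either the paper's Section \ref{boundary_case} argument or a substantial completion of your finite-cover route.
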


Theorem \ref{theorem:intro_A} implies that Hitchin components for orbifold groups form a family of higher Teichm\"uller spaces in the sense of \cite{Wienhard_ICM}. Having established this, we determine the topology of Hitchin components for orbifold groups, which is the main result of this paper.

\begin{theorem}[Theorem \ref{theorem:dimension} and Corollary \ref{dimension_bdry_case}]\label{theorem:main}
Let $Y$ be a compact connected $2$-orbifold of negative orbifold Euler characteristic, with $k$ cone points, of respective orders $m_1$, $\ldots$ , $m_k$, and $\ell$ corner reflectors, of respective orders $n_1$, $\ldots$, $n_\ell$. Denote by $b$ the number of boundary components of $Y$ that are full $1$-orbifolds and by $|Y|$ the underlying topological surface of $Y$. Let $G = \Int(\fg_\C)^\tau$, where $\fg$ is a simple split Lie algebra of rank $r$ with exponents $d_1, \dots, d_r$. Then the Hitchin component $\Hit(\piY,G)$ is homeomorphic to an open ball of dimension  
$$-\chi(|Y|)\dim G + \sum_{\alpha=1}^{\rk\, G} 
\left(2 \sum_{i=1}^k O(d_\alpha+1,m_i) + \sum_{j=1}^\ell O(d_\alpha+1,n_j) \, +\,  2 b\, \left\lfloor \frac{d_\alpha+1}{2}\right\rfloor \right),$$
where $O(d,m) = \left\lfloor d-\tfrac{d}{m}\right\rfloor$ denotes the biggest integer not bigger than $\left(d -\tfrac{d}{m}\right)$. 
\end{theorem}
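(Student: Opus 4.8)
The plan is to reduce the computation of the topology and dimension of $\Hit(\piY,G)$ to a statement about spaces of invariant holomorphic differentials on an auxiliary Riemann surface, and then to count the dimension of those spaces explicitly. First I would pass to a finite-index torsion-free normal subgroup: by Selberg's lemma there is a finite-index surjection $\piY \to \piY/\piX \cong F$ with $\piX = \pi_1 X$ the fundamental group of a closed orientable surface $X$, on which $F$ acts by (possibly orientation-reversing) conformal automorphisms with quotient orbifold $Y$. Using the restriction results of Section~\ref{subsection:finite_index} (Section 2.4), a Hitchin representation of $\piY$ restricts to an $F$-invariant Hitchin representation of $\piX$, and conversely the $F$-fixed points of $\Hit(\piX,G)$ are exactly $\Hit(\piY,G)$. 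By the non-abelian Hodge correspondence in the equivariant setting developed in Section~3, together with the $F$-equivariance of the Hitchin fibration established in Section~4.1, the Hitchin base $\bigoplus_{\alpha=1}^r H^0(X,K_X^{d_\alpha+1})$ carries an $F$-action for which the homeomorphism $\Hit(\piX,G)\cong \bigoplus_\alpha H^0(X,K_X^{d_\alpha+1})$ is $F$-equivariant; the key point is that this action is \emph{complex-linear} on each summand, even though $F$ may contain anti-holomorphic elements — this is because an anti-holomorphic involution composed with complex conjugation on differentials is again complex-linear. Taking $F$-invariants, I get
\[
\Hit(\piY,G) \;\cong\; \bigoplus_{\alpha=1}^{r} H^0(X,K_X^{d_\alpha+1})^{F},
\]
which is a real vector space, hence an open ball, of dimension $\sum_\alpha \dim_\R H^0(X,K_X^{d_\alpha+1})^F$.

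The remaining task is to compute $\dim_\R H^0(X,K_X^{m})^F$ for each $m = d_\alpha + 1$, identifying it with the dimension of the space of "regular $m$-differentials on the orbifold $Y$" introduced in Section~5.1. This is a Riemann–Roch computation on the quotient orbifold, and it splits according to whether $F$ acts with or without orientation-reversing elements. When $Y$ is orientable and closed (so $F$ acts holomorphically), $H^0(X,K_X^m)^F = H^0(Y,K_Y^{\otimes m})$ where $K_Y$ is the orbifold canonical bundle, and orbifold Riemann–Roch gives
\[
\dim_\C H^0(Y,K_Y^{\otimes m}) \;=\; (2m-1)(g_{|Y|}-1) \;+\; \sum_{i=1}^{k}\left\lfloor m - \tfrac{m}{m_i}\right\rfloor,
\]
the floor term being the contribution $\deg\lfloor mD_i\rfloor$ of the branch divisor at the cone point of order $m_i$; summing $(2d_\alpha+1)$ over the exponents and using $\sum_\alpha (2d_\alpha+1) = \dim G$ recovers the $-\chi(|Y|)\dim G$ term and the cone-point sum $2\sum_\alpha\sum_i O(d_\alpha+1,m_i)$. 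For the non-orientable or mirror-boundary cases I would work on the orientation double cover with its anti-holomorphic involution $\sigma$, and compute the dimension of the $(+1)$-eigenspace of $\sigma^*$ acting on holomorphic $m$-differentials; corner reflectors of order $n_j$ contribute a single floor term $O(d_\alpha+1,n_j)$ (half of a cone-point contribution, morally because a corner reflector is a cone point "cut in half" by two mirrors), and each full boundary $1$-orbifold contributes $2\lfloor (d_\alpha+1)/2\rfloor$, reflecting the real dimension of sections with the appropriate reality condition along a mirror. Assembling these pieces over all exponents yields exactly the stated formula.

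The main obstacle, and the step I would spend the most care on, is the precise bookkeeping in the non-orientable and corner-reflector cases: one must set up the orbifold line bundle $K_Y^{\otimes m}$ with the correct fractional divisor at each singular point (and the correct local model for $K_Y$ near a mirror, where $K_Y$ picks up a half-integer twist), take $F$-invariants compatibly with the anti-holomorphic pieces of the action, and verify that the resulting eigenspace dimension is given by the asymmetric floor function $O(d,m) = \lfloor d - d/m\rfloor$ rather than a symmetrized version. A secondary subtlety is the passage to the limit / independence of the finite cover $X$: one must check that $\dim_\R H^0(X,K_X^m)^F$ depends only on $Y$ and $m$, not on the chosen torsion-free subgroup, which follows from the Riemann–Roch computation itself once it is phrased intrinsically in terms of the orbifold data $(g_{|Y|}, m_i, n_j, b)$. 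Everything else — that the $F$-invariant locus is a linear subspace, hence a ball, and that its dimension equals $\sum_\alpha \dim_\R H^0(X,K_X^{d_\alpha+1})^F$ — is formal given the equivariant non-abelian Hodge theory of Sections~3 and~4.
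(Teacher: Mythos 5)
Your overall route is the paper's own: identify $\Hit(\piY,G)$ with the $\Si$-fixed locus of the Hitchin base via the equivariant non-Abelian Hodge correspondence and the $\Si$-equivariance of the Hitchin section (Sections \ref{Hitchin_s_equations} and \ref{section:parameterization_Hitchin}), then compute the dimension of invariant differentials by Riemann--Roch on the quotient in the orientable case and by passing to the orientation double cover otherwise. Two points, however, do not stand as written. First, your ``key point'' that the $\Si$-action on each summand $H^0(X,K_X^{d_\alpha+1})$ is complex-linear is false: an orientation-reversing $\si$ acts by $s\mapsto \ov{(\si^{-1})^*s}$, which is $\C$-\emph{anti}-linear. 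This does not affect the topological conclusion (the fixed locus of a finite group acting by real-linear maps is a real subspace, hence a ball), but the anti-linearity is precisely what halves the dimension for non-orientable $Y$ (Corollary \ref{cor:non orient half}); if the action were $\C$-linear, the fixed space would be a complex subspace and your own orientable count would give twice the stated answer in the non-orientable case. Your later computation via the fixed set of the anti-holomorphic involution on $Y^+$ silently treats the action correctly, so the slip is local, but the two statements are inconsistent. (Also, your Riemann--Roch step tacitly assumes $h^1=0$; this needs $\deg K(Y,d)>2g-2$, which is where $\chi(Y)<0$ and $d\geq 2$ enter in Lemma \ref{prop:dimension diff orient}.)

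Second, the case $b>0$ is not actually proved. Your setup -- a presentation $Y\simeq[\Si\bs X]$ with $X$ a \emph{closed} orientable surface and the closed-surface equivariant Hodge theory -- exists only for closed orbifolds. For $Y$ with boundary, $\Hit(\piY,G)$ is defined (Definition \ref{Hitchin_rep_bdry_case}) through the closed mirror orbifold $mY$, and one must first know that restriction $\Hit(\pi_1(mY),G)\lra\Hit(\piY,G)$ is a homeomorphism (Proposition \ref{Hitchin_comp_for_orbifold_with_boundary}), whose proof uses the uniqueness of the order-two element commuting with the boundary holonomy; only then can the closed-orbifold formula of Theorem \ref{theorem:dimension} be applied to $mY$, where each full $1$-orbifold boundary component becomes two corner reflectors of order $2$, contributing $2\,O(d_\alpha+1,2)=2\lfloor (d_\alpha+1)/2\rfloor$ per exponent. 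Your ``reality condition along a mirror'' heuristic produces the right number, but it is not a substitute for this reduction, and without it the term $2b\lfloor (d_\alpha+1)/2\rfloor$ in the statement is unjustified.
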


For instance, the $\PGL(2m,\R)$, resp.\ $\PGL(2m+1,\R)$, Hitchin component of the reflection group associated to a right-angled hyperbolic $\ell$-gon ($\ell > 4$) is homeomorphic to an open ball of dimension $(\ell-4)m^2+1$, resp.\ $(\ell-4)(m^2+m)$.

\begin{corollary}[Remark \ref{about_the_Labourie_McShane_component} and Corollary \ref{cases_of_validity_of_Hitchin_s_formula}]
Let $S$ be an orientable surface with boundary. Then the $\PGL(n,\R)$-Hitchin component of $S$ in the sense of Labourie and McShane \cite{Labourie_McShane} is homeomorphic to an open ball of dimension $-\chi(S)(n^2-1)$.
\end{corollary}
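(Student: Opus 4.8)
The plan is to realise the Labourie--McShane Hitchin component as a special case of the orbifold Hitchin component and then substitute into Theorem~\ref{theorem:main}. An orientable surface $S$ with non-empty boundary is in particular a compact $2$-orbifold with empty singular locus, and the first step is to check that the notion of Hitchin representation of $\pi_1 S$ introduced in Section~\ref{boundary_case} (Definition~\ref{Hitchin_rep_bdry_case}) agrees, when $Y=S$, with the one of Labourie and McShane \cite{Labourie_McShane}; this is the content of Remark~\ref{about_the_Labourie_McShane_component}. Both are connected components of $\Hom(\pi_1 S,\PGL(n,\R))/\PGL(n,\R)$ containing the Fuchsian locus (compositions $\kappa\circ j$ with $j\colon\pi_1 S\to\PGL(2,\R)$ discrete, faithful and convex cocompact), and by Theorem~\ref{theorem:intro_A} every representation on this component sends each boundary loop to an element that is diagonalisable with distinct real eigenvalues, i.e.\ has the positive/loxodromic boundary behaviour built into \cite{Labourie_McShane}. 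One also has to reconcile the group conventions --- \cite{Labourie_McShane} may work with $\SL(n,\R)$ or $\PSL(n,\R)$, whereas here $G=\PGL(n,\R)=\Int(\sl(n,\C))^\tau$ --- but this does not affect the homeomorphism type.

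Granting the identification, the rest is bookkeeping. The Lie algebra $\sl(n,\C)$ is simple of type $A_{n-1}$, so $G=\PGL(n,\R)$ has rank $r=n-1$, exponents $d_\alpha=\alpha$ for $\alpha=1,\dots,n-1$, and $\dim G=\sum_{\alpha=1}^{n-1}(2d_\alpha+1)=n^2-1$. When $Y=S$ there are no cone points ($k=0$) and no corner reflectors ($\ell=0$); moreover the boundary components of a surface are ordinary circles, not the mirrored segments that occur as the ``full'' $1$-orbifold boundary components of a general compact $2$-orbifold, so $b=0$ in the notation of Theorem~\ref{theorem:main}, and $|Y|=S$. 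Every summand in the correction term of Theorem~\ref{theorem:main} therefore vanishes, and the formula collapses to
$$\dim\Hit(\pi_1 S,\PGL(n,\R))\;=\;-\chi(|Y|)\dim G\;=\;-\chi(S)\,(n^2-1),$$
which is exactly Hitchin's formula; that $S$ falls among the cases in which no correction occurs is Corollary~\ref{cases_of_validity_of_Hitchin_s_formula}. Theorem~\ref{theorem:main} also gives that the component is homeomorphic to an open ball, completing the argument. As a sanity check, $\pi_1 S$ is free of rank $1-\chi(S)$, so the expected dimension of an irreducible character variety is $\big((1-\chi(S))-1\big)\dim G=-\chi(S)(n^2-1)$, in agreement with the above.

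The only step that is not purely formal is the first one: matching Definition~\ref{Hitchin_rep_bdry_case} with the Labourie--McShane definition, which is phrased in terms of cross ratios and the energy functional rather than of Fuchsian deformations. Once Section~\ref{boundary_case} and Theorem~\ref{theorem:intro_A} are available this comparison is short --- the two descriptions single out the same component and impose the same boundary regularity --- and everything downstream is a substitution of the exponents of $A_{n-1}$ into the dimension formula.
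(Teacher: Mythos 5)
Your second half is exactly the paper's computation and is correct: once the Labourie--McShane component $\mathscr{H}_S$ is identified with $\Hit(\pi_1 S,\PGL(n,\R))$ in the sense of Definition \ref{Hitchin_rep_bdry_case} (equivalently, with $\Hit(\pi_1(mS),\PGL(n,\R))$ for the mirror orbifold $mS$, via Proposition \ref{Hitchin_comp_for_orbifold_with_boundary}), one has $k=\ell=0$ and $b=0$, the exponents of $A_{n-1}$ give $\dim G=n^2-1$, and Theorem \ref{theorem:main} (or Corollary \ref{cases_of_validity_of_Hitchin_s_formula} applied to $mS$, which has only mirror points) yields an open ball of dimension $-\chi(S)(n^2-1)$.

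The gap is in the identification step, which you declare "short" but justify incorrectly. It is not true that both components are "connected components of $\Hom(\pi_1 S,\PGL(n,\R))/\PGL(n,\R)$ containing the Fuchsian locus": since $S$ has boundary, $\pi_1 S$ is free, so the Fuchsian locus is not a union of connected components of the full representation space (any Fuchsian representation can be deformed to the trivial one there). The Labourie--McShane component is by definition a connected component of the proper open subspace cut out by the loxodromic boundary condition, while the component of Definition \ref{Hitchin_rep_bdry_case} is the image of $\Hit(\pi_1(mS),G)$ under restriction. Your appeal to Theorem \ref{theorem:intro_A} (boundary loops go to purely loxodromic elements) together with connectedness only gives the easy inclusion $\Hit(\pi_1 S,\PGL(n,\R))\subseteq\mathscr{H}_S$; the reverse inclusion --- that \emph{every} representation in $\mathscr{H}_S$ extends to a Hitchin representation of $\pi_1(mS)\simeq\pi_1(dS)\rtimes(\Z/2\Z)$ --- is the substantive point, and it is precisely what the paper imports from Labourie--McShane's Theorem 9.2.2.2 (the doubling characterization, with the involution given by conjugation by $J_n$) in Remark \ref{about_the_Labourie_McShane_component}. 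Your proposal never invokes that theorem (and misdescribes the Labourie--McShane definition as being phrased via cross ratios and the energy functional), so "the two descriptions single out the same component" is asserted rather than proved; the concluding dimension-count sanity check does not substitute for this missing direction. With the doubling theorem cited, your argument becomes the paper's proof; without it, the identification is open.
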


We also prove an alternative formula for the dimension of Hitchin components, more similar to the ones given by Thurston \cite{Thurston_notes} and Choi and Goldman \cite{CG} for $G=\PGL(2,\R)$ and $\PGL(3,\R)$.

\begin{theorem}[Theorem \ref{alternate_formula_for_dim} and Corollary \ref{alternate_formula_for_dim_bdry_case}]\label{intro_alternate_formula_for_dim}
Under the assumptions of Theorem \ref{theorem:main}, set $k_m := \#\{i \mid m_i = m \}$, $\ell_n := \#\{j \mid n_j = n \}$ and $M:=\max_{1\leqslant \alpha\leqslant r}d_\alpha$. Then 
$$\begin{array}{rcl} \dim(\Hit(\piY,G)) & = & \displaystyle
- \chi(|Y|) \dim G + \frac{1}{2}(\dim G - \rk\,G) \left( 2k + \ell +2b \right) - 2 \sum_{m=2}^{M} \left( \sum_{\alpha = 1}^{\rk\,G} \left\lceil \frac{d_\alpha + 1}{m} - 1 \right\rceil \right) k_m \\
 & &\displaystyle - \sum_{n=2}^{M} \left( \sum_{\alpha = 1}^{\rk\,G} \left\lceil \frac{d_\alpha + 1}{n} - 1 \right\rceil \right) \ell_n\, - 2b  \sum_{\alpha = 1}^{\rk\,G} \left\lceil \frac{d_\alpha - 1}{2} \right\rceil ,
 \end{array}$$ where $\lceil x \rceil$ denotes the smallest integer not smaller than $x$.
\end{theorem}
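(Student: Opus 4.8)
The plan is to deduce this formula from Theorem~\ref{theorem:main} by a purely elementary rewriting of the function $O$ together with a regrouping of the singular points of $Y$ according to the value of their order; no geometric or analytic input is re-used. The one piece of Lie theory needed is the identity $\dim G=\sum_{\alpha=1}^{r}(2d_\alpha+1)$, which holds because $\sum_{\alpha}d_\alpha$ is the number of positive roots of $\fg$, equal to $\tfrac12(\dim G-\rk G)$; equivalently $\tfrac12(\dim G-\rk G)=\sum_{\alpha=1}^{r}d_\alpha$. This already explains the coefficient standing in front of $(2k+\ell+2b)$ in the statement: each of the $2k+\ell+2b$ slots coming from cone points, corner reflectors and full boundary $1$-orbifolds will contribute $\sum_\alpha d_\alpha$, minus a correction.

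Next I record the key arithmetic identity: for every integer $d\geq1$ and every integer $m\geq2$,
$$O(d+1,m)=\left\lfloor (d+1)-\tfrac{d+1}{m}\right\rfloor=(d+1)-\left\lceil\tfrac{d+1}{m}\right\rceil=d-\left\lceil\tfrac{d+1}{m}-1\right\rceil,$$
where the second equality uses $\lfloor -x\rfloor=-\lceil x\rceil$ and the third uses $\lceil y-1\rceil=\lceil y\rceil-1$. The boundary contribution is the special case $m=2$, namely $\lfloor\tfrac{d+1}{2}\rfloor=O(d+1,2)=d-\lceil\tfrac{d-1}{2}\rceil$ (which may alternatively be checked directly, distinguishing the parity of $d$). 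Substituting these expressions into the formula of Theorem~\ref{theorem:main}, the $i$-th cone point now contributes $2\sum_\alpha d_\alpha-2\sum_\alpha\lceil\tfrac{d_\alpha+1}{m_i}-1\rceil$, the $j$-th corner reflector contributes $\sum_\alpha d_\alpha-\sum_\alpha\lceil\tfrac{d_\alpha+1}{n_j}-1\rceil$, and the $b$ full boundary $1$-orbifolds together contribute $2b\sum_\alpha d_\alpha-2b\sum_\alpha\lceil\tfrac{d_\alpha-1}{2}\rceil$.

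It then remains to collect terms. The ``diagonal'' parts add up to $(2k+\ell+2b)\sum_{\alpha}d_\alpha=\tfrac12(\dim G-\rk G)(2k+\ell+2b)$ by the first step, matching the first correction in the statement together with the leading term $-\chi(|Y|)\dim G$, which is common to both formulas. For the ``correction'' parts I rewrite $\sum_{i=1}^{k}f(m_i)=\sum_{m\geq2}k_m\,f(m)$ and $\sum_{j=1}^{\ell}f(n_j)=\sum_{n\geq2}\ell_n\,f(n)$ with $f(m)=\sum_\alpha\lceil\tfrac{d_\alpha+1}{m}-1\rceil$, turning the cone-point and corner-reflector corrections into $-2\sum_{m\geq2}\bigl(\sum_\alpha\lceil\tfrac{d_\alpha+1}{m}-1\rceil\bigr)k_m$ and $-\sum_{n\geq2}\bigl(\sum_\alpha\lceil\tfrac{d_\alpha+1}{n}-1\rceil\bigr)\ell_n$; and since $\lceil\tfrac{d_\alpha+1}{m}-1\rceil=0$ as soon as $m\geq d_\alpha+1$, the inner sums vanish whenever $m>M=\max_\alpha d_\alpha$, so both ranges truncate to $2\leq m\leq M$ and $2\leq n\leq M$. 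This produces exactly the claimed expression. There is no real obstacle in this argument: all of the substance is carried by Theorem~\ref{theorem:main}, and what is left is floor/ceiling bookkeeping, the only points deserving attention being the verification of $\lfloor\tfrac{d+1}{2}\rfloor=d-\lceil\tfrac{d-1}{2}\rceil$ and the observation that truncating the sums at $M$ discards only vanishing terms.
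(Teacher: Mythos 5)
Your proposal is correct and follows essentially the same route as the paper: the whole content is the ceiling/floor identity $\lfloor d-\tfrac{d}{m}\rfloor=(d-1)-\lceil\tfrac{d}{m}-1\rceil$ combined with $\sum_{\alpha}(2d_\alpha+1)=\dim G$ and regrouping singular points by their orders, exactly as in the proofs of Theorem \ref{alternate_formula_for_dim} and Corollary \ref{alternate_formula_for_dim_bdry_case}. The only cosmetic difference is that you substitute directly into the summed formula of Theorem \ref{theorem:main} and convert the boundary term $\lfloor\tfrac{d_\alpha+1}{2}\rfloor$ on the spot, whereas the paper rewrites each $\dim H^0(Y,K(Y,d))$ (Theorem \ref{prop:dimension diff non-orient}) first and treats the boundary case by passing to the mirror orbifold $mY$ (replacing $\ell$, $\ell_2$ by $\ell+2b$, $\ell_2+2b$), which amounts to the same computation.
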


\begin{remark}
In the special case where $Y$ is a sphere with $3$ cone points and $G = \PGL(n,\R)$ (resp.\ $G = \PSp^{\pm}(2m,\R)$ or $\PO(m,m+1)$), Long and Thistlethwaite \cite{Long_Thistlethwaite} (resp.\ Weir \cite{Weir}) have computed the dimension of the Hitchin component of $\piY$ into $G$. Our result confirms their formulas, and in addition shows that those Hitchin components are homeomorphic to open balls. 
\end{remark}
 
The key to the proof of Theorems \ref{theorem:main} and \ref{intro_alternate_formula_for_dim} is that Hitchin components for orbifold groups can be seen as subspaces of Hitchin components for surface groups. Note that this statement does \textit{not} hold in general: it is a special property of Hitchin components. Indeed, when an orbifold $Y$ of negative Euler characteristic is seen as the quotient of a closed orientable surface $X$ by the action of a finite group $\Sigma$, the map $j:\Hom(\piY,G)/G \lra \Fix_{\Si}(\Hom(\piX,G)/G)$ taking $\rho:\piY\lra G$ to $\rho|_{\piX}:\piX\lra G$ is in general neither injective nor surjective. But, \emph{when restricted to} $\Hit(\piY,G)$, it becomes a homeomorphism onto $\Fix_{\Si}(\Hit(\piX,G))$.

\begin{theorem}[Theorem \ref{main_obs_about_Hit_comp_for_orbifolds}]\label{main_obs_intro}
Let $Y$ be a closed connected $2$-orbifold of negative Euler characteristic and let $G = \Int(\fg_\C)^\tau$. Given a presentation $Y \simeq [\Si \bs X]$, the map $\rho\lmt\rho|_{\piX}$ induces a homeomorphism 
$j:\Hit(\piY,G) \overset{\simeq}{\lra} \Fix_{\Si}(\Hit(\piX,G))$, 
between the Hitchin component of $\Rep(\piY,G)$ and the $\Si$-fixed locus in $\Hit(\piX,G)$.
\end{theorem}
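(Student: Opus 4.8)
The plan is to prove that $j$ is a continuous bijection and to produce an explicit continuous inverse. The presentation $Y\simeq[\Si\bs X]$ gives a short exact sequence $1\to\piX\to\piY\to\Si\to 1$; since $\piX$ is normal, conjugation in $\piY$ yields for each $\gamma\in\piY$ an automorphism $c_\gamma$ of $\piX$, hence (the inner part acting trivially on the quotient) an action of $\Si$ on $\Rep(\piX,G)=\Hom(\piX,G)/G$. For $\rho\colon\piY\to G$ one has $\rho|_{\piX}\circ c_\gamma=\Int(\rho(\gamma))\circ\rho|_{\piX}$, so $[\rho|_{\piX}]$ is $\Si$-fixed and $j$ is well defined and manifestly continuous. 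That the image lies in $\Fix_\Si(\Hit(\piX,G))$ and not merely in $\Fix_\Si(\Rep(\piX,G))$ I would get from the restriction result of Section 2.4 (the restriction of a Hitchin representation to a finite-index subgroup is Hitchin); alternatively, $\Hit(\piY,G)$ is connected and contains Fuchsian representations, whose restrictions to $\piX$ are again Fuchsian and thus land in the single component $\Hit(\piX,G)$.

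For injectivity the key input is that a Hitchin representation of $\piX$ into the adjoint group $G$ has trivial centralizer in $G$: it is irreducible by Theorem \ref{theorem:intro_A} and $Z(G)=1$. Hence, if $\rho_1|_{\piX}$ and $\rho_2|_{\piX}$ are conjugate, after conjugating $\rho_2$ we may assume $\rho_1|_{\piX}=\rho_2|_{\piX}=:\eta$, and then for every $\gamma\in\piY$ the element $\rho_2(\gamma)^{-1}\rho_1(\gamma)$ centralizes $\eta(\piX)$, so it is trivial and $\rho_1=\rho_2$. For surjectivity, fix $[\eta]\in\Fix_\Si(\Hit(\piX,G))$ and a representative $\eta$. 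Since $\eta\circ c_x=\Int(\eta(x))\circ\eta$ for $x\in\piX$ and $[\eta]$ is $\Si$-fixed, $\eta\circ c_\gamma$ is $G$-conjugate to $\eta$ for every $\gamma\in\piY$; triviality of the centralizer provides a unique $g(\gamma)\in G$ with $\Int(g(\gamma))\circ\eta=\eta\circ c_\gamma$, and uniqueness forces $\gamma\mapsto g(\gamma)$ to be a homomorphism $\rho$ with $\rho|_{\piX}=\eta$. (Equivalently: the obstruction to such an extension lives in $H^2(\Si,Z(G))$ and the set of extensions is a torsor under $H^1(\Si,Z(G))$, both trivial since $G$ is adjoint.) Together with injectivity this exhibits $[\eta]\mapsto[\rho]$ as a two-sided inverse of $j$ \emph{on the subset of $\Fix_\Si(\Hit(\piX,G))$ whose extension is Hitchin}, so it remains to show that every such extension $\rho$ belongs to $\Hit(\piY,G)$.

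To close this gap I would invoke the equivariant theory of the body of the paper. By the $\Si$-equivariant non-abelian Hodge correspondence (Section 3) and the equivariance of the Hitchin fibration (Section 4.1), $\Hit(\piX,G)$ is $\Si$-equivariantly homeomorphic to a real vector space carrying a linear $\Si$-action, so $\Fix_\Si(\Hit(\piX,G))$ is connected. Next, the extension map $e\colon[\eta]\mapsto[\rho]$ is continuous: $G$ acts freely (trivial centralizers) and properly on the Hitchin locus of $\Hom(\piX,G)$, so the quotient map has local continuous sections $s$, and over such a neighbourhood each intertwiner $g(\gamma_i)$, for a fixed finite generating set $\gamma_1,\dots,\gamma_s$ of $\piY$, is the unique element of $G$ conjugating $s(\eta')$ to the continuously varying representation $s(\eta')\circ c_{\gamma_i}$, which depends continuously on $\eta'$ because the action is free and proper. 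Finally, $e$ sends the restriction to $\piX$ of a Fuchsian representation of $\piY$ back to that same Fuchsian representation. Consequently $e\big(\Fix_\Si(\Hit(\piX,G))\big)$ is connected, meets the Fuchsian locus of $\Rep(\piY,G)$, and hence is contained in the connected set $\Hit(\piY,G)$; this proves that $j$ is onto and that $j^{-1}=e$ is continuous, so $j$ is a homeomorphism. (When $G=\PGL(n,\R)$ one can instead argue surjectivity directly: $\piX$ has finite index in $\piY$, so $\rho$ and $\rho|_{\piX}$ have the same limit curve, which is hyperconvex, whence $\rho$ is Hitchin by Theorem \ref{theorem:intro_A}.)

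The main obstacle is precisely this last step: the purely algebraic arguments above only identify $\Hit(\piY,G)$ with the set of $\Si$-fixed Hitchin classes on $X$ that admit a Hitchin extension to $\piY$, and upgrading this to all of $\Fix_\Si(\Hit(\piX,G))$ requires the connectedness of the fixed locus (hence the equivariant Higgs bundle machinery) together with the continuity of the extension map, which in turn calls for some care about properness of the $G$-action on the relevant locus and the existence of local sections of the quotient.
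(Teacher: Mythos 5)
Your argument is correct in substance and its injectivity half coincides with the paper's (Proposition \ref{proposition:injectivity}, resting on the trivial-centralizer Lemma \ref{lemma:centralizer}), but your surjectivity/inverse-continuity half takes a genuinely different route. The paper produces the extension of a $\Si$-fixed Hitchin class analytically: it uses the $\Si$-equivariance of the Hitchin section (Lemma \ref{equivariance_of_the_Hitchin_section}) to represent the class by a $\Si$-equivariant Higgs bundle $(\Ecan,\phi(p),\tau)$ with $p\in\Fix_\Si(\cB_X(\fg))$, passes through the equivariant non-Abelian Hodge correspondence to a $\Si$-invariant flat connection, and invokes the extended holonomy construction (Theorem \ref{hol_of_inv_conn}) to get a representation of $\piY$; Hitchin-ness then follows from connectedness of $\Fix_\Si(\cB_X(\fg))$ together with the fact that $p=0$ gives a Fuchsian representation, and continuity of the inverse comes for free from this chain of homeomorphisms (yielding Corollary \ref{Sigma_fixed_pts_in_Hitchin_base_of_X} on the way). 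You instead build the extension purely group-theoretically: $\Si$-invariance of the class plus trivial centralizer gives a unique intertwiner $g(\gamma)$ for each $\gamma\in\piY$, and uniqueness forces $\gamma\mapsto g(\gamma)$ to be a homomorphism extending $\eta$. This is more elementary at that step (no harmonic metrics needed to produce the extension), but you then have to pay in two places where the paper pays nothing extra: you still need the equivariant Hitchin section to know $\Fix_\Si(\Hit(\piX,G))$ is connected, and you must prove continuity of the extension map by hand, via properness and freeness of the $G$-action on the Hitchin locus of $\Hom(\piX,G)$ and local sections of the quotient — a standard but nontrivial verification that the paper's factorization through homeomorphisms avoids. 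Your closing step (connected image of $e$ meeting the Fuchsian locus, hence inside $\Hit(\piY,G)$) is the same endgame as the paper's, just run on $\Fix_\Si(\Hit(\piX,G))$ rather than on $\Fix_\Si(\cB_X(\fg))$.

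Two caveats. First, your justification of the key fact — ``irreducible by Theorem \ref{theorem:intro_A} and $Z(G)=1$, hence trivial centralizer'' — is not a valid deduction for a general real adjoint group (irreducibility in the parabolic sense does not preclude a nontrivial centralizer, e.g.\ a compact torus); you should simply cite Lemma \ref{lemma:centralizer}, whose proof goes through stability and simplicity of the Higgs bundles, or supplement irreducibility with the existence of purely loxodromic elements in the image. Second, your parenthetical shortcut for $G=\PGL(n,\R)$ via hyperconvexity is circular within the paper's logic: the implication ``hyperconvex $\Rightarrow$ Hitchin'' for orbifold groups (Theorem \ref{Hitchin_iff_hyperconvex}) is itself deduced from Corollary \ref{rep_whose_rest_is_Hitchin_is_itself_Hitchin}, which depends on Theorem \ref{main_obs_about_Hit_comp_for_orbifolds}; so that aside cannot replace your main connectedness argument.
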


In order to prove Theorem \ref{main_obs_intro}, we develop the following  $\Si$-equivariant version of the non-Abelian Hodge correspondence.

\begin{theorem}[Section \ref{NAHC_for_orbifolds}]
Under the assumptions of Theorem \ref{main_obs_intro}, there is a homeomorphism between the representation space $\Hom^{\mathrm{c.r.}}(\piY,G)/G$ of completely reducible representations of the orbifold fundamental group $\piY$ into $G$ and the moduli space  $$\cM_{(X,\Si)}(G) := \left\{
\begin{matrix}
\Si\textrm{-polystable equivariant}\ G\textrm{-Higgs bundles}\\ 
(\cE,\phi,\tau)\ \textrm{with vanishing first Chern class on}\ X
\end{matrix}
\right\} \big/\, \mathrm{isomorphism}$$ of isomorphism classes of $\Si$-polystable equivariant $G$-Higgs bundles with vanishing first Chern class on $X$.
\end{theorem}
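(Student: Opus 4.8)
The plan is to set up the equivariant non-Abelian Hodge correspondence by combining the classical (non-equivariant) correspondence on the closed surface $X$ with a descent argument for the $\Si$-action. The starting point is the standard chain of homeomorphisms on $X$: the Betti space $\Hom^{\mathrm{c.r.}}(\piX,G)/G$ is homeomorphic to the de Rham space of reductive flat $G$-connections, which in turn (by Corlette--Donaldson and the Hitchin--Kobayashi correspondence of Simpson, extended to $G$-Higgs bundles) is homeomorphic to the Dolbeault moduli space $\cM_X(G)$ of polystable $G$-Higgs bundles of vanishing first Chern class. The point of Section~\ref{NAHC_for_orbifolds} (which we may invoke) is that each of these three spaces carries a natural $\Si$-action, and that the homeomorphisms between them are $\Si$-equivariant: on the Betti side $\Si$ acts via the outer action of $\Si$ on $\piX$ (coming from the presentation $Y\simeq[\Si\bs X]$) together with lifts of automorphisms; on the de Rham side via pullback of flat connections by the lifted diffeomorphisms; and on the Dolbeault side via pullback of holomorphic data, using that a chosen $\Si$-invariant conformal structure on $X$ makes $\Si$ act by biholomorphisms. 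Equivariance of Corlette's theorem follows from the uniqueness (hence naturality) of the equivariant harmonic metric, and equivariance of the Hitchin--Kobayashi correspondence follows likewise from the uniqueness of the Hermitian--Einstein metric solving Hitchin's equations.

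Next I would identify the fixed-point set. Because $\Si$ is finite and all three moduli spaces are built from $\piX$ or from $X$ with its $\Si$-invariant conformal structure, a $\Si$-fixed point on the Dolbeault side is exactly an isomorphism class of $G$-Higgs bundle $(\cE,\phi)$ together with, for each $\si\in\Si$, an isomorphism $\si^*(\cE,\phi)\cong(\cE,\phi)$; these isomorphisms can be chosen to satisfy the cocycle condition (up to the center, which is trivial here since $G=\Int(\fg_\C)^\tau$ is adjoint and acts faithfully), so a fixed point is precisely a $\Si$-equivariant $G$-Higgs bundle $(\cE,\phi,\tau)$ in the sense of the statement, and the polystability of the underlying bundle together with $\Si$-invariance is what is meant by $\Si$-polystability. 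Thus $\Fix_\Si(\cM_X(G)) = \cM_{(X,\Si)}(G)$. On the Betti side, an analogous argument identifies $\Fix_\Si\big(\Hom^{\mathrm{c.r.}}(\piX,G)/G\big)$ with $\Hom^{\mathrm{c.r.}}(\piY,G)/G$: a $\Si$-invariant conjugacy class of reductive representations $\piX\to G$ extends, again because $G$ is centerless, to a representation of the orbifold group $\piY$ (which fits into $1\to\piX\to\piY\to\Si\to1$), and completely reducible representations of $\piY$ restrict to completely reducible ones of $\piX$ by Clifford theory / finite-index considerations — this is the kind of statement already used in Section~\ref{restriction_to_finite_index}. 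Combining, the $\Si$-equivariant homeomorphism restricts to the desired homeomorphism $\Hom^{\mathrm{c.r.}}(\piY,G)/G \xrightarrow{\ \simeq\ } \cM_{(X,\Si)}(G)$.

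The main obstacle is the descent/cocycle bookkeeping: one must be careful that the $\Si$-action on the Betti space is genuinely by the outer action (so that fixed points see all of $\piY$ and not just $\piX$ plus an abstract $\Si$-symmetry), and that the isomorphisms $\si^*(\cE,\phi)\cong(\cE,\phi)$ realizing a fixed point can be normalized into an honest $\Si$-linearization without a central obstruction. Here the hypothesis that $G$ is adjoint is doing real work, and I would isolate this as a lemma: for $G=\Int(\fg_\C)^\tau$, a $\Si$-fixed point of the relevant moduli space admits a unique-up-to-isomorphism $\Si$-linearization. A secondary technical point is to check that the three $\Si$-actions are compatible with the topologies (they are, being induced by homeomorphisms of the parameter spaces) and that taking fixed points commutes with the relevant quotients, which holds because $\Si$ is finite and acts on the representation varieties and on the space of solutions to Hitchin's equations by homeomorphisms preserving the $G$-action.
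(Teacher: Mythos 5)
Your proposed route breaks down at the fixed-point identifications, and this is not a technicality: the paper itself points out (in Section \ref{section:parameterization_Hitchin}) that the forgetful map $J:\cM_{(X,\Si)}(G)\lra\Fix_\Si(\cM_X(G))$ and the restriction map $\Hom(\piY,G)/G\lra\Fix_\Si(\Hom(\piX,G)/G)$ of \eqref{map_to_fixed_pt_set} are in general \emph{neither injective nor surjective}. Your normalization argument ("the cocycle condition holds up to the center, which is trivial since $G$ is adjoint") is only valid when the automorphism group of the object equals the center, i.e.\ for stable/simple objects such as those in the Hitchin section (this is exactly what Lemma \ref{lemma:centralizer} provides, and it is why the fixed-point identification \emph{is} true for Hitchin components, Theorem \ref{main_obs_about_Hit_comp_for_orbifolds}). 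The theorem you are asked to prove, however, concerns the whole space $\Hom^{\mathrm{c.r.}}(\piY,G)/G$: a strictly polystable Higgs bundle, or a completely reducible but reducible representation, has automorphism group (centralizer) strictly larger than the trivial center, so the isomorphisms $\si^*(\cE,\phi)\cong(\cE,\phi)$ need not be normalizable into a $\Si$-linearization (surjectivity of $J$ fails), and two non-isomorphic equivariant structures, resp.\ two non-conjugate extensions of $\rho|_{\piX}$ to $\piY$ differing by a centralizer-valued twist, map to the same fixed point (injectivity fails). So the asserted equalities $\Fix_\Si(\cM_X(G))=\cM_{(X,\Si)}(G)$ and $\Fix_\Si(\Hom^{\mathrm{c.r.}}(\piX,G)/G)=\Hom^{\mathrm{c.r.}}(\piY,G)/G$ are false in the generality needed, and the argument cannot be repaired by the adjointness of $G$ alone.

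The paper avoids fixed-point sets altogether and works with equivariant objects as data throughout. A representation of $\piY$ is converted directly into a $\Si$-equivariant flat bundle (Proposition \ref{Sigma_equivariant_structure_from_rep}), and conversely the holonomy of a $\Si$-invariant flat connection is extended to all of $\piY$ using the description of $\piY$ by paths joining $x$ to $\si(x)$ (Theorem \ref{hol_of_inv_conn}, Corollary \ref{rep_of_piY_and_flat_bdles_on_X}); this replaces your "outer action plus lifts" bookkeeping and is where the passage from $\piX$-data to genuine $\piY$-data actually happens. Complete reducibility over $\piY$ is matched with $\Si$-polystability of the equivariant flat bundle (Definition \ref{Sigma_stability_for_flat_bdles}, Proposition \ref{pst_and_Sigma_pst_for_flat_bundles}, using Serre's finite-index result). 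Then the two analytic inputs are taken in their already-equivariant forms: the equivariant Donaldson--Corlette theorem of Ho--Wilkin--Wu (Theorem \ref{Sigma_equiv_DC_thm}) produces a $\Si$-invariant harmonic reduction, and Simpson's equivariant Hitchin--Kobayashi correspondence (Theorem \ref{invariant_HYM_metrics}), combined with the $\Si$-invariance of the Chern connection of an invariant metric (Proposition \ref{action_on_Chern_connections}), produces a $\Si$-invariant solution of the self-duality equations from a $\Si$-polystable equivariant Higgs bundle. Your observation that uniqueness of harmonic/Hermitian--Yang--Mills metrics makes the classical correspondence $\Si$-equivariant is correct and is implicitly used, but equivariance of the correspondence on $X$ is not enough: the content of the theorem is the matching of $\piY$-representations with equivariant structures, not with fixed points.
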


We then prove that the Hitchin fibration with Hitchin base $\cB_X(\fg)$ admits a $\Sigma$-equivariant section (the Hitchin section), thus inducing a homeomorphism $\Fix_\Si(\cB_X(\fg)) \simeq \Fix_\Si(\Hit(\piX,G))$ (Lemma \ref{equivariance_of_the_Hitchin_section}), and we show how this implies Theorem \ref{main_obs_intro}, as well as the following result. 

\begin{corollary}[Corollary \ref{Sigma_fixed_pts_in_Hitchin_base_of_X}]\label{invariant_diffs_intro}
Under the assumptions of Theorem \ref{main_obs_intro}, the Hitchin component $\Hit(\piY,G)$ is homeomorphic to the real vector space $\Fix_\Si(\cB_X(\fg))$. In particular, it is a contractible space.
\end{corollary}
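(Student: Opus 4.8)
The proof is the concatenation of the two homeomorphisms recalled just above the statement, followed by an elementary observation. Fix a presentation $Y\simeq[\Si\bs X]$. By Theorem \ref{main_obs_intro}, the restriction map $\rho\mapsto\rho|_{\piX}$ is a homeomorphism $\Hit(\piY,G)\overset{\simeq}{\lra}\Fix_\Si(\Hit(\piX,G))$; by Lemma \ref{equivariance_of_the_Hitchin_section}, the $\Si$-equivariant Hitchin section restricts to a homeomorphism $\Fix_\Si(\cB_X(\fg))\overset{\simeq}{\lra}\Fix_\Si(\Hit(\piX,G))$. Both homeomorphisms have the same target $\Fix_\Si(\Hit(\piX,G))$, so composing the first with the inverse of the second produces the homeomorphism $\Hit(\piY,G)\simeq\Fix_\Si(\cB_X(\fg))$ asserted in the statement.

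To deduce contractibility it then suffices to check that $\Fix_\Si(\cB_X(\fg))$ is a real vector space. The Hitchin base $\cB_X(\fg)$ is a finite-dimensional complex vector space, namely a finite direct sum of spaces of holomorphic differentials on the Riemann surface $X$, and $\Si$ acts on it $\R$-linearly: an orientation-preserving element of $\Si$ acts by pullback of differentials, which is $\C$-linear, whereas an orientation-reversing element acts by pullback along the underlying anti-holomorphic map followed by complex conjugation of the coefficients, which is $\C$-antilinear; in both cases the resulting self-map is $\R$-linear. Hence $\Fix_\Si(\cB_X(\fg))$ is an $\R$-linear subspace of the underlying real vector space of $\cB_X(\fg)$, so it is homeomorphic to $\R^N$ for some $N$, and in particular contractible.

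There is no genuine obstacle internal to this corollary; it is a formal consequence of Theorem \ref{main_obs_intro} and Lemma \ref{equivariance_of_the_Hitchin_section}. The one point to keep straight is that the $\Si$-action on $\Hit(\piX,G)$ entering Theorem \ref{main_obs_intro} is the same as the one entering Lemma \ref{equivariance_of_the_Hitchin_section} — in both cases it is the action induced on $\Rep(\piX,G)$ by the outer $\Si$-action coming from the extension $1\to\piX\to\piY\to\Si\to 1$ — so that the common middle term $\Fix_\Si(\Hit(\piX,G))$ is literally the same space in the two statements. All of the actual work has been done upstream: in the $\Si$-equivariant non-Abelian Hodge correspondence behind Theorem \ref{main_obs_intro}, and in the proof that the Hitchin section can be chosen $\Si$-equivariant (Lemma \ref{equivariance_of_the_Hitchin_section}).
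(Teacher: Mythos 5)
Your proposal is correct and follows the paper's own route: the corollary is obtained precisely by composing the homeomorphism $\Hit(\piY,G)\simeq\Fix_\Si(\Hit(\piX,G))$ of Theorem \ref{main_obs_about_Hit_comp_for_orbifolds} with the inverse of the homeomorphism $\Fix_\Si(\cB_X(\fg))\simeq\Fix_\Si(\Hit(\piX,G))$ furnished by the $\Si$-equivariant Hitchin section (Lemma \ref{equivariance_of_the_Hitchin_section}), and your check that $\Si$ acts $\R$-linearly on $\cB_X(\fg)$, so that the fixed locus is a real vector space and hence contractible, matches the paper's setup of the $\Si$-action on differentials in Section \ref{subsec:Hitchin_fibration}.
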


In order to define the Hitchin base $\cB_Y(\fg)$ of $Y$ (see \eqref{Hitchin_base_of_an_orbifold}), we introduce spaces of regular differentials on orbifolds (Definition \ref{reg_diff_def}), compute their dimension (Theorem \ref{prop:dimension diff non-orient}), and prove that the Hitchin component $\Hit(\piY,G)$ is homeomorphic to $\cB_Y(\fg)$ (Theorem \ref{theorem:dimension}), which completes the proof of Theorems \ref{theorem:main} and \ref{intro_alternate_formula_for_dim}, as well as Corollary \ref{cor_orientation_cover_intro} below. Note that Theorem \ref{prop:dimension diff non-orient} provides an explanation why numbers of the form $O(d,m)=\lfloor d -\frac{d}{m} \rfloor$ appear in the formula for the dimension in Theorem \ref{theorem:main}. 

\begin{corollary}[Corollary \ref{cor:non orient half}]\label{cor_orientation_cover_intro}
Let $Y$ be a closed non-orientable orbifold and let $Y^+\lra Y$ be its orientation double cover. Then
$\dim \Hit(\piY,G) = \frac{1}{2} \dim \Hit(\pi_1 Y^+,G)$.
\end{corollary}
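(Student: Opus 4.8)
The plan is to deduce Corollary~\ref{cor_orientation_cover_intro} from the dimension formula in Theorem~\ref{theorem:main} (together with Corollary~\ref{invariant_diffs_intro}, which already guarantees that both sides are open balls). First I would record the orbifold data of the double cover: if $Y$ is closed non-orientable with $k$ cone points of orders $m_1,\dots,m_k$ and $\ell$ corner reflectors of orders $n_1,\dots,n_\ell$, then the orientation double cover $Y^+$ is a closed orientable orbifold with underlying surface satisfying $\chi(|Y^+|) = 2\chi(|Y|)$, with $2k$ cone points of orders $m_1,m_1,\dots,m_k,m_k$, no corner reflectors, and no boundary (each corner reflector of order $n_j$ in $Y$ lifts to a single cone point of order $n_j$ in $Y^+$, and $b=0$ on both sides since $Y$ is closed). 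This is the standard description of the orientation cover of a $2$-orbifold, and it is the only genuinely geometric input.

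With this data in hand, the proof is a term-by-term comparison. The leading term of Theorem~\ref{theorem:main} for $Y^+$ is $-\chi(|Y^+|)\dim G = -2\chi(|Y|)\dim G$, which is twice the leading term for $Y$. For the correction term, note that for each exponent $d_\alpha$ the contribution of a cone point of order $m$ is $2\,O(d_\alpha+1,m)$ and the contribution of a corner reflector of order $n$ is $O(d_\alpha+1,n)$; hence the cone points of $Y^+$ contribute $\sum_\alpha \sum_{i=1}^k 2\cdot 2\,O(d_\alpha+1,m_i) = 2\sum_\alpha\bigl(2\sum_i O(d_\alpha+1,m_i)\bigr)$, which is exactly twice the cone-point contribution for $Y$, while the absence of corner reflectors and boundary components in $Y^+$ means the remaining two families of terms for $Y^+$ vanish, and they are precisely twice the corresponding terms $\sum_\alpha\bigl(\sum_{j=1}^\ell O(d_\alpha+1,n_j) + 2b\lfloor (d_\alpha+1)/2\rfloor\bigr)$ for $Y$ after accounting for the factor-of-two discrepancy — here one uses that a corner reflector of order $n$ in $Y$, which contributes $O(d_\alpha+1,n)$, lifts to a cone point of order $n$ in $Y^+$, which contributes $2\,O(d_\alpha+1,n)$, exactly doubling it. Summing, $\dim\Hit(\pi_1Y^+,G) = 2\dim\Hit(\pi_1Y,G)$, which is the claim.

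There is essentially no hard analytic step here: Corollary~\ref{invariant_diffs_intro} and Theorem~\ref{theorem:main} do all the heavy lifting, and what remains is bookkeeping. The one point requiring a small amount of care — and the place I would expect a careless reader to stumble — is the matching of the corner-reflector term of $Y$ with a cone-point term of $Y^+$: the orientation-reversing deck transformation identifies the two sides of a corner reflector, so the mirror boundary arc disappears in $Y^+$ and the corner reflector of order $n$ becomes an honest cone point of order $n$ (not $2n$), and the coefficient jumps from $1$ to $2$ accordingly. Once this identification is made correctly, every term on the $Y^+$ side is visibly twice the corresponding term on the $Y$ side, and the corollary follows. (Alternatively, one could bypass the explicit formula and argue directly that $\Fix_\Sigma(\cB_X(\fg))$ for the two presentations have the asserted dimensions, but the formula-based argument is cleaner and self-contained given Theorem~\ref{theorem:main}.)
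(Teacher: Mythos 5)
Your proof is correct and is essentially the paper's: Corollary~\ref{cor:non orient half} is presented there as an immediate consequence of Theorem~\ref{theorem:dimension}, and your term-by-term comparison (using $\chi(|Y^+|)=2\chi(|Y|)$, each cone point of order $m_i$ lifting to two cone points of order $m_i$, and each corner reflector of order $n_j$ lifting to a single cone point of order $n_j$, with $b=0$ since $Y$ is closed) is exactly the intended bookkeeping. The only remark worth adding is that the halving can also be seen with no bookkeeping at all, since $H^0(Y,K(Y,d))=\Fix_\tau H^0(Y^+,K(Y^+,d))$ for a $\C$-anti-linear involution $\tau$ (the mechanism behind Theorem~\ref{prop:dimension diff non-orient}), so each summand of the Hitchin base of $Y$ has half the real dimension of the corresponding summand for $Y^+$.
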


\subsection{Applications} 

We list below a few applications of our results, to be presented in Section \ref{section:application}. Theorem \ref{theorem:main} also found an application to the study of the theory of compactifications of the character varieties, see Burger, Iozzi, Parreau and Pozzetti \cite{BIPP}.

\subsubsection{Rigidity} We encounter the following two types of rigidity phenomena.

\begin{theorem}[Theorem \ref{thm:dimension 0}]\label{no_non_trivial_deformations_intro}
Let $G=\Int(\fg_\C)^\tau$ and let $Y$ be a closed orientable orbifold of genus $0$ with $3$ cone points, of respective orders $m_1 \leqslant m_2 \leqslant m_3$. Assume that the tuple $(G,m_1,m_2,m_3)$ satisfies one of the following conditions:

\begin{enumerate}
\item $G=\PGL(2,\R) \simeq \PO(1,2) \simeq \PSp^\pm(2,\R)$ and $\frac{1}{m_1}+\frac{1}{m_2}+\frac{1}{m_3}<1$.
\item $G=\PGL(3,\R)$, $m_1=2$ and $\frac{1}{m_2}+\frac{1}{m_3}<\frac{1}{2}$.
\item $G=\PGL(4,\R)\simeq \PO^\pm(3,3)$, $\PGL(5,\R)$, $\PSp^{\pm}(4,\R)\simeq\PO(2,3)$, $m_1 = 2$, $m_2 = 3$ and $m_3\geqslant7$.
\item $G=\PSp^{\pm}(4,\R)\simeq\PO(2,3)$, $m_1=m_2=3$ and $m_3 \geqslant4$.
\item $G=\GG$, $m_1=2$ and $m_2=4$ or $5$, and $m_3=5$.
\end{enumerate}

Then $\dim \Hit(\piY,G) =0$, so any two Hitchin representations of $\piY$ into $G$ are $G$-conjugate in this case, and this happens for infinitely many orbifolds. 

Moreover, for all other pairs $(G,Y)$ with $Y$ closed orientable, Hitchin representations of $\piY$ into $G$ admit non-trivial deformations, i.e.\ $\dim\Hit(\piY,G)>0$.
\end{theorem}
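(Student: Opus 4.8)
The plan is to reduce Theorem~\ref{no_non_trivial_deformations_intro} entirely to the explicit dimension formula of Theorem~\ref{theorem:main} (equivalently, to the count of invariant regular differentials in Corollary~\ref{invariant_diffs_intro}), and then to carry out a finite case analysis. Since $Y$ is a sphere with $3$ cone points, $|Y| = S^2$, so $\chi(|Y|) = 2$, there are no corner reflectors ($\ell = 0$), and no full $1$-orbifold boundary components ($b = 0$). Thus the formula collapses to
\[
\dim\Hit(\piY,G) \;=\; -2\dim G \;+\; \sum_{\alpha=1}^{\rk G}\, 2\sum_{i=1}^{3} O(d_\alpha+1, m_i),
\]
where $O(d,m) = \lfloor d - d/m\rfloor$. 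So the first step is to record, for each split $G$ appearing in the statement, its rank $r$ and its exponents $d_1,\dots,d_r$ (for $\PGL(n,\R)$ these are $1,2,\dots,n-1$; for $\PSp^\pm(2m,\R)\simeq\PO(m,m+1)$ they are $1,3,5,\dots,2m-1$; for $\PO^\pm(m,m)\simeq\PGL(4,\R)$ when $m=3$; for $\GG$ they are $1,5$), together with $\dim G$, and then simply to evaluate the sum in each of the five listed cases.

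For the vanishing direction, I would treat the five cases in turn. Case (1) is classical: $\dim\Hit = -2\cdot 3 + 2\sum_{i}O(2,m_i)$ and $O(2,m) = \lfloor 2 - 2/m\rfloor = 1$ whenever $m\geq 2$, giving $-6 + 2\cdot 3 = 0$; the hyperbolicity hypothesis $\sum 1/m_i < 1$ is exactly what guarantees $\chi^{\mathrm{orb}}(Y) < 0$ so that $\Hit(\piY,G)$ is defined. Case (2), $G=\PGL(3,\R)$, $\dim G = 8$, exponents $1,2$: one computes $2[O(2,2)+O(2,m_2)+O(2,m_3)] + 2[O(3,2)+O(3,m_2)+O(3,m_3)]$; here $O(2,m)=1$, $O(3,2) = \lfloor 3 - 3/2\rfloor = 1$, and $O(3,m) = 2$ for $m\geq 3$ (since $3 - 3/m \in [2,3)$), so the bracket sums are $3$ and $1 + 2 + 2 = 5$, giving $-16 + 6 + 10 = 0$. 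Cases (3)--(5) are the same kind of arithmetic with the relevant exponents; the constraints on the $m_i$ (e.g.\ $m_1=2, m_2=3, m_3\geq 7$) are precisely the inequalities needed to push each $O(d_\alpha+1,m_i)$ down to the threshold value that makes the total vanish, and they are not mysterious once one writes $O(d,m) = d - 1 - \lceil d/m\rceil + [m\mid d]$ type reductions. The ``infinitely many orbifolds'' assertion is immediate from case (1) (or from cases (3)--(4) with $m_3$ ranging over an infinite set).

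For the final, complementary assertion --- that for \emph{all other} closed orientable $Y$ one has $\dim\Hit(\piY,G) > 0$ --- the strategy is a monotonicity-plus-base-case argument. First note each summand $O(d_\alpha+1,m_i)$ is non-decreasing in $m_i$, and $O(d,m)\to d-1$ as $m\to\infty$, while $\sum_\alpha 2(d_\alpha-1) = \dim G - \rk G - \cdots$; more precisely one shows that if $Y'$ is obtained from $Y$ by increasing one cone order, or by adding a cone point, then $\dim\Hit$ does not decrease (and strictly increases once the orders are large enough), so it suffices to (a) dispose of genus $0$ with exactly $3$ cone points not on the list, (b) dispose of genus $0$ with $\geq 4$ cone points, and (c) dispose of genus $\geq 1$. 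For (c), even the trivial (minimal) orbifold structure has $\dim\Hit \geq -\chi(|Y|)\dim G = (2g-2+k)\dim G \geq \dim G > 0$ once $2g - 2 + k > 0$, which holds whenever $\chi^{\mathrm{orb}} < 0$ unless $g=0$; for (b) with $g = 0$ and $k\geq 4$ cone points one has $-\chi(|Y|)\dim G = -2\dim G$ and $\sum_\alpha 2\sum_i O(d_\alpha+1,m_i) \geq 2\cdot 4\cdot\sum_\alpha O(d_\alpha+1,2)$, and since $O(d+1,2) = \lceil (d-1)/2\rceil$ one checks $8\sum_\alpha\lceil(d_\alpha-1)/2\rceil > 2\dim G$ for every split simple $G$ by a short uniform estimate (using $\dim G = \rk G + 2\sum_\alpha d_\alpha$); for (a) it remains to run the genus-$0$, $3$-cone-point formula and verify that outside the enumerated tuples the value is $\geq 1$ --- this is where the bulk of the bookkeeping lives, since one must confirm that the list in the theorem is \emph{exhaustive}.

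The main obstacle is precisely that last exhaustiveness check in part (a): one has to argue that for every split simple Lie algebra $\fg$ and every hyperbolic triple $(m_1,m_2,m_3)$ not in the list, the quantity $-2\dim G + 2\sum_\alpha\sum_i O(d_\alpha+1,m_i)$ is strictly positive, and a priori $\fg$ ranges over infinitely many types. The way to make this finite is to observe that for fixed $(m_1,m_2,m_3)$ the function is (weakly) increasing as we pass to a larger simple Lie algebra in a suitable sense --- because adding an exponent $d$ contributes $2\sum_i O(d+1,m_i) - 2(\text{its contribution to }\dim G)$, and for $d$ large this is positive --- so that only finitely many low-rank $G$ can possibly give a non-positive value, and similarly for fixed small $G$ only finitely many triples $(m_1,m_2,m_3)$ (those with all $m_i$ small) can fail, the rest being handled by the $m_i\to\infty$ monotonicity. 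Assembling these two finiteness reductions into a clean ``only the five families survive'' statement, and double-checking the boundary triples (e.g.\ $(2,3,7)$ versus $(2,3,6)$, $(2,4,5)$ versus $(2,4,4)$ for $\GG$), is the delicate part; everything else is the routine evaluation of $O(d,m)$ on explicit exponent lists.
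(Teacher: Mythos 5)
Your overall route --- reduce everything to the dimension formula of Theorem \ref{theorem:dimension} and run a case analysis over the exponents --- is the same as the paper's, and your explicit evaluations for the five listed families are fine. The problem is the complementary assertion (positivity for all other closed orientable $(G,Y)$), which you correctly identify as the bulk of the work but whose proposed treatment contains steps that fail as written. First, $-\chi(|Y|)=2g-2$, not $2g-2+k$, so your case~(c) bound gives $0$ rather than something positive when $g=1$; for a torus with cone points the positivity has to come from the cone-point terms, not from $-\chi(|Y|)\dim G$. Second, $O(d+1,2)=\lfloor (d+1)/2\rfloor$, not $\lceil (d-1)/2\rceil$, and even with the correct value the proposed uniform inequality $8\sum_\alpha O(d_\alpha+1,2)>2\dim G$ fails for $\PGL(3,\R)$ (one gets equality), so strict positivity for $g=0$, $k\geqslant 4$ needs the hyperbolicity constraint (when $k=4$ not all $m_i$ can equal $2$) rather than a purely uniform estimate. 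Third, the two finiteness reductions (in the Lie algebra and in the cone orders) and the exhaustiveness of the five families are only sketched; that is precisely the content that still has to be proved.

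The paper sidesteps all of this with one observation your plan does not use: every split simple $\fg$ has $1$ as an exponent, so $\dim\Hit(\piY,G)=0$ forces $H^0(Y,K(Y,2))=0$, and by Lemma \ref{prop:dimension diff orient}, since $O(2,m)=1$ for every $m\geqslant 2$, one has $\dim_\C H^0(Y,K(Y,2))=3(g-1)+k$, which vanishes if and only if $g=0$ and $k=3$. This kills at a stroke your cases (b) and (c) with no estimates over all $G$. More generally, Lemma \ref{lemma:genus zero} (using Remark \ref{lemma:useful} to exclude $g\geqslant 1$ and the bound $O(d,m)\geqslant O(d,2)$ to bound $k$ and $d$) classifies all orientable orbifolds with $H^0(Y,K(Y,d))=0$, recorded in Table \ref{table:vanishing_diff}; Theorem \ref{thm:dimension 0} then follows by intersecting that table over the exponent lists of each group (Table \ref{table:exponents}), including the boundary triples such as $(2,3,7)$ versus $(2,3,6)$ that worry you. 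If you adopt that lemma as the organizing step, your remaining work is exactly the finite verification for spheres with three cone points that you describe.
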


\noindent Cases (1) and (2) above are already known \cite{Thurston_notes,CG}. If $Y$ is non-orientable, Corollary \ref{cor_orientation_cover_intro} shows that $\dim\Hit(\piY,G)=0$ if and only if $Y$ is a quotient of one of the (infinitely many) spheres with cone points listed in Theorem \ref{no_non_trivial_deformations_intro}, i.e.\ $Y$ is either a disk with three corner reflectors or a disk with one cone point and one corner reflector. For surface groups, the dimension of Hitchin components is always positive and grows quadratically with the rank of the group (this last property also holds for orbifold groups: Proposition \ref{proposition:dim_estimate}).

The second type of rigidity phenomenon that we encounter has to do with Zariski density of representations of $\piY$ into $G$: \textit{those may not exist in Hitchin components for orbifold groups, and we find infinite families of such groups}. This is surprising, and contrasts with what happens for surface groups, for which the subset of Zariski dense representations is always dense in the Hitchin component.

\begin{theorem}[Theorem \ref{thm_none_is_zariski_dense}]\label{absence_of_Zariski_dense_reps_intro}
Let $G=\Int(\fg_\C)^\tau$ and let $Y$ be an orientable orbifold of genus $g$ with $k$ cone points, of respective orders $m_1 \leqslant\, \ldots\, \leqslant m_k$. If the triple $(Y,G,H)$ is one of those listed in Theorem \ref{thm_none_is_zariski_dense}, then the image of a Hitchin representation $\rho:\piY\lra G$ lies in a conjugate of $H$ in $G$. In particular, a Hitchin representation of $\piY$ into $G$ can never be Zariski-dense. For all other triples $(Y,G,H)$, this phenomenon does not occur.
\end{theorem}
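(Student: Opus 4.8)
The statement to prove is Theorem~\ref{absence_of_Zariski_dense_reps_intro}: the classification of triples $(Y,G,H)$ for which every Hitchin representation $\rho:\piY\lra G$ has image contained in a conjugate of a proper subgroup $H$, together with the complementary assertion that no such phenomenon occurs otherwise. The plan is to reduce the problem to a finite check by combining the dimension formula of Theorem~\ref{theorem:main} with the classification of conjugacy classes of reductive subgroups $H<G$ that contain a principal $\PGL(2,\R)$ (equivalently, contain the image of a Fuchsian representation).

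First I would set up the key mechanism: if $\rho:\piY\lra G$ is Hitchin and factors (up to conjugacy) through a closed subgroup $H$ that itself contains the principal $\PGL(2,\R)$ of $G$, then $\rho$ is itself $H$-Hitchin (this follows from the definition of Hitchin representations as deformations of Fuchsian ones, together with the rigidity/openness properties recorded in Theorem~\ref{theorem:intro_A}). Conversely, the inclusion $H\hookrightarrow G$ induces a map $\Hit(\piY,H)\lra\Hit(\piY,G)$, and the image of a Hitchin representation lies in a conjugate of $H$ for \emph{every} such $\rho$ precisely when this induced map is surjective, which — since both sides are open balls by Corollary~\ref{invariant_diffs_intro} — can be detected by comparing dimensions: one needs $\dim\Hit(\piY,H)=\dim\Hit(\piY,G)$. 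So the strategy is: (i) enumerate the relevant pairs $H<G$ (the maximal proper reductive subgroups containing a principal $\PGL(2,\R)$, using the classification of principal $\sl_2$-triples and Dynkin's work on subalgebras, e.g.\ the inclusions $\PGL(2,\R)\subset\GG\subset\PO(3,4)\subset\PGL(7,\R)$, $\PSp^{\pm}(2m,\R)\subset\PGL(2m,\R)$, $\PO(m,m+1)\subset\PGL(2m+1,\R)$, etc.), and (ii) for each pair, use the explicit formula in Theorem~\ref{theorem:main} — applied with the exponents of $\fh$ and of $\fg$ — to determine exactly which cone-point data $(g;m_1,\dots,m_k)$ force the equality of dimensions.

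The dimension comparison is the computational heart. For a fixed inclusion $\fh\subset\fg$ with $\rk\fh=s$, $\rk\fg=r$, exponents $d_1,\dots,d_s$ and $d'_1,\dots,d'_r$ respectively, Theorem~\ref{theorem:main} gives $\dim\Hit(\piY,G)-\dim\Hit(\piY,H)$ as $-\chi(|Y|)(\dim G-\dim H)$ plus a sum over exponents of the numbers $2\sum_i O(d+1,m_i)$ for the cone points. Since $\dim G>\dim H$, the difference $-\chi(|Y|)(\dim G-\dim H)$ is strictly positive and \emph{grows} with $-\chi(|Y|)$; the correction terms involving $O(d+1,m_i)=\lfloor d - d/m_i\rfloor$ are bounded in terms of the exponents. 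Hence equality $\dim\Hit(\piY,H)=\dim\Hit(\piY,G)$ can hold only when $-\chi(|Y|)$ is small and the cone orders $m_i$ are small — this is what makes the list in the theorem finite. I would organize the case analysis by first showing $g=0$ and $k=3$ (thin triangle-group orbifolds), then solving, for each relevant $H<G$, the resulting Diophantine equation in $(m_1,m_2,m_3)$; the output reproduces the sporadic triples listed in the statement (for instance the $\GG$ and $\PO(m,m+1)$, $\PSp^{\pm}(2m,\R)$ cases). The complementary statement — that for all other triples no containment in $H$ occurs — then follows because the induced map $\Hit(\piY,H)\lra\Hit(\piY,G)$ of open balls cannot be surjective once the dimensions differ, and one must additionally rule out that $\rho$ could land in some subgroup \emph{not} containing a principal $\PGL(2,\R)$, which is impossible because the restriction of $\rho$ to any finite-index surface subgroup is already Hitchin, hence irreducible by Theorem~\ref{theorem:intro_A}, forcing the Zariski closure to be reductive and to contain the principal $\PGL(2,\R)$ of its own Hitchin-theoretic structure.

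The main obstacle I anticipate is step (i): assembling a \emph{complete and non-redundant} list of conjugacy classes of proper closed subgroups $H<G$ that (a) contain a principal $\PGL(2,\R)$ and (b) could conceivably give the dimension equality. Merely listing maximal such subgroups via Dynkin's classification is not quite enough, because one must also track the real forms carefully (the groups here are adjoint split real forms $\Int(\fg_\C)^\tau$, and the relevant $H$ must be the split real form of the corresponding complex subgroup, with its exponents correctly identified), and one must verify that a Hitchin representation into $G$ landing in $H$ is indeed $H$-Hitchin rather than merely $H$-valued — this uses irreducibility and the characterization of Hitchin representations among all representations, which for general $G$ requires the Anosov/dynamical input of Theorem~\ref{theorem:intro_A} and its analogues. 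Once the list and this identification are in place, the remaining Diophantine bookkeeping, though lengthy, is routine and is exactly the kind of computation the formula in Theorem~\ref{theorem:main} is designed to make mechanical.
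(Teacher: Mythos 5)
The central gap is in how you control which subgroups can contain the image of a Hitchin representation, and hence in the direction ``for all other triples, this phenomenon does not occur''. A Dynkin-type list of subalgebras containing a principal $\sl(2)$, combined with irreducibility of Hitchin representations, only gives you that the Zariski closure is reductive; it does \emph{not} give your two key claims, namely that the Zariski closure of a Hitchin representation contains a principal $\PGL(2,\R)$, nor that a Hitchin representation of $\piY$ into $G$ whose image happens to lie in such an $H$ is automatically a Hitchin representation into $H$. These assertions are exactly the content of Guichard's classification of Zariski closures of Hitchin representations of surface groups, which the paper takes as an external input \cite{Guichard_Zariski} and transfers to orbifold groups by the observation that, since $\piX$ is normal of finite index in $\piY$ and $G$ is centerless, the identity component of the Zariski closure satisfies $H_\rho = H_{\rho|_{\piX}}$. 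Without this theorem (or an equivalent substitute), your argument does not close: a priori some Hitchin representation of $\piY$ could have Zariski closure outside your candidate list, and your sentence ``forcing the Zariski closure \ldots to contain the principal $\PGL(2,\R)$ of its own Hitchin-theoretic structure'' is an assertion, not a proof.

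A second, more technical, gap is your surjectivity criterion. The claim that surjectivity of $\Hit(\piY,H)\lra\Hit(\piY,G)$ ``can be detected by comparing dimensions'' because ``both sides are open balls'' is not valid as stated: an injective continuous map between open balls of equal dimension need not be surjective (an open embedding of a strictly smaller ball, for instance). What makes the numerical comparison legitimate in the paper is the explicit compatibility of the Hitchin parameterizations: under the homeomorphism $\Hit(\piY,G)\simeq\cB_Y(\fg)$ of Theorem \ref{theorem:dimension}, obtained from the $\Si$-equivariant Hitchin section (Lemma \ref{equivariance_of_the_Hitchin_section}) and Theorem \ref{main_obs_about_Hit_comp_for_orbifolds}, the image of $\Hit(\piY,H)$ is precisely the subspace where the regular differentials of the degrees occurring for $\fg$ but not for $\fh$ vanish; surjectivity is therefore equivalent to the vanishing of those spaces of regular differentials, which Lemma \ref{lemma:genus zero} and Table \ref{table:vanishing_diff} classify. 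This also corrects a slip in your sketch: the vanishing analysis forces $g=0$ but allows $k=3$, $4$ or $5$ (the statement of Theorem \ref{thm_none_is_zariski_dense} indeed contains $(2,2,2,m_4)$ and $(2,2,2,2,2)$ cases), not only triangle orbifolds. With Guichard's theorem and the Hitchin-base description of the image supplied, the remaining case-by-case computation you describe is the same as the paper's.
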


\subsubsection{Geodesics for the pressure metric and one-parameter families of representations of surface groups}\label{intro_applications_geodesics}

In view of Theorem \ref{main_obs_intro}, Hitchin components for orbifold groups may be considered as submanifolds of Hitchin components for surface groups. These submanifolds are totally geodesic for all $\Out(\piX)$-invariant Riemannian metrics on Hitchin components, for instance the \emph{Pressure metric} \cite{BCLS_pressure} and the \emph{Liouville pressure metric} \cite{BCLS_liouville} (Proposition \ref{tot_geod_submanifold}). In particular, one-dimensional Hitchin components provide explicit examples of geodesics for these metrics. In Section \ref{subsection_on_geodesics}, we classify all Hitchin components of dimension $1$ and we prove that, for the group $\PGL(n,\R)$, one-dimensional Hitchin components exist if and only if $n \leqslant 11$ (Theorem \ref{thm:dimension 2}). We find in this way natural, geometric examples of $1$-parameter families of representations of surface groups, parametrized by spaces of holomorphic differentials.

\begin{example}\label{example_of_geodesic}
Let $\mathcal{K}$ be the Klein quartic, a Riemann surface of genus $3$, and let $n$ be an integer such that $6\leqslant n\leqslant 11$. Then the orbifold $\mathcal{T}:=\Aut^\pm(\mathcal{K})\bs \mathcal{K}$ is a triangle of type $(2,3,7)$ and the $\PGL(n,\R)$-Hitchin component of $\pi_1\mathcal{T}$ embeds onto a geodesic of $\Hit(\pi_1\mathcal{K},\PGL(n,\R))$.
\end{example}

\subsubsection{Cyclic Higgs bundles}\label{cyclic_bundles_intro}

In Section \ref{cyclic_Higgs_bundles_section} we extend the notion of cyclic and $(n-1)$-cyclic Higgs bundles to Hitchin representations of orbifold groups. In the case of surface groups, these notions were introduced in \cite{Baraglia_thesis,Collier_thesis}. These special Higgs bundles are particularly useful because the Hitchin equations can be put in a simplified form, where the analysis can be understood, and many results can be proved only in this case, see e.g. \cite{Baraglia_thesis, Collier_thesis, CollierLi, DaiLi1, DaiLi2}.

We show that the same properties are true for cyclic and $(n-1)$-cyclic representations in the Hitchin components of orbifold groups. Moreover, we prove the following:

\begin{theorem}[Corollary \ref{comp_with_only_cyclic_bdles}] \label{components_with_only_cyclic_Higgs_bundles}
Let $Y$ be a sphere with $k$ cone points of respective orders $m_1 \leqslant \dotsc \leqslant m_k$ and let $G$ be one of the groups listed in Table \ref{table:cyclic_Higgs}. Then $\Hit(\pi_1 Y, G)$ consists only of cyclic or $(n-1)$-cyclic representations.
\end{theorem}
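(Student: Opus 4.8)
The plan is to translate the statement into a vanishing assertion for spaces of regular differentials on $Y$ and to verify that assertion by the explicit dimension formula. By the parameterization of Hitchin components (Theorem \ref{theorem:dimension}, see also Corollary \ref{invariant_diffs_intro}), $\Hit(\piY,G)$ is homeomorphic to the Hitchin base $\cB_Y(\fg)=\bigoplus_{\alpha=1}^{\rk G} D_{d_\alpha+1}(Y)$, where $D_m(Y)$ denotes the space of regular differentials of degree $m$ on $Y$ (Definition \ref{reg_diff_def}) and $d_1<\dots<d_r$ are the exponents of $\fg$; moreover this homeomorphism is realized by a Hitchin section built from a principal $\sl_2$-triple, hence graded with respect to the eigenspace decomposition of $\fg$ under the principal semisimple element (Lemma \ref{equivariance_of_the_Hitchin_section} and its proof). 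Unwinding the definitions of Section \ref{cyclic_Higgs_bundles_section}, a Hitchin representation is cyclic (resp.\ $(n-1)$-cyclic) precisely when the corresponding point of $\cB_Y(\fg)$ has all of its components equal to zero except possibly the one of top degree $d_r+1$ (resp.\ of the penultimate degree, which is $n-1$ for $G=\PGL(n,\R)$), and similarly for the other groups in Table \ref{table:cyclic_Higgs}. Thus it is enough to show that, for $Y$ a sphere with cone orders allowed by a given row of the table, $D_{d_\alpha+1}(Y)=0$ for every exponent $d_\alpha$ other than the one singled out by that row.

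To carry this out I would feed in the dimension formula for regular differentials on an orbifold (Theorem \ref{prop:dimension diff non-orient}, which also underlies Theorem \ref{theorem:main}): for $Y$ a sphere with $k$ cone points of orders $m_1,\dots,m_k$ and an integer $m\geq 2$,
\[
\dim_\C D_m(Y)\;=\;-(2m-1)+\sum_{i=1}^k O(m,m_i),\qquad O(m,m_i)=\left\lfloor m-\tfrac{m}{m_i}\right\rfloor,
\]
so that $D_m(Y)=0$ whenever $\sum_{i=1}^k O(m,m_i)\leq 2m-1$. Since $O(m,m_i)=m-1$ for $m_i\geq m$ and $O(m,m_i)<m-1$ for $m_i<m$, this inequality holds for all $m$ below an explicit threshold depending on $k$ and on how small the $m_i$ are, and for each row of Table \ref{table:cyclic_Higgs} — a short list of orbifolds and a short list of groups of small rank — checking the claim amounts to evaluating this formula at the finitely many exponents $d_\alpha+1$ of $\fg$ and reading off that only the prescribed one survives. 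Once the lower differentials are known to vanish, the structural consequences (Hitchin's equations in cyclic form, the corresponding description of the harmonic metric and of the Higgs bundle) carry over verbatim from the surface-group case \cite{Baraglia_thesis,Collier_thesis}, applied $\Si$-equivariantly on the covering surface $X$ and then descended to $Y$ via the equivariant non-Abelian Hodge correspondence of Section \ref{NAHC_for_orbifolds}.

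The step I expect to require the most care is the dictionary of the first paragraph, namely the identification of ``the associated equivariant $G$-Higgs bundle is cyclic (resp.\ $(n-1)$-cyclic)'' with ``only the top (resp.\ penultimate) graded summand of $\cB_Y(\fg)$ is non-zero''. This uses that the $\Si$-equivariant Hitchin section of Lemma \ref{equivariance_of_the_Hitchin_section} is compatible with the cyclic grading of $\fg$ coming from the principal $\sl_2$, so that a $\Si$-invariant point of $\cB_X(\fg)$ supported in a single degree produces a cyclic equivariant Higgs bundle on $X$, and conversely; descent from $X$ to $Y$ via the correspondence of Section \ref{NAHC_for_orbifolds} is then immediate. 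The remainder is bookkeeping with the formula above: one must confirm that Table \ref{table:cyclic_Higgs} lists exactly those pairs $(Y,G)$ for which this forced vanishing occurs, and that the unique surviving exponent is in every case either the largest one (cyclic) or the one of value $n-1$ ($(n-1)$-cyclic), so that one genuinely lands in the stated dichotomy; this is precisely where the restrictions on $G$ and on the cone orders enter.
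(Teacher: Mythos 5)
Your proposal is correct and follows essentially the same route as the paper: the paper likewise reduces the statement to showing that all but one graded summand of the Hitchin base $\cB_Y(\fg)$ vanishes (this is Theorem \ref{thm:single_differential}), which is checked case by case from the dimension formula of Theorem \ref{prop:dimension diff non-orient} together with Lemma \ref{lemma:genus zero} and Table \ref{table:vanishing_diff}, and then one reads off from the surviving degree whether the component is cyclic or $(n-1)$-cyclic. The only simplification you are entitled to is that the ``dictionary'' you flag as delicate is definitional here: for orbifold groups the paper defines cyclic and $(n-1)$-cyclic representations directly through the parameterization by $\cB_Y(\fg)$ (Section \ref{cyclic_Higgs_bundles_section}), so no separate verification of the compatibility of the equivariant Hitchin section with the cyclic grading is required.
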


This phenomenon never happens for surface groups: it is specific to certain orbifolds. In this case, the results about cyclic or $(n-1)$-cyclic representations are valid for \emph{all} points of these Hitchin components. For example, the description of the asymptotic behavior of families of Higgs bundles going at infinity given in \cite{CollierLi} gives a good description of the behavior at  infinity of these Hitchin components.

The proof of Theorem \ref{components_with_only_cyclic_Higgs_bundles} comes from a classification of all the Hitchin components that are parametrized by a Hitchin base where only a differential of one type can appear (see Theorem \ref{thm:single_differential}). We then find an application of Theorem \ref{components_with_only_cyclic_Higgs_bundles}: we use orbifold groups to construct examples of representations of surface groups that lie in some special loci of the Hitchin components that are not well understood geometrically, see Corollary \ref{corol:reps in special loci}.

\subsubsection{Projective structures on Seifert manifolds}

In \cite{GW}, Guichard and Wienhard proved that the Hitchin component of a surface group in $\PGL(4,\R)$ is the deformation space of convex foliated projective structures on the unit tangent bundle of that surface, and we give below a generalization of their result for arbitrary finite covers of unit tangent bundles of closed orientable orbifolds. Let $M$ be a closed $3$-manifold and let $\cD_{\PSL(2,\R)}(M)$ be the deformation space of $\PSL(2,\R)$-structures on $M$. We denote by $\cD_{\RP^3}(M)$ the deformation space of projective structures on $M$ and by $\cD_{\RP_\omega^3}(M)\subset \cD_{\RP^3}(M)$  the deformation space of contact projective structures.

\begin{theorem}[Proposition \ref{Seifert-fibered_three_mflds}  
and Theorem \ref{theorem:deformation_space_3manifold_Hitchin}]\label{deformation_space_3manifold_Hitchin_intro}
If $\cD_{\PSL(2,\R)}(M)\neq\emptyset$, then $M$ is a finite cover of the unit tangent bundle of a well-defined closed orientable $2$-orbifold $Y$, and the image of the canonical map $$\cD_{\PSL(2,\R)}(M) \lra \cD_{\RP_\omega^3}(M)\subset \cD_{\RP^3}(M)$$ is homeomorphic to $\Hit(\piY,\PGL(2,\R))$. It picks out connected components of $\cD_{\RP_\omega^3}(M)$ and $\cD_{\RP^3}(M)$ respectively homeomorphic to $\Hit(\piY,\PSp^\pm(4,\R))$ and $\Hit(\piY,\PGL(4,\R))$. In particular, these components are homeomorphic to open balls of dimensions $-10 \chi(|Y|) + (8k-2k_2 -2 k_3)$ and $-15 \chi(|Y|) + (12 k - 4 k_2 - 2 k_3)$. 
\end{theorem}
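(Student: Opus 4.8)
The plan is to establish Theorem~\ref{deformation_space_3manifold_Hitchin_intro} in three stages, combining the general orbifold machinery of this paper with the techniques of Guichard--Wienhard \cite{GW} for the surface case. The first stage is the Seifert-fibration statement: if $\cD_{\PSL(2,\R)}(M) \neq \emptyset$, then $M$ carries a $\PSL(2,\R)$-structure, hence is modeled on $\widetilde{\mathrm{SL}}(2,\R)$; such a closed $3$-manifold is a Seifert-fibered space over a hyperbolic base, and the holonomy of the structure lands in a lift of $\PSL(2,\R)$ acting on itself. Pushing forward along the fibration exhibits $M$ as a finite cover of the unit tangent bundle $T^1 Y$ of a closed orientable $2$-orbifold $Y$; the orbifold $Y$ is well-defined because it is recovered as the base of the Seifert fibration, which is unique up to isotopy for these geometries. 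This is essentially the content of Proposition~\ref{Seifert-fibered_three_mflds}, so I would cite it and move on.

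The second stage identifies the image of $\cD_{\PSL(2,\R)}(M) \to \cD_{\RP^3_\omega}(M) \subset \cD_{\RP^3}(M)$ with $\Hit(\piY, \PGL(2,\R))$. The point is that a $\PSL(2,\R)$-structure on $M$ is the same data as a representation $\pi_1 M \to \PSL(2,\R)$ that is, up to the finite covering $M \to T^1 Y$, the pullback of a discrete faithful convex cocompact representation $\piY \to \PGL(2,\R)$ together with a lift to the unit tangent bundle. The subtlety is that $\pi_1 T^1 Y$ is a central extension of $\piY$ by $\Z$ (or a finite cyclic group, in the orbifold setting the Euler number enters), so one must check that the relevant representations factor appropriately and that the Teichm\"uller space of $Y$, which by Thurston \cite{Thurston_notes} and our Section~\ref{section:properties} equals $\Hit(\piY, \PGL(2,\R))$, maps homeomorphically onto the image. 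Then the composition $\kappa \circ (\,\cdot\,)$ with the principal representations $\PGL(2,\R) \to \PSp^\pm(4,\R)$ and $\PGL(2,\R) \to \PGL(4,\R)$ realizes Fuchsian representations of $\piY$, whose deformations are by definition the Hitchin components. Invoking Guichard--Wienhard's identification of the $\PGL(4,\R)$-Hitchin component of a surface group with convex foliated projective structures, and its $\PSp^\pm(4,\R)$ (= contact) refinement, together with our Theorem~\ref{main_obs_intro} expressing $\Hit(\piY, G)$ as the $\Si$-fixed locus in $\Hit(\piX, G)$ for a surface $X$ with $Y \simeq [\Si \bs X]$, I would transport the surface-group statement to the orbifold: the $\Si$-invariant convex foliated (resp.\ contact) projective structures on $T^1 X$ descend to such structures on finite covers of $T^1 Y$, and conversely. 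One must verify that the geometric conditions ``convex'', ``foliated'', ``contact'' are preserved under the finite covering $M \to T^1 Y$ and under passage to the $\Si$-fixed locus --- this is a local condition on the developing map, hence stable, but it needs to be said carefully.

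The third stage is the dimension count, which is a direct application of Theorem~\ref{theorem:main}. For $G = \PSp^\pm(4,\R)$ we have $\dim G = 10$, $\rk G = 2$, and exponents $d_1 = 1$, $d_2 = 3$; since $Y$ is closed orientable we have $\ell = b = 0$, so the formula gives
$$
\dim \Hit(\piY, \PSp^\pm(4,\R)) = -10\,\chi(|Y|) + 2\sum_{i=1}^k \bigl( O(2, m_i) + O(4, m_i) \bigr).
$$
Now $O(2,m) = \lfloor 2 - 2/m \rfloor = 1$ for all $m \geq 2$, while $O(4,m) = \lfloor 4 - 4/m \rfloor$ equals $1$ when $m = 2$, $2$ when $m = 3$, and $3$ when $m \geq 4$; writing this as $3 - 2k_2 - k_3$ summed appropriately (i.e.\ $\sum_i O(4,m_i) = 3k - 2k_2 - k_3$ and $\sum_i O(2,m_i) = k$) yields $\dim = -10\chi(|Y|) + 2(k + 3k - 2k_2 - k_3) = -10\chi(|Y|) + (8k - 2k_2 - 2k_3)$, as claimed. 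For $G = \PGL(4,\R)$ we have $\dim G = 15$, exponents $1, 2, 3$, so $\dim = -15\chi(|Y|) + 2\sum_i (O(2,m_i) + O(3,m_i) + O(4,m_i))$; with $O(3,m) = \lfloor 3 - 3/m \rfloor$ equal to $1$ for $m = 2$ and $2$ for $m \geq 3$, i.e.\ $\sum_i O(3,m_i) = 2k - k_2$, we get $\dim = -15\chi(|Y|) + 2(k + (2k - k_2) + (3k - 2k_2 - k_3)) = -15\chi(|Y|) + (12k - 6k_2 - 2k_3)$. The contractibility and open-ball statement is then immediate from Theorem~\ref{theorem:main}.

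\textbf{The main obstacle} I anticipate is the second stage: correctly matching the bookkeeping of unit tangent bundles of orbifolds with the Guichard--Wienhard picture for surfaces. The issue is that $T^1 Y$ is itself an orbifold fibration (with singular fibers over the cone points), so ``finite cover of $T^1 Y$'' requires care about which covers are manifolds, and the central extension $1 \to \Z \to \pi_1 T^1 Y \to \piY \to 1$ must be reconciled with the fact that Hitchin representations of $\piY$ do not obviously lift to $\pi_1 T^1 Y$ --- one needs the lift provided by the geometric structure, and to see that the space of such lifts does not change the homeomorphism type. Once the dictionary between $\Si$-invariant structures on $T^1 X$ and structures on $M$ is set up cleanly (using that $M$ and $T^1 X$ share a common finite cover, or that $\pi_1 M$ is a finite-index subgroup of $\pi_1 T^1 X$ that is $\Si$-related to $\piY$), the rest follows by combining \cite{GW} with Theorems~\ref{main_obs_intro} and~\ref{theorem:main} of this paper.
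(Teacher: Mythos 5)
Your Stage 2 is where the real gap lies, and it is also where your route departs from the paper's. The paper does \emph{not} transport Guichard--Wienhard's characterization of convex foliated structures through the $\Si$-fixed locus: instead, for each $\rho\in\Hit(\piY,\PGL(4,\R))$ it uses the Anosov property (Proposition \ref{prop:Anosov}) to build the Guichard--Wienhard domain $\Omega_\rho^+$ directly for the \emph{orbifold} representation, lifts the $\piY$-action to the $d$-fold cover $\Omega_\rho^{+,d}$ by topological conjugacy with the Fuchsian case of Proposition \ref{Seifert-fibered_three_mflds}, and so obtains a section $\Psi$ of the holonomy map; the ``picks out connected components'' claim is then proved by combining Lemma \ref{lemma:component Seifert group} (the pullback $\phi^*(\Hit(\piY,G))$ is a \emph{whole} connected component of $\Hom(\pi_1M,G)/G$, via strong irreducibility and the centralizer of $\ker\phi$) with the Ehresmann--Thurston principle and connectedness of $\cD^{\,0}_{\RP^3}(M)$. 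Your outline is missing both of these ingredients. First, a $\Si$-fixed isotopy class of projective structures on $T^1X$ need not visibly contain a genuinely $\Si$-invariant representative, so ``descend to $T^1Y$ and pull back to $M$'' is not justified as stated --- this is exactly the failure of $j$ being bijective that the paper had to fight on the representation side with equivariant non-Abelian Hodge theory, and your appeal to ``a local condition on the developing map'' addresses only the preservation of convexity/foliation/contact under covers, not this invariance issue. Second, even granting the descent, nothing in your plan shows the image is open \emph{and} closed in $\cD_{\RP^3}(M)$ and $\cD_{\RP^3_\omega}(M)$: Guichard--Wienhard's ``convex foliated'' characterization is a theorem about unit tangent bundles of surfaces, and extending it to the $d$-fold fiberwise covers $M$ of $T^1Y$ is a new statement that cannot simply be ``transported''; the paper's argument deliberately avoids needing any such characterization, replacing it by the holonomy-component argument based on Lemma \ref{lemma:component Seifert group}, which your proposal never supplies an analogue of.

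There is also a concrete numerical slip in Stage 3: $O(4,2)=\lfloor 4-\tfrac{4}{2}\rfloor=2$, not $1$, and $O(4,3)=2$, so $\sum_i O(4,m_i)=3k-k_2-k_3$ (not $3k-2k_2-k_3$). With the correct values one gets $\dim\Hit(\piY,\PSp^\pm(4,\R))=-10\chi(|Y|)+2\big(k+3k-k_2-k_3\big)=-10\chi(|Y|)+8k-2k_2-2k_3$ (note that your written intermediate expression $2(4k-2k_2-k_3)$ does not equal the total you quote), and $\dim\Hit(\piY,\PGL(4,\R))=-15\chi(|Y|)+2\big(k+(2k-k_2)+(3k-k_2-k_3)\big)=-15\chi(|Y|)+12k-4k_2-2k_3$, in agreement with the statement and Table \ref{explicit_formulas_in_low_rank}; your derived $12k-6k_2-2k_3$ contradicts the very formula you are proving. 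Stages 1 and the overall use of Theorem \ref{theorem:dimension} are fine and match the paper.
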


Put together with Theorem \ref{no_non_trivial_deformations_intro}, this enables us to produce examples of closed $3$-manifolds with rigid real projective structure (see also Diagram \eqref{rigid_proj_structures_diagram}). More precisely, let $M$ be a finite cover of the unit tangent bundle of a closed orientable orbifold $Y$ and recall from Theorem \ref{deformation_space_3manifold_Hitchin_intro} that $\Hit(\piY,\PGL(2,\R))\subset \cD^{\,0}_{\RP^3_\omega}(M)\subset\cD^{\,0}_{\RP^3}(M)$. So, if $\cD^{\,0}_{\RP^3_\omega}(M)$ or $\cD^{\,0}_{\RP^3}(M)$ is zero-dimensional, $Y$ has to be a sphere with three cone points.

\begin{corollary}\label{rigid_proj_structures_intro}
Let $M$ be a finite cover of the unit tangent bundle of a sphere with three cone points, of respective orders $m_1\leqslant m_2\leqslant m_3$ with $\frac{1}{m_1}+\frac{1}{m_2}+\frac{1}{m_3}<1$. We then have $\cD_{\PSL(2,\R)}(M)=\mathrm{pt}$ and:
\begin{enumerate}
\item If $m_1 = 2$, $m_2 = 3$ and $m_3\geqslant7$, then $\cD^{\,0}_{\RP^3_\omega}(M)=\cD^{\,0}_{\RP^3}(M)=\mathrm{pt}$, so the canonical projective structure of $M$ is rigid in that case (and it is a contact projective structure). 
\item If $m_1=m_2=3$ and $m_3 \geqslant4$, then $\cD^{\,0}_{\RP^3_\omega}(M)=\mathrm{pt}$ but $\cD^{\,0}_{\RP^3}(M)$ has positive dimension: $M$ is contact rigid but not projectively rigid.
\item For all other triples $(m_1,m_2,m_3)$, $\cD^{\,0}_{\RP^3_\omega}(M)$ and $\cD^{\,0}_{\RP^3}(M)$ have positive dimension. 

\end{enumerate} 
\end{corollary}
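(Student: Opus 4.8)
The plan is to derive the corollary by feeding the triangle orbifold into Theorem~\ref{deformation_space_3manifold_Hitchin_intro} and then reading off the dimension count of Theorem~\ref{theorem:main}, the remaining work being a short enumeration of hyperbolic triangle orbifolds. Write $Y$ for the sphere with three cone points of orders $m_1\leqslant m_2\leqslant m_3$. The hypothesis $\tfrac1{m_1}+\tfrac1{m_2}+\tfrac1{m_3}<1$ is exactly the condition that $Y$ have negative orbifold Euler characteristic, so $Y$ is hyperbolic, its unit tangent bundle $T^1Y=\Gamma\backslash\PSL(2,\R)$ carries a natural $\PSL(2,\R)$-structure, and the finite cover $M$ inherits one; thus $\cD_{\PSL(2,\R)}(M)\neq\emptyset$. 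By Proposition~\ref{Seifert-fibered_three_mflds} the base orbifold supplied by Theorem~\ref{deformation_space_3manifold_Hitchin_intro} is this very $Y$, and Theorem~\ref{theorem:deformation_space_3manifold_Hitchin} identifies $\cD_{\PSL(2,\R)}(M)$ with $\Hit(\piY,\PGL(2,\R))$. The latter, by Theorem~\ref{theorem:main} with $\chi(|Y|)=2$, $\dim\PGL(2,\R)=3$, $\rk=1$, exponent $d_1=1$ and $O(2,m)=1$ for every $m\geqslant2$, is an open ball of dimension $-6+2(1+1+1)=0$, hence a single point; so $\cD_{\PSL(2,\R)}(M)=\mathrm{pt}$.

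Next I would invoke the rest of Theorem~\ref{deformation_space_3manifold_Hitchin_intro}: the components $\cD^{\,0}_{\RP^3_\omega}(M)$ and $\cD^{\,0}_{\RP^3}(M)$ containing the canonical (contact) projective structure are open balls homeomorphic to $\Hit(\piY,\PSp^\pm(4,\R))$ and $\Hit(\piY,\PGL(4,\R))$, of respective dimensions
$$-10\chi(|Y|)+8k-2k_2-2k_3 = 4-2k_2-2k_3 \qquad\text{and}\qquad -15\chi(|Y|)+12k-4k_2-2k_3 = 6-4k_2-2k_3,$$
where $k=3$ and $k_2$, $k_3$ count the cone points of order $2$, resp.\ $3$. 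Everything then reduces to determining $k_2$ and $k_3$ subject to hyperbolicity. A direct check shows that $k_2+k_3\geqslant2$ is compatible with $\tfrac1{m_1}+\tfrac1{m_2}+\tfrac1{m_3}<1$ only for $(m_1,m_2,m_3)=(2,3,m_3)$ with $m_3\geqslant7$ (then $k_2=k_3=1$) or $(3,3,m_3)$ with $m_3\geqslant4$ (then $k_2=0$, $k_3=2$); the triples $(2,2,\ast)$, $(2,3,3)$, $(3,3,3)$ and any other with two entries in $\{2,3\}$ violate the inequality. In the first case both displayed dimensions vanish, so both $\cD^{\,0}_{\RP^3_\omega}(M)$ and $\cD^{\,0}_{\RP^3}(M)$ are points, and since the canonical structure is a contact projective structure this is precisely the projective rigidity asserted in (1). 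In the second case the first dimension is $4-4=0$ and the second is $6-4=2>0$, which is (2). For every remaining hyperbolic triple $k_2+k_3\leqslant1$, so both dimensions are at least $2$, which is (3). As a sanity check, these three regimes correspond exactly to cases (3), (4) and the complement in Theorem~\ref{no_non_trivial_deformations_intro}.

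Since the numerical part is routine, the only places deserving care sit at the interface with Theorem~\ref{deformation_space_3manifold_Hitchin_intro}: one must confirm that the base orbifold of the Seifert fibration of $M$ really is the given sphere with three cone points (i.e.\ uniqueness of the Seifert fibration), and that it is $\cD_{\PSL(2,\R)}(M)$ itself, not merely the image of the canonical map into $\cD_{\RP^3}(M)$, that is reduced to a point — which I would read directly off the homeomorphism of Theorem~\ref{theorem:deformation_space_3manifold_Hitchin}. Granting those, the corollary is pure substitution into the two dimension formulas together with the elementary enumeration of hyperbolic triangle orbifolds having at least two cone points of order in $\{2,3\}$.
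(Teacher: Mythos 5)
Your argument is correct and follows essentially the same route the paper intends for this corollary: identify $\cD^{\,0}_{\RP^3_\omega}(M)$ and $\cD^{\,0}_{\RP^3}(M)$ with $\Hit(\piY,\PSp^\pm(4,\R))$ and $\Hit(\piY,\PGL(4,\R))$ via Theorem \ref{theorem:deformation_space_3manifold_Hitchin}, then substitute $\chi(|Y|)=2$, $k=3$ into the dimension formulas and enumerate the hyperbolic triples having at least two cone orders in $\{2,3\}$, in agreement with cases (3) and (4) of Theorem \ref{thm:dimension 0}. The two caveats you flag (that the Seifert base furnished by Proposition \ref{Seifert-fibered_three_mflds} is the given triangle orbifold, and that Proposition \ref{Seifert-fibered_three_mflds} strictly identifies $\Hit(\piY,\PGL(2,\R))$ with the quotient $\cD_{\PSL(2,\R)}(M)_{/\sim}$) are handled at the same implicit level in the paper itself, so nothing is missing relative to the paper's own reasoning.
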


Finally, it will be a consequence of Part (b) of Theorem \ref{thm_none_is_zariski_dense} that we can have $\cD^{\,0}_{\RP^3}(M)=\cD^{\,0}_{\RP^3_\omega}(M) \neq \mathrm{pt}$ if $Y$ is a sphere with $k$ cone points, namely when $k=3$, $m_1=2$ and $m_2,m_3\geqslant 4$, or $k=4$ and exactly one cone point has order greater than $2$, or $k=5$ and all cone points have order $2$. So any non-trivial deformation of the canonical projective structure of $M$ is contact in these cases.

\subsection*{Acknowledgments}

We thank Brian Collier, Olivier Guichard, Qiongling Li, Anna Wienhard and Tengren Zhang for helpful conversations, and the referee for various suggestions that have helped improve the exposition of the material presented in this paper.

D. Alessandrini was supported by the DFG grant AL 1978/1-1 within the Priority Programme SPP 2026 ``Geometry at Infinity''. G.-S. Lee was supported by the DFG research grant ``Higher Teichm\"uller Theory'', by the European Research Council under ERC-Consolidator Grant 614733, and by the DFG grant LE 3901/1-1 within the Priority Programme SPP 2026 ``Geometry at Infinity''. F. Schaffhauser was supported by \textit{Convocatoria 2018-2019 de la Facultad de Ciencias (Uniandes), Programa de investigaci\'on ``Geometr\'ia y Topolog\'ia de los Espacios de M\'odulos''}, the \textit{European Union's Horizon 2020 research and innovation programme under grant agreement No 795222} and the \textit{University of Strasbourg Institute of Advanced Study (USIAS)}. The authors acknowledge support from U.S. National Science Foundation grants DMS 1107452, 1107263, 1107367 ``RNMS: Geometric structures And Representation varieties'' (the GEAR Network).

\section{Hitchin representations for orbifolds}\label{Hitchin_reps_section}

\subsection{Hyperbolic 2-orbifolds} 

For background on orbifolds, we refer for instance to \cite{Thurston_notes,Scott,CG}. Let $Y$ be a closed connected smooth orbifold of dimension $2$. Recall that a \emph{singularity} of $Y$ is a point $y\in Y$ of one of the following three types:
\begin{enumerate}
\item a \emph{cone point} of order $m$ if $y$ has a neighborhood isomorphic to $(\sfrac{\Z}{m\Z})\bs \R^2$, where $\sfrac{\Z}{m\Z}$ acts on $\R^2$ via a rotation of angle $\frac{2\pi}{m}$,
\item a \emph{mirror point} if $y$ has a neighborhood isomorphic to $(\sfrac{\Z}{2\Z})\bs \R^2$, where $\sfrac{\Z}{2\Z}$ acts on $\R^2$ via a reflection though a line,
\item a \emph{corner reflector} (or \emph{dihedral point}) of order $n$ if $y$ has a neighborhood isomorphic to $D_n\bs\R^2$, where the action of the dihedral group $D_n \simeq (\sfrac{\Z}{n\Z}) \rtimes (\sfrac{\Z}{2\Z})$ on $\R^2$ is generated by the reflections through 
two lines with angle $\frac{\pi}{n}$ between them.
\end{enumerate} 

In the rest of the paper, for an orbifold $Y$, we will denote by $k$ the number of cone points (of respective orders $m_1$, $\ldots$ , $m_k$) and by $\ell$ the number of corner reflectors (of respective orders $n_1$, $\ldots$, $n_\ell$). We will denote by $\Yt$ the orbifold universal cover of $Y$ (for more details, see \cite[Section 2]{Scott}); recall that $\Yt$ is necessarily simply connected as a topological space but, in general, it may have non-trivial orbifold structure. For example, the teardrop orbifold (the rightmost orbifold in Figure \ref{fig:eleorb}) has underlying topological space $S^2$, has a single cone point of order $m \geq 2$ and it is its own universal cover. We will denote by $\piY$ the \emph{orbifold fundamental group} of $Y$, defined as the group $\piY:=\Aut_Y(\Yt)$ of deck transformations of $\Yt$. 
The underlying topological space $|Y|$ of a $2$-orbifold $Y$ is always homeomorphic to a compact surface, which has boundary if and only if $Y$ has mirror points, in which case $\partial |Y|$ is the set of mirror points and corner reflectors of $Y$. A $2$-orbifold $Y$ is called \emph{orientable} if $|Y|$ is orientable and $Y$ has only cone points as singularities. For instance, the universal orbifold covering $\Yt$ is always orientable as an orbifold. Recall that $Y$ may be non-orientable as an orbifold even though $|Y|$ is an orientable surface (this happens if and only if $|Y|$ is an orientable topological surface with non-empty boundary). Note that the setting that we have just described includes the case where $|Y|$ is non-orientable as a topological surface (possibly with boundary). In particular, our results will hold for non-orientable surfaces with trivial orbifold structure (or with only mirror points as orbifold singularities).

\begin{figure}[ht]
\centering
\includegraphics[width=10.0cm]{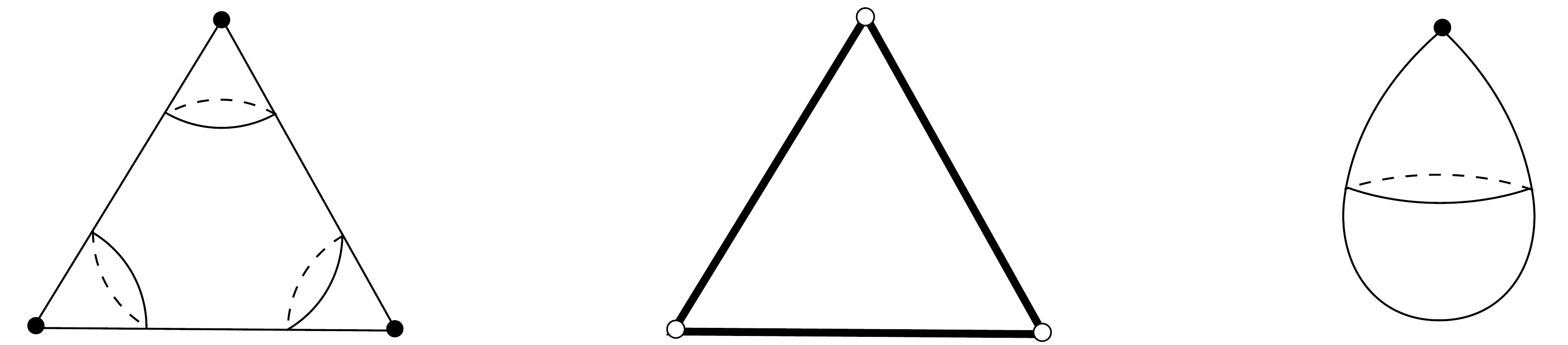}
\caption{These three examples are all closed orbifolds: the leftmost example is a sphere with three cone points (black dots); in the middle example, points lying on the sides of the triangle (excluding vertices) are mirror points, while the vertices are corner reflectors (white dots). The leftmost orbifold is the orientation double cover of the middle one. The rightmost example is a simply connected orbifold which is not a manifold.}\label{fig:eleorb}
\end{figure}

We shall assume throughout that $Y$ has negative (orbifold) Euler characteristic, i.e. the rational number $\chi(Y)$ defined below is strictly negative: 
\begin{equation} \label{form:def euler}
\chi(Y) := \chi(|Y|) - \sum_{i=1}^k \left(1-\frac{1}{m_i}\right) - \frac{1}{2} \sum_{j=1}^\ell \left(1-\frac{1}{n_j}\right) < 0. 
 \end{equation}

\noindent Every orbifold of negative Euler characteristic can always be seen as a quotient of a closed orientable surface, as follows from Selberg's lemma. 

\begin{definition}\label{pres_of_Y_def}
A \emph{presentation} of a closed connected orbifold $Y$ is a triple $(X,\Si,\phi)$, where $X$ is a closed connected orientable surface, $\Si$ a finite subgroup of $\mathrm{Diff}(X)$, and $\phi$  an orbifold isomorphism $\phi: Y \lra \Si \bs X$.
\end{definition}

In other words, a presentation is a finite, Galois, orbifold cover $p:X\lra Y$ of $Y$ by a closed, connected, orientable surface $X$. In the following, to keep the notation more compact, we will denote a presentation $(X,\Si,\phi)$ simply by $Y \simeq [\Si \bs X]$, leaving $\phi$ implicit. Crucially for us, this implies the existence of a short exact sequence:
\begin{equation}\label{fund_ses}
1\lra \piX \lra\piY \lra \Si \lra 1.
\end{equation} 
The group homomorphism $\Si \subset \mathrm{Diff}(X) \lra \mathrm{MCG}(X)\simeq \Out(\piX)$
taking a transformation $\si:X\lra X$ to its (extended) mapping class coincides, through the Dehn-Nielsen-Baer theorem (see e.g. \cite[Theorem 8.1]{Farb_Margalit}), with the canonical group homomorphism $\Si \lra \Out(\piX)$ induced by the short exact sequence \eqref{fund_ses}. In general, it does not lift to a group homomorphism $\Si\lra\Aut(\piX)$ (it does if $\Si$ happens to have a global fixed point in $X$, in which case the short exact sequence \eqref{fund_ses} splits and $\piY$ is isomorphic to the semidirect product $\piX\rtimes \Si$, see \cite{Sch_CRM}; this fact will be used in Remark \ref{about_the_Labourie_McShane_component}).

\subsection{Principal representation}\label{subsection:principal}

Let $\fg$ be a (real or complex) semisimple Lie algebra. Recall that the adjoint representation $\ad:\fg \lra \End(\fg)$ is faithful. Its image $\ad(\fg)$ is a subalgebra of $\End(\fg)$ isomorphic to $\fg$. The \emph{adjoint group} of $\fg$, denoted by $\Int(\fg)$, is defined as the connected Lie subgroup of $\GL(\fg)$ whose Lie algebra is $\ad(\fg)$. This group has trivial center. In the rest of the paper, when $\fgC$ is a complex semisimple Lie algebra, we will denote its adjoint group by $G_\C := \Int(\fgC)$. A \emph{real form} of $\fgC$ is a real Lie subalgebra $\fg\subset \fgC$ which is the set of fixed points of a real involution $\tau:\fgC \lra \fgC$. The involution $\tau$ also induces an involution on $\Int(\fgC)$. We will denote by $G$ the group $G = {\Int(\fgC)}^\tau < G_\C$ consisting of all the inner automorphisms of $\fgC$ that commute with $\tau$. The group $G$ is a real semisimple Lie group with Lie algebra $\fg$. It has trivial center, but it is not connected in general. Its identity component is the group $\Int(\fg)$.  

\begin{example}\label{basic_examples_of_gps}
Here are some examples:
\begin{itemize}
\item If $\fg=\sl(n,\R)$, then $G_\C\simeq \PSL(n,\C)$ and $G \simeq \PGL(n,\R) \simeq \PSL^{\pm}(n,\R)$, which is connected if and only if $n$ is odd, and $\Int(\fg)
\simeq\PSL(n,\R)$ for all $n$. Here, for each subgroup $H$ of $\GL(n,\mathbb{K})$, where $\mathbb{K}=\R$ or $\C$, we denote by $\mathbf{P}H$ the projectivization of $H$, i.e. $\mathbf{P}H = H / (H \cap C)$ with $C$ the center of $\GL(n,\mathbb{K})$, and
$$\SL^{\pm}(n,\R) = \{ A \in \GL(n,\R) \mid \mathrm{det}(A) = \pm 1 \}.$$
\item If $\fg=\sp(2m,\R)$, then $G_\C\simeq \PSp(2m,\C)$ and $G\simeq \PSp^\pm(2m,\R)$, which has two connected components. We recall that given a symplectic form $\omega$ on $\R^{2m}$, we have
$$\Sp^\pm(2m,\R) = \{ A \in \GL(2m,\R) \mid A^T \omega A = \pm \omega \}.$$
\item If $\fg=\so(p,q)$ with $0<p\leq q$, then $G_\C\simeq \PO(p+q,\C)$ and $G\simeq \PO^{\pm}(p,q)$, which is always disconnected. Again recall that given a non-degenerate bilinear form $J$ of signature $(p,q)$, we have $$ \OO^\pm(p,q) = \{ A \in \GL(p+q,\R) \mid A^T J A = \pm J \}. $$ If $p \neq q$, then $\mathbf{O}^{\pm}(p,q) = \mathbf{O}(p,q)$. In this paper, we are mainly interested in the case when $\fg$ is split, i.e. $\fg=\so(m,m+1)$ or $\so(m,m)$.
\item If $\fg$ is the split real form of type $G_2$, we will denote $G_\C$ by $\GG(\C)$, and $G$ simply by $\GG$, a disconnected group. We will consider $\GG(\C)$ as a subgroup of $\PO(7,\C)$ and $\GG$ as a subgroup of $\PO(3,4)$. 
\end{itemize}
\end{example}  

Let us assume, from now on, that $\fg$ is the split real form of a complex simple Lie algebra $\fgC$, defined by an involution $\tau$. As in \cite{Hitchin_Teich}, we can choose a principal $3$-dimensional subalgebra $\mathfrak{sl}(2,\C)\hookrightarrow \fg_\C$ such that $\mathfrak{sl}(2,\C)$ is $\tau$-invariant and induces a subalgebra $\mathfrak{sl}(2,\R)\hookrightarrow \fg$.
Denote by
$\kappa_\C: \PGL(2,\C) \simeq \Int(\mathfrak{sl}(2,\C))
\lra G_\C
$
the induced group homomorphism, and let 
\begin{equation}\label{re_ppal_sl2}
\kappa : \PGL(2,\R)\lra G
\end{equation}
be its restriction to the subgroup $\PGL(2,\R) < \PGL(2,\C)$. We will call $\kappa$ the \emph{principal representation} of $\PGL(2,\R)$ in $G$. In this paper, we use that the representation $\kappa$ is defined on the whole group $\PGL(2,\R)$.

In the examples discussed above (Example \ref{basic_examples_of_gps}), the principal representation $\kappa$ can be described explicitly. Consider the $n$-dimensional vector space $H_{n-1}$ of homogeneous polynomials of degree $(n-1)$ in two variables $X, Y$. A matrix $A\in \GL(2,\R)$ induces a linear map $\widetilde{\kappa}(A)$ that sends a polynomial $P(X,Y) \in H_{n-1}$ to the polynomial $P(A^{-1}\cdot (X,Y))$. This gives an explicit irreducible representation $\widetilde{\kappa}:\GL(2,\R)\lra \GL(n,\R)$ whose projectivization is conjugate to the principal representation $\kappa:\PGL(2,\R)\lra \PGL(n,\R)$. In this way, we can see that $\kappa$ makes the Veronese embedding $\RP^1 \ni \left[a:b\right] \lmt \left[(a X - b Y)^{n-1}\right] \in \mathbf{P}(H_{n-1})$ $\PGL(2,\R)$-equivariant. If $n=2m$ is even, the image of $\widetilde{\kappa}$ is contained in $\Sp^{\pm}(2m,\R)$, and if $n=2m+1$ is odd, it is contained in $\OO(m,m+1)$. If $n=7$, then the projective image of $\widetilde{\kappa}$ is contained in $\GG$. So the projectivization of $\widetilde{\kappa}$ is an explicit model for the principal representation in $\PGL(n,\R)$, $\PSp^{\pm}(2m,\R)$, $\PO(m,m+1)$ and $\GG$. The principal representation in $\PO^{\pm}(m,m)$ is given by the composition of $\kappa:\PGL(2,\R)\lra \PO(m-1,m)$ with the block embedding $\PO(m-1,m) \hookrightarrow \PO^{\pm}(m,m)$.

\subsection{Hitchin representations} 

Thurston \cite{Thurston_notes} studied the space of hyperbolic structures on a closed $2$-orbifold of negative Euler characteristic, called the Teichm\"uller space of $Y$ and denoted by $\cT(Y)$. The map sending a hyperbolic structure to its holonomy representation induces a homeomorphism between $\cT(Y)$ and a connected component of the representation space
$$\Rep(\piY,\PGL(2,\R)) := \Hom(\piY,\PGL(2,\R))/\PGL(2,\R).$$ 
This connected component consists exactly of the $\PGL(2,\R)$-conjugacy classes of discrete and faithful representations from $\piY$ to $\PGL(2,\R) \simeq \mathrm{Isom}(\mathbf{H}^2)$. Such representations are usually called \emph{Fuchsian representations} and, in what follows, we will constantly identify $\cT(Y)$ with the space of the conjugacy classes of Fuchsian representations. Thurston proved that  $\cT(Y)$ is homeomorphic to an open ball of dimension
\begin{equation}\label{Thurston_formula_for_PGL2}
-\chi(|Y|)\dim\PGL(2,\R) +2k+\ell=-3\chi(|Y|)+2k+\ell.
\end{equation}

Let $\fg$ be the split real form of a complex simple Lie algebra $\fgC$ and let $G=\Int(\fgC)^\tau$ as in Section \ref{subsection:principal}. In this paper we will study the Hitchin component, a connected component of the representation space 
$$\Rep(\piY,G) := \Hom(\piY,G)/G$$
that generalizes the Teichm\"uller space. The first step is to use the principal representation to define Fuchsian representations taking values in $G$. 

\begin{definition}[Fuchsian representation]\label{Fuchsian_rep} Let $Y$ be a closed connected $2$-orbifold of negative Euler characteristic. A group homomorphism $\rho:\piY\lra G$ is called a \emph{Fuchsian representation} if there is a discrete, faithful representation $h:\piY \lra \PGL(2,\R)$ such that $\kappa\circ h=\rho$, where $\kappa$ is the principal representation from (\ref{re_ppal_sl2}).
\end{definition}

\noindent Definition \ref{Fuchsian_rep} says that a representation $\rho:\piY\lra G$ is Fuchsian if and only if there exists a hyperbolic structure on $Y$ whose holonomy representation $h$ makes the following diagram commute.

$$\xymatrix{
& \PGL(2,\R) \ar^{\kappa}[d]\\ 
\piY \ar^{\rho}[r] \ar@{-->}^{h}[ru] & G
}$$

\noindent In particular, as $\chi(Y)<0$, there exist Fuchsian representations of $\piY$. The set of $G$-conjugacy classes of Fuchsian representations is called the \emph{Fuchsian locus} of $\Rep(\piY,G)$. This defines a continuous map (which is actually injective, see Corollary \ref{embedding_of_Teichmuller_space}) 
\begin{equation}\label{Fuchsian_embedding}
\cT(Y) \lra \Rep(\piY,G)
\end{equation}
\noindent from the Teichm\"uller space onto the Fuchsian locus of the representation space. Since $\cT(Y)$ is connected, the Fuchsian locus is contained in a well-defined connected component of $\Rep(\piY,G)$ called the \textit{Hitchin component} and denoted by $\Hit(\piY,G)$. For instance, $\Hit(\piY,\PGL(2,\R)) \simeq \cT(Y)$. As any two principal $3$-dimensional subalgebras $\sl(2,\R)\subset \mathfrak{g}$ are related by an interior automorphism of $\fg$ (see \cite{Kostant_Rallis_orbits_in_symmetric_spaces}), the map \eqref{Fuchsian_embedding} does not depend on that particular choice in the construction.

\begin{definition}[Hitchin representation]\label{Hit_rep_def}
Let $Y$ be a closed connected $2$-orbifold of negative Euler characteristic. A group homomorphism $\rho:\piY\lra G$ is called a \textit{Hitchin representation} if its $G$-conjugacy class $[\varrho]$ is an element of the Hitchin component $\Hit(\piY,G)$.
\end{definition}

\begin{remark}
It follows from the definition of a Fuchsian representation that if $Y$ is orientable (for instance, if $Y=X$ is a closed orientable surface), then any Fuchsian representation of $\piY$ in $G$ is in fact contained in $\Hom(\piY,G_0)$, where $G_0$ is the identity component of $G$ (because the holonomy representation of any hyperbolic structure on an orientable orbifold is contained in $\PSL(2,\R)$). If we consider such representations up to $G_0$-conjugacy, it may happen that there are two connected components of $\Hom(\piY,G_0)/G_0$ containing conjugacy classes of Fuchsian representations, but these are related by an interior automorphism of $G$. This happens for instance if $\fg=\sl(2,\R)$, in which case $G\simeq\PGL(2,\R)$ and $G_0\simeq\PSL(2,\R)$.
\end{remark}

\begin{remark}\label{embeddings_of_HC}
The morphism $\kappa:\PGL(2,\R) \lra \PGL(n,\R)$ has image contained in $\PSp^\pm(2m,\R)$ if $n=2m$, $\PO(m,m+1)$ if $n=2m+1$ and $\GG$ if $n = 7$ (see \cite[Chapter 6, \S 2]{GOV}). So, given an orbifold $Y$, we have maps
$\Hit(\piY,\PSp^\pm(2m,\R)) \lra \Hit(\piY,\PGL(2m,\R))$,
$\Hit(\piY,\PO(m,m+1)) \lra \Hit(\piY,\PGL(2m+1,\R))$
and $\Hit(\piY,\GG) \lra \Hit(\piY,\PO(3,4)) \lra \Hit(\piY,\PGL(7,\R))$. 
If $Y=X$ is a closed orientable surface, it is a consequence of Hitchin's parameterization \cite{Hitchin_Teich} recalled in Section \ref{section:parameterization_Hitchin} that these maps are injective. For the same reason, $\cT(X)\simeq\Hit(\piX,\PGL(2,\R))$ embeds into each $\Hit(\piX,G)$.
\end{remark}

\subsection{Restriction to subgroups of finite index}

Assume now that $Y \simeq [\Si\bs X]$ is a presentation of $Y$ 
in the sense of Definition \ref{pres_of_Y_def}. 
In particular, $\piX$ is a normal subgroup of finite index of $\piY$ and $\Si\simeq \piY/\piX$. The restriction of a representation gives a map $j:  \Rep(\piY,G) \ni \left[\rho\right] \lmt  \left[\rho|_{\piX}\right] \in  \Rep(\piX,G)$.
Recall that there is a canonical group homomorphism $\Si\lra \Out(\piX)$ and that $\Out(\piX)$ acts on the space $\Rep(\piX,G)$. We will denote by $\Fix_\Si(\Rep(\piX,G))$ the fixed locus of this action.

\begin{lemma}\label{map_to_fixed_locus}
The image of the map $j$ is contained in $\Fix_\Si(\Rep(\piX,G))$. 
\end{lemma}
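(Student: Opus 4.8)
The plan is to show that for every $\sigma \in \Sigma$, the two representations $\rho|_{\piX}$ and $(\rho|_{\piX}) \circ \sigma_*$ are conjugate in $G$, where $\sigma_* \in \Aut(\piX)$ is \emph{any} automorphism lifting the outer class of $\sigma$ under $\Sigma \lra \Out(\piX)$. Since the $\Out(\piX)$-action on $\Rep(\piX,G)$ is by $[\rho|_{\piX}] \mapsto [\rho|_{\piX} \circ \sigma_*]$ and is well defined independently of the chosen lift, it suffices to exhibit one convenient lift and one conjugating element. The natural choice is to conjugate inside $\piY$ itself: pick $\widetilde{\sigma} \in \piY$ mapping to $\sigma$ under $\piY \lra \Sigma$ from the short exact sequence \eqref{fund_ses}, and let $\sigma_* = \mathrm{conj}_{\widetilde{\sigma}}$ be conjugation by $\widetilde{\sigma}$ on the normal subgroup $\piX \trianglelefteq \piY$. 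This $\sigma_*$ is an automorphism of $\piX$ whose image in $\Out(\piX)$ is precisely the image of $\sigma$ under the canonical map $\Sigma \lra \Out(\piX)$ induced by \eqref{fund_ses} (this is exactly the content recalled in Subsection~2.1 identifying that map with the geometric one via Dehn--Nielsen--Baer).

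\textbf{Key computation.} For $\gamma \in \piX$ we have
$$ (\rho|_{\piX} \circ \sigma_*)(\gamma) \;=\; \rho(\widetilde{\sigma}\, \gamma\, \widetilde{\sigma}^{-1}) \;=\; \rho(\widetilde{\sigma})\, \rho(\gamma)\, \rho(\widetilde{\sigma})^{-1} \;=\; g\,(\rho|_{\piX})(\gamma)\, g^{-1}, $$
with $g := \rho(\widetilde{\sigma}) \in G$, using that $\widetilde{\sigma}\gamma\widetilde{\sigma}^{-1} \in \piX$ (normality) and that $\rho$ is a homomorphism on all of $\piY$. Hence $[\rho|_{\piX} \circ \sigma_*] = [\rho|_{\piX}]$ in $\Rep(\piX,G)$, i.e.\ $[\rho|_{\piX}]$ is fixed by the class of $\sigma$. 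Since $\sigma \in \Sigma$ was arbitrary, $j([\rho]) = [\rho|_{\piX}] \in \Fix_{\Si}(\Rep(\piX,G))$, which is the claim.

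\textbf{Main obstacle.} The only genuine subtlety — and it is a bookkeeping point rather than a real difficulty — is to make sure the $\Out(\piX)$-class produced by conjugation in $\piY$ really is the one through which $\Sigma$ acts in the definition of $\Fix_{\Si}(\Rep(\piX,G))$, and that this is independent of the choice of lift $\widetilde{\sigma}$; a different lift $\widetilde{\sigma}' = \widetilde{\sigma}\delta$ with $\delta \in \piX$ changes $\sigma_*$ by the inner automorphism $\mathrm{conj}_\delta$ of $\piX$, hence not at all in $\Out(\piX)$, and changes $g$ to $g\rho(\delta)$, which does not affect the conjugacy class. One should also note that $\Sigma \lra \Out(\piX)$ need not lift to $\Aut(\piX)$ (as emphasized in Subsection~2.1), but the argument above never needs such a lift: it works one element at a time using an arbitrary preimage $\widetilde{\sigma} \in \piY$. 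No estimates or analytic input are required; this is purely formal.
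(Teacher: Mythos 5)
Your argument is correct and is essentially the paper's own proof: both choose a lift $\ga\in\piY$ of $\si$, observe that the induced automorphism of $\piX$ is conjugation by $\ga$ inside $\piY$ (well defined in $\Out(\piX)$ independently of the lift), and then note that $\rho|_{\piX}$ twisted by this conjugation equals $\Int_{\rho(\ga)^{\pm 1}}\circ\rho|_{\piX}$, hence lies in the same $G$-conjugacy class. The only difference is the harmless convention of conjugating by $\ga$ versus $\ga^{-1}$, which does not affect the conclusion.
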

\begin{proof} Take $\si\in\Si$ and choose a lift $\ga\in\piY$. If $\rho:\piY\lra G$ is a representation, then $\si\cdot[\rho|_{\piX}]$ is, by definition, the $G$-conjugacy class of the representation $\si\cdot\rho|_{\piX}: \piX \ni \delta \lmt \rho|_{\piX}(\ga^{-1}\delta\ga) \in G$. As $\rho|_{\piX}(\ga^{-1}\delta\ga) = \rho(\ga)^{-1} \rho|_{\piX}(\delta) \rho(\ga)$ with $\rho(\ga)\in G$, we have indeed that $\si\cdot\rho|_{\piX}$ lies in the $G$-conjugacy class of $\rho|_{\piX}$. 
\end{proof} 

Note that the formula $(\ga\cdot \rho)(\delta):=\rho(\ga)^{-1}\rho(\delta)\rho(\ga)$ indeed defines a left action of $\piY$ on $\Hom(\piY,G)$ because $(\ga_1\cdot(\ga_2\cdot\rho))(\delta) = (\ga_2\cdot\rho)(\ga_1)^{-1} (\ga_2\cdot\rho)(\delta) (\ga_2\cdot\rho)(\ga_1)$. In general, the map 
\begin{equation}\label{map_to_fixed_pt_set}
j:\Rep(\piY,G)\lra \Fix_\Si(\Rep(\piX,G))
\end{equation} 
defined by means of Lemma \ref{map_to_fixed_locus} is neither injective nor surjective. A crucial observation of the present paper is that if we restrict to Hitchin components, $j$ induces a bijective map. 

\begin{lemma}\label{rest_of_Hitchin_is_Hitchin}
If $\rho:\piY\lra G$ is a Hitchin representation and $Y'\lra Y$ is a finite orbifold cover,  then $\rho|_{\pi_1Y'}:\pi_1Y'\lra G$ is a Hitchin representation.
\end{lemma}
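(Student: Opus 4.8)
The plan is to reduce the statement about the finite cover $Y'\to Y$ to the definition of Hitchin representations via the Fuchsian locus, exploiting the fact that restriction commutes with composition by the principal representation $\kappa$. First I would observe that it suffices to prove the claim when $Y'\to Y$ is a \emph{Galois} (i.e.\ regular) cover: indeed, any finite orbifold cover $Y'\to Y$ is dominated by a Galois cover $Y''\to Y$ factoring through $Y'$ (pass to the core of $\pi_1 Y'$ in $\pi_1 Y$), and since $\pi_1 Y' \supset \pi_1 Y''$, once we know $\rho|_{\pi_1 Y''}$ is Hitchin it follows a fortiori — applying the Galois case again to the intermediate cover, or simply restricting — that $\rho|_{\pi_1 Y'}$ is Hitchin. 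So I may assume $\pi_1 Y'$ is normal of finite index in $\pi_1 Y$, and in particular that $Y' \simeq [\Sigma' \bs X]$ admits a presentation compatible with one for $Y$, as in Definition \ref{pres_of_Y_def}.

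Next, the core point is that Hitchin representations are defined as the connected component of $\Rep(\piY,G)$ containing the Fuchsian locus, so I would argue in two steps: (i) the restriction map $j:\Rep(\piY,G)\to\Rep(\pi_1 Y',G)$ is continuous, hence sends connected sets to connected sets; and (ii) $j$ maps the Fuchsian locus of $\Rep(\piY,G)$ into the Fuchsian locus of $\Rep(\pi_1 Y',G)$. For (ii), if $\rho = \kappa\circ h$ with $h:\piY\to\PGL(2,\R)$ discrete and faithful, then $\rho|_{\pi_1 Y'} = \kappa\circ(h|_{\pi_1 Y'})$, and $h|_{\pi_1 Y'}$ is again discrete and faithful (a subgroup of a discrete faithful image is discrete, and restriction of an injective map is injective), so $\rho|_{\pi_1 Y'}$ is Fuchsian in the sense of Definition \ref{Fuchsian_rep} — here one uses that $\chi(Y')<0$ as well, which holds because $Y'\to Y$ is a covering of an orbifold of negative Euler characteristic, so that $\cT(Y')\neq\emptyset$ and the Hitchin component $\Hit(\pi_1 Y',G)$ is well-defined. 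Combining (i) and (ii): $\Hit(\piY,G)$ is connected and contains a point of the Fuchsian locus, its image under $j$ is therefore a connected subset of $\Rep(\pi_1 Y',G)$ containing a Fuchsian class, hence is contained in the unique connected component $\Hit(\pi_1 Y',G)$. Thus $\rho|_{\pi_1 Y'}$ is Hitchin.

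The main obstacle — really the only place any care is needed — is the reduction to the Galois case and making sure the Euler characteristic hypothesis propagates, so that all the Hitchin components in sight are actually defined; everything else is a formal consequence of continuity of restriction and the fact that the principal representation is a fixed homomorphism that commutes with restriction to subgroups. One should also note that no presentation $[\Sigma\bs X]$ is strictly needed for this lemma (unlike later results): the argument runs purely at the level of $\Rep(\cdot,G)$ and its Fuchsian locus, using only that finite-index subgroups of $\piY$ are themselves orbifold fundamental groups of orbifolds of negative Euler characteristic. I would state the proof in that generality and simply invoke $\cT(Y')\neq\emptyset$ from $\chi(Y')<0$.
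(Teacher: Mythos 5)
Your proof is correct and is essentially the paper's own argument: the restriction of a Fuchsian representation is Fuchsian (discreteness and faithfulness pass to subgroups, and $\chi(Y')<0$), and connectedness of $\Hit(\piY,G)$ together with continuity of restriction forces the image into $\Hit(\pi_1Y',G)$. The preliminary reduction to the Galois case is unnecessary, as you yourself observe at the end — the argument never uses normality or a presentation of $Y$.
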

\begin{proof}
If $\rho:\piY\lra G$ is a Fuchsian representation, then, for every finite orbifold cover $Y'\lra Y$, the representation $\rho|_{\pi_1Y'}$ is also Fuchsian. As Hitchin components are connected, this implies the statement.
\end{proof}

\noindent Lemma \ref{rest_of_Hitchin_is_Hitchin} implies that $j(\Hit(\piY,G)) \subset \Hit(\piX,G)$. Moreover, the group $\Out(\piX)$ acts on $\Rep(\piX,G)$ preserving the Fuchsian locus, therefore it also preserves the Hitchin component. We denote the fixed locus of the induced $\Si$-action by $\Fix_{\Si}(\Hit(\piX,G))$. Hence we have a map $j:\Hit(\piY,G)\lra \Fix_{\Si}(\Hit(\piX,G))$. To prove that the map $j$ is injective, we will need the following lemma.
\begin{lemma}   \label{lemma:centralizer}
Let $\rho:\piY\lra G$ be a Hitchin representation. Then $\rho$ is $G_\C$-strongly irreducible, meaning that its restriction to every finite index subgroup is $G_\C$-irreducible. Moreover, $\rho$ has trivial centralizer in $G$ and in $G_\C$, i.e. if an element $g\in G_\C$ satisfies $g \rho(\gamma) = \rho(\gamma) g$ for every $\gamma \in \piY$, then $g$ is the identity. 
\end{lemma}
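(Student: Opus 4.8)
The plan is to reduce the statement, by restriction, to the case of a closed orientable surface — where it is classical — and to verify that both conclusions propagate back from a finite-index surface subgroup. Concretely, fix a presentation $Y\simeq[\Si\bs X]$ as in Definition~\ref{pres_of_Y_def} and restrict $\rho$ to the surface group $\piX\leqslant\piY$, a subgroup of finite index. By Lemma~\ref{rest_of_Hitchin_is_Hitchin}, $\rho|_{\piX}$ is a Hitchin representation of the closed orientable surface group $\piX$. Granting the lemma for $\piX$, both assertions follow for $\piY$: any $g\in G_\C$ commuting with $\rho(\piY)$ commutes a fortiori with $\rho(\piX)$, hence equals $1$ (which also settles the centralizer in $G\subset G_\C$); and if $\Ga'\leqslant\piY$ has finite index, then $\Ga'\cap\piX$ has finite index in $\piX$, so $\rho|_{\Ga'\cap\piX}$ is $G_\C$-irreducible by the strong irreducibility of $\rho|_{\piX}$, and hence so is $\rho|_{\Ga'}$, a representation not being contained in any proper parabolic of $G_\C$ as soon as its restriction to some subgroup is not. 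So it suffices to prove the lemma when $Y=X$ is a closed orientable surface.

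For a closed orientable surface this is classical. When $G=\PGL(n,\R)$, a Hitchin representation is hyperconvex by Labourie's theorem~\cite{Labourie} (see also Guichard~\cite{Guichard_hyperconvex}): it admits an equivariant limit curve $\xi\colon\partial\piX\to\RP^{n-1}$ sending any $n$ distinct points to a projective frame. The span of $\xi(\partial\piX)$ is $\rho$-invariant and, by hyperconvexity, all of $\R^n$; since $\piX$ acts minimally on $\partial\piX$, every nonzero $\rho$-invariant subspace contains $\xi(\partial\piX)$ and is therefore everything, so $\rho$ is $\R$-irreducible — in fact $\C$-irreducible, as $\rho(\ga)$ has distinct real eigenvalues for $\ga$ of infinite order, which excludes a decomposition $\C^n\simeq V\oplus\overline{V}$. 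Applying this to the (still Hitchin, by Lemma~\ref{rest_of_Hitchin_is_Hitchin}) restrictions to finite-index subgroups gives strong irreducibility, and Schur's lemma over $\C$ then gives the trivial centralizer in $\PGL(n,\C)=G_\C$. For a general split $G$, strong irreducibility is again part of the structure theory of Hitchin representations recalled, via Hitchin's parameterization~\cite{Hitchin_Teich}, in Section~\ref{section:properties}; one way to deduce triviality of the centralizer is to observe that the Zariski closure $H=\overline{\rho(\piX)}^{\,\mathrm{Zar}}$ contains a principal $\PSL(2,\C)$ of $G_\C$ (immediate for a Fuchsian representation $\kappa\circ h$, since $h$ has Zariski-dense image in $\PGL(2,\R)$, and true throughout the Hitchin component) and that the centralizer in $\fg_\C$ of a principal $\sl(2,\C)$-subalgebra is zero — $\fg_\C$ is a sum of nontrivial irreducible $\sl(2,\C)$-modules, of dimensions $2d_\alpha+1\geqslant 3$ — so that $Z_{G_\C}(\rho)=Z_{G_\C}(H)=\{1\}$, $G_\C$ being the adjoint group.

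The reduction is routine; the crux is the surface-case input, and the difficulty is to have it uniformly over all split real forms $G$, rather than only over $\PGL(n,\R)$, where hyperconvexity delivers everything by hand. For general $G$ one must either appeal to the Anosov/positivity theory of Hitchin representations of surface groups or, alternatively, to the non-abelian Hodge correspondence of the later sections, under which a Hitchin representation corresponds to a \emph{stable} $G_\C$-Higgs bundle on the Hitchin section, whose automorphism group is the center of $G_\C$, hence trivial. One should also be careful with the distinction between $G$, $G_\C$ and simply connected covers when passing from irreducibility to triviality of centralizers.
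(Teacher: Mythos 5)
Your reduction is sound and matches the paper's skeleton: fix a presentation $Y\simeq[\Si\bs X]$, prove everything for the surface group $\piX$, and propagate back (commutation with $\rho(\piY)$ implies commutation with $\rho(\piX)$; irreducibility of a restriction implies irreducibility). The paper's treatment of strong irreducibility is marginally different but equivalent to yours: instead of intersecting $\Ga'$ with $\piX$, it observes that any finite-index $\Ga'<\piY$ is itself the orbifold fundamental group of a finite cover $Y'$, so Lemma \ref{rest_of_Hitchin_is_Hitchin} applies directly. Where you diverge is in the surface-case input. The paper's proof is exactly the ``alternative'' you mention in passing: by \cite{Hitchin_Teich}, the Higgs bundles parametrizing the Hitchin section are smooth points of the $G_\C$-Higgs moduli space, hence $G_\C$-stable and simple, and the non-Abelian Hodge correspondence converts this into $G_\C$-irreducibility and trivial centralizer of $\rho|_{\piX}$, uniformly in the split form $G$. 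Your primary arguments buy concreteness for $\PGL(n,\R)$ (hyperconvexity) but cost more for general $G$: the claim that the Zariski closure of \emph{every} Hitchin representation contains a principal $\PSL(2,\C)$ is immediate only at the Fuchsian locus; away from it, it rests on Guichard's classification of Zariski closures, which the paper cites only later (Section \ref{subsec:rigidity}) and which is listed as work in preparation --- a much heavier input than the Higgs-bundle argument the lemma actually needs.

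Two steps should be tightened. First, in the $\PGL(n,\R)$ case, ``Schur's lemma over $\C$ gives trivial centralizer in $\PGL(n,\C)$'' is too quick: an element $g\in\PGL(n,\C)$ commuting with the projective image only gives $\tilde g\,\tilde\rho(\gamma)\,\tilde g^{-1}=\lambda(\gamma)\,\tilde\rho(\gamma)$ for lifts, with $\lambda$ a character valued in $n$-th roots of unity; one must pass to the finite-index kernel of $\lambda$, where strong irreducibility (which you have) plus Schur forces $\tilde g$ to be scalar. Second, your parenthetical that $\fg_\C$ is a sum of nontrivial irreducible $\sl(2,\C)$-modules only shows the centralizer of a principal $\sl(2,\C)$ is \emph{finite}; triviality in the adjoint group is Kostant's theorem that the centralizer of a principal three-dimensional subgroup is the center. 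Both gaps are standard to fill, but the paper's route avoids them altogether.
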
 
Recall that for a (real or complex) reductive Lie group $H$, a representation is $H$-\emph{irreducible} if its image is not contained in a proper parabolic subgroup of $H$. When $G=\PGL(n,\R)$ or $\PGL(n,\C)$, this is equivalent to the well-known definition \cite[(1.3.1)]{Serre05}. As expected, being $G_\C$-irreducible implies being $G$-irreducible.
\begin{proof}[Proof of Lemma \ref{lemma:centralizer}]
Choose a presentation $Y \simeq [\Si\bs X]$, and consider $\rho|_{\piX}$.  
Hitchin proved in \cite[Section 5]{Hitchin_Teich} that the Higgs bundles in the Hitchin components are smooth points of the moduli space of $G_\C$-Higgs bundles, and hence these Higgs bundles are $G_\C$-stable and simple. By the non-Abelian Hodge correspondence, this implies that the representation $\rho|_{\piX}$ is $G_\C$-irreducible and has trivial centralizer in $G_\C$. The same properties therefore hold for $\rho$. Moreover, if $\Ga' <  \piY$ is a finite index subgroup, then $\Ga'$ is the orbifold fundamental group of a finite orbifold covering $Y'$, and by Lemma \ref{rest_of_Hitchin_is_Hitchin} we see that the restriction to $\Ga'$ is still  $G_\C$-irreducible.
\end{proof}

\begin{proposition}     \label{proposition:injectivity}
The map $j :\Hit(\piY,G)\lra \Fix_{\Si}(\Hit(\piX,G))$ is injective.
\end{proposition}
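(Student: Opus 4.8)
The plan is to exploit the two facts just established: that restriction sends Hitchin representations to Hitchin representations (Lemma \ref{rest_of_Hitchin_is_Hitchin}) and that Hitchin representations have trivial centralizer in $G$ (Lemma \ref{lemma:centralizer}). Suppose $\rho_1,\rho_2:\piY\lra G$ are Hitchin representations with $j([\rho_1])=j([\rho_2])$, i.e. $\rho_1|_{\piX}$ and $\rho_2|_{\piX}$ are conjugate in $G$. Replacing $\rho_2$ by a $G$-conjugate, I may assume outright that $\rho_1|_{\piX}=\rho_2|_{\piX}=:\rho_0$. The goal is then to show $\rho_1=\rho_2$ on all of $\piY$.

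First I would fix $\gamma\in\piY$ and compare $\rho_1(\gamma)$ with $\rho_2(\gamma)$. Since $\piX$ is normal in $\piY$, for every $\delta\in\piX$ we have $\gamma\delta\gamma^{-1}\in\piX$, and applying the hypothesis $\rho_1|_{\piX}=\rho_2|_{\piX}$ to this element gives
\begin{equation*}
\rho_1(\gamma)\,\rho_0(\delta)\,\rho_1(\gamma)^{-1}=\rho_1(\gamma\delta\gamma^{-1})=\rho_2(\gamma\delta\gamma^{-1})=\rho_2(\gamma)\,\rho_0(\delta)\,\rho_2(\gamma)^{-1}.
\end{equation*}
Hence the element $g:=\rho_2(\gamma)^{-1}\rho_1(\gamma)\in G$ commutes with $\rho_0(\delta)$ for every $\delta\in\piX$; that is, $g$ centralizes the image of $\rho_0=\rho_1|_{\piX}$. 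By Lemma \ref{rest_of_Hitchin_is_Hitchin}, $\rho_1|_{\piX}$ is itself a Hitchin representation of $\piX$, so by Lemma \ref{lemma:centralizer} it has trivial centralizer in $G$ (indeed in $G_\C$). Therefore $g=\mathrm{Id}$, i.e. $\rho_1(\gamma)=\rho_2(\gamma)$. As $\gamma$ was arbitrary, $\rho_1=\rho_2$, so $[\rho_1]=[\rho_2]$ in $\Hit(\piY,G)$, proving injectivity.

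I do not anticipate a serious obstacle here: the argument is purely formal once Lemmas \ref{rest_of_Hitchin_is_Hitchin} and \ref{lemma:centralizer} are in hand. The only point requiring minor care is the reduction step — ensuring that after conjugating $\rho_2$ by a fixed element of $G$ one still has two Hitchin representations of $\piY$ agreeing on $\piX$ — which is immediate since $\Hit(\piY,G)$ is $G$-invariant by construction, and that the centralizer statement in Lemma \ref{lemma:centralizer} is applied to $\rho_1|_{\piX}$ as a representation of the index-finite subgroup $\piX$, which is exactly the content of that lemma. One should also note that this shows $j$ is injective as a map on conjugacy classes, not merely that it separates representations up to simultaneous conjugacy, because the vanishing of the centralizer upgrades ``conjugate on $\piX$'' to ``equal on $\piY$ after one global conjugation.''
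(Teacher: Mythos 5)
Your proof is correct and is essentially the paper's own argument: after the same reduction to $\rho_1|_{\piX}=\rho_2|_{\piX}$, the paper likewise shows that $\rho_1(\gamma)^{-1}\rho_2(\gamma)$ centralizes the restricted (Hitchin) representation and invokes the trivial-centralizer statement of Lemma \ref{lemma:centralizer} to conclude $\rho_1=\rho_2$. The only difference is cosmetic (the paper packages the computation as two conjugate representations of the normal subgroup, following Long--Reid), so there is nothing to add.
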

\begin{proof}
Let $\rho_1,\rho_2$ be two Hitchin representations of $\piY$ into $G$ such that $\rho_1|_{\piX}$ and $\rho_2|_{\piX}$ are $G$-conjugate. Replacing $\rho_2$ by $\Int_g\circ \rho_2$ for some $g\in G$ if necessary, we may assume that $\rho_1|_{\piX}$ and $\rho_2|_{\piX}$ are equal. The abstract situation (compare \cite[Lemma 3.1]{Long_Reid}) is then as follows: we have a normal subgroup $N\lhd\Ga$ and two representations $\rho_1,\rho_2:\Ga\lra G$ such that $\rho_1|_N=\rho_2|_N=:\rho$ has trivial centralizer in $G$. For a fixed $\ga\in\Ga$, consider the representation $N\lra G$ defined, for all $n\in N$, by $n\lmt \rho_1(\ga^{-1})\rho_2(\ga) \rho(n) \rho_2(\ga^{-1})\rho_1(\ga)$. This is equal to $\rho_1(\ga^{-1}) \rho_2(\ga n\ga^{-1}) \rho_1(\ga) = \rho_1(\ga^{-1}) \rho_1(\ga n\ga^{-1}) \rho_1(\ga) = \rho(n)$ because $\ga n\ga^{-1}\in N \lhd \Ga$. Hence $\rho_1(\ga^{-1})\rho_2(\ga)$ centralizes the Hitchin representation $\rho$, and by Lemma \ref{lemma:centralizer} it is the identity. Thus, $\rho_1(\ga)=\rho_2(\ga)$ for all $\ga\in\piY$.
\end{proof}

\begin{corollary}\label{embedding_of_Teichmuller_space} 
Let $\rho_1, \rho_2: \piY \lra G$ be two Fuchsian representations: $\rho_1=\kappa\circ h_1$ and $\rho_2=\kappa\circ h_2$ where $h_1, h_2:\piY\lra\PGL(2,\R)$ are discrete and faithful representations. If $\rho_1$ and $\rho_2$ are $G$-conjugate, then $h_1$ and $h_2$ are $\PGL(2,\R)$-conjugate. Equivalently, the map \eqref{Fuchsian_embedding} induces a bijection between the Teichm\"{u}ller space $\cT(Y)$ of the orbifold $Y$ and the Fuchsian locus of $\Rep(\piY,G)$.
\end{corollary}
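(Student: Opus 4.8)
The plan is to show that any $g\in G$ with $g\,\rho_1(\gamma)\,g^{-1}=\rho_2(\gamma)$ for all $\gamma\in\piY$ is necessarily of the form $g=\kappa_\C(q)$ for a \emph{real} element $q\in\PGL(2,\R)$, and that this $q$ conjugates $h_1$ to $h_2$. Granting this, the ``equivalently'' reformulation is immediate: the map \eqref{Fuchsian_embedding} is $[h]\mapsto[\kappa\circ h]$, it is onto the Fuchsian locus by definition, and the statement just described is precisely its injectivity.

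First I would pass to Zariski closures. The image $\Gamma_i:=h_i(\piY)$ is a discrete non-elementary subgroup of $\PGL(2,\R)$ (being a discrete faithful image of the fundamental group of a closed hyperbolic orbifold), hence Zariski dense in $\PGL(2,\R)$ and a fortiori in $\PGL(2,\C)$. The complexified principal homomorphism $\kappa_\C:\PGL(2,\C)\simeq\Int(\sl(2,\C))\lra G_\C$ of Section \ref{subsection:principal} is an injective morphism of algebraic groups, so its image $L_\C:=\kappa_\C(\PGL(2,\C))$ is a closed algebraic subgroup of $G_\C$ in which $\rho_i(\piY)=\kappa(\Gamma_i)$ is Zariski dense; thus $L_\C$ is the Zariski closure of $\rho_i(\piY)$ in $G_\C$, for $i=1,2$. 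Then $g\,\rho_1(\piY)\,g^{-1}=\rho_2(\piY)$ gives $gL_\C g^{-1}=L_\C$, so conjugation by $g$ restricts to an automorphism of $L_\C\cong\PGL(2,\C)$; since $A_1$ has no nontrivial diagram automorphism this automorphism is inner, yielding $q\in\PGL(2,\C)$ with $g\,\kappa_\C(x)\,g^{-1}=\kappa_\C(q x q^{-1})$ for all $x$.

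Next I would remove the ambiguity in $q$ using rigidity. The element $c:=\kappa_\C(q)^{-1}g\in G_\C$ centralizes $L_\C$, in particular it centralizes $\rho_2(\piY)$; a Fuchsian representation is a Hitchin representation, so Lemma \ref{lemma:centralizer} applies to $\rho_2$ and gives that its centralizer in $G_\C$ is trivial, whence $c=1$ and $g=\kappa_\C(q)$. It remains to see that $q$ is real. Since the principal $\sl(2,\C)\subset\fgC$ is chosen $\tau$-invariant with $\sl(2,\C)^\tau=\sl(2,\R)$, the homomorphism $\kappa_\C$ is $\tau$-equivariant for the conjugation $\tau_0$ it induces on $\PGL(2,\C)=\Int(\sl(2,\C))$, whose fixed-point set is exactly the subgroup $\PGL(2,\R)$; applying $\tau$ to $g=\kappa_\C(q)\in G=\Int(\fgC)^\tau$ and using injectivity of $\kappa_\C$ forces $\tau_0(q)=q$, i.e.\ $q\in\PGL(2,\R)$. (Alternatively, $q$ conjugates $\Gamma_1$ to $\Gamma_2$ inside $\PGL(2,\C)$ acting on $\C\PP^1$, so it carries the limit set $\RP^1$ of $\Gamma_1$ to the limit set of $\Gamma_2$, which is again $\RP^1$; since the stabilizer of $\RP^1$ in $\PGL(2,\C)$ is $\PGL(2,\R)$, we again get $q\in\PGL(2,\R)$.) Finally, $\kappa_\C(q\,h_1(\gamma)\,q^{-1})=g\,\rho_1(\gamma)\,g^{-1}=\rho_2(\gamma)=\kappa_\C(h_2(\gamma))$ for all $\gamma$, and injectivity of $\kappa_\C$ gives $q\,h_1(\gamma)\,q^{-1}=h_2(\gamma)$, so $h_1$ and $h_2$ are $\PGL(2,\R)$-conjugate.

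The only step I expect to require genuine care is the passage to Zariski closures combined with the rigidity input of Lemma \ref{lemma:centralizer}; once $g$ is pinned down to an element of $L_\C$, the rest is bookkeeping. An essentially equivalent alternative, avoiding Zariski closures here, is to first settle the case $Y=X$ of a closed orientable surface — where $\rho_1|_{\piX}$ and $\rho_2|_{\piX}$ are $G$-conjugate Fuchsian representations of $\piX$, forcing $h_1$ and $h_2$ to be $\PGL(2,\R)$-conjugate — and then bootstrap from $\piX$ to $\piY$ exactly as in the proof of Proposition \ref{proposition:injectivity}, using that $h_1(\piX)$ has trivial centralizer in $\PGL(2,\R)$; but the surface case itself is proved by the same density argument.
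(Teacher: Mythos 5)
Your argument is correct, but it takes a genuinely different route from the paper. The paper's proof is a short diagram chase: restriction to $\piX$ gives a commutative square relating $\cT(Y)\lra\cT(X)$ and $\Hit(\piY,G)\lra\Hit(\piX,G)$; the top arrow is injective by Proposition \ref{proposition:injectivity} (applied with target $\PGL(2,\R)$), the vertical arrow $\cT(X)\lra\Hit(\piX,G)$ is injective by Hitchin's parameterization (Remark \ref{embeddings_of_HC}), and injectivity of the composite forces injectivity of $\cT(Y)\lra\Hit(\piY,G)$. You instead work directly with a conjugating element $g\in G$: you identify the Zariski closure of $\rho_i(\piY)$ in $G_\C$ with $\kappa_\C(\PGL(2,\C))$, use that $A_1$ has only inner automorphisms to write $g=\kappa_\C(q)\cdot c$ with $c$ centralizing the image, kill $c$ by the trivial-centralizer statement of Lemma \ref{lemma:centralizer} (legitimate here, since Fuchsian representations are Hitchin and that lemma does not depend on the corollary), and then pin $q$ to $\PGL(2,\R)$ by $\tau$-equivariance of $\kappa_\C$ (or by the limit-set argument, noting that the relation $qh_1q^{-1}=h_2$ in $\PGL(2,\C)$ is available before reality of $q$ is known). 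What the paper's route buys is brevity and reuse of already-established machinery — it needs nothing beyond Proposition \ref{proposition:injectivity} and the surface case from \cite{Hitchin_Teich}; what your route buys is more information: it shows that the conjugating element itself lies in $\kappa(\PGL(2,\R))$, and it makes the Fuchsian case self-contained at the level of the Zariski closure and rigidity arguments, essentially reproving the injectivity of $\cT(X)\hookrightarrow\Hit(\piX,G)$ rather than quoting it. Your closing "alternative" (surface case first, then bootstrap as in Proposition \ref{proposition:injectivity}) is in fact the paper's strategy, except that the paper imports the surface case from Hitchin rather than proving it by your density argument.
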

\begin{proof}
Let $Y \simeq [\Si \bs X]$. Recall that $\cT(Y)\simeq\Hit(\piY,\PGL(2,\R))$, and similarly for $X$. The map $h\lmt\ h|_{\piX}$ then induces a commutative diagram $$\xymatrix{
\cT(Y) \ar[r] \ar[d] & \cT(X) \ar[d] \\
\Hit(\piY,G) \ar[r] & \Hit(\piX,G)
} 
$$ whose vertical arrows are induced by composition by the principal representation $\kappa:\PGL(2,\R)\lra G$ and whose horizontal arrows are injective, by Proposition \ref{proposition:injectivity} (as a matter of fact, we only need the injectivity of the top one). Since the vertical arrow $\cT(X)\lra \Hit(\piX,G)$ is injective (see \cite{Hitchin_Teich} and Remark \ref{embeddings_of_HC}), it follows that so is the vertical arrow $\cT(Y)\lra\Hit(\piY,G)$.
\end{proof}

\begin{theorem}\label{main_obs_about_Hit_comp_for_orbifolds}
Let $Y$ be a closed connected $2$-orbifold of negative Euler characteristic. Let $\fg$ be the split real form of a complex simple Lie algebra and let $G$ be the group of real points of $\Int(\fg\otimes\C)$. Given a presentation $Y \simeq [\Si \bs X]$, the map $\rho\lmt\rho|_{\piX}$ induces a homeomorphism 
$j:\Hit(\piY,G) \overset{\simeq}{\lra} \Fix_{\Si}(\Hit(\piX,G))$ 
between the Hitchin component of $\Rep(\piY,G)$ and the $\Si$-fixed locus in $\Hit(\piX,G)$.
\end{theorem}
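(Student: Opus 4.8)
The map $j$ is well defined and continuous because restriction of homomorphisms $\Hom(\piY,G)\to\Hom(\piX,G)$ is continuous and $G$-equivariant, hence descends to the $\Rep$-spaces; it carries $\Hit(\piY,G)$ into $\Hit(\piX,G)$ by Lemma \ref{rest_of_Hitchin_is_Hitchin} and into the $\Si$-fixed locus by Lemma \ref{map_to_fixed_locus}; and it is injective by Proposition \ref{proposition:injectivity}. So it suffices to construct a continuous map $e:\Fix_\Si(\Hit(\piX,G))\to\Rep(\piY,G)$ whose image lies in $\Hit(\piY,G)$ and which satisfies $j\circ e=\mathrm{id}$. Since $j$ is injective, this automatically forces $e\circ j=\mathrm{id}$ as well, and hence that $j$ is a homeomorphism onto $\Fix_\Si(\Hit(\piX,G))$.

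\textbf{The extension map.} Given $[\rho_X]\in\Fix_\Si(\Hit(\piX,G))$, choose a representative $\rho_X$. For $\gamma\in\piY$, conjugation $c_\gamma:\delta\mapsto\gamma\delta\gamma^{-1}$ is an automorphism of the normal subgroup $\piX$, and because the image of $\gamma$ in $\Si$ fixes $[\rho_X]$, the representation $\rho_X\circ c_\gamma$ is $G$-conjugate to $\rho_X$. Since a Hitchin representation of $\piX$ has trivial centralizer in $G$ (this is the content of Lemma \ref{lemma:centralizer}, whose proof already applies verbatim to the surface group $\piX$), there is a \emph{unique} $g_\gamma\in G$ with $\rho_X(\gamma\delta\gamma^{-1})=g_\gamma\,\rho_X(\delta)\,g_\gamma^{-1}$ for all $\delta\in\piX$. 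Uniqueness forces $g_{\gamma_1\gamma_2}=g_{\gamma_1}g_{\gamma_2}$ and $g_\delta=\rho_X(\delta)$ for $\delta\in\piX$, so $\rho_Y:\gamma\mapsto g_\gamma$ is a homomorphism $\piY\to G$ extending $\rho_X$; replacing $\rho_X$ by a conjugate replaces $\rho_Y$ by the corresponding conjugate, so $[\rho_Y]$ depends only on $[\rho_X]$. Fixing a finite generating set of $\piY$, each $g_\gamma$ is the unique solution of a system whose coefficients are algebraic functions of $\rho_X$, so $[\rho_X]\mapsto[\rho_Y]=:e([\rho_X])$ is continuous, and $j\circ e=\mathrm{id}$ since restricting the extension returns $\rho_X$.

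\textbf{$e$ takes values in the Hitchin component.} It remains to show $e(\Fix_\Si(\Hit(\piX,G)))\subset\Hit(\piY,G)$. By the $\Si$-equivariance of the Hitchin section (Lemma \ref{equivariance_of_the_Hitchin_section}) — itself a consequence of the $\Si$-equivariant non-Abelian Hodge correspondence and the equivariance of the Hitchin fibration established in Sections \ref{NAHC_for_orbifolds} and \ref{section:parameterization_Hitchin} — the Hitchin parametrization $\cB_X(\fg)\overset{\simeq}{\lra}\Hit(\piX,G)$ is $\Out(\piX)$-equivariant, hence $\Si$-equivariant, so $\Fix_\Si(\Hit(\piX,G))\simeq\Fix_\Si(\cB_X(\fg))$ is a real vector space, in particular \emph{connected}. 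Since $\chi(Y)<0$ there is a Fuchsian representation $\rho^0:\piY\to G$; then $[\rho^0|_{\piX}]\in\Fix_\Si(\Hit(\piX,G))$, and by the uniqueness in the construction above one has $e([\rho^0|_{\piX}])=[\rho^0]\in\Hit(\piY,G)$. Thus the connected set $e(\Fix_\Si(\Hit(\piX,G)))$ meets the connected component $\Hit(\piY,G)$ of $\Rep(\piY,G)$, so it is contained in it. Combined with the previous paragraph, $j$ is a homeomorphism.

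\textbf{Expected main obstacle.} The only non-formal step is the connectedness of $\Fix_\Si(\Hit(\piX,G))$: a priori the fixed locus of a finite group acting on a ball need not be connected, and ruling this out seems to genuinely require the Higgs-bundle machinery, i.e.\ the $\Si$-equivariant non-Abelian Hodge correspondence together with the fact that the Hitchin section can be chosen $\Si$-equivariantly. Everything else — the group-theoretic extension, its compatibility with conjugacy, and the continuity of $e$ — is elementary. (One could alternatively phrase $j$, under that correspondence, as the forgetful map from $\Si$-equivariant $G$-Higgs bundles to $G$-Higgs bundles, which may streamline the continuity of $e$, but the algebraic description above already suffices.)
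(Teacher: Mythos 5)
Your argument is correct, and it reaches the same two pillars as the paper — injectivity via Proposition \ref{proposition:injectivity}, and connectedness of the fixed locus via the $\Si$-equivariant Hitchin section (Lemma \ref{equivariance_of_the_Hitchin_section}) — but your surjectivity step is genuinely different. The paper produces the extension of $\rho_X$ to $\piY$ by going through the equivariant non-Abelian Hodge correspondence: the invariant point $p\in\Fix_\Si(\cB_X(\fg))$ gives the $\Si$-equivariant Higgs bundle $(\Ecan,\phi(p))$, hence a $\Si$-invariant flat connection, whose holonomy extends to $\piY$ by Theorem \ref{hol_of_inv_conn}; and because the whole construction is a chain of homeomorphisms, continuity of the inverse (and Corollary \ref{Sigma_fixed_pts_in_Hitchin_base_of_X}) comes out in the same stroke. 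You instead build the extension purely group-theoretically, using the trivial centralizer of $\rho_X$ (Lemma \ref{lemma:centralizer}, whose proof indeed establishes the property for $\rho|_{\piX}$ first) to get unique conjugating elements $g_\gamma$, hence a homomorphism $\piY\lra G$; the Higgs-bundle machinery is then invoked only once, to know that $\Fix_\Si(\Hit(\piX,G))\simeq\Fix_\Si(\cB_X(\fg))$ is connected, so that the extension of any fixed point, being connected to the extension of a Fuchsian point, stays in the Hitchin component. Your route isolates exactly where the analytic input is indispensable; the paper's route packages the topological statement together with the parametrization by invariant differentials.

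Two caveats. First, continuity of your map $e$ does not follow merely from $g_\gamma$ being the "unique solution of a system with algebraic coefficients": uniqueness alone gives no continuity in the parameter. It is true, but needs an argument — e.g.\ since Hitchin representations have trivial centralizer, $G$ acts freely and properly on the Hitchin locus of $\Hom(\piX,G)$, so $\Hom\lra\Rep$ admits continuous local sections there, and by properness any sequence of conjugators $g_{\gamma,n}$ attached to a convergent sequence $\rho_{X,n}\to\rho_X$ subconverges, every limit point being $g_\gamma$ by uniqueness. Note that this continuity is load-bearing twice in your scheme: for pushing connectedness forward to the image of $e$, and for $j$ being a homeomorphism rather than just a continuous bijection. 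Second, the parenthetical claim that the Hitchin parametrization $\cB_X(\fg)\overset{\simeq}{\lra}\Hit(\piX,G)$ is "$\Out(\piX)$-equivariant, hence $\Si$-equivariant" is not correct as stated: the parametrization depends on a choice of complex structure and is not mapping-class-group equivariant. What is true, and what you actually use, is exactly Lemma \ref{equivariance_of_the_Hitchin_section}: after fixing a $\Si$-invariant dianalytic structure (equivalently a hyperbolic structure on $Y$), the section is equivariant for the finite group $\Si$ acting by holomorphic or anti-holomorphic maps, which is all that is needed for the conclusion that the fixed locus is a vector space.
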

The injectivity was proved in Proposition \ref{proposition:injectivity}. We postpone the proof of surjectivity to Section \ref{inv_Hitchin_reps}. 

\begin{corollary}\label{fixed_pts_sets_in_Hit_of_arbitrary_finite_Galois_cover}
Let $Y'\lra Y$ be a finite Galois cover of $Y$ and let $\Si':= \piY/\piY'$. Then the map $\rho\lmt\rho|_{\pi_1Y'}$ induces a homeomorphism $\Hit(\piY,G)\simeq \Fix_{\Si'}(\Hit(\pi_1Y',G))$.
\end{corollary}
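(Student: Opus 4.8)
The plan is to deduce this from Theorem~\ref{main_obs_about_Hit_comp_for_orbifolds} by producing a single closed orientable surface that presents both $Y$ and $Y'$. Since $Y'\lra Y$ is a finite cover of the closed orbifold $Y$ of negative Euler characteristic, $Y'$ is again a closed orbifold of negative Euler characteristic, so $\Hit(\pi_1Y',G)$ is defined. Now fix a presentation $Y'\simeq[\Si''_0\bs X_0]$ as in Definition~\ref{pres_of_Y_def}, so that $\pi_1X_0$ is a torsion-free finite-index subgroup of $\pi_1Y'$ lying in $\PSL(2,\R)$, and set $N:=\bigcap_{\ga\in\piY}\ga\,(\pi_1X_0)\,\ga^{-1}$. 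Then $N$ is a torsion-free finite-index subgroup of $\piY$, normal in $\piY$, contained in $\pi_1Y'$, and still contained in $\PSL(2,\R)$. Hence $X:=\Yt/N$ is a closed orientable surface on which $\Si:=\piY/N$ acts faithfully by diffeomorphisms, and we obtain presentations $Y\simeq[\Si\bs X]$ and $Y'\simeq[\Si''\bs X]$ with $\Si'':=\pi_1Y'/N$ normal in $\Si$ and $\Si/\Si''\simeq\piY/\pi_1Y'=\Si'$.

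Next I would apply Theorem~\ref{main_obs_about_Hit_comp_for_orbifolds} to both presentations, obtaining homeomorphisms $j_Y\colon\Hit(\piY,G)\overset{\simeq}{\lra}\Fix_\Si(\Hit(\pi_1X,G))$ and $j_{Y'}\colon\Hit(\pi_1Y',G)\overset{\simeq}{\lra}\Fix_{\Si''}(\Hit(\pi_1X,G))$, both induced by restriction of representations to $\pi_1X$. Because $\Si''$ acts trivially on $\Fix_{\Si''}(\Hit(\pi_1X,G))$, the $\Si$-action there descends to an action of $\Si/\Si''\simeq\Si'$, and a direct check on the defining formulas — all coming from conjugation by lifts to $\piY$, exactly as in Lemmas~\ref{map_to_fixed_locus} and~\ref{rest_of_Hitchin_is_Hitchin} — shows that $j_{Y'}$ is $\Si'$-equivariant for the natural $\Si'$-action on $\Hit(\pi_1Y',G)$ through $\Out(\pi_1Y')$. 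Therefore $j_{Y'}$ restricts to a homeomorphism $\Fix_{\Si'}(\Hit(\pi_1Y',G))\overset{\simeq}{\lra}\Fix_{\Si'}\big(\Fix_{\Si''}(\Hit(\pi_1X,G))\big)$; and since $\Si$ fits into the extension $1\to\Si''\to\Si\to\Si'\to1$, a point of $\Hit(\pi_1X,G)$ fixed both by $\Si''$ and by the induced $\Si'$-action is fixed by all of $\Si$, so $\Fix_{\Si'}\big(\Fix_{\Si''}(\Hit(\pi_1X,G))\big)=\Fix_\Si(\Hit(\pi_1X,G))$.

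Composing, $j_{Y'}^{-1}\circ j_Y$ is a homeomorphism $\Hit(\piY,G)\overset{\simeq}{\lra}\Fix_{\Si'}(\Hit(\pi_1Y',G))$, and the last point is to identify it with the map induced by $\rho\mapsto\rho|_{\pi_1Y'}$. By Lemma~\ref{rest_of_Hitchin_is_Hitchin} the restriction $\rho|_{\pi_1Y'}$ is a Hitchin representation, and $j_{Y'}(\rho|_{\pi_1Y'})=(\rho|_{\pi_1Y'})|_{\pi_1X}=\rho|_{\pi_1X}=j_Y(\rho)$, so $j_{Y'}^{-1}\circ j_Y(\rho)=\rho|_{\pi_1Y'}$; that the image lies in $\Fix_{\Si'}(\Hit(\pi_1Y',G))$ is the argument of Lemma~\ref{map_to_fixed_locus} applied to $\pi_1Y'\lhd\piY$. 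I expect the main obstacle to be the group-theoretic bookkeeping at the beginning — arranging one orientable surface group that is simultaneously normal in $\piY$ and contained in $\pi_1Y'$ — together with the verification that $j_{Y'}$ intertwines the residual $\Si'$-actions; granting these, the corollary is a formal consequence of Theorem~\ref{main_obs_about_Hit_comp_for_orbifolds}.
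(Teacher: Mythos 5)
Your proposal is correct and follows essentially the same route as the paper: produce one closed orientable surface $X$ that is a finite Galois cover of both $Y$ and $Y'$ (the paper obtains it by intersecting with $\piY'$ a Galois surface cover of $Y$, you by taking the normal core in $\piY$ of a surface group presenting $Y'$ — the same idea), then apply Theorem~\ref{main_obs_about_Hit_comp_for_orbifolds} twice and use $\Fix_\Si=\Fix_{\Si'}\circ\Fix_{\Si''}$. Your explicit verification that $j_{Y'}$ intertwines the residual $\Si'$-actions and that the composite homeomorphism is indeed $\rho\mapsto\rho|_{\pi_1Y'}$ only spells out steps the paper leaves implicit.
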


\begin{proof}
Let $X$ be a finite Galois cover of $Y$ by a closed orientable surface. By pulling back this cover to $Y'$ if necessary, we can assume that $X$ is a (finite and Galois) cover of $Y'$. Then, by Theorem \ref{main_obs_about_Hit_comp_for_orbifolds}, one has
$$\Hit(\piY,G) \simeq \Fix_{\piY/\piX}(\Hit(\piX,G)) = \Fix_{\piY/\piY'} \left( \Fix_{\pi_1Y'/\piX}(\Hit(\piX,G)) \right)$$ which is homeomorphic to $\Fix_{\Si'}(\Hit(\piY',G))$, again by Theorem \ref{main_obs_about_Hit_comp_for_orbifolds}.
\end{proof}

\begin{corollary}\label{rep_whose_rest_is_Hitchin_is_itself_Hitchin}
Let $\rho:\piY\lra G$ be a representation and let $Y'\lra Y$ be a finite cover of $Y$, not necessarily Galois. Then $\rho$ is Hitchin if and only if $\rho|_{\pi_1Y'}$ is Hitchin.
\end{corollary}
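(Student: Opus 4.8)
The statement to prove is Corollary \ref{rep_whose_rest_is_Hitchin_is_itself_Hitchin}: for an arbitrary (not necessarily Galois) finite cover $Y'\lra Y$ and a representation $\rho:\piY\lra G$, one has $\rho$ Hitchin if and only if $\rho|_{\pi_1Y'}$ is Hitchin. One direction is immediate: if $\rho$ is Hitchin, then Lemma \ref{rest_of_Hitchin_is_Hitchin} already gives that $\rho|_{\pi_1Y'}$ is Hitchin, for any finite orbifold cover. So the whole content is the converse, and the plan is to reduce the non-Galois case to the Galois case, where Corollary \ref{fixed_pts_sets_in_Hit_of_arbitrary_finite_Galois_cover} applies.

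\textbf{Reduction to a Galois cover.} First I would replace $Y'$ by a finite cover $Y''\lra Y'$ such that the composite $Y''\lra Y$ is Galois; such a $Y''$ exists by the standard ``normal core'' construction (pass to the cover corresponding to $\bigcap_{\gamma\in\piY}\gamma\,\pi_1 Y'\,\gamma^{-1}$, a finite-index normal subgroup of $\piY$ contained in $\pi_1Y'$), and it is again a closed $2$-orbifold of negative Euler characteristic. Now suppose $\rho|_{\pi_1 Y'}$ is Hitchin. Since $\pi_1 Y''\leq \pi_1 Y'$ is a finite-index subgroup and $Y''\lra Y'$ is a finite orbifold cover, Lemma \ref{rest_of_Hitchin_is_Hitchin} gives that $\rho|_{\pi_1 Y''}$ is Hitchin. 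Thus we are reduced to the case where $Y''\lra Y$ is Galois: writing $\Si'':=\piY/\pi_1 Y''$, we know $\rho|_{\pi_1 Y''}\in\Hit(\pi_1 Y'',G)$ and we must conclude $\rho\in\Hit(\piY,G)$.

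\textbf{Using the Galois case.} By Lemma \ref{map_to_fixed_locus} (applied to the presentation-type data $\pi_1 Y''\lhd\piY$ with quotient $\Si''$), the class $[\rho|_{\pi_1 Y''}]$ lies in $\Fix_{\Si''}(\Rep(\pi_1 Y'',G))$, and since it lies in $\Hit(\pi_1 Y'',G)$ it in fact lies in $\Fix_{\Si''}(\Hit(\pi_1 Y'',G))$. By Corollary \ref{fixed_pts_sets_in_Hit_of_arbitrary_finite_Galois_cover}, the restriction map gives a homeomorphism $\Hit(\piY,G)\overset{\simeq}{\lra}\Fix_{\Si''}(\Hit(\pi_1 Y'',G))$; in particular it is \emph{surjective}, so there exists a Hitchin representation $\rho_0:\piY\lra G$ with $\rho_0|_{\pi_1 Y''}$ conjugate to $\rho|_{\pi_1 Y''}$. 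After conjugating $\rho_0$ we may assume $\rho_0|_{\pi_1 Y''}=\rho|_{\pi_1 Y''}$. Now invoke the rigidity mechanism already used in Proposition \ref{proposition:injectivity}: the Hitchin representation $\rho_0|_{\pi_1 Y''}$ has trivial centralizer in $G$ (Lemma \ref{lemma:centralizer}), and for each $\ga\in\piY$ the element $\rho(\ga^{-1})\rho_0(\ga)$ centralizes $\rho_0|_{\pi_1 Y''}$ (because $\pi_1 Y''$ is normal in $\piY$, the computation is verbatim that of Proposition \ref{proposition:injectivity}), hence equals the identity. Therefore $\rho=\rho_0$ is Hitchin.

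\textbf{Main obstacle.} There is no deep obstacle here — the corollary is essentially a formal consequence of Corollary \ref{fixed_pts_sets_in_Hit_of_arbitrary_finite_Galois_cover} together with the centralizer rigidity of Lemma \ref{lemma:centralizer}. The only point requiring a little care is the reduction step: one must make sure that after passing from $Y'$ to a Galois cover $Y''$ of $Y$, the hypothesis ``$\rho|_{\pi_1 Y'}$ Hitchin'' survives restriction to $\pi_1 Y''$, which is exactly what Lemma \ref{rest_of_Hitchin_is_Hitchin} guarantees, and that one then correctly identifies the deck group $\Si''$ and applies Corollary \ref{fixed_pts_sets_in_Hit_of_arbitrary_finite_Galois_cover} rather than Theorem \ref{main_obs_about_Hit_comp_for_orbifolds} directly (since $X$ need not be a surface here, only an orbifold). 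Everything else is bookkeeping with the normal-core construction.
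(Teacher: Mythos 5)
Your proof is correct and follows essentially the same route as the paper: pass to a finite Galois cover $Y''$ of $Y$ lying over $Y'$, use Lemma \ref{rest_of_Hitchin_is_Hitchin} to see that $\rho|_{\pi_1Y''}$ is Hitchin and \eqref{map_to_fixed_pt_set} to place it in $\Fix_{\Si''}(\Hit(\pi_1Y'',G))$, then conclude via Corollary \ref{fixed_pts_sets_in_Hit_of_arbitrary_finite_Galois_cover}. The only difference is that you spell out the trivial-centralizer uniqueness argument (as in Proposition \ref{proposition:injectivity}) showing $\rho$ must coincide with the Hitchin extension produced by surjectivity, a step the paper leaves implicit when it invokes that corollary.
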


\begin{proof}
The obvious direction of the corollary is given by Lemma \ref{rest_of_Hitchin_is_Hitchin}. Conversely, assume that $\rho:\piY\lra G$ satisfies that $\rho|_{\pi_1Y'}$ is Hitchin. Let $Y''$ be a finite Galois cover of $Y$ that covers $Y'$ (again, this may be obtained by pullback of a finite Galois cover of $Y$). By Lemma \ref{rest_of_Hitchin_is_Hitchin}, the representation $\rho|_{\pi_1Y''}$ is Hitchin.  And by \eqref{map_to_fixed_pt_set}, $\rho|_{\pi_1Y''}$ lies in the fixed-point set of $\pi_1Y/\pi_1Y''$ in $\Hit(\pi_1Y'',G)$. Therefore, Corollary \ref{fixed_pts_sets_in_Hit_of_arbitrary_finite_Galois_cover} shows that $\rho$ is Hitchin.
\end{proof}

\subsection{Properties of Hitchin representations for closed orbifolds}\label{section:properties}

We present in this section a series of properties satisfied by $\PGL(n,\R)$-Hitchin representations of fundamental groups of closed orbifolds that directly generalize known ones for fundamental groups of closed orientable surfaces (strong irreducibility, discreteness, faithfulness, hyperconvexity). The first property is a special case of Lemma \ref{lemma:centralizer}. The next two are simple consequences of the fact that $\piY$ contains the fundamental group of a closed orientable surface $X$ as a normal subgroup of finite index (Definition \ref{pres_of_Y_def}). For the remaining one, we apply Corollary \ref{rep_whose_rest_is_Hitchin_is_itself_Hitchin}. We shall assume that $G\simeq\PGL(n,\R)$ until the end of this section. We refer to \cite[Definition 2.10]{GW2} for the definition of Anosov representations.

\begin{remark}
By Remark \ref{embeddings_of_HC}, the results of this subsection apply to Hitchin representations in the groups $\PSp^{\pm}(2m,\R)$, $\PO(m,m+1)$ and $\GG$, since they are also Hitchin representations in $\PGL(n,\R)$. 
\end{remark}

\begin{proposition}\label{prop:Anosov}
Let $B$ be a Borel subgroup of $\PGL(n,\R)$. Then every Hitchin representation $\rho:\piY \lra \PGL(n,\R)$ is $B$-Anosov.
\end{proposition}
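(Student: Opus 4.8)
The plan is to reduce the orbifold statement to the known surface case via the presentation $Y \simeq [\Si\bs X]$, exactly as announced for the other properties in this subsection. Concretely, pick a presentation so that $\piX \lhd \piY$ is a normal subgroup of finite index with $X$ a closed orientable surface. By Lemma \ref{rest_of_Hitchin_is_Hitchin}, the restriction $\rho|_{\piX}:\piX \lra \PGL(n,\R)$ is again a Hitchin representation, and by the work of Labourie \cite{Labourie} (together with the characterization in \cite{GW2}), every $\PGL(n,\R)$-Hitchin representation of a closed surface group is $B$-Anosov, where $B$ is a Borel (minimal parabolic) subgroup. So $\rho|_{\piX}$ is $B$-Anosov.

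The remaining step is to promote the Anosov property from the finite-index subgroup $\piX$ to $\piY$. The key point is that being Anosov is inherited by overgroups when the subgroup has finite index: if $\Gamma' < \Gamma$ has finite index and $\rho:\Gamma \lra \PGL(n,\R)$ is a representation whose restriction $\rho|_{\Gamma'}$ is $P$-Anosov, then $\rho$ is $P$-Anosov. I would invoke this directly; it is standard (it follows, for instance, from the characterization of Anosov representations in terms of the existence of continuous, $\rho$-equivariant, transverse, dynamics-preserving boundary maps $\partial_\infty \Gamma \lra G/P$ — the boundary maps for $\rho|_{\Gamma'}$ automatically extend $\rho$-equivariantly since $\partial_\infty\Gamma' = \partial_\infty\Gamma$ for a finite-index subgroup, and equivariance with respect to the smaller group plus continuity forces equivariance with respect to the larger group, using that $\Gamma$ acts on $\partial_\infty\Gamma$ by extending the $\Gamma'$-action; alternatively one uses the eigenvalue-gap/singular-value-gap criterion of \cite{GW2}, which is manifestly stable under passing to finite-index overgroups since it is a coarse geometric condition on the word metric, and word metrics of $\Gamma$ and $\Gamma'$ are quasi-isometric). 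Since $Y$ has negative Euler characteristic, $\piY$ is word-hyperbolic (it is virtually a surface group), so the notion of $B$-Anosov representation of $\piY$ makes sense in the sense of \cite{GW2}, and the argument above applies.

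The main obstacle — really the only subtlety — is justifying cleanly that the Anosov condition descends from $\piX$ to $\piY$, i.e. producing the $\rho$-equivariant boundary map $\xi:\partial_\infty\piY \lra \PGL(n,\R)/B$ from the $\rho|_{\piX}$-equivariant one, and checking it remains transverse and contracting. I expect this to be short: one uses uniqueness of the limit map for $\rho|_{\piX}$ (which holds because $\rho|_{\piX}$ is irreducible, hence in particular the limit map is canonical) to show that for each $\ga\in\piY$ the map $\xi$ and $\rho(\ga)\circ\xi\circ\ga^{-1}$ are both limit maps for the Anosov representation $\rho|_{\piX}$ (using that $\piX$ is normal, so $\ga$-conjugation preserves $\piX$), whence they coincide; this is the same normal-subgroup trick already used in the proof of Proposition \ref{proposition:injectivity}. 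The contraction and transversality properties of $\xi$ then follow from those of the limit map of $\rho|_{\piX}$ together with the fact that $\piX$-geodesics and $\piY$-geodesics are uniformly comparable. Thus $\rho$ is $B$-Anosov.
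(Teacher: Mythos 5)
Your proposal is correct and follows essentially the same route as the paper: restrict to the finite-index surface subgroup $\piX$, apply Labourie's theorem to get that $\rho|_{\piX}$ is $B$-Anosov, and then use that the Anosov property passes from a finite-index subgroup to the whole group, which the paper handles simply by citing \cite[Corollary 3.4]{GW2} rather than re-deriving the boundary-map extension as you sketch.
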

\begin{proof}
Choose a presentation $Y \simeq [\Si \bs X]$. Labourie \cite{Labourie} proved that $\rho|_{\piX}$ is $B$-Anosov. Now \cite[Corollary 3.4]{GW2} implies that $\rho$ is also $B$-Anosov since $\piX$ is a finite index subgroup of $\piY$.
\end{proof}

\begin{corollary} \label{Hitchin_rep_discrete}
Every Hitchin representation $\rho:\piY \lra \PGL(n,\R)$ has discrete image.
\end{corollary}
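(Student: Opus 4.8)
The goal is to show that a Hitchin representation $\rho:\piY\lra\PGL(n,\R)$ has discrete image, and the natural strategy is to reduce to the known surface-group case via the finite-index subgroup $\piX$. The plan is as follows: fix a presentation $Y\simeq[\Si\bs X]$ as in Definition \ref{pres_of_Y_def}, so that $\piX$ is a normal subgroup of finite index in $\piY$, and $\rho|_{\piX}$ is a Hitchin representation of $\piX$ by Lemma \ref{rest_of_Hitchin_is_Hitchin}. Labourie's theorem \cite{Labourie} (or Proposition \ref{prop:Anosov} applied to $X$) gives that $\rho|_{\piX}$ has discrete image in $\PGL(n,\R)$.

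\begin{proof}
Choose a presentation $Y \simeq [\Si \bs X]$, so that $\piX$ is a normal subgroup of finite index in $\piY$. By Lemma \ref{rest_of_Hitchin_is_Hitchin}, the restriction $\rho|_{\piX}$ is a Hitchin representation of $\piX$, hence has discrete image by Labourie \cite{Labourie}. Since $\piX$ has finite index $[\piY:\piX] = |\Si| =: N$ in $\piY$, the image $\rho(\piY)$ is a union of $N$ cosets of $\rho(\piX)$, namely $\rho(\piY) = \bigcup_{i=1}^N \rho(\ga_i)\,\rho(\piX)$ for coset representatives $\ga_1,\dots,\ga_N \in \piY$. Each $\rho(\ga_i)\,\rho(\piX)$ is a translate of the discrete set $\rho(\piX)$, hence discrete and closed in $\PGL(n,\R)$; a finite union of closed discrete subsets is closed and discrete. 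Therefore $\rho(\piY)$ is discrete.
\end{proof}

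The main subtlety to be careful about is that discreteness of a \emph{subset} (rather than a subgroup) is what passes through finite unions: a discrete subgroup of a Lie group is automatically closed, so $\rho(\piX)$ is closed in $\PGL(n,\R)$, and each left translate $\rho(\ga_i)\rho(\piX)$ is closed (translation by $\rho(\ga_i)$ is a homeomorphism). Thus $\rho(\piY)$, being a finite union of closed discrete sets, is itself closed and discrete. I expect no real obstacle here; the only point worth stating explicitly is why a subgroup containing a finite-index discrete subgroup is discrete, which is exactly the elementary coset argument above. An alternative, equally short route would be to invoke Proposition \ref{prop:Anosov}: $\rho$ is $B$-Anosov, and Anosov representations of hyperbolic groups (here $\piY$ is word-hyperbolic, being virtually a surface group) are well known to have discrete image with finite kernel; but the finite-index coset argument is more self-contained and avoids re-deriving standard facts about Anosov representations.
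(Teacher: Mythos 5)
Your proof is correct, but it takes a different route from the paper. The paper deduces discreteness as an immediate corollary of the Anosov property: it first proves (Proposition \ref{prop:Anosov}, again by restricting to $\piX$ and citing \cite[Corollary 3.4]{GW2}) that $\rho$ is $B$-Anosov, and then simply invokes \cite[Theorem 5.3]{GW2}, which asserts that all Anosov representations have discrete image --- exactly the alternative you mention in your closing remark. You instead bypass the Anosov machinery entirely: restrict to the finite-index surface subgroup $\piX$, use Lemma \ref{rest_of_Hitchin_is_Hitchin} and Labourie's theorem to get discreteness of $\rho|_{\piX}$, and then run the elementary coset argument (a subgroup of a Lie group containing a closed discrete subgroup of finite index is itself discrete, since it is a finite union of translates of a closed discrete set). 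Your argument is more self-contained and only needs discreteness for surface groups, whereas the paper's one-line proof leverages machinery it has already set up and which it needs anyway for the other properties in Theorem \ref{theorem:intro_A}; both are valid, and your handling of the closedness of $\rho(\piX)$ and of finite unions is the right point to make explicit.
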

\begin{proof}
By \cite[Theorem 5.3]{GW2}, all Anosov representations have this property.
\end{proof}

\noindent Moreover, the theory of domains of discontinuity of Guichard and Wienhard \cite{GW2} and Kapovich, Leeb and Porti \cite{KLP} can be applied to Hitchin representations of orbifold groups (Section \ref{subsec:projective structures}).

\begin{proposition}  \label{Hitchin_rep_faithful}
Every Hitchin representation $\rho:\piY \lra \PGL(n,\R)$ is faithful.
\end{proposition}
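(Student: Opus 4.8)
The plan is to reduce the orbifold statement to the known surface‐group case via a presentation $Y \simeq [\Si \bs X]$. First I would fix such a presentation, so that $\piX$ is a normal subgroup of finite index in $\piY$ and sits inside $\piY$ as the fundamental group of a closed orientable surface $X$ (Definition \ref{pres_of_Y_def}). Given a Hitchin representation $\rho : \piY \lra \PGL(n,\R)$, Lemma \ref{rest_of_Hitchin_is_Hitchin} guarantees that $\rho|_{\piX} : \piX \lra \PGL(n,\R)$ is again Hitchin. By Labourie's theorem \cite{Labourie} (equivalently, the properties already invoked in the proof of Proposition \ref{prop:Anosov}), a Hitchin representation of a closed orientable surface group is faithful. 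Hence $\rho|_{\piX}$ is injective, i.e. $\ker\rho \cap \piX = \{1\}$.

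The remaining point is to upgrade faithfulness on the finite-index subgroup $\piX$ to faithfulness on all of $\piY$. Set $K := \ker\rho \lhd \piY$. Then $K \cap \piX = \{1\}$, so $K$ injects into the finite quotient $\piY/\piX \simeq \Si$; in particular $K$ is a finite normal subgroup of $\piY$. But $\piY$ is the fundamental group of a hyperbolic $2$-orbifold of negative Euler characteristic, hence is isomorphic to a convex cocompact (indeed discrete, non-elementary) subgroup of $\PGL(2,\R)$, which is torsion-free up to finite order in a controlled way — more to the point, such a group contains no nontrivial finite normal subgroup. Indeed, a nontrivial finite normal subgroup $K$ would have to be preserved under conjugation by all of $\piY$; since every element of infinite order $\gamma \in \piY$ acts on the finite set $K$ and the centralizer of such $\gamma$ in $\PGL(2,\R)$ (or in $\piY$) is virtually cyclic, one deduces that the elements of $K$ must commute with a finite-index subgroup of $\piY$, forcing $K$ to lie in the center of that subgroup; but a non-elementary discrete subgroup of $\PGL(2,\R)$ has trivial center. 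Therefore $K = \{1\}$ and $\rho$ is faithful.

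The step I expect to be the main (if modest) obstacle is precisely this last algebraic input: showing that $\piY$ admits no nontrivial finite normal subgroup. This is standard for fundamental groups of hyperbolic orbifolds, but it does require knowing that $\piY$ embeds as a discrete, non-elementary (convex cocompact) subgroup of $\PGL(2,\R)$ — which is exactly the structure recalled in the introduction and underlying Definition \ref{Fuchsian_rep} (existence of Fuchsian representations when $\chi(Y)<0$) — and then invoking that non-elementary discrete subgroups of $\PSL(2,\R)$ are centerless and contain no finite normal subgroup (any such subgroup would be pointwise fixed by a finite-index subgroup and hence central). Alternatively, one can bypass the group theory entirely and argue geometrically: a faithful Fuchsian representation $h : \piY \lra \PGL(2,\R)$ already witnesses that $\piY$ is residually finite with the relevant property, and $\ker\rho$ maps into $\Si$ while also being normal in $\piY$; combining with $\ker(\rho|_{\piX}) = \{1\}$ and the faithfulness of $h$ closes the argument. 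Either way the core is: faithful on $\piX$ $+$ no finite normal subgroups in $\piY$ $\implies$ faithful on $\piY$.
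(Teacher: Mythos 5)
Your proof is correct, but it takes a genuinely different route from the paper's. Both arguments begin identically: fix a presentation $Y\simeq[\Si\bs X]$, note that $\rho|_{\piX}$ is Hitchin by Lemma \ref{rest_of_Hitchin_is_Hitchin}, and invoke Labourie's theorem to get $\ker\rho\cap\piX=\{1\}$. From there the paper argues element by element: for $\ga\in\piY$ with minimal $q$ such that $\ga^q\in\piX$, either $\ga^q\neq 1$ and loxodromicity of $\rho(\ga^q)$ gives $\rho(\ga)\neq 1$ directly, or $\ga$ is torsion, in which case the eigenvalues of $\rho(\ga)$ are $q$-th roots of unity, a discrete set, hence locally constant along the path in the Hitchin component joining $\rho$ to a Fuchsian representation $\rho'$; since $\rho'$ is faithful, $\rho'(\ga)$ has an eigenvalue $\neq 1$, and therefore so does $\rho(\ga)$. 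You instead argue globally: $\ker\rho$ injects into $\Si$, hence is a finite normal subgroup of $\piY$, and the fundamental group of a closed hyperbolic $2$-orbifold has none that are nontrivial. That last fact is standard and your justification is essentially right, though one phrase needs fixing: from the conjugation action of $\piY$ on the finite group $K=\ker\rho$ you get that $K$ \emph{centralizes} a finite-index subgroup $\Ga'<\piY$, not that $K$ is central \emph{in} $\Ga'$ (it need not lie in $\Ga'$); the argument then closes because, under a discrete faithful $h:\piY\lra\PGL(2,\R)$ (which exists since $\chi(Y)<0$), $h(\Ga')$ is non-elementary and the centralizer of a non-elementary discrete subgroup of $\PGL(2,\R)$ is trivial — or, more geometrically, because the nonempty fixed-point set of $h(K)$ in $\H^2$ is a point or a geodesic preserved by all of $h(\piY)$, contradicting non-elementarity. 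What each approach buys: yours is shorter and purely group-theoretic, at the price of importing this external Fuchsian-group fact; the paper's deformation argument stays entirely inside the Hitchin component and incidentally records the extra information that torsion elements have the same eigenvalues under any Hitchin representation as under a Fuchsian one, which the paper exploits elsewhere (e.g.\ in its discussion of images of finite-order elements and in the proof of Proposition \ref{Hitchin_comp_for_orbifold_with_boundary}).
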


\begin{proof}
Let $\rho$ be a Hitchin representation of $\piY$. By \cite[Proposition 3.4]{Labourie}, elements of $(\mathrm{Im}\,\rho|_{\piX}) \setminus \{1\}$ in $\PGL(n,\R)$ are diagonalizable with distinct, real eigenvalues. In particular, $\rho|_{\piX}$ is faithful. Consider now $\ga\in\piY$. Since $\piY/\piX$ is a finite group, there exists a minimal integer $q$ such that $\ga^q\in\piX$. If $\ga^q\neq 1_{\piX}$, then by Labourie's result, $\rho(\ga)^q=\rho(\ga^q)\neq 1$ in $\PGL(n,\R)$. In particular, $\rho(\ga)\neq 1$. If $\ga^q=1_{\piX}$, then (as $\rho(\ga)^q=\rho(\ga^q)=1$) the eigenvalues of $\rho(\ga)$, as an endomorphism of $\fg_\C$, are all $q$-th roots of unity. Since those form a finite (in particular, discrete) subset, the latter is invariant by continuous deformation of $\rho$. Let us then consider a Fuchsian representation $\rho':\piY \hookrightarrow \PGL(n,\R)$. By what we have just said, $\rho'(\ga)$ and $\rho(\ga)$ have the same eigenvalues. Since $\ga$ is not trivial in $\piY$, the element $\rho'(\ga)$ is not trivial in $\PGL(n,\R)$, so it has an eigenvalue that is not equal to $1$. Therefore $\rho(\ga)$ also has an eigenvalue that is not equal to $1$. In particular, $\rho(\ga)\neq 1$.
\end{proof}

\begin{remark}
Wienhard \cite{Wienhard_ICM} defines higher Teichm\"uller spaces as unions of connected components of
$\Rep(\pi_1 X,G)$ in which each representation is discrete and faithful. Here, $\pi_1 X$ is the fundamental group of a closed orientable surface.  If we  generalize that definition to orbifold groups, then Corollary \ref{Hitchin_rep_discrete} and Proposition \ref{Hitchin_rep_faithful} say that Hitchin components for orbifold groups are examples of higher Teichm\"uller spaces.
\end{remark}

\begin{proposition}   \label{prop:Hitchin rep loxodromic}
If $\rho:\piY\lra \PGL(n,\R)$ is a Hitchin representation, then for all $\ga$ of infinite order in $\piY$, the element $\rho(\ga)$ of $\PGL(n,\R)$ is purely loxodromic (i.e.\ diagonalizable with distinct real eigenvalues).
\end{proposition}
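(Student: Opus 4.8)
The plan is to reduce the statement about $\piY$ to the already-known analogous statement for the closed orientable surface group $\piX$, using the finite-index inclusion $\piX < \piY$ coming from a presentation $Y \simeq [\Si \bs X]$. First I would fix such a presentation, so that $\piX$ is a normal subgroup of finite index in $\piY$ and, by Lemma \ref{rest_of_Hitchin_is_Hitchin}, the restriction $\rho|_{\piX} : \piX \lra \PGL(n,\R)$ is again a Hitchin representation. By the theorem of Labourie \cite{Labourie} (see also Theorem \ref{theorem:intro_A}), every non-trivial element of $\mathrm{Im}(\rho|_{\piX})$ is purely loxodromic, i.e.\ diagonalizable over $\R$ with distinct eigenvalues. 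So for any $\ga \in \piY$ of infinite order, some power $\ga^q$ lies in $\piX$; since $\ga$ has infinite order, $\ga^q \neq 1$, hence $\ga^q \notin \ker(\rho|_{\piX})$ (as $\rho|_{\piX}$ is faithful, being purely loxodromic on non-trivial elements), and therefore $\rho(\ga)^q = \rho(\ga^q)$ is purely loxodromic in $\PGL(n,\R)$.

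The remaining task is the elementary linear-algebra step: deduce that $\rho(\ga)$ itself is purely loxodromic from the fact that $\rho(\ga)^q$ is. I would lift $\rho(\ga)$ to a matrix $A \in \GL(n,\R)$ and work with $A$; the hypothesis gives that $A^q$ (up to scalar) has $n$ distinct real eigenvalues. Since the distinct-eigenvalue condition forces $A^q$ to be diagonalizable over $\C$, $A$ is also diagonalizable over $\C$ (any matrix a power of which is diagonalizable, and which is invertible, is itself diagonalizable — its minimal polynomial divides $z^n - $ a product of distinct linear factors, hence has simple roots). So $A$ has $n$ eigenvalues in $\C^\times$, counted without multiplicity only after we check they are distinct: if two eigenvalues $\lambda_i, \lambda_j$ of $A$ were equal, their $q$-th powers would coincide, contradicting that $A^q$ has distinct eigenvalues. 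Hence the eigenvalues $\lambda_1, \dots, \lambda_n$ of $A$ are pairwise distinct.

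It then remains to see they are all real. The eigenvalues of $A$ come in complex-conjugate pairs since $A$ is a real matrix; if $\lambda$ is a non-real eigenvalue, then $\bar\lambda \neq \lambda$ is another eigenvalue, and one must rule out that $\lambda^q = \bar\lambda^{\,q}$ causes a collision or, more to the point, that $\lambda^q$ fails to be real while $A^q$ has all-real spectrum. Concretely: $A^q$ has only real eigenvalues, so $\lambda^q \in \R$ for every eigenvalue $\lambda$ of $A$; writing $\lambda = r e^{i\theta}$ with $r>0$, this forces $q\theta \in \pi\Z$, so $\lambda$ is real unless $\theta = \pi k/q$ with $k$ not a multiple of $q$. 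In the latter case $\bar\lambda = r e^{-i\theta}$ is a distinct eigenvalue with $\bar\lambda^{\,q} = r^q e^{-iq\theta} = r^q e^{iq\theta} = \lambda^q$ (since $e^{\pm iq\theta} = \pm 1$ agree), contradicting the distinctness of the eigenvalues of $A^q$. Hence every eigenvalue of $A$ is real, and combined with the distinctness established above, $\rho(\ga)$ is purely loxodromic.

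The main obstacle is nothing deep — it is purely this last bookkeeping step of passing from $A^q$ to $A$ and controlling reality and distinctness of eigenvalues simultaneously; one has to be a little careful that the scalar ambiguity in $\PGL(n,\R)$ does not interfere (it does not, since multiplying $A$ by a non-zero real scalar multiplies all eigenvalues by the same non-zero real number, preserving both "real" and "pairwise distinct"). Everything else is an immediate consequence of Labourie's theorem applied to $\rho|_{\piX}$ together with the finite-index argument already used in the proof of Proposition \ref{Hitchin_rep_faithful}.
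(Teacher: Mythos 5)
Your proposal is correct and follows the same route as the paper: restrict to $\piX$ via a presentation $Y\simeq[\Si\bs X]$, apply Labourie's theorem to the Hitchin representation $\rho|_{\piX}$, and pass from $\rho(\ga^q)$ to $\rho(\ga)$. The paper leaves the final linear-algebra step (``$\rho(\ga)^q$ purely loxodromic implies $\rho(\ga)$ purely loxodromic'') implicit, whereas you spell it out carefully, including the harmless $\PGL(n,\R)$ scalar ambiguity; your verification is accurate.
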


\begin{proof}
In the case of a closed orientable surface $X$, all non-trivial elements of $\piX$ are of infinite order and Labourie has shown in \cite{Labourie} that their image under a Hitchin representation is purely loxodromic. If $Y \simeq [\Si \bs X]$ with $\Si$ finite, then for all $\ga\in\piY$ there exists $q\geqslant 1$ such that $\ga^q\in\piX$ and if $\ga$ is of infinite order, then $\ga^q\neq 1$ in $\piX$. So $\rho(\ga)^q=\rho(\ga^q)$ is purely loxodromic. Therefore, so is $\rho(\ga)$.
\end{proof}

\noindent However, if $\ga$ is of finite order in $\piY$, then $\rho(\ga)\in G$ may have non-distinct eigenvalues, as we can already see from the case $G=\PGL(3,\R)$. For instance, if $Y$ has a cone point $x$ of order $2$, then a small loop around $x$ will map, under the holonomy representation of a hyperbolic structure on $Y$, to an element of $\PGL(2,\R)$, the rotation matrix of angle $\frac{\pi}{2}$. It is conjugate to $\mathrm{diag}(i,-i)$ in $\PGL(2,\C)$ and maps to $\mathrm{diag}(-1,1,-1)$ under $\kappa_\C$ (and also $\kappa$).

Finally, by applying Theorem \ref{main_obs_about_Hit_comp_for_orbifolds}, we can extend the Labourie-Guichard characterization of Hitchin representations into $G=\PGL(n,\R)$ as hyperconvex representations \cite{Labourie,Guichard_hyperconvex} to the orbifold case. Following Labourie \cite{Labourie}, a $\PGL(n,\R)$-representation $\rho$ of $\piY$ is called \emph{hyperconvex} if there exists a continuous map $\xi:\partial_\infty\piY \lra \RP^{n-1}=\mathbf{P}(\R^n)$ that is $\piY$-equivariant with respect to $\rho$ and hyperconvex in the sense that for all $n$-tuples of pairwise distinct points $(x_1,\,\ldots\,,x_n)$ in $\partial_\infty\piY\simeq\partial\H^2 \simeq S^1$, we have 
$\xi(x_1) + \dotsm + \xi(x_n) = \R^n$.

\begin{lemma} \emph{\cite{KB}} \label{lemma:canonical}
If $X\lra Y$ is a finite cover, then there is a canonical homeomorphism $\partial_\infty\piX \simeq \partial_\infty\piY$, which is $\piX$-equivariant with respect to the inclusion $\piX\hookrightarrow\piY$.
\end{lemma}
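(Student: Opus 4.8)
The plan is to invoke that $\piY$ is a word-hyperbolic group and to use the functoriality of the Gromov boundary under quasi-isometries. Since $\chi(Y)<0$, the orbifold $Y$ admits a finite orbifold cover by a closed orientable surface of genus at least $2$ (Definition \ref{pres_of_Y_def}); hence $\piY$ is virtually a surface group, so it is word-hyperbolic, and the finite-index subgroup $\piX$ is word-hyperbolic as well, with a well-defined Gromov boundary. Concretely, $Y$ is a good orbifold whose orbifold universal cover $\Yt$ is the hyperbolic plane $\H^2$, and $\piY$ acts on $\Yt=\H^2$ properly discontinuously and cocompactly by isometries; since $X\lra Y$ is an orbifold cover, $\Xt=\H^2$ too and the deck action of $\piX$ on $\Xt$ is exactly the restriction to $\piX<\piY$ of the action of $\piY$ on $\H^2$.

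First I would fix a basepoint $o\in\H^2$. By the Milnor--\v{S}varc lemma, the orbit map $\piY\lra\H^2$, $\ga\lmt\ga\cdot o$, is a quasi-isometry, so it extends to a $\piY$-equivariant homeomorphism $\partial_\infty\piY\overset{\simeq}{\lra}\partial\H^2$; this homeomorphism does not depend on $o$, since two orbit maps are at bounded distance and the boundary extension of a quasi-isometry depends only on its class up to bounded distance. Running the same argument for $\piX$ acting on the \emph{same} space $\H^2$ produces a $\piX$-equivariant homeomorphism $\partial_\infty\piX\overset{\simeq}{\lra}\partial\H^2$. Composing the first map with the inverse of the second yields a homeomorphism $\partial_\infty\piX\overset{\simeq}{\lra}\partial_\infty\piY$. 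It is canonical because both identifications with $\partial\H^2$ are, and it is $\piX$-equivariant because the $\piY$-equivariance of $\partial_\infty\piY\simeq\partial\H^2$ restricts to $\piX$-equivariance and matches the $\piX$-equivariance of $\partial_\infty\piX\simeq\partial\H^2$.

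Equivalently — and this is the form in which the statement is usually phrased, see \cite{KB} — one observes that the inclusion $\iota\colon\piX\hookrightarrow\piY$ is itself a quasi-isometry, because a finite-index subgroup equipped with a word metric is quasi-isometrically embedded in the ambient group and is coarsely dense in it; a quasi-isometry between proper geodesic hyperbolic spaces induces a homeomorphism of Gromov boundaries, so $\iota$ induces $\partial\iota\colon\partial_\infty\piX\overset{\simeq}{\lra}\partial_\infty\piY$. This map is canonical because $\iota$ is an honest group homomorphism, not merely a coarse map, and it is $\piX$-equivariant because $\iota$ intertwines the left-translation actions of $\piX$ on itself and on $\piY$. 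I do not expect a genuine obstacle here; the only point requiring care is the distinction between ``equivariant'' and ``equivariant up to bounded distance'', which is harmless since the inclusion, resp.\ the orbit maps, are equivariant on the nose, so the induced boundary maps are too.
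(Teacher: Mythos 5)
Your proof is correct; the paper gives no argument of its own for this lemma and simply cites \cite{KB}, whose content is precisely your second formulation (the inclusion of a finite-index subgroup is a quasi-isometry, hence induces a canonical, equivariant homeomorphism of Gromov boundaries). Your concrete identification of both boundaries with $\partial\H^2$ via Milnor--\v{S}varc is an equivalent way of seeing the same statement, so this matches the intended argument.
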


\begin{theorem}\label{Hitchin_iff_hyperconvex}
A representation of $\piY$ in $\PGL(n,\R)$ is Hitchin if and only if it is hyperconvex.
\end{theorem}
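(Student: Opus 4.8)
The plan is to reduce the orbifold statement to the surface case via the finite cover, using Theorem \ref{main_obs_about_Hit_comp_for_orbifolds} and Lemma \ref{lemma:canonical}. Fix a presentation $Y \simeq [\Si \bs X]$. For the ``only if'' direction, suppose $\rho : \piY \lra \PGL(n,\R)$ is Hitchin. By Lemma \ref{rest_of_Hitchin_is_Hitchin}, $\rho|_{\piX}$ is Hitchin, hence hyperconvex by the Labourie--Guichard theorem \cite{Labourie, Guichard_hyperconvex}, so there is a continuous, $\rho|_{\piX}$-equivariant, hyperconvex curve $\xi : \partial_\infty\piX \lra \RP^{n-1}$. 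Transporting along the canonical homeomorphism $\partial_\infty\piX \simeq \partial_\infty\piY$ of Lemma \ref{lemma:canonical}, we get a continuous map on $\partial_\infty\piY$ that is $\piX$-equivariant for $\rho$ and still hyperconvex (hyperconvexity is a condition on $n$-tuples of distinct points, and the homeomorphism preserves distinctness). The first task is to upgrade $\piX$-equivariance to $\piY$-equivariance. For this I would appeal to the uniqueness of the Anosov boundary map: $\rho$ is $B$-Anosov by Proposition \ref{prop:Anosov}, so it has a unique continuous $\rho$-equivariant limit map $\partial_\infty\piY \lra \PGL(n,\R)/B$; its first coordinate $\partial_\infty\piY \lra \RP^{n-1}$ is the unique continuous $\rho$-equivariant curve, and for $\gamma \in \piX$ it restricts to the unique $\rho|_{\piX}$-equivariant curve, which is $\xi$ (hyperconvex curves for Hitchin surface-group representations being unique). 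Hence $\xi$ is already $\piY$-equivariant, and $\rho$ is hyperconvex.

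For the ``if'' direction, suppose $\rho : \piY \lra \PGL(n,\R)$ admits a continuous, $\rho$-equivariant, hyperconvex curve $\xi : \partial_\infty\piY \lra \RP^{n-1}$. Restricting $\xi$ to $\piX$ (again via $\partial_\infty\piX \simeq \partial_\infty\piY$) exhibits $\rho|_{\piX}$ as a hyperconvex representation of the \emph{surface} group $\piX$, so by Labourie--Guichard $\rho|_{\piX}$ lies in $\Hit(\piX,\PGL(n,\R))$. Now I want to conclude that $\rho$ itself is Hitchin. This does not follow formally from $\rho|_{\piX}$ being Hitchin for an \emph{arbitrary} representation $\rho$ of $\piY$ — but it does by Corollary \ref{rep_whose_rest_is_Hitchin_is_itself_Hitchin}, which is precisely the statement that $\rho$ is Hitchin if and only if $\rho|_{\pi_1 Y'}$ is, for $Y' \lra Y$ a finite cover (here $Y' = X$). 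Applying it finishes the direction.

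The main obstacle is the equivariance upgrade in the ``only if'' direction: a priori the curve $\xi$ pulled back from $X$ is only equivariant for the finite-index subgroup $\piX$, and we need it to be equivariant for all of $\piY$. The clean way around this is the uniqueness of equivariant boundary maps, which I would organize as follows: (i) $\rho$ is $B$-Anosov (Proposition \ref{prop:Anosov}), so it has a continuous $\rho$-equivariant limit curve $\zeta : \partial_\infty\piY \lra \RP^{n-1}$; (ii) the restriction $\zeta|_{\partial_\infty\piX}$ is a continuous $\rho|_{\piX}$-equivariant curve for the Anosov representation $\rho|_{\piX}$, hence equals the unique such curve, which is the hyperconvex curve $\xi$; (iii) therefore $\zeta = \xi$ as maps on $\partial_\infty\piY$, so $\xi$ is $\piY$-equivariant and $\rho$ is hyperconvex. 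One should double-check that the identifications $\partial_\infty\piX \simeq \partial_\infty\piY \simeq S^1$ are compatible with the $\piX$-actions (guaranteed by Lemma \ref{lemma:canonical}) and that hyperconvexity of $\xi$ at all distinct $n$-tuples is genuinely inherited from that of the curve on $\partial_\infty\piX$ — this is immediate since every $n$-tuple of distinct points of $\partial_\infty\piY$ corresponds under the homeomorphism to an $n$-tuple of distinct points of $\partial_\infty\piX$ and the direct-sum condition $\xi(x_1)+\dots+\xi(x_n) = \R^n$ is transported verbatim.
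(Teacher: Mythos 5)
Your ``if'' direction is exactly the paper's argument (restrict the hyperconvex curve to $\piX$, apply Guichard's theorem, then Corollary \ref{rep_whose_rest_is_Hitchin_is_itself_Hitchin}), and your overall reduction to the surface case is the right one. The ``only if'' direction, however, takes a different route from the paper and, as written, has a gap at the key identification step (ii). You want to conclude that the restriction $\zeta|_{\partial_\infty\piX}$ of the Anosov limit curve of $\rho$ coincides with the hyperconvex curve $\xi$, and you justify this by ``uniqueness of the continuous $\rho|_{\piX}$-equivariant curve'' resp.\ ``hyperconvex curves for Hitchin surface-group representations being unique''. Neither justification applies as stated: Guichard's uniqueness statement (\cite[Proposition 16]{Guichard_hyperconvex}, the result the paper cites) is uniqueness of the \emph{hyperconvex} equivariant curve, and you do not know a priori that $\zeta|_{\partial_\infty\piX}$ is hyperconvex --- that is essentially what you are trying to prove; uniqueness among \emph{all} continuous equivariant curves into $\RP^{n-1}$ is a stronger assertion that is nowhere established in the paper and would itself require an argument. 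The gap is fixable: both $\zeta|_{\partial_\infty\piX}$ and $\xi$ are dynamics-preserving (they send the attracting fixed point of each infinite-order $\gamma\in\piX$ to the attracting eigenline of $\rho(\gamma)$ --- for $\zeta$ by the defining property of Anosov limit maps, for $\xi$ by Labourie), and attracting fixed points are dense in $\partial_\infty\piX$, so continuity forces $\zeta|_{\partial_\infty\piX}=\xi$. With that repair your Anosov-based argument goes through.

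It is worth comparing with the paper's own, more economical device for the equivariance upgrade: for $\ga\in\piY$ one considers the twisted curve $\rho(\ga)\circ\xi\circ\ga^{-1}$ on $\partial_\infty\piY=\partial_\infty\piX$, checks directly that it is hyperconvex and (using normality of $\piX$ in $\piY$) $\piX$-equivariant, and then invokes Guichard's uniqueness of the hyperconvex curve to get $\rho(\ga)\circ\xi\circ\ga^{-1}=\xi$ for all $\ga$, i.e.\ $\piY$-equivariance of $\xi$. This avoids any appeal to the Anosov machinery and to uniqueness statements beyond the one actually available in the literature, which is why the paper's route is preferable; your route buys nothing extra here and imports exactly the uniqueness issue flagged above.
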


\begin{proof}
Let $Y \simeq\, [\Si\bs X]$ be a presentation of $Y$ (see Definition \ref{pres_of_Y_def}), and let $\rho$ be a Hitchin representation of $\piY$. Then $\rho|_{\piX}$ is Hitchin by Lemma \ref{rest_of_Hitchin_is_Hitchin}. By \cite[Theorem 1.4]{Labourie}, there exists a $\piX$-equivariant, hyperconvex curve $\xi:\partial_\infty\piX\lra \RP^{n-1}$. Given an element $\ga\in\piY$, let us consider the map $$\rho(\ga)\circ\xi\circ\ga^{-1}:(\partial_\infty\piY = \partial_\infty\piX) \lra \RP^{n-1},$$
where $\partial_\infty\piX$ is identified with $\partial_\infty\piY$ via the $\piX$-equivariant homeomorphism in Lemma \ref{lemma:canonical}. It is straightforward to check that this map is hyperconvex. Moreover, it is $\piX$-equivariant: if $\delta\in\piX$, we have, as $\piX$ is normal in $\piY$, that $(\rho(\ga) \circ \xi \circ \ga^{-1}) \circ \delta = \rho(\ga)\circ (\rho(\ga^{-1}\delta\ga)\circ\xi\circ\ga^{-1}) = \rho(\delta) \circ (\rho(\ga)\circ\xi\circ\ga^{-1})$. So, by uniqueness of such a map \cite[Proposition 16]{Guichard_hyperconvex}, $\rho(\ga)\circ\xi\circ\ga^{-1} = \xi$. As this holds for all $\ga\in\piY$, we have that $\xi$ is $\piY$-equivariant. 
Conversely, assume that $\rho$ is hyperconvex and let $\xi:\partial_\infty\piY\lra\RP^{n-1}$ be the associated $\piY$-equivariant hyperconvex curve. Since $\piX\hookrightarrow\piY$, the curve $\xi$ is also $\piX$-equivariant. So, by \cite[Théorème 1]{Guichard_hyperconvex}, $\rho|_{\piX}$ is a Hitchin representation. It then follows from Corollary \ref{rep_whose_rest_is_Hitchin_is_itself_Hitchin} that $\rho$ is a Hitchin representation of $\piY$.
\end{proof}

\begin{remark}\label{unicity_of_the_hyperconcex_curve} 
In the course of the proof, we have seen that if $\rho:\piY\lra \PGL(n,\R)$ is a hyperconvex representation of $\piY$, then the $\piY$-equivariant hyperconvex curve $\xi:\partial_\infty\piY\lra \RP^{n-1}$ is unique.
\end{remark}

\subsection{Orbifolds with boundary}\label{boundary_case}

We refer to \cite{CG} for background on orbifolds with boundary: each point of a smooth orbifold with boundary admits an open neighborhood with a presentation of the form $[\Ga\bs U]$ where $U$ is an open subspace of a closed half-space and $\Ga$ is a finite group, acting faithfully on $U$ by diffeomorphisms. In particular, the boundary of an $n$-dimensional orbifold with boundary is an $(n-1)$-dimensional closed orbifold. In dimension $2$, the boundary components of a compact orbifold with boundary are therefore either circles (with trivial orbifold structure) or segments with endpoints that are mirror points. The latter are called \textit{full 1-orbifolds} in \cite{CG} (see Figure \ref{fig:boundary}). The Euler characteristic of an orbifold with boundary $Y$ is given by the formula: $$\chi(Y) = \chi(|Y|) - \sum_{i=1}^k \left(1-\frac{1}{m_i}\right) - \frac{1}{2} \sum_{j=1}^\ell \left(1-\frac{1}{n_j}\right) -\frac{1}{2}b,$$ where $k$ is the number of cone points, $\ell$ the number of corner reflectors, and $b$ is the number of full $1$-orbifold boundary components of $Y$ (\cite[p.1029]{CG}). 

\begin{figure}[ht]
\centering
\includegraphics[width=12.0cm]{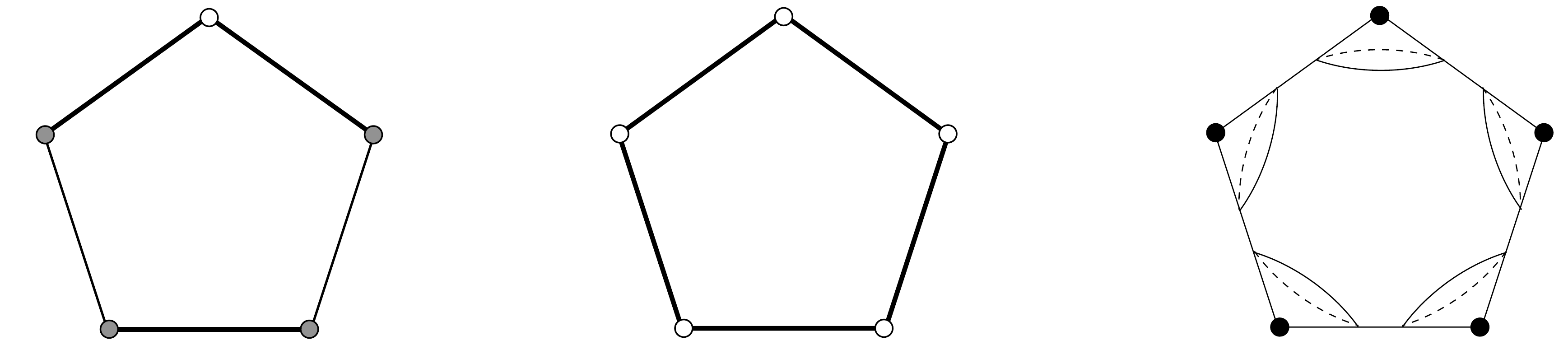}
\small
\put (-300, -10){$Y$}
\put (-180, -10){$m Y$}
\put (-47, -10){$d Y$}
\caption{In $Y$ and $mY$, the points lying on a \emph{bold} side of the polygon (excluding vertices) are mirror points, and the white dots are corner reflectors of order $2$. In $dY$, the black dots are cone points of order $2$. The leftmost orbifold $Y$ has two boundary components, which are full $1$-orbifolds with gray extremal points. The middle orbifold $mY$ (resp.\ rightmost orbifold $d Y$) is a \emph{closed} orbifold with five corner reflectors (resp.\ cone points); $dY$ is the orientation double cover of $mY$. The underlying topological space of $Y$ and $mY$ is a disk.}\label{fig:boundary}
\end{figure}

To introduce a notion of Hitchin component for orbifolds with boundary, we need to impose an extra condition: the Teichmüller space of an orbifold with boundary, for instance, is defined as the deformation space of hyperbolic structures with totally geodesic boundary. Holonomy representations of such hyperbolic structures are exactly those representations $\rho:\piY\lra\PGL(2,\R)$ that are discrete, faithful and convex cocompact. Given a $2$-orbifold with boundary $Y$, we can construct a closed orbifold associated to $Y$, denoted by $mY$ and obtained by decreeing that all boundary points of $Y$ (lying in both circles and full $1$-orbifolds) are mirror points. We shall call $mY$ the  \emph{mirror} of $Y$ (see Figures \ref{fig:boundary} and \ref{fig:mirror}).

It has the same $k$ cone points as $Y$, the same $\ell$ corner reflectors, plus an extra $2b$ corner reflectors, each  one of order $2$, corresponding to extremal points of boundary full $1$-orbifolds of $Y$. In particular, $\chi(mY)=\chi(Y)$. By definition of the Teichmüller space of $Y$, one has $\cT(Y)\simeq\cT(mY)\simeq\Hit(\pi_1(mY),\PGL(2,\R))$ and it follows from the formula for a closed orbifold \eqref{Thurston_formula_for_PGL2} applied to $mY$ (see \cite[p.1094]{CG}) that: $$\dim\cT(Y)=-3\chi(|Y|)+2k+\ell+2b= \dim\cT(mY).$$ Below we denote by $dY$ the orientation double cover of $mY$: it has $2k+\ell+2b$ cone points and its Euler characteristic is equal to $2\chi(mY)$. The Teichmüller space of $mY$ can be identified with the space of hyperbolic structures on $dY$ that are invariant under the canonical involution of $dY$.

\begin{figure}[ht]
\centering
\includegraphics[width=12.0cm]{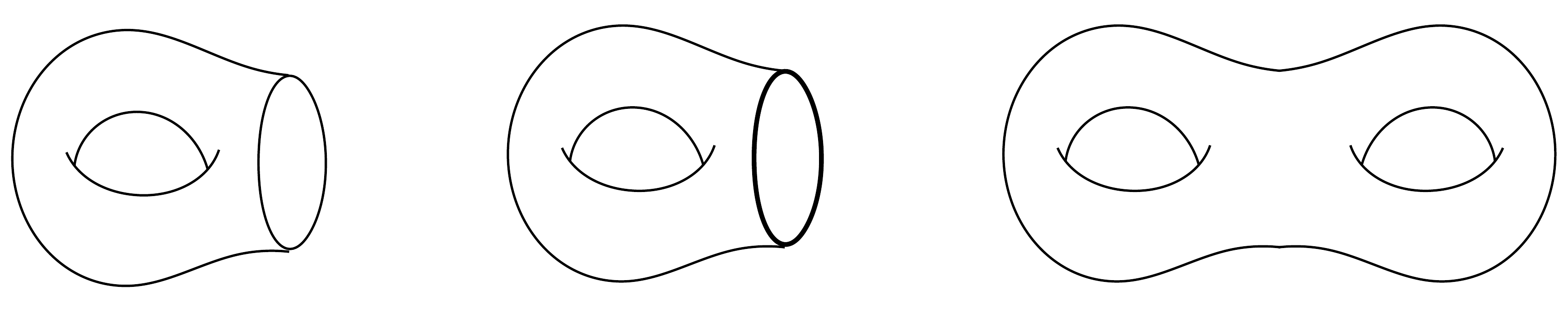}
\small
\put (-310, -8){$Y$}
\put (-200, -8){$m Y$}
\put (-67, -8){$d Y$}
\caption{The leftmost orbifold $Y$ is a torus with boundary (it has \emph{trivial} orbifold structure), the middle orbifold $mY$ is a non-orientable \emph{closed} orbifold (points lying on the circle represented in bold are mirror points), and the rightmost orbifold $dY$ is a surface of genus $2$ (with trivial orbifold structure); $dY$ is the orientation double cover of $mY$. The underlying topological space of $Y$ and $mY$ is a torus minus an open disk.}\label{fig:mirror}
\end{figure}

Let now $\fg$ be the split real form of a simple complex Lie algebra $\fg_\C$, with corresponding real structure $\tau$, and set $G=\Int(\fgC)^\tau$. In order to define Hitchin representations for the fundamental group of an orbifold with boundary $Y$, note first that $\piY$ is a subgroup of $\pi_1(mY)$. 

\begin{remark}
The index $[\pi_1(mY):\piY]$ is infinite. Indeed, we can choose a hyperbolic structure on $m Y$ and denote its holonomy representation by $\rho : \pi_1(m Y) \longrightarrow \PGL(2,\R)$. It induces the hyperbolic structure on $Y$ with geodesic boundary and $\rho(\pi_1 Y ) \bs \mathbb{H}^2$ is the complete hyperbolic orbifold made up of $Y$ with funnels attached along the boundary components. We have that the hyperbolic orbifold $\rho(\pi_1 Y) \bs \mathbb{H}^2$ has infinite area but $\rho(\pi_1(m Y)) \bs \mathbb{H}^2$ has finite area. Hence, the group $\piY$ is of infinite index in $\pi_1(mY)$.
\end{remark}

Let $\kappa:\PGL(2,\R)\lra G$ be the homomorphism induced by the choice of a principal $3$-dimensional subalgebra $\mathfrak{sl}(2,\R) \subset \fg$. As in Definition \ref{Fuchsian_rep}, a representation $\rho:\piY\lra G$ is called \emph{Fuchsian} if it lifts to a holonomy representation of hyperbolic structure (with totally geodesic boundary) on $Y$, i.e.\ if $\rho$ extends to a Fuchsian representation $\ov{\rho}:\pi_1(mY)\lra G$. Let us now introduce the map 
$$
\begin{array}{rcl}
\Phi : \Rep(\pi_1(mY),G) & \longrightarrow & \Rep(\piY,G)\\
\left[\chi\right] & \longmapsto &  \left[\chi|_{\piY}\right]
\end{array}$$ This map induces a homeomorphism between the Fuchsian locus of $\pi_1(mY)$ in $\Rep(\pi_1(mY),G)$ and the Fuchsian locus of $\piY$ in $\Rep(\piY,G)$, which motivates the following definition.

\begin{definition}\label{Hitchin_rep_bdry_case}
Let $Y$ be a compact connected $2$-orbifold with boundary. We define the \emph{Hitchin component} $\Hit(\piY,G)$ to be $\Phi(\Hit(\pi_1(mY),G))$, meaning the (connected) subspace of $\Rep(\piY,G)$ consisting of (conjugacy classes of) $G$-representations of $\piY$ that extend to a Hitchin representation of $\pi_1(mY)$ in $G$. A \emph{Hitchin representation} of $\piY$ is a representation $\rho:\piY\lra G$ whose conjugacy class lies in $\Hit(\piY,G)$.
\end{definition}

\noindent Note that the Fuchsian locus of $\piY$ in $\Rep(\piY,G)$ is indeed contained in $\Hit(\piY;G)$.

\begin{proposition}\label{Hitchin_comp_for_orbifold_with_boundary}
Let $Y$ be a compact connected $2$-orbifold with boundary and let $mY$ be the associated closed orbifold with mirror boundary. Then the map $\Phi:\chi \lmt \chi|_{\piY}$ induces a homeomorphism 
$$\Hit(\pi_1(mY),G) \overset{\simeq}{\lra} \Hit(\piY,G).$$
\end{proposition}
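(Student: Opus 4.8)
The plan is to exhibit an explicit inverse to $\Phi$ on Hitchin components, using the $\PGL(n,\R)$-picture to reduce to the closed-orbifold case already treated in Theorem \ref{main_obs_about_Hit_comp_for_orbifolds}. First I would record that, by construction, $\Phi$ is continuous (it is restriction of representations along the inclusion $\piY\hookrightarrow\pi_1(mY)$), that it sends the Fuchsian locus of $\pi_1(mY)$ bijectively onto the Fuchsian locus of $\piY$, and that, since $\Hit(\pi_1(mY),G)$ is connected and $\Phi$ is continuous, the image $\Phi(\Hit(\pi_1(mY),G))$ is exactly $\Hit(\piY,G)$ by Definition \ref{Hitchin_rep_bdry_case}. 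So surjectivity is immediate, and the content of the proposition is that $\Phi$ is \emph{injective} and that its inverse is continuous.

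For injectivity, the key point is that a Hitchin representation of $\pi_1(mY)$ is determined by its restriction to the infinite-index subgroup $\piY$. By Remark \ref{embeddings_of_HC} it suffices to treat $G=\PGL(n,\R)$ (the three embeddings $\PSp^\pm$, $\PO(m,m+1)$, $\GG\hookrightarrow\PGL(n,\R)$ are injective on Hitchin components, and $\PO^\pm(m,m)$ is handled via its block embedding). So let $\chi_1,\chi_2:\pi_1(mY)\lra\PGL(n,\R)$ be Hitchin representations with $\chi_1|_{\piY}$ conjugate to $\chi_2|_{\piY}$; after conjugating we may assume $\chi_1|_{\piY}=\chi_2|_{\piY}=:\rho$. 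Now $\piY$ contains the fundamental group $\piX$ of a closed orientable surface as a finite-index subgroup — indeed $mY$ has a presentation $mY\simeq[\Si\bs X]$ and $X$ can be chosen to also cover $Y$ — so $\rho|_{\piX}=\chi_1|_{\piX}$ is a Hitchin representation of a closed surface group, hence has trivial centralizer in $\PGL(n,\C)$ by Lemma \ref{lemma:centralizer}. Running the centralizer argument of Proposition \ref{proposition:injectivity} verbatim (with the roles $N=\piX$, $\Gamma=\pi_1(mY)$, using only that $\piX$ is normal in $\pi_1(mY)$ and that $\rho|_{\piX}$ has trivial centralizer), we get that for every $\gamma\in\pi_1(mY)$ the element $\chi_1(\gamma)^{-1}\chi_2(\gamma)$ centralizes $\rho|_{\piX}$, hence equals the identity; so $\chi_1=\chi_2$.

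It remains to check that $\Phi^{-1}:\Hit(\piY,G)\lra\Hit(\pi_1(mY),G)$ is continuous. One clean way: identify $\Hit(\pi_1(mY),G)$ with $\Fix_\Si(\Hit(\piX,G))$ via Theorem \ref{main_obs_about_Hit_comp_for_orbifolds} and identify $\Hit(\piY,G)$ with $\Fix_{\Si_Y}(\Hit(\piX,G))$ where $\Si_Y:=\piY/\piX$ (the restriction map $\Hit(\piY,G)\lra\Fix_{\Si_Y}(\Hit(\piX,G))$ is a homeomorphism: injectivity is Proposition \ref{proposition:injectivity}, applied to the orbifold $Y$ with boundary exactly as in the closed case since the proof used only the algebraic short exact sequence, and surjectivity follows because $\Hit(\piY,G)$ is by definition the $\Phi$-image of $\Hit(\pi_1(mY),G)$, whose points restrict, via $\piX\subset\piY\subset\pi_1(mY)$, into $\Fix_\Si(\Hit(\piX,G))\subset\Fix_{\Si_Y}(\Hit(\piX,G))$, and this containment is onto by a dimension/connectedness count once we know the target is the right fixed locus — alternatively one observes $\Si\supset\Si_Y$ so $\Fix_\Si\subset\Fix_{\Si_Y}$, and both are identified with $\Hit$ of the respective orbifold). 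Under these identifications $\Phi$ becomes the inclusion $\Fix_\Si(\Hit(\piX,G))\hookrightarrow\Fix_{\Si_Y}(\Hit(\piX,G))$ of one closed subset into another; since we have just shown this inclusion is a continuous bijection between two spaces each homeomorphic to an open ball (Theorem \ref{main_obs_about_Hit_comp_for_orbifolds} together with Corollary \ref{invariant_diffs_intro}), and a continuous bijection from a space to itself that is the identity on the Fuchsian locus forces the two fixed loci to coincide as subsets, $\Phi$ is the identity map and is in particular a homeomorphism. The main obstacle is this last bookkeeping: making precise that the two fixed loci $\Fix_\Si$ and $\Fix_{\Si_Y}$ inside $\Hit(\piX,G)$ literally coincide (equivalently, that $mY$ and $Y$ give the same invariant Hitchin representations of $\piX$), which amounts to checking that the subgroup of $\Out(\piX)$ generated by $\Si_Y$ already contains $\Si$, i.e. that passing from $Y$ to its mirror $mY$ does not enlarge the relevant mapping-class subgroup — a fact one verifies directly from the geometry of the construction of $mY$ (reflections across boundary components of $Y$ generate, together with $\piY/\piX$, the whole group $\pi_1(mY)/\piX$, but these reflections act on $X$ by elements already realized inside the deck group since $X\to mY$ factors through $X\to Y$).
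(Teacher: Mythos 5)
There is a genuine gap, and it occurs at the very first step of your injectivity argument: you claim that $\piY$ contains a closed orientable surface group $\piX$ as a finite-index subgroup, "since $X$ can be chosen to also cover $Y$". This is false. $Y$ is a compact orbifold with non-empty boundary, so $\piY$ is a convex cocompact (virtually free) group of \emph{infinite} index in $\pi_1(mY)$ (this is exactly the Remark preceding Definition \ref{Hitchin_rep_bdry_case}); a closed surface cannot cover an orbifold with boundary, and a virtually free group contains no closed surface group at all, let alone with finite index. Consequently, knowing $\chi_1|_{\piY}=\chi_2|_{\piY}$ gives you no control over $\chi_1|_{\piX}$ versus $\chi_2|_{\piX}$, and the centralizer argument of Proposition \ref{proposition:injectivity} cannot be run with $N=\piX$, $\Gamma=\pi_1(mY)$. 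The same false premise undermines the continuity step: the group $\Si_Y:=\piY/\piX$ does not exist, Theorem \ref{main_obs_about_Hit_comp_for_orbifolds} applies only to closed orbifolds, and the final "bookkeeping" about $\Fix_\Si$ versus $\Fix_{\Si_Y}$ has no meaning.

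What your argument never touches is the actual content of the proposition: the group $\pi_1(mY)$ is generated by $\piY$ together with the order-two reflection elements $\delta_b$ (one for each boundary component $b$ of $Y$), which lie \emph{outside} $\piY$, so one must show that $\ov{\rho}(\delta_b)$ is determined, continuously, by $\rho=\ov{\rho}|_{\piY}$. The paper does this directly: $\delta_b$ commutes with the boundary holonomy $\gamma_b$; along a path $\ov{\rho}_t$ from a Fuchsian representation, $\rho_t(\gamma_b^q)$ is positive hyperbolic (Fock--Goncharov), hence semisimple and regular, so $\ov{\rho}_t(\delta_b)$ is semisimple of order two and its possible values form a discrete set; by continuity it stays conjugate to $\kappa(J_2)$, and one concludes that $\ov{\rho}(\delta_b)$ is the \emph{unique} order-two element of $G$ commuting with $\rho(\gamma_b)$ and conjugate to $\kappa(J_2)$, which yields both injectivity and continuity of $\Phi^{-1}$. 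Some argument of this kind, pinning down the images of the reflections from the restriction to $\piY$, is indispensable and is missing from your proposal.
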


\begin{proof}
The map $\Phi : \Hit(\pi_1(mY),G) \rightarrow \Hit(\piY,G)$ given by $\Phi ([\chi]) = [\chi|_{\piY}]$ is continuous, and by definition of the Hitchin component of $\piY$, it is surjective. Thus, it remains to show that $\Phi$ is injective and that its inverse is also continuous. 

Each boundary component $b$ of $Y$ is either a circle or a full $1$-orbifold, and the group $\pi_1 b$ may identify with a subgroup of $\piY$. If $b$ is a circle, then $\pi_1 b$ is an infinite cyclic subgroup $C$ of $\piY$. And, if $b$ is a full $1$-orbifold, the group $\pi_1 b$ is the infinite dihedral group $ C \rtimes (\sfrac{\Z}{2\Z})$, where $C$ is a cyclic subgroup of $\pi_1 b$. In either case, we denote by $\gamma_b$ a generator of $C$. Note that there exists an order $2$ element $\delta_b \in \pi_1(mY)$ corresponding to $b$ such that $\delta_b$ commutes with $\gamma_b$. 

Suppose now that $\rho = \ov{\rho}|_{\piY}$ for some $[\ov{\rho}] \in \Hit(\pi_1(mY),G)$. By definition, there exists a continuous path $(\, \ov{\rho}_{t} \,)_{t \in [0,1]} $ in $\Hom(\pi_1(mY), G)$ beginning at a Fuchsian representation $\ov{\rho}_0$ and ending at $\ov{\rho}_1 = \ov{\rho}$ such that the $G$-conjugacy class of $\ov{\rho}_t$ lies in $\Hit(\pi_1(mY),G)$. 

If $mY \simeq [\Si' \bs X']$ is a presentation of $mY$, where $X'$ is a closed orientable surface and $\Si'$ is a finite group, then there exists $q \geq 1$ such that $\gamma_b^q \in \piX'$. By \cite[Theorem 1.9]{FG}, the element $\rho_t(\gamma_b^q) = \ov{\rho}_t(\gamma_b^q)$ is positive hyperbolic, so it is semisimple and regular. Since $\ov{\rho}_t(\delta_b)$ commutes with $\rho_t(\gamma_b)$ (and also $\rho_t(\gamma_b^q)$), it is also semisimple. It then follows from the fact that $\delta_b$ is of order $2$ that the set of candidates for $\ov{\rho}_t(\delta_b)$ is finite, in particular, discrete. By the continuity of $\ov{\rho}_t$, the element $\ov{\rho}_t(\delta_b)$ is conjugate to $\ov{\rho}_0(\delta_b)$ and hence to $\kappa(J_2) \in G$ with $J_2 = \mathrm{diag}(1,-1) \in \PGL(2,\R)$. Thus, the element $\ov{\rho}_t(\delta_b)$ is the unique order $2$ element in $G$ that commutes with $\rho(\gamma_b)$ and that is conjugate to $\kappa(J_2)$, and hence $\ov{\rho}$ is uniquely determined by $\rho$ and the map $\rho \lmt \ov{\rho}(\delta_b)$ is continuous. It implies that the map $\Phi$ is injective and its inverse is continuous.
\end{proof}

Note that when $G=\PGL(3,\R)$, the Hitchin components for the orbifold with boundary $Y$ we have defined coincides with the deformation space of convex real projective structures with principal geodesic boundary on $Y$ described in \cite{CG}.  

\begin{remark}\label{about_the_Labourie_McShane_component}
In \cite{Labourie_McShane}, Labourie and McShane introduced a notion of Hitchin component for the $\PGL(n,\R)$-representation space of the fundamental group of a compact orientable surface with boundary $S$. The boundary condition that they impose on Fuchsian representations is that a simple loop around a boundary component should go to a purely loxodromic element of $\PGL(n,\R)$, and the Hitchin component in their sense, which we denote by $\mathscr{H}_S$, is then the connected component of this Fuchsian locus inside the subspace of $\Hom(\pi_1S,\PGL(n,\R))/\PGL(n,\R)$ consisting of all representations satisfying that boundary condition, thus generalizing the classical Teichm\"uller space of hyperbolic structures with totally geodesic boundary on $S$. They show in \cite[Theorem 9.2.2.2]{Labourie_McShane} that $\rho:\pi_1S\lra \PGL(n,\R)$ is a Hitchin representation in their sense if and only if it extends to a Hitchin representation $\widehat{\rho}:\pi_1(dS)\lra \PGL(n,\R)$, where $dS$ is the doubled surface, such that $\widehat{\rho}$ is $\sfrac{\Z}{2\Z}$-equivariant with respect to the natural involution of $dS$ and the involution of $\PGL(n,\R)$ given by conjugation by $J_n:=\mathrm{diag}(1,-1,1,-1,\,\ldots\, )\in\PGL(n,\R)$. Equivalently, $\widehat{\rho}$ is a representation of $\pi_1(dS) \rtimes (\sfrac{\Z}{2\Z})$ in $\PGL(n,\R)$. Since $\pi_1(dS)\rtimes (\sfrac{\Z}{2\Z})$ is isomorphic to the orbifold fundamental group of the orbifold $mS$ with underlying space $S$ obtained by decreeing that all boundary points of $S$ are mirror points, the Hitchin component $\mathscr{H}_S$ of $S$ in the sense of Labourie and McShane is indeed homeomorphic to the Hitchin component of $mS$ in the sense of Definition \ref{Hit_rep_def}, thus to the Hitchin component of $S$ in the sense of Definition \ref{Hitchin_rep_bdry_case}. In particular, Theorem \ref{theorem:dimension} and Corollary \ref{cases_of_validity_of_Hitchin_s_formula} will show that $\mathscr{H}_S$ is homeomorphic to an open ball of dimension $-\chi(S) (n^2-1)=-\chi (|mS|) (n^2-1)$.
\end{remark}

Finally, we prove that Theorem \ref{theorem:intro_A} indeed holds for orbifolds with boundary.

\begin{theorem}
Let $Y$ be a compact connected $2$-orbifold with boundary of negative Euler characteristic. Then a Hitchin representation $\rho : \piY \lra \PGL(n,\R)$ is $B$-Anosov, discrete, faithful and strongly irreducible. Moreover, for all $\gamma$ of infinite order in $\piY$, the element $\rho(\gamma)$ is diagonalizable with distinct real eigenvalues. 
\end{theorem}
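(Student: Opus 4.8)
The plan is to deduce every assertion from the closed orbifold $mY$, the mirror of $Y$. By Proposition~\ref{Hitchin_comp_for_orbifold_with_boundary}, the Hitchin representation $\rho$ of $\piY$ extends to a Hitchin representation $\ov{\rho}:\pi_1(mY)\lra\PGL(n,\R)$ of the \emph{closed} orbifold $mY$, and $\piY$ sits inside $\pi_1(mY)$ as a subgroup. Three of the five assertions are then immediate: $\rho=\ov{\rho}|_{\piY}$ is faithful since $\ov{\rho}$ is (Proposition~\ref{Hitchin_rep_faithful}); $\rho(\piY)$ is discrete, being a subset of the discrete group $\ov{\rho}(\pi_1(mY))$ (Corollary~\ref{Hitchin_rep_discrete}); and if $\ga\in\piY$ has infinite order, then so does $\ga$ in $\pi_1(mY)$, so $\rho(\ga)=\ov{\rho}(\ga)$ is diagonalizable with distinct real eigenvalues by Proposition~\ref{prop:Hitchin rep loxodromic}.

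There remain the $B$-Anosov property and strong irreducibility. The proofs of these in Section~\ref{section:properties} go through a \emph{finite-index} surface subgroup, whereas $\piY$ has \emph{infinite} index in $\pi_1(mY)$ (see the remark before Definition~\ref{Hitchin_rep_bdry_case}); the substitute is that $\piY$ is a quasiconvex subgroup of the word-hyperbolic group $\pi_1(mY)$. Indeed $\piY$ is the group of a discrete, faithful, convex cocompact hyperbolic structure on $Y$ with geodesic boundary (the orbifold $\rho(\piY)\bs\mathbf{H}^2$ being $Y$ with funnels attached), hence a convex-cocompact, in particular quasi-isometrically embedded, subgroup of $\pi_1(mY)$. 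Since $\ov{\rho}$ is $B$-Anosov (Proposition~\ref{prop:Anosov}) and the restriction of a $B$-Anosov representation to a quasi-isometrically embedded subgroup is again $B$-Anosov, by the standard functoriality of Anosov representations \cite{GW2,KLP}, the representation $\rho$ is $B$-Anosov.

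For strong irreducibility, let $\Ga'<\piY$ be of finite index; it is again quasiconvex in $\pi_1(mY)$, so the inclusion $\partial_\infty\Ga'\hookrightarrow\partial_\infty\pi_1(mY)$ lets us restrict the $\ov{\rho}$-equivariant hyperconvex curve $\xi:\partial_\infty\pi_1(mY)\lra\RP^{n-1}$ of Theorem~\ref{Hitchin_iff_hyperconvex} to a continuous $\rho|_{\Ga'}$-equivariant curve $\xi_{\Ga'}$ still carrying every $n$-tuple of distinct points to a basis; in particular the image of $\xi_{\Ga'}$ spans $\R^n$. By the previous paragraph $\rho|_{\Ga'}$ is $B$-Anosov, hence completely reducible. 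Were there a proper nonzero $\rho|_{\Ga'}$-invariant subspace $V$, complete reducibility would give an invariant complement $V'$; minimality of the $\Ga'$-action on $\partial_\infty\Ga'$ forces the closed invariant sets $\{x:\xi_{\Ga'}(x)\subset V\}$ and $\{x:\xi_{\Ga'}(x)\subset V'\}$ to be empty or all of $\partial_\infty\Ga'$, and they cannot be everything because $\xi_{\Ga'}$ spans $\R^n$ while $V,V'\neq\R^n$; but they cannot both be empty either, since for any infinite-order $\ga\in\Ga'$ the attracting eigenline $\xi_{\Ga'}(\ga^+)$ of $\rho(\ga)$ lies in $V$ or in $V'$ (as $\rho(\ga)$ has distinct real eigenvalues and $\R^n=V\oplus V'$ splits it). Hence $\rho|_{\Ga'}$ is $\PGL(n,\R)$-irreducible; the same argument over $\C$ (the real eigenlines of $\rho(\ga)$ remain defined over $\C$) gives $\PGL(n,\C)$-irreducibility, and as this holds for every finite-index $\Ga'$, the representation $\rho$ is strongly irreducible, exactly as in Lemma~\ref{lemma:centralizer}. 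Alternatively one may quote directly that a hyperconvex representation of a non-elementary hyperbolic group is irreducible \cite{Labourie,Guichard_hyperconvex}.

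The one genuinely new ingredient, compared with the closed case of Section~\ref{section:properties}, is this replacement of ``finite-index subgroup'' by ``quasiconvex subgroup'': the crux is to recognize $\piY\hookrightarrow\pi_1(mY)$ as quasiconvex and to use that both the Anosov property and the hyperconvex limit curve descend to quasiconvex subgroups. I expect this to be the only place a new argument is needed; everything else transcribes the closed-orbifold arguments.
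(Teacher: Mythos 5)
Your overall architecture coincides with the paper's: extend $\rho$ to a Hitchin representation $\ov{\rho}$ of the closed orbifold group $\pi_1(mY)$, observe that discreteness, faithfulness and loxodromicity of infinite-order elements descend trivially to the restriction, and obtain the $B$-Anosov property from the fact that $\piY\hookrightarrow\pi_1(mY)$ is a quasi-isometric embedding (the paper quotes \cite[Theorem 1.3]{GGKW_Anosov_actions} for the restriction of Anosov representations to quasi-isometrically embedded subgroups; your convex-cocompactness justification of the embedding is the same point as the paper's remark about funnels).

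The gap is in your strong irreducibility step. It rests on the assertion that $\rho|_{\Ga'}$ is ``$B$-Anosov, hence completely reducible'', and that implication is false: Borel-Anosov representations need not have completely reducible image (Barbot's non-reductive deformations of $j\oplus 1$ for surface groups in $\SL(3,\R)$ are Borel-Anosov but preserve a plane with no invariant complement). Without the invariant complement $V'$, your minimality-plus-eigenline argument only excludes invariant subspaces that contain some point $\xi_{\Ga'}(x)$ of the restricted limit curve; it says nothing about an invariant subspace meeting the curve trivially, and since complete reducibility of $\rho|_{\Ga'}$ is essentially part of the conclusion being proved, the step as justified is circular. The paper closes this step differently: the $B$-Anosov property gives proximality relative to full flags \cite[Lemma 3.1]{GW2}, and then \cite[Lemma 5.12]{GW2} (a proximal restriction of a hyperconvex representation is strongly irreducible) gives the claim. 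Your fallback citation -- that a hyperconvex representation of a non-elementary hyperbolic group is irreducible -- is proved in \cite{Labourie,Guichard_hyperconvex} only for surface groups; for the quasiconvex subgroup $\piY$, whose Gromov boundary is a Cantor set, the statement you need is exactly the one supplied by \cite[Lemma 5.12]{GW2}, so the proof is repaired by invoking that lemma (as the paper does) in place of the complete-reducibility claim; the rest of your argument, including the passage from $\R$- to $\C$-irreducibility via an element with $n$ distinct real eigenvalues, is fine.
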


\begin{proof}
By Definition \ref{Hitchin_comp_for_orbifold_with_boundary}, a Hitchin representation $\rho:\piY\lra\PGL(n,\R)$ extends to a Hitchin representation $\ov{\rho}:\pi_1(mY)\lra\PGL(n,\R)$. By the results of Section \ref{section:properties} (Theorem \ref{theorem:intro_A} for closed orbifolds), $\ov{\rho}$ is $B$-Anosov, strongly irreducible, discrete and faithful. Moreover, it sends elements of infinite order in $\piY$ to purely loxodromic elements of $\PGL(n,\R)$. So these last three properties also hold for $\rho=\ov{\rho}|_{\piY}$ and there only remains to prove that $\rho$ is $B$-Anosov and strongly irreducible. 

Since $\ov{\rho}$ is $B$-Anosov and $\piY\hookrightarrow \pi_1(mY)$ is a quasi-isometric embedding with respect to the word metric on each group, we have that $\rho=\ov{\rho}|_{\piY}$ is $B$-Anosov (\cite[Theorem 1.3]{GGKW_Anosov_actions}). As a consequence of the $B$-Anosov property, we have that $\rho$ is proximal relative to full flags (\cite[Lemma 3.1]{GW2}). But since $\rho=\ov{\rho}|_{\piY}$ with $\ov{\rho}$ Hitchin, \cite[Lemma 5.12]{GW2} shows that $\rho$ is strongly irreducible (the assumptions of Lemma 5.12 are satisfied because of \cite{Labourie} and its generalization to closed orbifolds in Section \ref{section:properties}).
\end{proof}

\section{Hitchin's equations in an equivariant setting}\label{Hitchin_s_equations}

In this section, we give a short presentation of the results of equivariant non-Abelian Hodge theory that we need for our purposes, using previous work of Simpson \cite{Simpson_JAMS,Simpson_local_systems}, Ho, Wilkin and Wu \cite{HWW} and Garc\'ia-Prada and Wilkin \cite{GPW}. Since we are only interested in certain particular groups of adjoint type in this paper, we can afford to work with Lie algebra bundles.

\subsection{From orbifold representations to equivariant flat bundles} 

Let us fix a presentation $Y\simeq [\Si\bs X]$ as in Definition \ref{pres_of_Y_def}. In particular, there is a short exact sequence $1\lra\piX\lra \piY\lra\Si\lra 1$ and the universal covers of $X$ and $Y$ are $\piX$-equivariantly isomorphic: $\Xt \simeq \Yt$. It is well-known that, if $G$ is a Lie group of adjoint type with Lie algebra $\fg$, and $\rho:\piX\lra G$ is a representation of $\piX$ in $G$, then there is, associated to it, a flat Lie algebra $G$-bundle $\cE_\rho := \piX\bs (\Xt\times\fg)$ on $X$. In Proposition \ref{Sigma_equivariant_structure_from_rep} below, we recall that, if $\rho: \piX \lra G$ is the restriction to $\piX$ of a representation of $\piY$ into $G$, then the action of $\Sigma$ on $X$ lifts to $\cE_\rho$, giving it a structure of $\Si$-equivariant bundle in the following sense.

\begin{definition}[Equivariant bundle]\label{equiv_bdle_smooth_case}
A $\Si$-equivariant Lie algebra $G$-bundle over $(X,\Si)$ is a pair $(E,\tau)$ consisting of a smooth Lie algebra $G$-bundle $E$ and a family $\tau=(\tau_\si)_{\si\in\Si}$ of bundle homomorphisms 
\begin{equation}\label{lift_of_the_action}
\xymatrix{
E \ar^{\tau_{\si}}[r] \ar[d] & E \ar[d]\\
X \ar^{\si}[r] & X
}\end{equation}
satisfying $\tau_{1_\Si} = \id_E$ and, for all $\si_1,\si_2$ in $\Si$, $\tau_{\si_1\si_2} = \tau_{\si_1}\tau_{\si_2}$.
A homomorphism of $\Si$-equivariant bundles over $X$ is a bundle homomorphism (over $\id_X$) that commutes to the $\Si$-equivariant structures. A $\Si$-sub-bundle of $(E,\tau)$ is a sub-bundle $F\subset E$ such that, for all $\si\in \Si$, $\tau_\si(F)\subset F$. In particular, $(F,\tau|_F)$ is itself a $\Si$-equivariant bundle on $X$.
\end{definition}

When we say Lie algebra $G$-bundle, we mean a locally trivial $G$-bundle whose fibers are modeled on a Lie algebra  equipped with an effective action of $G$ by Lie algebra automorphisms. The most important case for us is when $G$ is of adjoint type and $\fg = \mathrm{Lie}(G)$. Homomorphisms of such bundles are understood to be Lie algebra homomorphisms fiberwise. A definition similar to Definition \ref{equiv_bdle_smooth_case} of course holds for usual vector bundles, as well as for principal bundles. If $(E,\tau)$ is a $\Si$-equivariant bundle on $X$, there are canonical isomorphisms $\phi_\si: E\overset{\simeq}{\lra} \si^* E$, satisfying $\phi_{1_\Si} = \id_E$ and $\phi_{\si_1\si_2} = (\si_2^* \phi_{\si_1}) \phi_{\si_2}$ for all $\si_1,\si_2$ in $\Si$. Conversely, such a family $(\phi_\si)_{\si\in\Si}$ defines a $\Si$-equivariant structure $\tau$ on $E$, the relation between the two notions being given by 
$\tau_\si=\sit^{E} \circ\phi_\si$, where $\sit^{E}$ is the canonical map $\si^*E\lra E$ over $\si:X\lra X$, satisfying $\widetilde{\si_1\si_2}^{E}=\widetilde{\si_1}^{E}\,\widetilde{\si_2}^{\si_1^* E}$ and $\si_2^*\phi_{\si_1} = (\widetilde{\si_2}^{\si_1^* E})^{-1}\phi_{\si_1}\widetilde{\si_2}^{E}$. In what follows, given a $\Si$-equivariant bundle $(E,\tau)$, we will always identify $E$ with $\si^*E$ using $\phi_\si$. In particular, there is an induced action of $\Si$ on the space of $G$-connections on $E$, which we denote by $\cA_E$. 
 This action is defined as follows: if $\nabla$ is a $G$-connection on $E$ and $\si\in\Si$, then $\nabla^\si:=\si^*\nabla$ is a connection on $\si^*E$, which has been canonically identified with $E$ via $\phi_{\si}$. This sets up a right action of $\Si$ on $\cA_E$, which may, equivalently, be defined by noting that $\Si$ acts on $\Om^k(X;E)=\Ga(\Lambda^k T^*X\otimes E)$, by $\si\cdot\omega := (\si\otimes \tau_\si) \circ \omega \circ \si^{-1}$ (see \eqref{action_on_Higgs_fields}), and setting $\nabla^\si:=\si^{-1}\nabla \si$ (see Proposition \ref{action_on_Chern_connections}). The group $\Si$ also acts on the gauge group $\cG_E$ of $E$ via $u^\si := \si^* u$ (or, equivalently, $u^\si=\tau_\si^{-1} \circ u \circ \tau_{\si}$). The $\Si$-action on $\cA_E$ is then compatible with the gauge action on that space, in the sense that $(u^{-1}\nabla u)^\si = (u^\si)^{-1}\nabla^\si u^\si.$ In particular, $\Fix_\Si\,(\cG_E)$ acts on $\Fix_\Si(\cA_E)$. We also observe that $F_{\nabla^\si}=\si^*F_\nabla=:F_\nabla^\si$ in $\Om^2(X;E)$. In particular, $\Si$ acts on the set $F^{-1}(0)$ of flat connections on $E$. It remains to see that, if $\rho:\piY\lra G$ is a group homomorphism, then there is indeed a canonical $\Si$-equivariant structure on the Lie algebra bundle $\cE_\rho = \piX\bs(\Xt\times\fg)$ over $X$.

\begin{proposition}\label{Sigma_equivariant_structure_from_rep}
Given $\ga\in\piY$, the map \begin{equation}\label{Si_equiv_structure_on_flat_bundle}
\widetilde{\tau}_\ga:  \Xt\times \fg \ni  (\eta,v)  \lmt  (\ga\cdot \eta,\rho(\ga)\cdot v) \in \Xt\times \fg
\end{equation} 
descends to a map $\tau_\si$ on $\cE_\rho$ that only depends on the class $\si$ of $\ga$ in $\Si = \piY/\piX$. The collection $\tau:=(\tau_\si)_{\si\in\Si}$ of these maps defines a $\Si$-equivariant structure on $\cE_\rho$. Moreover, the canonical flat connection on $\cE_\rho$, induced by the trivial connection on $\Xt\times\fg$, is $\Si$-invariant with respect to the action of $\Si$ on the space of connections on $\cE_\rho$ associated to $\tau$.
\end{proposition}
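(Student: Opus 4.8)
The plan is to perform every construction upstairs on the product bundle $\Xt\times\fg$ and then descend along the quotient by $\piX$. The structure to keep in mind is the following: $\piX\lhd\piY$ with $\Si\simeq\piY/\piX$; the isomorphism $\Xt\simeq\Yt$ is $\piX$-equivariant, and $\piY$ acts on $\Xt$ extending the deck action of $\piX$, this action descending on $X=\piX\bs\Xt$ to the $\Si$-action coming from the presentation $Y\simeq[\Si\bs X]$; finally $\piX$ acts on $\Xt\times\fg$ by $\delta\cdot(\eta,v)=(\delta\cdot\eta,\rho(\delta)\cdot v)$ (this makes sense because $G$ is of adjoint type, so $\rho(\delta)$ acts on $\fg$ by a Lie algebra automorphism), and $\cE_\rho$ is the quotient. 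In particular each map $\widetilde{\tau}_\ga$ from \eqref{Si_equiv_structure_on_flat_bundle} is, tautologically, a fibrewise Lie algebra automorphism of $\Xt\times\fg$ lying over the diffeomorphism $\eta\lmt\ga\cdot\eta$ of $\Xt$.

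The first and main step is to check that $\widetilde{\tau}_\ga$ passes to the quotient. For $\delta\in\piX$, a short computation using that $\rho$ is a homomorphism gives $\widetilde{\tau}_\ga\big(\delta\cdot(\eta,v)\big)=(\ga\delta\ga^{-1})\cdot\widetilde{\tau}_\ga(\eta,v)$, and $\ga\delta\ga^{-1}\in\piX$ by normality; hence $\widetilde{\tau}_\ga$ carries $\piX$-orbits to $\piX$-orbits and descends to a bundle map $\tau_\ga$ of $\cE_\rho$ lying over the diffeomorphism $\si$ of $X$ induced by $\ga$, which is fibrewise a Lie algebra automorphism because $\widetilde{\tau}_\ga$ is. Choosing a second lift $\delta_0\ga$ of $\si$, with $\delta_0\in\piX$, one has $\widetilde{\tau}_{\delta_0\ga}=\delta_0\cdot\widetilde{\tau}_\ga$, which has the same image in $\cE_\rho$; thus $\tau_\si:=\tau_\ga$ is independent of the chosen lift of $\si$. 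This establishes the first claim of the statement.

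Next I would verify the two axioms of Definition \ref{equiv_bdle_smooth_case}. Taking $\ga=1_{\piY}$ gives $\widetilde{\tau}_{1}=\id$, hence $\tau_{1_\Si}=\id_{\cE_\rho}$; and for $\si_1,\si_2\in\Si$ with lifts $\ga_1,\ga_2$, the element $\ga_1\ga_2$ is a lift of $\si_1\si_2$ and $\widetilde{\tau}_{\ga_1}\circ\widetilde{\tau}_{\ga_2}=\widetilde{\tau}_{\ga_1\ga_2}$ holds on $\Xt\times\fg$ (again because $\rho$ is a homomorphism), which descends to $\tau_{\si_1}\tau_{\si_2}=\tau_{\si_1\si_2}$ on $\cE_\rho$. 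So $\tau=(\tau_\si)_{\si\in\Si}$ is a $\Si$-equivariant structure. Finally, since $\widetilde{\tau}_\ga$ acts on the $\fg$-factor through the \emph{constant} automorphism $\rho(\ga)$ (independent of the point of $\Xt$), it preserves the trivial product connection $d$ on $\Xt\times\fg$; descending this invariance, each $\tau_\si$ preserves the flat connection on $\cE_\rho$ induced by $d$, which is exactly the assertion that this connection lies in $\Fix_\Si(\cA_{\cE_\rho})$ for the $\Si$-action on connections attached to $\tau$.

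I do not anticipate a serious obstacle: the whole argument is formal. The only point requiring genuine care is the descent of $\widetilde{\tau}_\ga$ to $\cE_\rho$ — namely its compatibility with the $\piX$-action and the independence of the chosen lift of $\si$ — and this rests entirely on the normality of $\piX$ in $\piY$ together with the multiplicativity of $\rho$.
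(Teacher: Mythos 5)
Your argument is correct and is essentially the proof given in the paper: the descent of $\widetilde{\tau}_\ga$ via the relation $\widetilde{\tau}_\ga\widetilde{\tau}_\delta=\widetilde{\tau}_{\ga\delta\ga^{-1}}\widetilde{\tau}_\ga$, the independence of the lift from $\widetilde{\tau}_{\delta_0\ga}=\widetilde{\tau}_{\delta_0}\widetilde{\tau}_\ga$, and the $\Si$-invariance of the induced connection from the $\piY$-invariance of the trivial connection on $\Xt\times\fg$ (since $\rho(\ga)$ acts fibrewise by a constant automorphism) are exactly the steps the authors use. Your explicit verification of the cocycle axioms $\tau_{1_\Si}=\id$ and $\tau_{\si_1\si_2}=\tau_{\si_1}\tau_{\si_2}$ merely spells out what the paper leaves implicit.
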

\begin{proof}
Let us check that $\widetilde{\tau}_\ga$ descends to $\cE_\rho$: if $\delta\in\piX$, then $\widetilde{\tau}_\ga \widetilde{\tau}_\delta = \widetilde{\tau}_{\delta'} \widetilde{\tau}_\ga $ with $\delta' := \ga\delta\ga^{-1}\in\piX$. A similar computation shows that the induced transformation of $\cE_\rho$ indeed only depends on the class of $\ga$ modulo $\piX$. The connection on $\cE_\rho$ induced by the trivial connection on $\mathcal{V} := \Xt\times\fg$ is $\Si$-invariant because the trivial connection on $\mathcal{V}$ is $\piY$-invariant with respect to the $\piY$-equivariant structure $(\widetilde{\tau}_\ga)_{\ga\in\piY}$ on $\mathcal{V}$. 
\end{proof}

Therefore, given a presentation $Y\simeq[\Si\bs X]$, we have set up a map 
\begin{equation}\label{from_orbi_reps_to_flat_equiv_bundles}
\Hom(\piY,G)/G \lra \{ \Si\textrm{-equivariant}\ \textrm{flat}\ G\textrm{-bundles\ on}\ X \} /\textrm{isomorphism},
\end{equation} where a $\Si$-equivariant flat bundle is defined as follows.

\begin{definition}[Equivariant flat bundle]\label{equiv_flat_bdle_def}
A $\Si$-\textit{equivariant flat bundle} on $(X,\Si)$ is a triple $(E,\nabla,\tau)$ where $(E,\nabla)$ is a flat bundle on $X$ and $\tau$ is a $\Si$-equivariant structure on $E$ that leaves the connection $\nabla$ invariant. A homomorphism of $\Si$-equivariant flat bundles is a homomorphism of flat bundles that commutes with the $\Si$-equivariant structures.
\end{definition}

An inverse map to \eqref{from_orbi_reps_to_flat_equiv_bundles} is provided by the holonomy of $\Si$-invariant flat connections. More precisely, as in \cite{Ho-Liu,Sch_JDG}, we will have one such holonomy map for each isomorphism class of $\Si$-equivariant bundles. To prove this, we first need a description of $\piY$ in terms of paths in $X$. Let us choose a point $x\in X$ and consider the set $P_x$, consisting of pairs $([c],\si)$ where $\si\in \Si$ and $[c]$ is the homotopy class of a path $c:[0,1]\lra X$ satisfying $c (0)=x$ and $c(1)= \si(x)$, equipped with the group law $([c_1],\si_1)([c_2],\si_2) = ([c_1(\si_1\circ c_2)],\si_1\si_2)$. Our convention for concatenating paths is from left to right, so the above group law is well-defined. Note that, if $\Si$ has fixed points in $X$ and $x\in \Fix_{\Si}(X)$, then $P_x\simeq\piX\rtimes\Si$ for the natural left action of $\Si$ on $\piX$. In what follows, we denote by $\xt$ the base point of $\Xt$ corresponding to the homotopy class of the constant path at $x$ in $X$. Recall that $\piY=\Aut_Y(\Yt)$.

\begin{lemma}
The map $\piY \lra P_x$ sending $\ga\in\piY$ to $([c_\ga],\si)$, where $c_\ga:[0,1]\lra X$ is the projection to $X$ of an arbitrary path from $\xt$ to $\ga(\xt)$ in $\Xt$ and $\si$ is the class of $\ga$ in $\Si= \piY/\piX$, is a group isomorphism.
\end{lemma}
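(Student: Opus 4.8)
The plan is to reduce the statement to ordinary covering space theory. Recall that since $X$ is a closed orientable surface with $\chi(X)=|\Si|\,\chi(Y)<0$, it is a manifold and $\Xt=\Yt$ carries no orbifold structure: the projection $p:\Xt\to X$ is an honest covering with deck group $\piX$ acting freely, and $\Xt$ is simply connected. Moreover $p$ is $\piY$-equivariant for the tautological action of $\piY$ on $\Xt$ and the action of $\piY$ on $X$ through $\Si=\piY/\piX$ (this is the $\Si$-action built into the presentation $Y\simeq[\Si\bs X]$), so that $p\circ\ga=\si\circ p$ whenever $\si\in\Si$ is the class of $\ga\in\piY$. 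I would first check that the assignment is well defined: any two paths in $\Xt$ from $\xt$ to $\ga(\xt)$ are homotopic rel endpoints (since $\Xt$ is simply connected), hence project to paths in $X$ homotopic rel endpoints, so $[c_\ga]$ is independent of the chosen lift; and $p\circ\ga=\si\circ p$ gives $c_\ga(0)=x$, $c_\ga(1)=\si(x)$, so indeed $([c_\ga],\si)\in P_x$.

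Next I would verify the homomorphism property. Fix $\ga_1,\ga_2\in\piY$ with classes $\si_1,\si_2$, and choose lifts $\widetilde{c}_i$ from $\xt$ to $\ga_i(\xt)$. Then $\ga_1\circ\widetilde{c}_2$ runs from $\ga_1(\xt)$ to $\ga_1\ga_2(\xt)$, so the concatenation $\widetilde{c}_1\cdot(\ga_1\circ\widetilde{c}_2)$ is an admissible choice of lift for $\ga_1\ga_2$; projecting by $p$ and using $p\circ\ga_1=\si_1\circ p$ gives $c_{\ga_1\ga_2}\simeq c_{\ga_1}\cdot(\si_1\circ c_{\ga_2})$, which is precisely the first coordinate of $([c_{\ga_1}],\si_1)([c_{\ga_2}],\si_2)$. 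The $\Si$-coordinates multiply correctly by definition of the map $\piY\to\Si$, so the map is a group homomorphism.

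Then bijectivity. For injectivity, if $\ga$ maps to $([\mathrm{const}_x],1_\Si)$ then $\ga\in\piX$ and the loop $c_\ga$ at $x$ is null-homotopic; but by the classical isomorphism $\piX\overset{\simeq}{\lra}\pi_1(X,x)$ attached to the covering $p$ and base point $\xt$ (which is exactly the restriction to $\piX$ of the map under study), $[c_\ga]$ equals the image of $\ga$, so $\ga=1$. For surjectivity, given $([c],\si)\in P_x$, lift $c$ (with $c(0)=x$) to $\widetilde{c}$ in $\Xt$ with $\widetilde{c}(0)=\xt$, so $\widetilde{c}(1)\in p^{-1}(\si(x))$. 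Choosing any $\ga_0\in\piY$ with class $\si$, the map $\delta\mapsto\ga_0\delta(\xt)$ is a bijection from $\piX$ onto $p^{-1}(\si(x))$: it lands there and is onto because $\ga_0$ carries $p^{-1}(x)=\piX\cdot\xt$ bijectively onto $p^{-1}(\si(x))$ by equivariance, and it is injective because $\piX$ acts freely on $\Xt$. Hence there is a (unique) $\ga\in\ga_0\piX$ with $\ga(\xt)=\widetilde{c}(1)$; it has class $\si$, and since $\widetilde{c}$ is then an admissible lift, $c_\ga\simeq c$, i.e. $\ga\mapsto([c],\si)$.

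There is no substantial obstacle here; the points deserving care are the simple connectedness of $\Xt$ (used for well-definedness) and the fact that only the free $\piX$-action on $\Xt$, not the possibly non-free $\piY$-action, enters the argument (specifically in the injectivity clause of the surjectivity step). I would also remark, as the statement's surrounding text does, that when $\Si$ has a global fixed point $x\in X$ one recovers in this way the concrete description $P_x\simeq\piX\rtimes\Si$.
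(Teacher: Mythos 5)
Your proof is correct and follows essentially the same covering-space argument as the paper: well-definedness from simple connectedness of $\Xt$, the homomorphism property via concatenation of lifted paths, and injectivity/surjectivity from path lifting together with the free $\piX$-action. The only (minor) variation is in the surjectivity step, where you pick some $\ga_0\in\piY$ over $\si$ using the exact sequence $1\to\piX\to\piY\to\Si\to 1$ and correct it by a deck transformation to hit $\widetilde{c}(1)$, whereas the paper constructs $\ga$ directly as the unique lift of $\si\circ q$ through the universal cover sending $\xt$ to $\widetilde{c}(1)$; both are routine and equivalent.
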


\begin{proof}
Note that $[c_\ga]$ is well-defined because $\Xt$ is simply connected. Moreover, the map $\ga\lmt([c_\ga],\si)$ is a group homomorphism. To see that it is injective, assume that $([c_\ga],\si)= (\xt,1_\Si)$. As $\si=1_\Si$, we have that $\ga\in\piX$. And since $c_\ga$ is homotopic to the constant path at $x$ in $X$, the path it lifts to in $\Xt$ goes from $\xt$ to $\xt$. In particular, $\ga(\xt)=\xt$, and since $\ga\in\piX$ and $\piX$ acts freely on $\Xt$, this implies that $\ga=1_{\piX}=1_{\piY}$. To see that our map $\piY\lra P_x$ is also surjective, take $([c],\si)$ in $P_x$ and let us denote by $q$ the universal covering map $q:\Xt\lra X$. The path $c$ goes from $x$ to $\si(x)$ in $X$ and it lifts to a path from $\xt$ to a point $\eta$ in the fiber of $q$ over $\si(x)$. Since $\si\circ q: \Xt\lra X$ is also a universal covering map, there exists a unique continuous map $\ga:\Xt\lra\Xt$ such that $q\circ \ga=\si\circ q$ and $\ga(\xt)=\eta$. Since $\ga:\Xt\lra\Xt$ lies over $\si:X\lra X$, we have that $\ga$ maps to $\si$ in $\piY/\piX$ and, since $\si:X\lra X$ lies over $\id_Y$, we also have that $\ga\in\Aut_Y(\Yt)=\piY$. Finally, by definition of $[c_\ga]$, we have that $[c_\ga]=[c]$.
\end{proof}

\noindent For all $([c],\si)\in P_x$, we consider the map $\tau_{\si}^{-1}\circ T^\nabla_c: E_x \lra E_x$ obtained by composing the parallel transport operator along the path $c$ with respect to $\nabla$ by the bundle map $\tau_\si^{-1}$. Because of our convention on concatenation of paths, this will be a group anti-homomorphism from $P_x$ to $\Aut(E_x)$, as we now show.

\begin{theorem}\label{hol_of_inv_conn}
Given a presentation $Y\simeq[\Si\bs X]$ and a $\Si$-equivariant flat $G$-bundle $(E,\nabla,\tau)$ over $X$, there is a group homomorphism $\widetilde{\rho_\nabla}: \piY\simeq P_x \lra G$ obtained by taking $([c],\si)\in P_x$ to $\tau_\si^{-1}\circ T^\nabla_c\in\Aut(E_x)$. Moreover, the restriction of $\widetilde{\rho_\nabla}$ to $\piX < \piY$ is the holonomy representation $\rho_\nabla:\piX\lra G$, and two gauge-equivalent connections induce conjugate representations. We therefore obtain a continuous map $\Fix_\Si\,(F^{-1}(0)) / \Fix_\Si\,(\cG_E) \lra \Hom(\piY,G)/G$ from gauge orbits of $\Si$-invariant, flat connections on $(E,\tau)$ to $\Rep(\piY,G)$ which, composed with the map \eqref{from_orbi_reps_to_flat_equiv_bundles}, is the identity map of $\Fix_\Si\,(F^{-1}(0)) / \Fix_\Si\,(\cG_E)$.
\end{theorem}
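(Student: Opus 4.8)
The statement bundles together five claims, and the plan is to check them in order: $(\mathrm{i})$ the formula $([c],\si)\mapsto\tau_\si^{-1}\circ T^\nabla_c$ defines a group anti-homomorphism $P_x\to\Aut(E_x)$, hence (after the routine normalization of conventions) a homomorphism $\widetilde{\rho_\nabla}:\piY\simeq P_x\lra G$; $(\mathrm{ii})$ its restriction to $\piX$ is the usual holonomy $\rho_\nabla$; $(\mathrm{iii})$ $\Si$-invariant gauge transformations act on it by $G$-conjugation; $(\mathrm{iv})$ it depends continuously on $\nabla$; and $(\mathrm{v})$ the composite with \eqref{from_orbi_reps_to_flat_equiv_bundles} sends $[\nabla]$ back to the isomorphism class of $(E,\nabla,\tau)$.

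For $(\mathrm{i})$ I would isolate the two structural facts about parallel transport that do the work. The first is functoriality under our left-to-right concatenation convention: $T^\nabla_{c_1\cdot c_2}=T^\nabla_{c_2}\circ T^\nabla_{c_1}$ whenever $c_1(1)=c_2(0)$. The second is equivariance: since $\tau_\si$ leaves $\nabla$ invariant (Definition \ref{equiv_flat_bdle_def}), a $\nabla$-parallel section along $c$ is carried by $\tau_\si$ to a $\nabla$-parallel section along $\si\circ c$, which gives $T^\nabla_{\si\circ c}=\tau_\si\circ T^\nabla_c\circ\tau_\si^{-1}$. Plugging these, together with the cocycle identity $\tau_{\si_1\si_2}=\tau_{\si_1}\tau_{\si_2}$ and $\tau_{1_\Si}=\id_E$, into the group law $([c_1],\si_1)([c_2],\si_2)=([c_1(\si_1\circ c_2)],\si_1\si_2)$ of $P_x$, a three-line computation yields $\tau_{\si_1\si_2}^{-1}\circ T^\nabla_{c_1(\si_1\circ c_2)}=(\tau_{\si_2}^{-1}\circ T^\nabla_{c_2})\circ(\tau_{\si_1}^{-1}\circ T^\nabla_{c_1})$, i.e.\ the asserted anti-homomorphism $P_x\to\Aut(E_x)$. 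Because $\nabla$ is a $G$-connection and the $\tau_\si$ are $G$-bundle maps, each $\tau_\si^{-1}\circ T^\nabla_c$ lies in the copy of $G$ in $\Aut(E_x)$ determined by a framing $E_x\simeq\fg$, and composing the anti-homomorphism with inversion on $\piY$ (equivalently, reading off $G$ through the right-action convention on the fibers) turns it into the homomorphism $\widetilde{\rho_\nabla}:\piY\lra G$; its conjugacy class is independent of the framing. Part $(\mathrm{ii})$ is then immediate, since any element of $\piX\lhd\piY$ is represented by some $([c],1_\Si)$ with $c$ a loop at $x$, and $\tau_{1_\Si}=\id_E$ collapses the formula to parallel transport around $c$.

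For $(\mathrm{iii})$, if $u\in\Fix_\Si(\cG_E)$ then the familiar identity $T^{u^{-1}\nabla u}_c=u_{c(1)}^{-1}\circ T^\nabla_c\circ u_{c(0)}$, combined with the fact that $\Si$-invariance $u^\si=u$ reads $u_{\si(x)}=\tau_\si\circ u_x\circ\tau_\si^{-1}$, gives $\widetilde{\rho_{u^{-1}\nabla u}}=u_x^{-1}\,\widetilde{\rho_\nabla}\,u_x$, conjugation by the element $u_x\in G$. For $(\mathrm{iv})$ one uses that $T^\nabla_c$ solves a linear ODE whose coefficients depend smoothly on the connection form of $\nabla$, so $\nabla\mapsto\widetilde{\rho_\nabla}$ is continuous on $\cA_E$, hence descends to a continuous map on $\Fix_\Si(F^{-1}(0))/\Fix_\Si(\cG_E)$ with values in $\Hom(\piY,G)/G$.

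Finally, for $(\mathrm{v})$: set $\rho:=\widetilde{\rho_\nabla}$ and form $(\cE_\rho,\nabla_{\mathrm{can}},\tau')$ via \eqref{from_orbi_reps_to_flat_equiv_bundles} and Proposition \ref{Sigma_equivariant_structure_from_rep}; I would produce an explicit isomorphism $(\cE_\rho,\nabla_{\mathrm{can}},\tau')\simeq(E,\nabla,\tau)$ of $\Si$-equivariant flat bundles. Trivializing the pullback of $E$ to the simply connected $\Xt$ by $\nabla$-parallel transport from the chosen lift $\xt$ of $x$, together with the framing $E_x\simeq\fg$, identifies it with $\Xt\times\fg$, carries $\nabla$ to the trivial connection, and --- by the definition of $\rho_\nabla$ --- turns the deck action of $\piX$ into exactly the action defining $\cE_\rho$, so $\cE_\rho\simeq E$ compatibly with the flat connections. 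That this same isomorphism intertwines $\tau'$ with $\tau$ is precisely what the correction factor $\tau_\si^{-1}$ in the holonomy formula is designed to guarantee: unwinding Proposition \ref{Sigma_equivariant_structure_from_rep}, $\tau'_\si$ becomes, in the trivialization, the descent of $(\eta,v)\mapsto(\ga\cdot\eta,\rho(\ga)\cdot v)$ with $\rho(\ga)=\tau_\si^{-1}\circ T^\nabla_{c_\ga}$, which is $\tau_\si$ itself. I expect the genuine difficulty here to be purely organizational rather than conceptual: the real work is keeping the concatenation and anti-homomorphism-versus-homomorphism conventions consistent throughout, and verifying in $(\mathrm{v})$ that the $\Si$-equivariant structures --- and not merely the underlying flat bundles --- are matched; everything else is the equivariant shadow of the classical Riemann--Hilbert correspondence.
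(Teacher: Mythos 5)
Your proposal is correct and follows essentially the same route as the paper: the heart of the argument in both is the intertwining relation $T^\nabla_{\si\circ c}=\tau_\si\circ T^\nabla_c\circ\tau_\si^{-1}$ for a $\Si$-invariant connection, combined with the left-to-right concatenation convention to obtain the anti-homomorphism property of $([c],\si)\lmt\tau_\si^{-1}\circ T^\nabla_c$ (with the same routine normalization of conventions to pass to a homomorphism into $G$). The paper dispatches your points (ii)--(v) with the remark that the rest is proved as in the case $\Si=\{1\}$, so your explicit treatment of the gauge action, continuity, and the matching of the $\Si$-equivariant structures in the inverse construction simply fills in details the paper defers to the classical case.
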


\begin{proof}
The statement follows from the definition of $P_x$ and the properties of parallel transport operators, namely that, if $T^\nabla_c$ is the parallel transport operator along the path $c$ with respect to a connection $\nabla$, there is a commutative diagram $$\xymatrix{
E_{c(0)} \ar^{T^{\nabla^\si}_c}[r] \ar^{\tau_\si}[d] & E_{c(1)} \ar^{\tau_\si}[d] \\
E_{\si(c(0))} \ar^{T^\nabla_{\si\circ c}}[r] & E_{\si(c(1))}
}$$ where, as earlier, $\nabla^\si=\si^*\nabla$ (by definition). For a detailed proof of the above, we refer for instance to \cite[Section 4.1]{Sch_JDG}. In particular, if $\nabla^\si=\nabla$, then $T^\nabla_{\si\circ c} = \tau_\si T^\nabla_c \tau_\si^{-1}$, which readily implies that the map $([c],\si)\lmt \tau_\si^{-1}\circ T^\nabla_c$ is a group anti-homomorphism from $\piY$ to $\Aut(E_x)$ (since $T^\nabla_{c_1(\si_1\circ c_2)} = T^\nabla_{\si_1\circ c_2} \circ T^\nabla_{c_1}$, due to our convention on concatenation of paths). The rest of the theorem is proved as in the case $\Si=\{1\}$.
\end{proof}

\begin{corollary}\label{rep_of_piY_and_flat_bdles_on_X}
Under the assumptions of Theorem \ref{hol_of_inv_conn}, there is a homeomorphism $$\Hom(\piY,G)/G \simeq \bigsqcup_{[E,\tau]\in \mathcal{P}_\Si} \Fix_\Si\,(F^{-1}(0)) /\Fix_\Si\,(\cG_E),$$ where $\mathcal{P}_\Si$ is the set of isomorphism classes of $\Si$-equivariant smooth Lie algebra $G$-bundles with fiber $\fg=\mathrm{Lie}(G)$ on $X$.
\end{corollary}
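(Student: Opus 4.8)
The plan is to assemble two constructions already set up in this section. Write $\Psi$ for the map \eqref{from_orbi_reps_to_flat_equiv_bundles}, sending a conjugacy class $[\rho]$ to the isomorphism class of the equivariant flat bundle $\cE_\rho$, and $\mathrm{hol}$ for the map obtained by gluing, over all $[E,\tau]\in\mathcal{P}_\Si$, the holonomy maps $\mathrm{hol}_{[E,\tau]}\colon\Fix_\Si(F^{-1}(0))/\Fix_\Si(\cG_E)\lra\Hom(\piY,G)/G$ of Theorem \ref{hol_of_inv_conn}. First I would check that the target of $\Psi$ is exactly the right-hand side of the statement: every $\Si$-equivariant flat bundle $(E,\nabla,\tau)$ has an underlying $\Si$-equivariant smooth Lie algebra $G$-bundle $(E,\tau)$, whose isomorphism class in $\mathcal{P}_\Si$ partitions the set of isomorphism classes of equivariant flat bundles, and, once a representative $(E,\tau)$ of such a class is fixed, the discussion preceding Definition \ref{equiv_flat_bdle_def} shows that $\Si$ acts on $\cA_E$ preserving the flat locus $F^{-1}(0)$, that $\Fix_\Si(\cG_E)$ acts on $\Fix_\Si(\cA_E)$, and that two $\Si$-invariant flat connections on $(E,\tau)$ yield isomorphic $\Si$-equivariant flat bundles exactly when they lie in the same $\Fix_\Si(\cG_E)$-orbit. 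Hence the set of isomorphism classes of $\Si$-equivariant flat $G$-bundles on $X$ is naturally $\bigsqcup_{[E,\tau]\in\mathcal{P}_\Si}\Fix_\Si(F^{-1}(0))/\Fix_\Si(\cG_E)$, and what remains is to see that $\Psi$ is a homeomorphism onto it.

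To see that $\Psi$ is a bijection, I would use that Theorem \ref{hol_of_inv_conn} already gives $\Psi\circ\mathrm{hol}_{[E,\tau]}=\mathrm{id}$ on each piece, hence $\Psi\circ\mathrm{hol}=\mathrm{id}$; in particular $\Psi$ is surjective, each $\mathrm{hol}_{[E,\tau]}$ is injective, and the images of $\mathrm{hol}_{[E,\tau]}$ and $\mathrm{hol}_{[E',\tau']}$ for distinct classes are disjoint, since a common value would be sent by $\Psi$ to both classes at once. It then remains to check $\mathrm{hol}\circ\Psi=\mathrm{id}$, i.e.\ that the holonomy of $\cE_\rho$ equipped with its canonical flat connection and the equivariant structure of Proposition \ref{Sigma_equivariant_structure_from_rep} recovers $[\rho]$. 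This I would deduce from the classical monodromy correspondence for flat bundles on $X$ (which recovers $\rho|_{\piX}$ up to conjugacy) together with the explicit formula \eqref{Si_equiv_structure_on_flat_bundle} for $\widetilde{\tau}_\ga$: choosing a base point $\xt\in\Xt$ above $x\in X$ identifies $(\cE_\rho)_x$ with $\fg$, hence $\Aut\bigl((\cE_\rho)_x\bigr)$ with a group containing $G$, and under this identification the operator $\tau_\si^{-1}\circ T^\nabla_{c_\ga}$ of Theorem \ref{hol_of_inv_conn} is $\rho(\ga)$ for every $\ga\in\piY$, not merely for $\ga\in\piX$. Thus $\Psi$ and $\mathrm{hol}$ would be mutually inverse bijections.

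Finally, I would topologize $\Hom(\piY,G)/G$ as a quotient of the real analytic subset $\Hom(\piY,G)\subset G^N$ (compact-open topology), each $\cA_E$ with its $C^\infty$ topology and $F^{-1}(0)$, $\Fix_\Si(\cG_E)$ and the quotients $\Fix_\Si(F^{-1}(0))/\Fix_\Si(\cG_E)$ with the induced and quotient topologies, and the disjoint union with its natural topology. Then $\mathrm{hol}$ is continuous by Theorem \ref{hol_of_inv_conn}, and $\Psi$ is continuous because $\cE_\rho$, its canonical flat connection, and its $\Si$-equivariant structure all depend continuously on $\rho$; a continuous bijection with continuous inverse is a homeomorphism. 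I expect the main obstacle to be exactly this last, bookkeeping step --- verifying bicontinuity with respect to all the topologies at play --- rather than anything conceptual. The cleanest route is to reduce to the non-equivariant case $\Si=\{1\}$, which is the classical correspondence between flat $G$-bundles and representations of $\piX$, and then to observe that taking $\Si$-fixed points is compatible with these topologies: the $\Si$-invariance conditions cut out closed subspaces that the maps $\Psi$ and $\mathrm{hol}$ respect, so no analytic input is needed beyond what already enters Theorem \ref{hol_of_inv_conn}.
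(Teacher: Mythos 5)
Your proposal is correct and follows essentially the same route as the paper, which deduces the corollary directly from Theorem \ref{hol_of_inv_conn} together with the construction \eqref{from_orbi_reps_to_flat_equiv_bundles}: the two maps are mutually inverse (the paper leaves the check $\mathrm{hol}\circ\Psi=\mathrm{id}$ and the continuity bookkeeping implicit, remarking only that the rest is ``as in the case $\Si=\{1\}$''). Your extra verifications — the partition by the isomorphism class of the underlying equivariant bundle and the explicit computation of the extended holonomy of $\cE_\rho$ — are exactly the details the paper takes for granted.
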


\subsection{From equivariant flat bundles to equivariant harmonic bundles}\label{from_flat_to_harmonic}

Let now $\fg$ be a real semisimple Lie algebra and let $G$ be the group of real points of $\Int(\fg\otimes\C)$. Let $(E,\nabla)$ be a flat Lie algebra bundle over $X$, with typical fiber $\fg$ and structure group $G$. Choose a Cartan involution $\theta:G\lra G$ and denote by $K:=\Fix(\theta) < G$ the associated maximal compact subgroup of $G$. The induced Lie algebra automorphism will also be denoted by $\theta$. Let $\rho_\nabla:\piX\lra G$ be the holonomy representation associated to the flat connection $\nabla$. For a flat $G$-bundle, a reduction of structure group from $G$ to $K=\Fix(\theta)$ (also called a $K$-reduction) can be defined as $\piX$-equivariant map $f:\Xt\lra G/K$, where $\piX$ acts on $G/K$ via $\rho_\nabla$ and left translations by elements of $G$. If 
such a map $f$ is given, then  $E=\piX\bs(\Xt\times \fg)$ inherits an involutive automorphism $\theta_f$, induced by the map \begin{equation}\label{Cartan_inv_on_reduced_flat_bundle}
\widetilde{\theta_f}: \Xt\times \fg \ni  (\eta,v) \lmt \big(\eta, \Ad_{f(\eta)\theta(f(\eta))^{-1}}\,\theta(v)\big) \in \Xt\times\fg
\end{equation} (it is immediate to check that $\widetilde{\theta_f}$ indeed descends to the bundle $\piX\bs(\Xt\times\fg)$), as well as a direct sum decomposition $E\simeq E_K\oplus P$ where $E_K:=\Fix(\theta_f)$ is a Lie algebra bundle with structure group $K$ and typical fiber $\fk := \mathrm{Lie}(K)$ and $P:=\{v\in E\ |\ \theta_f(v)=-v\}$ is a vector bundle with structure group $K$ and typical fiber $\fp:=\{v\in\fg\ |\ \theta(v)=-v\}$, satisfying $[E_K,P] \subset P$ and $[P,P]\subset E_K$ with respect to the fiberwise Lie bracket.  
The decomposition $E=E_K\oplus P$ will be called a \textit{Cartan decomposition} and the involution $\theta_f$ a \textit{Cartan involution} of $E$. It induces an identification $\cA_E \simeq \cA_{E_K}\times \Om^1(X;P)$, where $\cA_{E_K}$ is the space of $K$-connections on $E_K$.
Let then $(E,\nabla, \tau)$ be a $\Si$-equivariant flat bundle on $X$, in the sense of Definition \ref{equiv_flat_bdle_def}. By Theorem \ref{hol_of_inv_conn}, the holonomy representation $\rho_\nabla:\piX\lra G$ extends to a group homomorphism $\widetilde{\rho_{\nabla}}:\piY\lra G$.  We will be interested in $\piX$-equivariant maps $f:\Xt\lra G/K$ that are in fact $\piY$-equivariant (with respect to $\widetilde{\rho_{\nabla}}$).

\begin{proposition}\label{induced_equiv_structure_on_reduced_bundle}
Let $(E,\nabla,\tau)$ be a $\Si$-equivariant flat bundle on $X$ and let $Y$ be the orbifold $[\Si\bs X]$. We shall denote by $\eps$ the canonical morphism $\eps:\piY\lra \piY/\piX \simeq\Si$. Then the following properties hold:
\begin{enumerate}
\item The group $\piY$ acts on the set of $\piX$-equivariant maps $f:\Xt\lra G/K$ by $f^\ga :=\widetilde{\rho_{\nabla}} (\ga^{-1}) (f \circ \ga)$.
\item If $\theta_f$ is the Cartan involution of $E$ associated to $f$, then, for all $\ga\in\piY$, we have $\theta_{f^\ga} = 
\theta_f^{\eps(\ga)}$, where $\eps(\ga)\in\Si$ acts on $\theta_f$ via the $\Si$-action on gauge transformations of $(E,\tau)$: $\theta_f^\si=\tau_\si^{-1}\theta_f\tau_\si$ for all $\si\in\Si$.
\item Let $E\simeq E_K\oplus P$ be the Cartan decomposition of $E$ associated to $f$ and let $(A_f,\psi_f)\in \cA_{E_K}\times \Om^1(X;P)$ be the induced decomposition of $\nabla$ into a $K$-connection and a $P$-valued $1$-form, i.e.\ $\nabla=A_f+\psi_f$. Then, for all $\ga\in\piY$, we have $A_{f^\ga} = A_f^{\eps(\ga)}$ and $\psi_{f^\ga} = \psi_f^{\eps(\ga)}$.
\item If the map $f:\Xt\lra G/K$ is $\piY$-equivariant, then $\theta_f$ commutes to the $\Si$-equivariant structure $\tau$ on $E$. In particular, the restriction of $\tau$ induces $\Si$-equivariant structures on the $K$-bundles $E_K$ and $P$, therefore also a $\Si$-action on $\cA_{E_K}$ and $\Om^1(X;P)$, and the pair $(A_f,\psi_f)\in \cA_{E_K}\times \Om^1(X;P)$ is $\Si$-invariant.
\end{enumerate}
\end{proposition}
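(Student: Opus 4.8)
The plan is to verify each of the four items in turn, all of them following from unwinding the definitions of the various $\piY$- and $\Si$-actions introduced just before the statement. First I would treat item (1): one checks directly that $f^\ga := \widetilde{\rho_\nabla}(\ga^{-1})(f\circ\ga)$ is again $\piX$-equivariant — using that $\piX$ is normal in $\piY$, so that for $\delta\in\piX$ one has $f^\ga(\delta\cdot\eta) = \widetilde{\rho_\nabla}(\ga^{-1})f(\ga\delta\eta) = \widetilde{\rho_\nabla}(\ga^{-1})\widetilde{\rho_\nabla}(\ga\delta\ga^{-1})f(\ga\eta) = \rho_\nabla(\delta)f^\ga(\eta)$, where the middle step uses $\piX$-equivariance of $f$ and the fact that $\ga\delta\ga^{-1}\in\piX$ — and that $(f^{\ga_1})^{\ga_2} = f^{\ga_1\ga_2}$, so this is a genuine left (or right, depending on convention) action. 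This is purely formal.

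Next, for items (2) and (3), I would start from formula \eqref{Cartan_inv_on_reduced_flat_bundle} for $\widetilde{\theta_f}$ on $\Xt\times\fg$ and compute $\widetilde{\theta_{f^\ga}}$. Replacing $f$ by $f^\ga = \widetilde{\rho_\nabla}(\ga^{-1})(f\circ\ga)$, the expression $f^\ga(\eta)\,\theta(f^\ga(\eta))^{-1}$ becomes $\widetilde{\rho_\nabla}(\ga^{-1})\,\big(f(\ga\eta)\theta(f(\ga\eta))^{-1}\big)\,\theta(\widetilde{\rho_\nabla}(\ga^{-1}))^{-1}$; feeding this through $\Ad$ and comparing with the definition of the $\Si$-equivariant structure $\widetilde{\tau}_\ga$ from \eqref{Si_equiv_structure_on_flat_bundle} (which acts on the fiber by $\rho(\ga) = \widetilde{\rho_\nabla}(\ga)$), one sees that $\widetilde{\theta_{f^\ga}} = \widetilde{\tau}_\ga^{-1}\circ\widetilde{\theta_f}\circ\widetilde{\tau}_\ga$ on $\Xt\times\fg$, hence descends to $\theta_{f^\ga} = \tau_{\eps(\ga)}^{-1}\theta_f\tau_{\eps(\ga)} = \theta_f^{\eps(\ga)}$ on $E$, since $\tau_\si$ depends only on the class $\si = \eps(\ga)$. (One must be slightly careful that $\theta(\widetilde{\rho_\nabla}(\ga^{-1}))$ differs from $\widetilde{\rho_\nabla}(\ga^{-1})$ in general, but the extra $\theta$-twisted factor is exactly absorbed into the $\theta$ already present in \eqref{Cartan_inv_on_reduced_flat_bundle}; this bookkeeping is the one genuinely delicate point.) Item (3) is then immediate: the Cartan decomposition $E = E_K\oplus P$ and the splitting $\nabla = A_f + \psi_f$ are canonically determined by $\theta_f$, so applying $\tau_{\eps(\ga)}^{-1}(-)\tau_{\eps(\ga)}$ to $\theta_f$ and using that $\tau$ preserves $\nabla$ (Definition \ref{equiv_flat_bdle_def}) gives $A_{f^\ga} = A_f^{\eps(\ga)}$ and $\psi_{f^\ga} = \psi_f^{\eps(\ga)}$, because the $\Si$-action on $\cA_{E_K}$ and on $\Om^1(X;P)$ is precisely conjugation by $\tau_\si$.

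Finally, item (4) is a direct corollary: if $f$ is $\piY$-equivariant, then $f^\ga = f$ for all $\ga\in\piY$, so by item (2), $\theta_f^{\eps(\ga)} = \theta_{f^\ga} = \theta_f$, and since $\eps:\piY\to\Si$ is surjective this says $\theta_f^\si = \theta_f$ for all $\si\in\Si$, i.e.\ $\theta_f$ commutes with $\tau$. Consequently $\tau$ preserves the eigenbundles $E_K = \Fix(\theta_f)$ and $P = \{v : \theta_f v = -v\}$, hence restricts to $\Si$-equivariant structures on them and induces the stated $\Si$-actions on $\cA_{E_K}$ and $\Om^1(X;P)$; and invariance of the pair $(A_f,\psi_f)$ follows from item (3) applied with $f^\ga = f$. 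I expect the main obstacle to be purely notational: keeping straight the three compatible actions (the $\piY$-action on reductions $f$, the $\Si$-action on gauge transformations via $\tau_\si^{-1}(-)\tau_\si$, and the $\Si$-action on connections via $\si^{-1}\nabla\si$), and in particular checking the $\theta$-twist in the computation for item (2) does not introduce a sign or a stray Cartan involution — everything else is a formal consequence of Proposition \ref{Sigma_equivariant_structure_from_rep} and Theorem \ref{hol_of_inv_conn}.
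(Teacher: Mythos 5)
Your proposal is correct and follows essentially the same route as the paper's proof: item (1) via normality of $\piX$ in $\piY$, item (2) by reducing to the identity $\widetilde{\theta_{f^\ga}} = \widetilde{\tau}_\ga^{-1}\circ\widetilde{\theta_f}\circ\widetilde{\tau}_\ga$ on $\Xt\times\fg$ (your $\theta$-twist bookkeeping does close up, since $\Ad_{\theta(g)}\circ\theta = \theta\circ\Ad_g$), item (3) from the explicit formulas for $A_f$ and $\psi_f$ together with $\Si$-invariance of $\nabla$, and item (4) as a formal consequence of the first three.
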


\begin{proof}
The proof boils down to the following:
\begin{enumerate}
\item We check that the map $\widetilde{\rho_{\nabla}} (\ga^{-1}) (f \circ \ga)$ from $\Xt$ to $G/K$ is $\piX$-equivariant. Since $\ga\delta\ga^{-1}\in\piX\lhd \piY$, we have, for all $\eta\in \Xt$ and all $\delta\in\piX$, that
$$\widetilde{\rho_{\nabla}} (\ga^{-1}) (f \circ \ga) (\delta\cdot\eta) = \widetilde{\rho_{\nabla}} (\ga^{-1}) \rho_\nabla(\ga\delta\ga^{-1}) f(\ga\cdot\eta) = \rho_\nabla(\delta) \big(\widetilde{\rho_{\nabla}}(\ga^{-1})(f\circ\ga)(\eta)\big).$$
\item It suffices to show that the map $\widetilde{\theta_f}$ defined in \eqref{Cartan_inv_on_reduced_flat_bundle} satisfies $\widetilde{\theta_{f^\ga}} = \widetilde{\tau}_\ga^{-1} \circ \widetilde{\theta_f} \circ \widetilde{\tau}_\ga$, where $\widetilde{\tau}_\ga$ is the map defined in \eqref{Si_equiv_structure_on_flat_bundle}, 
which follows from a direct computation. 
\item This is a simple computation, using the explicit definition of $A_f=\frac{1}{2}(\nabla + \theta_f\nabla\theta_f^{-1})$ and $\psi_f=\nabla-A_f$, 
as well as the $\Si$-invariance of $\nabla$. 
\item This follows immediately from the previous three properties. \qedhere
\end{enumerate}
\end{proof}

\noindent Of course, for Proposition \ref{induced_equiv_structure_on_reduced_bundle} to be useful, we need to make sure that $\piY$-equiva\-riant maps $f:\Xt\lra G/K$ indeed exist. One way to see this is as follows. Let $P_E$ be the principal $G$-bundle associated to $E$ and let $P_E(G/K):= P_E\times_G (G/K)$ be the bundle whose sections are $K$-reductions of $P_E$. The $\Si$-equivariant structure $\tau$ on $E$ induces a $\Si$-equivariant structure on $P_E$, that we shall still denote by $\tau$. Note that this $\tau$ is a $\Si$-equivariant structure in the principal bundle sense, so we have the compatibility relation $\tau_\si(p\cdot g) = \tau_\si(p)\cdot g$ between $\tau$ and the action of $G$ on $P_E$. 
Then $P_E(G/K)$ also has a $\Si$-equivariant structure, given by $$P_E\times_G (G/K) \ni  \left[p,gK\right] \lmt \left[\tau_\si(p),gK\right] \in P_E\times_G(G/K),$$ 
which is indeed well-defined by the previous remark. In particular, it makes sense to speak of $\Si$-equivariant sections of $P_E(G/K)$, and these do exist as we can average an arbitrary section of $P_E(G/K)$ over the finite group $\Si$, since there is a notion of center of mass in the simply connected complete Riemannian manifold of non-positive curvature $G/K$ (see e.g.\ \cite[Section~3.2]{Jost_nonpositive_curvature}). 
We then have the following result, whose proof is straightforward.
\begin{lemma}\label{existence_of_invariant_K_reductions}
Denote by $q:\Xt\lra X$ the universal covering map. Then there is a $\piX$-equivariant isomorphism $q^*P_E(G/K)\simeq \Xt\times G/K$, which induces a bijection between $\Si$-equivariant sections of $P_E(G/K)\lra X$ and $\piY$-equivariant maps $f:\Xt\lra G/K$. In particular, the latter do exist.
\end{lemma}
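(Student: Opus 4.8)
The plan is to trivialize the flat bundle over the simply connected cover $\Xt$ and then to keep track of the two commuting actions --- of $\piX$ by deck transformations and of $\Si$ via the equivariant structure $\tau$ --- which together package into a single action of $\piY$ on $\Xt\times G/K$. First I would use flatness of $(E,\nabla)$: parallel transport from a fixed basepoint produces a canonical $\piX$-equivariant trivialization $q^*E\simeq\Xt\times\fg$, on which $\piX$ acts by $\delta\cdot(\eta,v)=(\delta\cdot\eta,\rho_\nabla(\delta)v)$, where $\rho_\nabla:\piX\lra G$ is the holonomy representation. Passing to the associated principal $G$-bundle and then to the associated $(G/K)$-bundle, this gives the $\piX$-equivariant isomorphism $q^*P_E(G/K)\simeq\Xt\times G/K$ of the statement, with $\piX$ acting by $\delta\cdot(\eta,gK)=(\delta\cdot\eta,\rho_\nabla(\delta)gK)$. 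Pushing forward along $q$, sections of $P_E(G/K)\lra X$ then correspond bijectively to $\piX$-equivariant maps $f:\Xt\lra G/K$, i.e.\ maps with $f(\delta\cdot\eta)=\rho_\nabla(\delta)f(\eta)$ for $\delta\in\piX$; this settles the first assertion.

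Next I would track the $\Si$-equivariant structure under this trivialization. By Theorem \ref{hol_of_inv_conn} one may identify $(E,\nabla,\tau)$ with the $\Si$-equivariant flat bundle $\cE_{\widetilde{\rho_\nabla}}$ of Proposition \ref{Sigma_equivariant_structure_from_rep} attached to the extension $\widetilde{\rho_\nabla}:\piY\lra G$ of $\rho_\nabla$, and under this identification $\tau$ lifts, by \eqref{Si_equiv_structure_on_flat_bundle}, to the maps $\widetilde{\tau}_\ga(\eta,v)=(\ga\cdot\eta,\widetilde{\rho_\nabla}(\ga)v)$ on $\Xt\times\fg$. Hence on $\Xt\times G/K$ the bundle map $\tau_\si$, which covers $\si:X\lra X$, is covered for each lift $\ga\in\piY$ of $\si$ by $(\eta,gK)\lmt(\ga\cdot\eta,\widetilde{\rho_\nabla}(\ga)gK)$; since $\widetilde{\tau}_{\ga_1}\widetilde{\tau}_{\ga_2}=\widetilde{\tau}_{\ga_1\ga_2}$, these lifts assemble, as $\ga$ ranges over $\piY$, into a genuine left action of $\piY$ on $\Xt\times G/K$ extending the $\piX$-action above. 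The one delicate point here --- the main obstacle --- is precisely this compatibility: that the abstractly defined $\Si$-structure $[p,gK]\mapsto[\tau_\si(p),gK]$ on $P_E(G/K)$ corresponds, fiberwise and under the trivialization, to left translation by $\widetilde{\rho_\nabla}(\ga)$. It is routine once one unwinds the definitions of Proposition \ref{Sigma_equivariant_structure_from_rep} and Theorem \ref{hol_of_inv_conn}, but one must be careful with left/right conventions (and with the anti-homomorphism appearing in the holonomy construction).

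Given this, the conclusion is immediate. A section $s$ of $P_E(G/K)\lra X$, regarded as the $\piX$-equivariant map $f:\Xt\lra G/K$ above, is $\Si$-equivariant (i.e.\ $\tau$-invariant) if and only if $f$ is invariant under the $\piY$-action of the previous step; since $f$ is already $\piX$-equivariant, it is enough to impose this for one lift $\ga$ of each $\si\in\Si$, and the resulting condition is exactly $\piY$-equivariance of $f$ with respect to $\widetilde{\rho_\nabla}$. This establishes the claimed bijection. For existence, starting from any smooth section of $P_E(G/K)$ --- equivalently, any $\piX$-equivariant map $f_0:\Xt\lra G/K$ --- I would average it over $\Si$ by replacing $f_0(\eta)$ with the center of mass of the finite set $\{\widetilde{\rho_\nabla}(\ga)^{-1}f_0(\ga\cdot\eta)\}$, with $\ga$ running over a set of coset representatives, using that $G/K$ is a complete simply connected manifold of non-positive curvature (\cite[Section~3.2]{Jost_nonpositive_curvature}); one checks directly that the result is $\piY$-equivariant, so $\piY$-equivariant maps $f:\Xt\lra G/K$ indeed exist.
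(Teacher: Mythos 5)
Your proposal is correct and follows essentially the route the paper intends: the paper declares the bijection ``straightforward'' (it is exactly the trivialization of the pulled-back flat bundle over $\Xt$ that you spell out, with the $\Si$-structure becoming the $\piY$-action through $\widetilde{\rho_\nabla}$), and it obtains existence by the same center-of-mass averaging in the Hadamard space $G/K$, phrased for sections of $P_E(G/K)$ over the finite group $\Si$ rather than, as you do, for $\piX$-equivariant maps averaged over coset representatives of $\piX$ in $\piY$ — the two averagings correspond under the bijection. Your write-up is in fact more detailed than the paper's, and the one point you flag (holonomy sign/anti-homomorphism conventions) is indeed only a bookkeeping matter.
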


\noindent We cannot speak of a $\Si$-equivariant map $f:\Xt\lra G/K$, because the $\Si$-action on $X$ does not lift to $\Xt$ in general. However, we will (slightly abusively) speak of $\Si$-invariant reductions in the following sense.

\begin{definition}[Invariant $K$-reduction]\label{invariant_reduction}
Let $(E,\nabla,\tau)$ be a $\Si$-equivariant flat bundle on $X$. A $\piY$-equivariant map $f:\Xt\lra G/K$ will be called a \textit{$\Si$-invariant $K$-reduction} of $(E,\nabla,\tau)$.
\end{definition}

As Lemma \ref{existence_of_invariant_K_reductions} shows, $\piY$-equivariant maps $f:\Xt\lra G/K$ exist, and as Proposition \ref{induced_equiv_structure_on_reduced_bundle} shows, the Cartan decomposition $E\simeq E_K\oplus P$ associated to such an $f$ is compatible with the $\Si$-action in the sense that $\tau_\si(E_K)\subset E_K$ and $\tau_\si(P)\subset P$ for all $\si\in\Si$. 
In the context of $\Si$-equivariant flat bundles, we then have the following notion of stability, which generalizes Corlette's definition \cite[Definition 3.1]{Corlette}\label{stability_for_flat_bdles} and will eventually lead to a generalization of the Donaldson-Corlette Theorem \cite{Don_twisted}, \cite[Theorem 3.4.4]{Corlette}.

\begin{definition}[Stability condition for equivariant flat bundles]\label{Sigma_stability_for_flat_bdles}
A $\Si$-equivariant flat Lie algebra $G$-bundle $(E,\nabla,\tau)$ on $X$ is called:
\begin{itemize}
\item $\Si$\textit{-irreducible} (or $\Si$-\textit{stable}) if it contains no non-trivial $\nabla$-invariant $\Si$-sub-bundle (or equivalently, if the extended holonomy representation $\widetilde{\rho_\nabla}:\piY\lra G\subset\Aut(\fg)$ of Theorem \ref{hol_of_inv_conn} turns $\fg$ into an irreducible $\piY$-module).
\item $\Si$\textit{-completely reducible} (or $\Si$\textit{-polystable}) if $(E,\nabla,\tau)$ is isomorphic to a direct sum $\oplus_{1\leqslant i\leqslant k} (E_i,\nabla_i,\tau_i)$ of irreducible $\Si$-equivariant flat bundles (or equivalently, if $\fg$ is isomorphic, as a $\piY$-module, to a direct sum $\oplus_{1\leqslant i\leqslant k} \fg_i$ of irreducible $\piY$-modules).
\end{itemize}
\end{definition}

Here, a $\piY$-module is a pair $(\fg,\rho)$ consisting of a Lie algebra $\fg$ and a homomorphism $\rho:\piY\lra G$ to the group of real points of $\Int(\fg\otimes\C)$, and another possible characterization of complete reducibility is to say that every $\nabla$-invariant $\Si$-sub-bundle $F$ of the flat bundle $(E,\nabla)$ has a complement that is both $\Si$-invariant and  $\nabla$-invariant (or equivalently, that any sub-$\piY$-module of $\fg$ has a $\piY$-invariant complement). Evidently, if a flat bundle $(E,\nabla)$ is stable, then, for any $\Si$-equivariant structure $\tau$ leaving $\nabla$ invariant, the equivariant flat bundle $(E,\nabla,\tau)$ is $\Si$-stable. But $\Si$-stability of $(E,\nabla,\tau)$ only implies polystability of $(E,\nabla)$ in general. As a matter of fact, $(E,\nabla,\tau)$ is $\Si$-polystable if and only if $(E,\nabla)$ is polystable, as follows from the following result.

\begin{proposition}\label{pst_and_Sigma_pst_for_flat_bundles}
Let $\fg$ be a real semisimple Lie algebra and let $G$ be the group of real points of $\Int(\fg\otimes\C)$. Let $[\Si\bs X]\simeq Y$ be a presentation of the orbifold $Y$ and let $\rho:\piY\lra G$ be a representation of the orbifold fundamental group of $Y$ in $G$. Then $\fg$ is completely reducible as a $\piY$-module if and only if it is completely reducible as a $\piX$-module.
\end{proposition}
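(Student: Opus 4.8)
The plan is to observe that, in view of the module-theoretic reformulation of complete reducibility recorded in Definition \ref{Sigma_stability_for_flat_bdles}, Proposition \ref{pst_and_Sigma_pst_for_flat_bundles} is nothing but the classical fact that semisimplicity of a finite-dimensional module is inherited in both directions along a normal subgroup of finite index. Concretely, we view $\fg$ as the real vector space underlying the $\piY$-module $(\fg,\rho)$, where $\ga\in\piY$ acts by $v\mapsto\rho(\ga)\cdot v$ with $\rho(\ga)$ regarded inside $\Aut(\fg)$, and we recall that $\piX\lhd\piY$ is normal of finite index $n:=|\Si|$, which is invertible in $\R$. I would then prove the two implications separately, using only normality of $\piX$ and $\operatorname{char}\R=0$.

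For ``$\piX$-completely reducible $\Rightarrow$ $\piY$-completely reducible'' I would run a Maschke-type averaging argument. Let $W\subset\fg$ be a $\piY$-submodule; since $\fg$ is semisimple as a $\piX$-module there is a $\piX$-equivariant projection $p\colon\fg\to W$. Pick representatives $\ga_1,\dots,\ga_n\in\piY$ of the cosets of $\piX$ and set $\bar p:=\frac{1}{n}\sum_{i=1}^n\rho(\ga_i)\,p\,\rho(\ga_i)^{-1}$. Normality of $\piX$ together with $\piX$-equivariance of $p$ makes $\bar p$ independent of the chosen transversal and $\piY$-equivariant; since $W$ is $\piY$-invariant, each summand maps $\fg$ into $W$ and restricts to the identity on $W$, hence so does $\bar p$, so $\bar p$ is a projection onto $W$. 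Then $\ker\bar p$ is a $\piY$-invariant complement of $W$, proving that $\fg$ is $\piY$-semisimple.

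For the converse ``$\piY$-completely reducible $\Rightarrow$ $\piX$-completely reducible'', since a direct sum of $\piX$-semisimple modules is $\piX$-semisimple, I would first reduce to the case where $\fg$ is \emph{irreducible} as a $\piY$-module. In that case consider the $\piX$-socle $\operatorname{soc}_{\piX}(\fg)$, the sum of all irreducible $\piX$-submodules of $\fg$: it is the largest semisimple $\piX$-submodule, and it is nonzero because any nonzero finite-dimensional module has a simple submodule. The key point is that $\operatorname{soc}_{\piX}(\fg)$ is $\piY$-invariant: if $S\subset\fg$ is an irreducible $\piX$-submodule and $\ga\in\piY$, then $\rho(\ga)S$ is again an irreducible $\piX$-submodule, because for $\delta\in\piX$ one has $\rho(\delta)\rho(\ga)S=\rho(\ga)\rho(\ga^{-1}\delta\ga)S$ with $\ga^{-1}\delta\ga\in\piX$. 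Being a nonzero $\piY$-submodule of the irreducible $\piY$-module $\fg$, the socle equals $\fg$, which says exactly that $\fg$ is semisimple as a $\piX$-module.

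The argument is essentially formal; the only point requiring care — and the place where the hypotheses genuinely enter — is the $\piY$-invariance of the $\piX$-socle (forcing the use of normality of $\piX$) together with the averaging over $\Si$ (forcing $n$ to be invertible). Beyond that, the one thing to double-check is that the notion of complete reducibility used here for $\piX$ matches the one occurring in the Donaldson--Corlette type statement for flat $G$-bundles, which is exactly the equivalence built into Definition \ref{Sigma_stability_for_flat_bdles}.
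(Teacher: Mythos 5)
Your argument is correct. The paper itself disposes of this proposition in one line, by citing Serre's result \cite{Serre_cr_ss_gpe_normal} on the preservation of semisimplicity under restriction to (and induction from) a normal subgroup of finite index in characteristic $0$; what you have done is supply a self-contained proof of exactly that cited fact, at the level of the linear $\piY$-action on $\fg$, which is indeed the relevant notion of complete reducibility here. Your two halves are the two classical ingredients: a Maschke-type averaging of a $\piX$-equivariant projection over a transversal of $\piX$ in $\piY$ for the direction ``$\piX$-completely reducible $\Rightarrow$ $\piY$-completely reducible'', and Clifford's socle argument for the converse. Both steps are sound; the only slight imprecision is in your closing remark about where the hypotheses enter: the averaging step uses only finiteness of the index and invertibility of $n=[\piY:\piX]$ (independence of the transversal follows from $\piX$-equivariance of $p$ alone, with no appeal to normality), whereas normality is what makes the socle $\operatorname{soc}_{\piX}(\fg)$ a $\piY$-submodule in the converse direction, where finiteness of the index is in fact not needed. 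Compared with the paper, your route is more elementary and makes the mechanism visible (and isolates which hypothesis drives which implication), at the cost of a page of standard representation theory that the citation compresses to one sentence.
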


\begin{proof}
Since $\piX$ is a normal subgroup of finite index of $\piY$, the result follows for instance from \cite{Serre_cr_ss_gpe_normal}.
\end{proof}

The next result lays the groundwork for the first half of the non-Abelian Hodge correspondence for $\Si$-equivariant bundles: if the $\Si$-equivariant flat bundle $(E,\nabla,\tau)$ is $\Si$-polystable, it admits a $\Si$-invariant harmonic $K$-reduction $f$ (in the sense of Definition \ref{invariant_reduction}), which defines a \textit{$\Si$-equivariant harmonic bundle} $(E,\nabla,f,\tau)$, i.e.\ a harmonic bundle $(E,\nabla,f)$ endowed with a $\Si$-equivariant structure $\tau$ that leaves the connection $\nabla$, the harmonic $K$-reduction $f$, the connection $A_f$ and the $1$-form $\psi_f$ all invariant.

\begin{theorem}[Invariant harmonic reductions of equivariant bundles]\cite[Theorem 2.2]{HWW}\label{Sigma_equiv_DC_thm}
Let $\fg$ be a real semisimple Lie algebra and let $G$ be the group of real points of $\Int(\fg\otimes\C)$. Let $K < G$ be a maximal compact subgroup and let $(E,\nabla,\tau)$ be a $\Si$-equivariant flat Lie algebra $G$-bundle with fiber $\fg$ over $X$. Then $(E,\nabla,\tau)$ admits a $\Si$-invariant harmonic $K$-reduction $f:\Xt\lra G/K$ if and only if it is $\Si$-polystable.
\end{theorem}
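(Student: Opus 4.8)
The plan is to deduce this equivariant statement from the classical Donaldson--Corlette theorem \cite{Don_twisted,Corlette}, combined with the comparison of polystability notions in Proposition \ref{pst_and_Sigma_pst_for_flat_bundles}, and to handle the $\Si$-equivariance by a symmetrization argument that exploits the nonpositive curvature of the symmetric space $G/K$. I would prove the two implications separately.

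\textbf{The direction ``$\Si$-invariant harmonic reduction $\Rightarrow$ $\Si$-polystable''.} This one is essentially immediate. If $f:\Xt\lra G/K$ is a $\Si$-invariant harmonic $K$-reduction of $(E,\nabla,\tau)$, then forgetting the equivariant structure it is in particular a $\piX$-equivariant harmonic reduction of the flat bundle $(E,\nabla)$, so the Donaldson--Corlette theorem \cite[Theorem 3.4.4]{Corlette} says that $(E,\nabla)$ is polystable (equivalently, the holonomy $\rho_\nabla:\piX\lra G$ has reductive Zariski closure). Proposition \ref{pst_and_Sigma_pst_for_flat_bundles} then upgrades this to $\Si$-polystability of $(E,\nabla,\tau)$.

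\textbf{The direction ``$\Si$-polystable $\Rightarrow$ $\Si$-invariant harmonic reduction''.} Assume $(E,\nabla,\tau)$ is $\Si$-polystable. By Proposition \ref{pst_and_Sigma_pst_for_flat_bundles} the underlying flat bundle $(E,\nabla)$ is polystable, so by \cite{Don_twisted,Corlette} it admits \emph{some} harmonic $K$-reduction; the task is to produce a $\Si$-invariant one. I see two routes. In the \emph{heat-flow route}, one starts from a $\Si$-invariant (not yet harmonic) $K$-reduction $f_0$, which exists by Lemma \ref{existence_of_invariant_K_reductions}, and runs Corlette's harmonic-map heat flow starting from $f_0$, obtaining a path $(f_t)_{t\in[0,\infty)}$. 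Since the flow equation is intrinsic and the group $\Si$ acts on all the data by bundle isomorphisms covering isometries of $X$ and preserving $\nabla$, the flow commutes with the $\Si$-action, so each $f_t$ remains $\Si$-invariant; polystability of $(E,\nabla)$ gives long-time existence and convergence $f_t\to f_\infty$ to a harmonic reduction (this is exactly Corlette's argument), and $\Si$-invariance, being a closed condition, passes to the limit $f_\infty$. In the \emph{fixed-point route}, one observes that once nonempty the set $\mathcal{H}$ of harmonic $K$-reductions of the polystable flat bundle $(E,\nabla)$ is a complete, nonpositively curved space --- after grouping $(E,\nabla)$ into isotypic blocks of stable summands it is a product of symmetric spaces of noncompact type (roughly $\GL(m_i)/\mathbf{U}(m_i)$, with $m_i$ the multiplicities) --- on which $\Si$ acts by isometries through $\tau$; a finite group acting by isometries on a complete $\mathrm{CAT}(0)$ space has a fixed point (Cartan's fixed-point theorem, or Bruhat--Tits), and such a fixed point is exactly a $\Si$-invariant harmonic reduction. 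Either way, Proposition \ref{induced_equiv_structure_on_reduced_bundle} then guarantees that the resulting $f$ is a $\Si$-invariant $K$-reduction in the sense of Definition \ref{invariant_reduction}, yielding the $\Si$-equivariant harmonic bundle $(E,\nabla,f,\tau)$.

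\textbf{Main obstacle.} The subtle point is not the formal bookkeeping but the analysis hidden in the claims ``the flow commutes with the $\Si$-action and converges'' (heat-flow route) or ``$\mathcal{H}$ is a complete nonpositively curved space carrying an isometric $\Si$-action'' (fixed-point route). In the first route one must check that Corlette's a priori estimates, long-time existence and convergence, which rely crucially on $G/K$ being a Hadamard manifold, survive verbatim for the $\Si$-symmetric flow, i.e. that nothing new happens at the finitely many points of $X$ with nontrivial stabilizer; in the second route one must identify precisely the space of harmonic reductions of a polystable flat bundle and verify that $\tau$ induces an isometry of it. Note that a naive center-of-mass average of several harmonic reductions is generally \emph{not} harmonic, which is exactly why one cannot symmetrize an arbitrary harmonic reduction directly, but must instead flow an invariant initial datum or invoke a fixed-point theorem on the harmonic locus itself. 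This is the content of \cite[Theorem 2.2]{HWW}, whose proof I would follow for the details.
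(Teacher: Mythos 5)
Your proposal is correct, but note that the paper does not prove this statement itself: Theorem \ref{Sigma_equiv_DC_thm} is quoted from \cite[Theorem 2.2]{HWW}, together with the remark that the proof there is written for $\Si\simeq\sfrac{\Z}{2\Z}$ and that the techniques extend to an arbitrary finite group $\Si$. Your sketch supplies precisely such an argument, and both halves are sound: the forward implication is the Donaldson--Corlette theorem \cite{Don_twisted,Corlette} combined with Proposition \ref{pst_and_Sigma_pst_for_flat_bundles}, and for the converse either of your two routes works. Two points are worth making explicit. First, one must fix a $\Si$-invariant Riemannian metric on $X$, so that the elements of $\piY$ act on $\Xt$ and on $G/K$ by isometries; this is what makes the flow (equivalently, the energy functional) commute with the action $f\mapsto f^\ga=\widetilde{\rho_\nabla}(\ga^{-1})(f\circ\ga)$ of Proposition \ref{induced_equiv_structure_on_reduced_bundle}, and it is also why this action preserves harmonicity; note moreover that $\piX$ acts trivially on $\piX$-equivariant maps, so the action genuinely descends to $\Si$. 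Second, in the fixed-point route the identification of the harmonic locus with the symmetric space of the centralizer of $\rho_\nabla(\piX)$ does hold (by uniqueness of the harmonic reduction up to that centralizer), but the cleanest formulation avoids it: for a polystable flat bundle, harmonic reductions are exactly the minimizers of the energy, which is convex along geodesic interpolations in the complete non-positively curved space of $\piX$-equivariant maps $\Xt\lra G/K$, so the set of harmonic reductions is a non-empty, closed, convex subset on which $\Si$ acts by isometries, and the Cartan--Bruhat--Tits fixed-point theorem yields the desired $\piY$-equivariant harmonic reduction. Either way, your argument is a faithful reconstruction of what the cited reference provides, so there is no gap.
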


\begin{remark}
In \cite{HWW}, Theorem \ref{Sigma_equiv_DC_thm} is proved in the special case where $\Si \simeq \sfrac{\Z}{2\Z}$, but their techniques extend to the case where $\Si$ is any finite group. Note also that, in \cite{HWW}, $X$ is of arbitrary dimension.
\end{remark}

\subsection{From equivariant Higgs bundles to equivariant harmonic bundles}\label{equiv_Higgs_bdles}

Let $(X,\Si)$ be a closed orientable surface equip\-ped with an action of a finite group $\Si$. We fix an orientation and a $\Si$-invariant Riemannian metric $g$ on $X$, and denote by $J$ the associated complex structure. Then a transformation $\si \in \Si$ is holomorphic with respect to $J$ if it preserves the orientation of $X$; otherwise, it is anti-holomorphic (note that, here, $\Si$ is a subgroup of $\mathrm{Diff}(X)$, not $\mathrm{MCG}(X)=\pi_0(\mathrm{Diff}(X))$, so finding a complex structure $J$ on $X$ such that $\Sigma\subset \Aut^{\pm}(X,J)$ is elementary). A $\Si$-equivariant structure $\tau$ on a holomorphic vector bundle $\cE\lra X$ is a family $\tau=(\tau_\si)_{\si\in\Si}$ of either holomorphic or anti-holomorphic transformations of $\cE$ satisfying:
\begin{enumerate}
\item For all $\si\in\Si$, Diagram \eqref{lift_of_the_action} (with $E$ replaced by $\cE$) is commutative,
\item The bundle map $\tau_\si$ is fiberwise $\C$-linear if $\si:X\lra X$ is holomorphic and fiberwise $\C$-anti-linear if $\si:X\lra X$ is anti-holomorphic,
\item One has $\tau_{1_\Si} = \id_\cE$ and, for all $\si_1,\si_2\in\Si$, $\tau_{\si_1\si_2} = \tau_{\si_1}\tau_{\si_2}$.
\end{enumerate} For instance, the canonical bundle $K_X$ of $X$ has a $\Si$-equivariant structure induced by the $\Si$-action on $X$. Moreover, if $(\cE,\tau)$ is a $\Si$-equivariant holomorphic vector bundle on $X$, then any associated bundle inherits a $\Si$-equivariant structure. For instance, $\End(\cE)\simeq \cE^*\otimes \cE$ has the induced $\Si$-equivariant structure $\xi\otimes v\lmt (\xi\circ\tau_\si^{-1})\otimes \tau_\si(v)$, 
which we shall simply denote by $\tau$. As a consequence, $K_X\otimes \End(\cE)$ also has an induced $\Si$-equivariant structure, that we denote by $(\si\otimes\tau_\si)_{\si\in\Si}$, and the space of sections of $K_X\otimes \End(\cE)$, being the space of sections of an equivariant bundle, inherits a $\Si$-action defined, for $\si\in\Si$, by 
\begin{equation}\label{action_on_Higgs_fields} \si(\phi) := (\si\otimes\tau_\si) \circ \phi \circ \si^{-1}.\end{equation} Whenever the holomorphic vector bundle $\cE$ has an extra structure (for instance, a holomorphic Lie bracket), we will assume, in the definition of a $\Si$-equivariant structure $\tau$, that the bundle maps $\tau_\si:\cE\lra\cE$ are compatible with that structure. In this paper, we consider $G$-Higgs bundles on $X$ for $G$ a real form of a connected semisimple complex Lie group of adjoint type $G_\C$. We denote by $\fg$ the Lie algebra of $G$.  If $\theta:G\lra G$ is a Cartan involution, $K:=\Fix(\theta) < G$ is the associated maximal compact subgroup and $K\lra\GL(\fp)$ is the isotropy (adjoint) representation of $K$ on the $(-1)$-eigenspace of $\theta:\fg\lra\fg$, then, by definition, a $G$-Higgs bundle on $X$ is a pair $(\cP,\phi)$ consisting of a holomorphic principal $K_\C$-bundle $\cP$, where $K_\C$ is the complexification of $K$, and a holomorphic section $\phi\in H^0(X;K_X\otimes\cP(\fp_\C))$, where $\fp_\C:=\fp\otimes\C$ and $\cP(\fp_\C):=\cP\times_{K_\C}\fp_\C$. Let  us now specialize this definition to the case where $G$ is the group of real points of $G_\C:=\Int(\fg_\C)$, where $\fg$ is a real semisimple Lie algebra and $\fg_\C:=\fg\otimes\C$. We let $\fk$ be a maximal compact Lie subalgebra of $\fg$, with respect to the Killing form $\kappa$, and we denote by $K_\C < G_\C$ be the connected subgroup corresponding to the Lie algebra $\fk_\C:=\fk\otimes\C$ (i.e.\ here, $K_\C=\Int(\fk\otimes\C)$). We denote by $\theta_\C$ (resp.\ $\kappa_\C$) the $\C$-linear extension to $\fg_\C$ of the Cartan involution $\theta$ (resp.\ Killing form $\kappa$) of $\fg$. Then $K_\C=\Fix(\theta_\C)$ in $G_\C$ and we set $K:=\Fix(\theta)$ in $G$. Moreover, the positive definite quadratic form $B_\theta(x,y):=-\kappa(\theta(x),y)$ on $\fg$ induces a non-degenerate $\C$-valued quadratic form $B_{\theta_\C}$ on $\fg_\C$, whose group of isometries contains $K_\C$ and whose space of symmetric endomorphisms contains the space of adjoint transformations of the form $\ad_x=[x,\,\cdot\,]$ for $x\in\fp_\C :=\fp\otimes\C$. Using the faithful representations $K_\C\hookrightarrow \mathbf{O}(\fg_\C,B_{\theta_\C})$ and $\fp_\C\hookrightarrow \mathrm{Sym}(\fg_ \C,B_{\theta_\C})$, we can now give the following definition of a $G$-Higgs bundle for $G$ as above.

\begin{definition}[Higgs bundles for real forms of connected complex semisimple Lie groups of adjoint type]\label{G_Higgs_bdle_def}
Let $\fg$ be a real semisimple Lie algebra and let $G$ be the group of real points of $G_\C:=\Int(\fg\otimes\C)$. Let $\fg=\fk\oplus\fp$ be a Cartan decomposition of $\fg$ with Cartan involution $\theta$. Let $K$ be the maximal compact subgroup of $G$ with Lie algebra $\fk$ and let $K_\C$ be the complex subgroup of $G_\C$ with Lie algebra $\fk\otimes\C$ and maximal compact subgroup $K$.

By a $G$-Higgs bundle on the Riemann surface $X$, we shall mean a pair $(\cE,\phi)$ consisting of
\begin{itemize}
\item a holomorphic Lie algebra bundle $\cE$ with typical fiber $\fg_\C:=\fg\otimes \C$ and structure group $K_\C$, and
\item a holomorphic $1$-form $\phi\in H^0(X;K_X\otimes \ad_{\fp_\C}(\cE))$, called the Higgs field,
\end{itemize} where by $\ad_{\fp_\C}(\cE)$ we mean the bundle of symmetric adjoint endomorphisms of $\cE$, i.e.\ endomorphisms of $\cE$ locally of the form $\ad_{\xi}=[\xi,\,\cdot\,]:\fg_\C\lra\fg_\C$, for some $\xi\in\fp_\C:=\fp\otimes\C$. This notion is indeed independent of the choice of local trivialization because the adjoint action of $K_\C$ preserves $\fp_\C$.
\end{definition}

By construction, the group $K_\C$ is reductive. It is not necessarily connected (its identity component is $\Int(\fk\otimes\C)$). For instance, when $G=\PGL(2,\R)$, one has $K_\C=\PO(2,\C)$, which has two connected components, the identity component being $\PSO(2,\C)\simeq\C^*/\{\pm 1\}$.

\color{black}

\begin{remark}
Giving a $G$-Higgs bundle in the sense of Definition \ref{G_Higgs_bdle_def} is equivalent to giving a triple $(\cE,\beta,\phi)$ where:
\begin{itemize}
\item $\cE$ is a holomorphic Lie algebra bundle with typical fiber $\fg_\C:=\fg\otimes \C$ and structure group $G_\C$, 
\item $\beta\in H^0(X;S^2\cE^*)$ is a non-degenerate quadratic form on $\cE$ which is compatible with the Lie bracket in the sense that $\beta([v_1,v_2],v_3) = \beta(v_1,[v_2,v_3])$, and
\item $\phi\in H^0(X;K_X\otimes \ad(\cE))$ is symmetric with respect to $\beta$.
\end{itemize} Indeed, $\beta$ will be fiberwise of the form $B_{\theta_\C}$ for $\theta:\fg\lra\fg$ a (fixed) Cartan involution, thus inducing a reduction of structure group from $G_\C$ to $K_\C$, so the Higgs field $\phi$ is symmetric with respect to $\beta$ if and only if it is $\ad_{\fp_\C}(\cE)$-valued.
\end{remark}

It will be convenient, at times, to see a holomorphic vector bundle $\cE$ as a pair $(E,\ov{\partial}_E)$ consisting of a smooth complex vector bundle $E$ on $X$ and a Dolbeault operator $\ov{\partial}_{E}:\Om^0(X;E)\lra \Om^{0,1}(X;E)$. As an example of $G$-Higgs bundle for $G$ as above, consider the case where $\fg=\fh_\C$ is already a complex semisimple Lie algebra. Then $K_\C\simeq H_\C$, $\fp_\C\simeq\fh_\C$ and $\cP(\fp_\C) \simeq\cP\times_{H_\C}\fh_\C \simeq \ad(\cP)$. So, when $G=H_\C$ is a connected complex semisimple Lie group of adjoint type, an $H_\C$-Higgs bundle can be thought of as a holomorphic Lie algebra vector bundle $\cE$, with typical fiber $\fh_\C$ and structure group $H_\C\simeq\Int(\fh_\C)$, equipped with a holomorphic $1$-form $\phi$ with values in adjoint endomorphisms of $\cE$.
 Another fundamental example is given by the case where $\fg=\fk$ is a compact semisimple Lie algebra. Then $G=K$, so $\fp=0$, and a $K$-Higgs bundle is a pair $(\cE,\phi)=(\cE,0)$ consisting of a holomorphic Lie algebra vector bundle $\cE$, with typical fiber $\fk_\C$ and structure group $K_\C$. Note that when $\fk=\fu(n)$, then $K\simeq \PU(n)$. 
A more elaborate example is given as follows: given a real semisimple Lie algebra $\fg$ and $G$ the group of real points of $\Int(\fg_\C)$, if $f:\Xt\lra G/K$ is a harmonic $K$-reduction of a polystable flat Lie algebra $G$-bundle $(E,\nabla)$, with associated Cartan decomposition $E=E_K\oplus P$ and $\nabla=A_f+\psi_f$, then the harmonic bundle $(E\otimes\C,d_{A_f}^{0,1}, \psi_{f}^{1,0})$ is a $G$-Higgs bundle. 
In this last example, the vector bundle $\cE$ in particular has vanishing Chern classes. In our context, the following definition is then natural (and is a special case of the notion of pseudo-equivariant $G$-Higgs bundle developed for an arbitrary semisimple Lie group $G$ in \cite{GPW,Heller_Schaposnik}).

\begin{definition}[Equivariant Higgs bundles]
A $\Si$-equivariant $G$-Higgs bundle on $(X,\Si)$ is a triple $(\cE,\phi,\tau)$ consisting of a $G$-Higgs bundle $(\cE,\phi)$ and a $\Si$-equivariant structure $\tau=(\tau_\si)_{\si\in\Si}$ leaving the Higgs field $\phi$ invariant, i.e.\ such that, for all $\si\in\Si$, one has $\si(\phi)=\phi$ with respect to the action of $\Si$ on $H^0(X;K_X\otimes\ad_{\fp_\C}(\cE))$ defined in \eqref{action_on_Higgs_fields}. A homomorphism of $\Si$-equivariant $G$-Higgs bundles is a homomorphism of $G$-Higgs bundles that commutes to the $\Si$-equivariant structures.
\end{definition}

\noindent The $\Si$-invariance condition on the Higgs field $\phi$ can also be phrased in the following way: for all $\si\in\Si$, the following diagram, where by $\tau$ we mean the $\Si$-equivariant structure of $\ad_{\fp_\C}(\cE)\subset \End(\cE)$ induced by that of $\cE$, is commutative.
$$
\xymatrix{
\ad_{\fp_\C}(\cE) \ar[r]^-{\phi} \ar[d]^{\tau_\si} & K_X\otimes \ad_{\fp_\C}(\cE) \ar[d]^{\si\otimes\tau_\si} \\
\ad_{\fp_\C}(\cE) \ar[r]^-{\phi} & K_X\otimes \ad_{\fp_\C}(\cE)
}
$$

We will now further restrict ourselves to $G$-Higgs bundles that have vanishing Chern classes, because, in that case, we can take semistability of a principal $G$-Higgs bundle $(\cP,\phi)$ to mean that the vector $G$-Higgs bundle $(\cP(V_\C),\phi_{V_\C})$ associated to $(\cP,\phi)$ via a faithful representation $G_\C\hookrightarrow \GL(V_\C)$ is semistable \cite[p.86]{Simpson_local_systems}. Here, as $G_\C$ is of adjoint type, we can take $V_\C:=\fg_\C$. In the $\Si$-equivariant setting, we then have the following definition, which will be sufficient for our purposes.

\begin{definition}[Stability condition for equivariant Higgs bundles]
Let $\fg$ be a real semisimple Lie algebra and let $G$ be the group of real points of $\Int(\fg\otimes\C)$. A $\Si$-equiva\-riant $G$-Higgs bundle $(\cE,\phi,\tau)$ with vanishing first Chern class on $X$ is called:
\begin{itemize}
\item $\Si$-semistable if, for all non-trivial sub-bundle $\cF\subset\cE$ such that $\phi(\cF)\subset K_X\otimes \cF$ and $\tau_\si(\cF)\subset\cF$ for all $\si\in\Si$, the degree of $\cF$ is non-positive, i.e.
$\deg(\cF)\leqslant 0$.
\item $\Si$-stable if the above inequality is strict,
\item $\Si$-polystable if it is isomorphic to a direct sum of $\Si$-stable equivariant Higgs bundles of degree 0.
\end{itemize}
\end{definition}

\noindent The point of this definition is that any $\Si$-semistable equivariant $G$-Higgs bundle has an associated $\Si$-polystable equivariant Higgs bundle (the graded object associated to any choice of a Jordan-H\"older filtration of the initial bundle, defined up to isomorphism) and that such objects admit a characterization in terms of special metrics, namely Hermitian-Yang-Mills metrics (Theorem \ref{invariant_HYM_metrics}, which is due to Simpson in \cite{Simpson_JAMS,Simpson_local_systems}). An example of such a $\Si$-polystable equivariant $G$-Higgs bundle is provided by the equivariant $G$-Higgs bundle $(E\otimes\C,d_{A_f}^{0,1}, \psi_{f}^{1,0},\tau)$ associated to a $\Si$-invariant harmonic $K$-reduction $f:\Xt\lra G/K$ of a $\Si$-polystable equivariant flat bundle $(E,\nabla,\tau)$. Note that such a map $f$ exists by Theorem \ref{Sigma_equiv_DC_thm}. Moreover, Theorem 1 of \cite{Simpson_JAMS} is already stated in a $\Si$-equivariant setting, for $\Si$ a finite group of holomorphic automorphisms of $X$. The extension to the case where $\Si$ is allowed to contain anti-holomorphic transformations of $X$ is not difficult, once one realizes that such a group $\Sigma$ still acts on the space of smooth Hermitian metrics on a holomorphic vector bundle $\cE$, by setting, for all $x\in X$ and all $v_1,v_2$ in $\cE_x$,
\begin{equation}\label{action_on_metrics}
h^\si_x (v_1,v_2) =  \left\{ \begin{array}{cl}
h_{\si(x)} \big(\tau_\si(v_1),\tau_\si(v_2)\big) & \mathrm{if}\ \si\ \textrm{is holomorphic on}\ X, \vspace{7pt}\\ 
\ov{h_{\si(x)} \big(\tau_\si(v_1),\tau_\si(v_2)\big)} & \mathrm{if}\ \si\ \textrm{is anti-holomorphic on}\ X.
\end{array}\right.
\end{equation} We can therefore use Simpson's theorem \cite[Theorem 1]{Simpson_JAMS}. Note that Simpson's version actually has one extra degree of generality, namely the Higgs field $\phi$ is \textit{not} assumed to be preserved by the $\Si$-action, instead it suffices that there exists a character $\chi:\Si\lra\C^*$ such that, for all $\si\in\Si$, $\si(\phi) = \chi(\si)\phi$; when $\Si$ contains anti-holomorphic transformations, the group homomorphism $\chi:\Si\lra\C^*$ should be replaced by a crossed homomorphism, with respect to the action of $\Si$ on $\C^*$ defined by the canonical morphism $\Si\lra \sfrac{\Z}{2\Z}$ followed by complex conjugation on $\C^*$, but in any case this is not necessary for us here. As a matter of fact, we also need Simpson's extension of his result to $G$-Higgs bundles with $G$ a real form of a complex semisimple Lie group \cite[Corollary 6.16]{Simpson_local_systems}. A different approach to Theorem \ref{invariant_HYM_metrics} below and its generalization to pseudo-equivariant $G$-Higgs bundles can be found in \cite[Theorem 4.4]{GPW}. 
Given a $G$-Higgs bundle $(\cE,\phi)$ (with vanishing first Chern class) equipped with a Hermitian metric $h$, we denote by $\phi^{*_h}$ the fiberwise adjoint of the Higgs field $\phi$ with respect to $h$, and by $A_h$ the Chern connection associated to $h$. Recall that $h$ is called a \textit{Hermitian-Yang-Mills} metric on $(\cE,\phi)$ if the Chern connection $A_h$ satisfies the self-duality equation $F_{A_h} + [\phi,\phi^{*_h}] = 0$. In such a case, the triple $(\cE,\phi,h)$ defines a harmonic bundle in the sense of Section \ref{from_flat_to_harmonic} and the next, fundamental, result of Simpson's says that all harmonic bundles arise in this way from polystable Higgs bundles with vanishing first Chern class.

\begin{theorem}\cite{Simpson_JAMS,Simpson_local_systems}\label{invariant_HYM_metrics}
Let $\fg$ be a real semisimple Lie algebra and let $G$ be the group of real points of $\Int(\fg\otimes\C)$. Let $(\cE,\phi,\tau)$ be a $\Si$-equivariant $G$-Higgs bundle with vanishing first Chern class on $X$. Then there exists a $\Si$-invariant Hermitian-Yang-Mills metric $h$ on the holomorphic vector bundle $\cE$ if and only if $(\cE,\phi,\tau)$ is $\Si$-polystable.
\end{theorem}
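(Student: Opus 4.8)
The plan is to deduce this from Simpson's Hermitian--Yang--Mills correspondence for Higgs bundles with vanishing first Chern class \cite{Simpson_JAMS,Simpson_local_systems}, of which the statement is the natural $\Si$-equivariant counterpart, by combining three ingredients. First, Simpson's result in \cite[Theorem 1]{Simpson_JAMS} is already proved in an equivariant form, for a finite group acting on $X$ by \emph{holomorphic} automorphisms together with a compatible action on the Higgs bundle, and its analytic engine (the Donaldson-type functional and the Uhlenbeck--Yau-type existence argument) is manifestly $\Si$-invariant, hence produces a $\Si$-invariant solution. Second, \cite[Corollary 6.16]{Simpson_local_systems} upgrades the $\GL$-version to $G$-Higgs bundles for $G$ a real form of a complex semisimple group; since our $G_\C$ is of adjoint type, the faithful representation $G_\C\hookrightarrow\GL(\fg_\C)$ reduces everything to the vector-bundle case. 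Third --- the only genuinely new point --- I would allow $\Si$ to contain anti-holomorphic transformations of $X$, using that such a $\Si$ still acts on the space $\mathrm{Herm}(\cE)$ of smooth Hermitian metrics on $\cE$, fibrewise by isometries of its fibres $\GL(r,\C)/\mathbf{U}(r)$ (non-positively curved symmetric spaces), via formula \eqref{action_on_metrics} with complex conjugation inserted on the anti-holomorphic factors, and that this action is compatible with the Chern connection and the Higgs field, hence permutes the solutions of the self-duality equation $F_{A_h}+[\phi,\phi^{*_h}]=0$. A completely black-box alternative is to quote \cite[Theorem 4.4]{GPW}, where the correspondence is established directly for pseudo-equivariant $G$-Higgs bundles, a class containing the $\Si$-equivariant ones here.

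For the ``if'' direction, suppose $(\cE,\phi,\tau)$ is $\Si$-polystable. First I would note that the underlying $G$-Higgs bundle $(\cE,\phi)$ is polystable of degree $0$ (the Higgs counterpart of Proposition~\ref{pst_and_Sigma_pst_for_flat_bundles}: the socle of the underlying Higgs bundle is a canonical sub-object, hence automatically $\phi$- and $\Si$-invariant, so $\Si$-polystability forces ordinary polystability), so Simpson's non-equivariant theorem provides a Hermitian--Yang--Mills metric $h_0$ on $(\cE,\phi)$. The set $\mathrm{HYM}(\cE,\phi)$ of all such metrics is a single orbit of the reductive automorphism group of $(\cE,\phi)$ acting by isometries on $\mathrm{Herm}(\cE)$, hence a totally geodesic, in particular convex, subspace. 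Since $\Si$ preserves $\phi$, it permutes $\mathrm{HYM}(\cE,\phi)$, so the finite orbit $\Si\cdot h_0$ lies in that convex subspace, and its barycentre (centre of mass in a complete space of non-positive curvature, as in \cite[Section~3.2]{Jost_nonpositive_curvature}) is a $\Si$-invariant point of $\mathrm{HYM}(\cE,\phi)$ by uniqueness of the barycentre. This is the sought $\Si$-invariant Hermitian--Yang--Mills metric; equivalently, one may simply invoke the $\Si$-equivariance of Simpson's construction.

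For the ``only if'' direction, let $h$ be a $\Si$-invariant Hermitian--Yang--Mills metric on $(\cE,\phi,\tau)$. Simpson's theorem already yields that $(\cE,\phi)$ is polystable of degree $0$, and I would refine this to $\Si$-polystability by induction on $\rk\cE$. If $(\cE,\phi,\tau)$ admits no proper nonzero $\phi$-invariant $\Si$-sub-bundle of degree $0$, then it is $\Si$-stable, since polystability forces every $\phi$-invariant sub-bundle to have non-positive degree. Otherwise, picking such a sub-bundle $\cF$, the second-fundamental-form computation for a Hermitian--Yang--Mills metric on a degree-$0$ Higgs bundle shows that the $h$-orthogonal complement $\cF^{\perp_h}$ is a holomorphic, $\phi$-invariant sub-bundle and that $(\cE,\phi)\simeq(\cF,\phi|_\cF)\oplus(\cF^{\perp_h},\phi|_{\cF^{\perp_h}})$ as $G$-Higgs bundles with vanishing first Chern class, each summand carrying the restricted Hermitian--Yang--Mills metric. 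Because $h$ and $\cF$ are $\Si$-invariant, so is $\cF^{\perp_h}$, so this is a splitting of $\Si$-equivariant $G$-Higgs bundles, and the inductive hypothesis applied to each summand exhibits $(\cE,\phi,\tau)$ as a direct sum of $\Si$-stable degree-$0$ equivariant $G$-Higgs bundles, i.e.\ it is $\Si$-polystable.

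The hardest part, I expect, will be the anti-holomorphic bookkeeping: checking carefully that the action \eqref{action_on_metrics} is genuinely compatible with the Chern connection and with the self-duality equation --- so that $\Si$ really permutes Hermitian--Yang--Mills metrics and the barycentre lands back inside $\mathrm{HYM}(\cE,\phi)$ --- and, relatedly, that $\mathrm{HYM}(\cE,\phi)$ is totally geodesic in $\mathrm{Herm}(\cE)$, equivalently that the Hermitian--Yang--Mills metric on a polystable degree-$0$ Higgs bundle is unique up to the reductive automorphism group. None of this is difficult, but it is the content not literally subsumed by the cited statements of Simpson; for a reader unwilling to check it, quoting \cite[Theorem 4.4]{GPW} dispatches the whole theorem.
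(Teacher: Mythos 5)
Your proposal is correct and, at its core, takes exactly the paper's route: the paper offers no independent proof, but attributes the statement to Simpson's \cite[Theorem 1]{Simpson_JAMS} (already equivariant for a finite group of \emph{holomorphic} automorphisms) together with \cite[Corollary 6.16]{Simpson_local_systems} for real forms, handles anti-holomorphic elements of $\Si$ via the action \eqref{action_on_metrics} on metrics (whose compatibility with Chern connections is Proposition \ref{action_on_Chern_connections}), and records \cite[Theorem 4.4]{GPW} as a black-box alternative --- precisely your first and last paragraphs. Your middle paragraphs go beyond the paper by supplying a self-contained argument, and they are essentially sound; in particular, your direct socle/Harder--Narasimhan argument that $\Si$-polystability of $(\cE,\phi,\tau)$ forces polystability of $(\cE,\phi)$ is welcome, since the paper obtains that implication (Corollary \ref{Si_equiv_NAHT_Higgs_side}) as a \emph{consequence} of the present theorem, so quoting it here would be circular. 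One justification in the averaging step needs repair: an orbit of a group of isometries is not in general totally geodesic, so the convexity of the set of Hermitian--Yang--Mills metrics should be verified directly --- e.g.\ as the minimum set of the geodesically convex Donaldson functional, or explicitly from the isotypic description $\cE\simeq\bigoplus_i \cE_i\otimes\C^{n_i}$ with Hermitian--Yang--Mills metrics exactly those of the form $\bigoplus_i h_i\otimes g_i$, for which pointwise geodesics between two such metrics visibly stay in the set --- after which the barycentre of the finite $\Si$-orbit of $h_0$ (which does consist of Hermitian--Yang--Mills metrics, by $\si(\phi)=\phi$ and $A_{h^\si}=A_h^\si$) is $\Si$-invariant and solves the problem; equivalently, as you and the paper both note, one may simply invoke the $\Si$-invariance of Simpson's construction itself. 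Your ``only if'' direction (orthogonal complement of a $\Si$- and $\phi$-invariant degree-$0$ sub-bundle, plus induction on the rank) is the standard second-fundamental-form argument and is fine.
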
 

\begin{corollary}\label{Si_equiv_NAHT_Higgs_side}
Let $(\cE,\phi,\tau)$ be a $\Si$-equivariant $G$-Higgs bundle with vanishing first Chern class on $X$. If $(\cE,\phi,\tau)$ is $\Si$-polystable as an equivariant $G$-Higgs bundle, then $(\cE,\phi)$ is polystable as a $G$-Higgs bundle.
\end{corollary}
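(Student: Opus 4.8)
The plan is to deduce this immediately from Theorem \ref{invariant_HYM_metrics} by applying it twice: once equivariantly and once non-equivariantly. First, since $(\cE,\phi,\tau)$ is $\Si$-polystable as an equivariant $G$-Higgs bundle with vanishing first Chern class, Theorem \ref{invariant_HYM_metrics} provides a $\Si$-invariant Hermitian metric $h$ on $\cE$ that is Hermitian-Yang-Mills, i.e.\ its Chern connection $A_h$ satisfies $F_{A_h} + [\phi,\phi^{*_h}] = 0$.

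Next I would simply forget the $\Si$-invariance of $h$: it remains a Hermitian-Yang-Mills metric on the underlying $G$-Higgs bundle $(\cE,\phi)$, which still has vanishing first Chern class. Now apply Theorem \ref{invariant_HYM_metrics} in the special case $\Si=\{1\}$ (this is Simpson's original statement \cite{Simpson_JAMS,Simpson_local_systems}, of which the equivariant version is a refinement): the existence of a Hermitian-Yang-Mills metric on a $G$-Higgs bundle with vanishing first Chern class is equivalent to polystability of that $G$-Higgs bundle. Hence $(\cE,\phi)$ is polystable as a $G$-Higgs bundle.

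There is essentially no obstacle here: the only point to check is that the self-duality equation $F_{A_h}+[\phi,\phi^{*_h}]=0$ is literally the same condition in the equivariant and non-equivariant settings — which is clear, since the $\Si$-equivariant structure plays no role in the equation itself — and that the notion of ``polystable as a $G$-Higgs bundle'' appearing in the conclusion is precisely the one characterized by Theorem \ref{invariant_HYM_metrics} with trivial group. So the corollary is a formal consequence of the theorem, recorded separately because it is exactly the implication needed for one direction of the $\Si$-equivariant non-Abelian Hodge correspondence in Section \ref{NAHC_for_orbifolds}.
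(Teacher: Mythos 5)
Your proposal is correct and follows essentially the same route as the paper: the paper's proof likewise applies Theorem \ref{invariant_HYM_metrics} to produce a $\Si$-invariant Hermitian--Yang--Mills metric and then observes that such a metric is in particular Hermitian--Yang--Mills, so the non-equivariant case of the same theorem gives polystability of $(\cE,\phi)$.
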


\begin{proof}
Assume that $(\cE,\phi,\tau)$ is $\Si$-polystable. Then, by Theorem \ref{invariant_HYM_metrics}, it admits a $\Si$-invariant Yang-Mills metric $h$. Such a metric is in particular Hermitian-Yang-Mills, so $(\cE,\phi)$ is polystable as a $G$-Higgs bundle.
\end{proof}

Our next goal is to show that the Chern connection of a $\Si$-invariant metric is necessarily $\Si$-invariant. This will follow from an elementary observation (Proposition \ref{action_on_Chern_connections}). Let $\cE$ be a holomorphic vector bundle on $X$ and think of it as a smooth vector bundle $E$ equipped with a Dolbeault operator $\ov{\partial}_E: \Omega^0(X;E)\lra \Omega^{0,1}(X;E)$. Saying that $\tau=(\tau_\si)_{\si\in\Si}$ is a $\Si$-equivariant structure in the holomorphic sense on $\cE$ is equivalent to saying that $\tau$ is a $\Si$-equivariant structure in the smooth sense on $E$ such that, additionally, $\si\ov{\partial}_E\si^{-1} = \ov{\partial}_E$ for all $\si\in \Si$, i.e.\ the Dolbeault operator $\ov{\partial}_E$ is equivariant with respect to the $\Si$-actions induced by $\tau$ on $\Omega^0(X;E)$ and $\Omega^{0,1}(X;E)$. Indeed, that equivariance condition implies that each $\tau_\si$ preserves the space $\ker\ov{\partial}_E$ of holomorphic sections of $\cE$, therefore is either holomorphic or anti-holomorphic with respect to $\ov{\partial}_E$.

\begin{proposition}\label{action_on_Chern_connections}
Let $(E,\dbar_E,\tau)$ be a $\Si$-equivariant holomorphic vector bundle on $X$. Then there is a right action $D\lmt D^\si$ of the group $\Si$ on the space $\{D:\Omega^0(X;E)\lra \Omega^1(X;E)\ |\ D^{0,1}=\dbar_E\}$  of linear connections compatible with $\dbar_E$. Moreover, if $h$ is a Hermitian metric on $(E,\dbar_E)$ and $A_h$ is the Chern connection associated to $h$, then one has, for all $\si\in\Si$, $A_h^\si = A_{h^\si}$ with respect to the action of $\Si$ on the space of metrics defined in \eqref{action_on_metrics}. In particular, if $h$ is $\Si$-invariant, then so is the Chern connection $A_h$.
\end{proposition}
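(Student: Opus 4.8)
The plan is to obtain the desired action as the restriction, to the affine subspace of linear connections $D$ on $\cE$ with $D^{0,1}=\dbar_E$, of the right $\Si$-action $D\mapsto D^\si:=\si^{-1}\,D\,\si$ on the space of all linear connections on $E$. Here $\si$ acts on $\Omega^0(X;E)$ by $s\mapsto \si\cdot s:=\tau_\si\circ s\circ\si^{-1}$ and on $\Omega^1(X;E)$ by $\omega\mapsto(\si\otimes\tau_\si)\circ\omega\circ\si^{-1}$, exactly as in \eqref{action_on_Higgs_fields}, with the convention that when $\si$ is anti-holomorphic the $\C$-antilinearity of $\tau_\si$ is offset by the conjugate-linearity of pullback on forms, so that these operations remain $\C$-linear. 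That $D\mapsto D^\si$ is a \emph{right} action is formal: $\si\mapsto(\si\cdot\,)$ is a left action on $\Omega^0(X;E)$ and on $\Omega^1(X;E)$ (this uses only $\tau_{1_\Si}=\id$ and $\tau_{\si_1\si_2}=\tau_{\si_1}\tau_{\si_2}$), hence conjugation by it is a right action on operators. That $D^\si$ is again a connection follows from the Leibniz rule by a direct check: the induced action of $\si$ on functions ($f\mapsto f\circ\si^{-1}$, resp.\ $f\mapsto\ov{f\circ\si^{-1}}$ when $\si$ is anti-holomorphic) intertwines the de Rham differential on $\Omega^0(X)$ with that on $\Omega^1(X)$, because $d$ commutes with pullback and with complex conjugation. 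Finally, $D^\si$ is compatible with $\dbar_E$ whenever $D$ is: this is the only point where one uses that $\tau$ is a $\Si$-equivariant structure \emph{in the holomorphic sense}, i.e.\ that $\si\,\dbar_E\,\si^{-1}=\dbar_E$ for all $\si\in\Si$, as recalled just before the statement; granting this, and since the operations above respect the bidegree decomposition of forms, $D^{0,1}=\dbar_E$ gives $(D^\si)^{0,1}=\si^{-1}\dbar_E\si=\dbar_E$.

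For the statement about the Chern connection, recall that $A_h$ is characterised as the \emph{unique} linear connection on $\cE$ that is both compatible with $\dbar_E$ and unitary for $h$, i.e.\ such that $A_h^{0,1}=\dbar_E$ and $d\,h(s,t)=h(A_h s,t)+h(s,A_h t)$ for all $s,t\in\Omega^0(X;E)$. By the first part, $A_h^\si$ is again compatible with $\dbar_E$; hence, by this uniqueness, it suffices to check that $A_h^\si$ is unitary for the metric $h^\si$ of \eqref{action_on_metrics}. Unwinding the definitions $A_h^\si s=\si^{-1}\cdot\big(A_h(\si\cdot s)\big)$ and $h^\si(s,t)=h(\si\cdot s,\si\cdot t)\circ\si$ (resp.\ its complex conjugate when $\si$ is anti-holomorphic), and using that $A_h$ is unitary for $h$ together with the compatibility of $d$ with pullback (and conjugation), one computes directly that $d\,h^\si(s,t)=h^\si(A_h^\si s,t)+h^\si(s,A_h^\si t)$. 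Therefore $A_h^\si=A_{h^\si}$; in particular, if $h$ is $\Si$-invariant (i.e.\ $h^\si=h$ for all $\si$), then $A_h^\si=A_{h^\si}=A_h$ for all $\si$, so the Chern connection $A_h$ is $\Si$-invariant.

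The one genuinely delicate point above is the bookkeeping of the $\C$-antilinearity of $\tau_\si$ for anti-holomorphic $\si$ and of the conjugation appearing in \eqref{action_on_metrics}: once one observes that, for such $\si$, pullback sends $(1,0)$-forms conjugate-linearly to $(0,1)$-forms and that $h$ is conjugate-linear in its second argument, all conjugations cancel in pairs and the verifications are identical to the holomorphic case. Everything else is a routine consequence of the naturality of pullbacks of connections and of Chern connections under diffeomorphisms.
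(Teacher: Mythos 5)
Your argument is correct and essentially the one the paper gives: you define the same conjugation action $D^\si=\si^{-1}D\si$, use $\si\,\dbar_E\,\si^{-1}=\dbar_E$ for compatibility, and deduce $A_h^\si=A_{h^\si}$ from the defining characterization of the Chern connection applied to $h^\si$. The only (immaterial) difference is that you verify full metric compatibility $dh^\si=h^\si(A_h^\si\,\cdot,\cdot)+h^\si(\cdot,A_h^\si\,\cdot)$ and invoke uniqueness, whereas the paper checks the equivalent defining relation for the $(1,0)$-part $D_h$ via $\dbar_J\big(h(s_1,s_2)\big)=h(D_h s_1,s_2)+h(s_1,\dbar_E s_2)$.
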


\begin{proof}
First, we define an action of $\Si$ on the space of linear connections on $E$ that are compatible with the holomorphic structure $\dbar_E$. Set $D^\si := \si^{-1} D \si$, where $\si$ acts on $\Omega^k(X;E)$ in the usual way (see for instance \eqref{action_on_Higgs_fields}). It is clear that this action preserves the subspaces of $(1,0)$ and $(0,1)$ pseudo-connections, as conjugation by $\si$ is a $\C$-linear operation. Then $(\si^{-1}D\si)^{0,1} = \si^{-1} D^{0,1} \si = \si^{-1} \dbar_E \si = \dbar_E$, so $D^\si$ is indeed compatible with $\dbar_E$. Next, we prove that $A_h^\si=A_{h^\si}$. Recall that the Chern connection $A_h$ associated to the metric $h$ and the holomorphic structure $\dbar_E$ is the linear connection $A_h:=D_h + \dbar_E$ where $D_h$ is the operator of type $(1,0)$ uniquely determined by the condition $\dbar_J \big(h(s_1,s_2)\big) = h(D_h s_1,s_2) + h(s_1,\dbar_E s_2)$ for all smooth sections $s_1,s_2$ of $E$ (where $\dbar_J$ is the Cauchy-Riemann operator associated to $J$ on $X$). Then, one has, for all $\si\in\Si$, that $\dbar_J(h^\si(s_1,s_2)) = h^\si((\si^{-1} D_h\si)s_1,s_2) + h^\si(s_1,\dbar_E s_2)$, so $D_{h^\si} = \si^{-1} D_h \si = D_h^\si$ and $A_{h^\si} \,=\, D_{h^\si} + \dbar_E \,=\, \si^{-1} D_h \si + \si^{-1}\dbar_E\si \,=\, \si^{-1} A_h \si \,=\, A_h^\si$.
\end{proof}

\noindent Combining Proposition \ref{action_on_Chern_connections} with Simpson's Theorem \ref{invariant_HYM_metrics}, we obtain the main result of this section, which is the second half of the non-Abelian Hodge correspondence for $\Si$-equivariant bundles: if the $\Si$-equivariant $G$-Higgs bundle $(\cE,\phi,\tau)=(E,\ov{\partial}_E,\phi,\tau)$ is $\Si$-polystable, it admits a $\Si$-invariant Hermitian-Yang-Mills metric $h$, which defines a \textit{$\Si$-equivariant harmonic bundle} $(E,\ov{\partial}_E,\phi,h,\tau)=(E,\nabla_h,h,\tau)$, with $\Si$-invariant flat connection $\nabla_h:=A_h+\psi_h$ where $\psi_h=\phi+\phi^{*_h}$.

\begin{theorem}
Let $\fg$ be a real semisimple Lie algebra and let $G$ be the group of real points of $\Int(\fg\otimes\C)$. Let $(\cE,\phi,\tau)$ be a $\Si$-equivariant $G$-Higgs bundle on $X$. Then $(\cE,\phi,\tau)$ is $\Si$-polystable if and only if there exists a $\Si$-invariant Hermitian metric $h$ on $\cE$ such that the associated Chern connection $A_h$ is a $\Si$-invariant solution of the self-duality equations, namely $F_{A_h} + [\phi,\phi^{*_h}] = 0$ and, for all $\si\in\Si$, $A_h^\si = A_h$. 
\end{theorem}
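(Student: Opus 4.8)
The plan is to assemble the theorem from the two ingredients developed immediately above: Simpson's existence result for $\Si$-invariant Hermitian--Yang--Mills metrics (Theorem \ref{invariant_HYM_metrics}) and the $\Si$-equivariance of the Chern connection (Proposition \ref{action_on_Chern_connections}). First I would record the elementary observation that, for a $G$-Higgs bundle of the type considered here, the self-duality equation $F_{A_h}+[\phi,\phi^{*_h}]=0$ is exactly the Hermitian--Yang--Mills condition: there is no central term because $\deg\cE=0$, which in turn holds because the structure group $K_\C$ preserves the quadratic form $B_{\theta_\C}$ on $\fg_\C$, so that $(\det\cE)^{\otimes 2}$ is trivial. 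Thus the statement to prove is that, for such bundles, $\Si$-polystability is equivalent to the existence of a $\Si$-invariant HYM metric whose Chern connection is moreover $\Si$-invariant.

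For the ``only if'' direction, assume $(\cE,\phi,\tau)$ is $\Si$-polystable. Theorem \ref{invariant_HYM_metrics} furnishes a $\Si$-invariant Hermitian metric $h$ with $F_{A_h}+[\phi,\phi^{*_h}]=0$, so the only remaining point is that $A_h$ itself is $\Si$-invariant. But Proposition \ref{action_on_Chern_connections} gives $A_h^\si=A_{h^\si}$ for every $\si\in\Si$, and $h^\si=h$ then forces $A_h^\si=A_h$.

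For the ``if'' direction, assume a $\Si$-invariant metric $h$ exists with $A_h^\si=A_h$ and $F_{A_h}+[\phi,\phi^{*_h}]=0$. Then $h$ is in particular a $\Si$-invariant Hermitian--Yang--Mills metric on $(\cE,\phi,\tau)$, and the converse half of Theorem \ref{invariant_HYM_metrics} yields at once that $(\cE,\phi,\tau)$ is $\Si$-polystable. Here one uses that $\Si$-invariance of the Higgs field is built into the definition of an equivariant $G$-Higgs bundle, so the hypotheses of Theorem \ref{invariant_HYM_metrics} are met without further comment.

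No analytic work beyond Simpson's theorem is required, so there is no serious obstacle; the only thing worth a line of bookkeeping is that the output $(E,\nabla_h,h,\tau)$ is genuinely a $\Si$-equivariant harmonic bundle in the sense of Section \ref{from_flat_to_harmonic}, i.e.\ that $\psi_h=\phi+\phi^{*_h}$ and $\nabla_h=A_h+\psi_h$ are $\Si$-invariant. This holds because $\si(\phi)=\phi$ and $h^\si=h$ force $\si(\phi^{*_h})=\phi^{*_h}$ (the fiberwise $h$-adjoint of $\phi$ is determined by the two $\Si$-fixed data $\phi$ and $h$), while $A_h^\si=A_h$ was just established; adding these gives $\nabla_h^\si=\nabla_h$.
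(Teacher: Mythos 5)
Your proposal is correct and follows the paper's own route: the forward direction is exactly the combination of Theorem \ref{invariant_HYM_metrics} (existence of a $\Si$-invariant Hermitian--Yang--Mills metric) with Proposition \ref{action_on_Chern_connections} (so $A_h^\si = A_{h^\si} = A_h$), and the converse is the other half of Theorem \ref{invariant_HYM_metrics}, which the paper leaves implicit. Your extra bookkeeping about the degree and the $\Si$-invariance of $\psi_h$ and $\nabla_h$ is harmless and consistent with the paper's setup.
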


\begin{proof}
Assume that $(\cE,\phi,\tau)$ is $\Si$-polystable. The existence of a $\Si$-invariant metric $h$ such that $A_h$ satisfies $F_{A_h}+[\phi,\phi^{*_h}]=0$ is provided by \cite{Simpson_JAMS} and \cite{Simpson_local_systems}, as recalled in Theorem \ref{invariant_HYM_metrics}. The $\Si$-invariance of the associated Chern connection then comes from Proposition \ref{action_on_Chern_connections}.
\end{proof}

\subsection{Non-Abelian Hodge correspondence}\label{NAHC_for_orbifolds}

Putting together the results of Section \ref{Hitchin_s_equations}, we obtain, given a real semisimple Lie algebra $\fg$, a hyperbolic $2$-orbifold $Y$ and a presentation $Y\simeq[\Si\bs X]$ of that orbifold as a quotient of a closed orientable hyperbolic surface $X$ by the action of a finite group of isometries $\Si$, a homeomorphism between the representation space $\Hom^{\mathrm{c.r.}}(\piY,G)/G$ of completely reducible representations of the orbifold fundamental group $\piY$ into the group of real points of $\Int(\fg\otimes\C)$ and the moduli space  $$\cM_{(X,\Si)}(G) := \left\{
\begin{matrix}
\Si\textrm{-polystable equivariant}\ G\textrm{-Higgs bundles}\\ 
(\cE,\phi,\tau)\ \textrm{with vanishing first Chern class on}\ X
\end{matrix}
\right\} \big/\, \mathrm{isomorphism}$$ of isomorphism classes of $\Si$-polystable equivariant $G$-Higgs bundles with vanishing first Chern class on $X$. We shall refer to that homeomorphism as a \textit{non-Abelian Hodge correspondence for orbifolds}, depending on the presentation $Y\simeq[\Si\bs X]$, and we now proceed to analyzing the Hitchin component of $\Hom(\piY,G)/G$ in terms of that correspondence.

\section{Parameterization of Hitchin components}\label{section:parameterization_Hitchin}

Throughout this section, we fix a presentation $Y \simeq [\Si\bs X]$ of the orbifold $Y$, where $X$ is assumed to be a Riemann surface and $\Si$ acts on $X$ by transformations that are either holomorphic or anti-holomorphic (see Definition \ref{pres_of_Y_def}), and we let $G = \Int(\fgC)^\tau$, where $\fg$ is now a \emph{split} real form of a complex \textit{simple} Lie algebra. By Lemma \ref{lemma:centralizer}, if $
\rho:\piY \lra G \subset \GL(\fg)$ is a Hitchin representation, then $\fg$ is an irreducible $\piY$-module under $\rho$. So, using the non-Abelian Hodge correspondence for orbifolds recalled in Section \ref{NAHC_for_orbifolds}, we can think of $\Hit(\piY,G)$ as a connected component of \begin{equation}\label{NAHT_for_orbifolds}\Hom^{\mathrm{c.r.}}(\piY,G)/G \simeq \cM_{(X,\Si)}(G).\end{equation} By Corollary \ref{Si_equiv_NAHT_Higgs_side}, there is a well-defined map $J: \cM_{(X,\Si)}(G) \ni (\cE,\phi,\tau) \lmt  (\cE,\phi) \in \cM_X(G)$ 
forgetting the $\Si$-equivariant structure $\tau$. The group $\Si$ acts on $\cM_X(G)$ (by pullback of bundles and Higgs fields, see \eqref{action_on_Higgs_bundles}) and, as in Lemma \ref{map_to_fixed_locus}, the image of $J$ is contained in $\Fix_\Si(\cM_X(G))$ but the resulting map $J:\cM_{(X,\Si)}(G) \lra \Fix_\Si(\cM_X(G))$ is again neither injective nor surjective in general. In this section, we will show that, if we restrict it to $\Hit(\piY,G)\subset \cM_{(X,\Si)}(G)$, then the map $J$ induces a homeomorphism $\Hit(\piY,G) \simeq \Fix_\Si(\Hit(\piX,G))$.

\subsection{Equivariance of the Hitchin fibration}\label{subsec:Hitchin_fibration}

Recall that $\Si$ acts on $X$ by transformations that are either holomorphic or anti-holomorphic. The induced action on the canonical bundle $K_X$ defines a $\Si$-equivariant structure $(\tau_\si)_{\si\in\Si}$ in the holomorphic sense on $K_X$. As seen in Section \ref{equiv_Higgs_bdles}, this in turn induces an action of $\Si$ on all tensor powers $K_X^d$ of the canonical bundle, and on sections of such bundles: if $s\in H^0(X;K_X^d)$ and $\si\in\Si$, we set $\si(s) := \tau_\si \circ s \circ \si^{-1}$.
\begin{equation}\label{action_on_hol_diff}
\xymatrix{
K_X^d \ar[r]^{\tau_\si} \ar[d] & K_X^d\ar[d]\\
X \ar@/_/[u]_{s} \ar[r]^{\si} & X \ar@{.>}@/_/[u]_{\si(s)}
}
\end{equation} Since $\si$ and $\tau_\si$ are either simultaneously holomorphic or simultaneously anti-holo\-morphic, $\si(s)$ is indeed a holomorphic section of $K_X^d$. Explicitly for $d=1$, as $\tau_\si:K_X\lra K_X$ is just the transpose of the tangent map $T\si^{-1}$, we have $$\si(s) = \left\{
\begin{array}{rl}
(\si^{-1})^*s & \mathrm{if}\ \si\ \textrm{is holomorphic,} \vspace{7pt}\\
\ov{(\si^{-1})^*s} & \mathrm{if}\ \si\ \textrm{is anti-holomorphic,}
\end{array}\right.$$ where, by definition, $\ov{(\si^{-1})^*s}$ sends $v\in T_z X$ to $\ov{\big(s(\si^{-1}(z)) \circ T_z \si^{-1}\big)\cdot v}\in \C$. And finally, if $X$ is an open set in $\C$ with an action of $\Si$ and $s(z) = f(z)\, dz$, then 
\begin{equation}\label{Sigma_action_on_sections_local_form}
\si(s) = \left\{
\begin{array}{ll}
(f\circ \si^{-1}) \ (\partial\si)\, dz & \mathrm{if}\ \si\ \textrm{is holomorphic,}\vspace{7pt}\\
\ov{(f\circ \si^{-1})} \ (\partial\ov{\si}) \, dz & \mathrm{if}\ \si\ \textrm{is anti-holomorphic,}
\end{array}\right.
\end{equation} where by $\partial\si$ we denote the $\C$-linear part of the differential $d\si$ of the $\R$-differentiable map $\si:\C\lra\C$.
We now recall the definition of the Hitchin fibration $F:\cM_X(G) \lra \cB_X(\fg)$, where the \emph{Hitchin base} $\cB_X(\fg)$ will be defined in \eqref{Hitchin_fibration}. 

\begin{remark}
This fibration was introduced by Hitchin for simple complex Lie groups $G_\C$ in \cite{Hitchin_Duke}. For a real Lie group $G$ like ours ($=$ split real form of a connected simple complex Lie group of adjoint type), there are two possibilities to define the Hitchin fibration. Either, as in \cite{Hitchin_Teich}, by composing the original Hitchin fibration $F_{\C}: \cM_X(G_\C)\lra \cB_X(\fg_\C)$ with the canonical map $\cM_{X}(G) \lra \cM_X(G_\C)$, or, as in \cite{GPPNR}, by a direct definition generalizing the one in \cite{Hitchin_Duke}. The latter is perhaps preferable from our point of view, because it avoids the injectivity defect of the canonical map $\cM_{X}(G) \lra \cM_X(G_\C)$. For the two approaches to actually coincide, one needs in particular to have $\cB_X(\fg)=\cB_X(\fg_\C)$, which is true by the assumption that $\fg$ is a split real form of $\fg_\C$ (see \cite{GPPNR}). 
\end{remark}

Let $\fg$ be the split real form of a simple complex Lie algebra $\fg_\C$ and let $\fg=\fk\oplus\fp$ be a Cartan decomposition of $\fg$ (with respect to the Killing form). As usual, set $G_\C:=\Int(\fg_\C)$ and let $K$ be the maximal compact subgroup of $G$ with Lie algebra $\fk$. Finally, let $K_\C$ be the complex subgroup of $G_\C$ with Lie algebra $\fk\otimes\C$ and maximal compact subgroup $K$. The adjoint action of $K\subset G$ on $\fg$ preserves $\fp$, and  the induced action of $K_\C$ on $\fp_\C$ is compatible with the canonical real structures of these spaces, in the sense that $\ov{\Ad_k\, \xi} = \Ad_{\ov{k}}\, \ov{\xi}$ for all $k \in K_\C$ and all $\xi \in \fp_\C$. Let $r:=\rk (\fg)$ denote the real rank of $\fg$. Note that since $\fg$ is split by assumption, this is equal to the rank of $\fg_\C$. By a theorem due to Kostant and Rallis \cite{Kostant_Rallis_orbits_in_symmetric_spaces}, the $\R$-algebra $\R[\fp]^K $ of $K$-invariant regular functions on $\fp$ is generated by exactly $r$ homogeneous polynomials $(P_1,\,\ldots\,, P_r)$. 
We set $d_\alpha:=\deg P_\alpha -1$ for all $\alpha \in \{1, \,\ldots\,,r\}$. The $(d_\alpha)_{1\leqslant \alpha\leqslant r}$ depend only on the real Lie algebra $\fg$ and are called the \emph{exponents} of $\fg$. Following \cite{Hitchin_Duke} and \cite{GPPNR}, every such family defines a fibration 
\begin{equation}\label{Hitchin_fibration}
F: \cM_X(G) \ni (\cE,\phi) \lmt \big(P_1(\phi),\,\ldots\, ,P_r(\phi)) \in
 \cB_X(\fg):= \displaystyle\bigoplus_{\alpha=1}^r H^0(X;K_X^{d_\alpha+1}).
\end{equation}

\noindent By Definition \ref{G_Higgs_bdle_def}, the Higgs field $\phi\in H^0(X;K_X\otimes \ad_{\fp_\C}(\cE))$ of a $G$-Higgs bundle $(\cE,\phi)$ is a holomorphic $1$-form on $X$ with values in the bundle of symmetric adjoint endomorphisms of $\cE$, i.e.\ endomorphisms that are locally of the form $\ad_\xi$ for some $\xi\in\fp_\C$. Since $\ad_{\fp_\C}(\cE)$ has fiber $\fp_\C$ and structure group $K_\C$, and each $P_\alpha\in\R[\fp]^K$ defines a $K_\C$-invariant $\C$-valued polynomial function on $\fp_\C$, we have that $P_\alpha(\phi)$ is indeed a (homogeneous) holomorphic differential, of degree equal to $\deg P_\alpha = d_\alpha+1$ on $X$. We shall now see that the Hitchin fibration \eqref{Hitchin_fibration} is $\Si$-equivariant. Recall first (see (\ref{action_on_hol_diff})) that the finite group $\Si$, consisting of transformations of $X$ that are either holomorphic or anti-holomorphic, acts on each complex vector space $H^0(X;K_X^{d_\alpha+1})$. Moreover, if $(\cE,\phi)$ is a $G$-Higgs bundle on $X$ and $\si\in\Si$, then there is a $G$-Higgs bundle 
\begin{equation}\label{action_on_Higgs_bundles}
\big(\si(\cE),\si(\phi)\big) = \left\{
\begin{array}{rl}
\big((\si^{-1})^*\cE, (\si^{-1})^*\phi\big) & \mathrm{if}\ \si\ \textrm{is holomorphic},\vspace{7pt}\\
\big(\ov{(\si^{-1})^*\cE}, \ov{(\si^{-1})^*\phi}\big) & \mathrm{if}\ \si\ \textrm{is anti-holomorphic},
\end{array}
\right.
\end{equation} where $\si(\phi)\in H^0(X;\si(K_X)\otimes \ad_{\fp_\C}(\si(\cE))) = H^0(X;K_X\otimes \ad_{\fp_\C}(\si(\cE)))$, since $K_X$ has a canonical $\Si$-equivariant structure (therefore is canonically isomorphic to $\si(K_X)$). The point is that $\si(\phi)$ is indeed a Higgs field on the $K_\C$-bundle $\si(\cE)$. Note that if, additionally, a $\Si$-equivariant structure $\tau$ on $\cE$ has been given, then there is a canonical isomorphism $\si(\cE)\simeq\cE$, and $\si(\phi)$ may therefore be viewed as a Higgs field on the original holomorphic bundle $\cE$: we recover in this way the canonical $\Si$-action $\phi\lmt\si(\phi)$ on sections of the $\Si$-equivariant bundle $K_X\otimes \ad_{\fp_\C}(\cE)$, as defined in \eqref{action_on_Higgs_fields}. We now want to compare $F(\si(\cE),\si(\phi))$ and $\si(F(\cE,\phi))$, where $\si\in\Si$ and $F$ is the Hitchin fibration of \eqref{Hitchin_fibration}.

\begin{proposition}
Let $\fg$ be the split real form of a simple complex Lie algebra $\fg_\C$ and let $G$ be the associated real form of the simple complex Lie group of adjoint type $G_\C:=\Int(\fg_\C)$. Let $\fg=\fk\oplus\fp$ be a Cartan decomposition of $\fg$ and let $K$ be the compact real form of $\Int(\fk\otimes\C)$. Let $X$ be a compact connected Riemann surface of genus $g\geqslant 2$ and let $\Si$ be a finite group acting effectively on $X$ by transformations that are either holomorphic or anti-holomorphic. For any choice of generators $(P_1,\,\ldots\, ,P_r)$ of the $\R$-algebra $\R[\fp]^K$, we denote by $F$ the associated Hitchin fibration \eqref{Hitchin_fibration}, 
where $d_\alpha:=\deg P_\alpha -1$ and $r:=\rk(\fg)$. Then $F:\cM_X(G)\lra \cB_X(\fg)$ is $\Si$-equivariant with respect to the $\Si$-action on $\cM_X(G)$ defined in \eqref{action_on_Higgs_bundles} and the $\Si$-action on $\cB_X(\fg)$ defined by means of \eqref{action_on_hol_diff}.
\end{proposition}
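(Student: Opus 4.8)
Since the $\Si$-action on $\cB_X(\fg)=\bigoplus_{\alpha=1}^r H^0(X;K_X^{d_\alpha+1})$ is defined componentwise via \eqref{action_on_hol_diff}, and $F$ has components $(\cE,\phi)\lmt P_\alpha(\phi)$, the whole statement reduces to proving, for each $\si\in\Si$ and each $\alpha\in\{1,\dots,r\}$, the identity of holomorphic sections of $K_X^{d_\alpha+1}$
\begin{equation*}
P_\alpha\big(\si(\phi)\big) \;=\; \si\big(P_\alpha(\phi)\big),
\end{equation*}
where on the left $\si(\phi)$ is the Higgs field on $\si(\cE)$ from \eqref{action_on_Higgs_bundles} (with $\si(\cE)\simeq\cE$ canonically once $\tau$ is fixed, so that $\si(\phi)$ is the element of $H^0(X;K_X\otimes\ad_{\fp_\C}(\cE))$ given by \eqref{action_on_Higgs_fields}). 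First I would observe that $P_\alpha$, being $K_\C$-invariant on $\fp_\C$, gives a well-defined morphism of sheaves $\ad_{\fp_\C}(\cE)^{\otimes(d_\alpha+1)}\to K_X^{d_\alpha+1}$ after twisting by $K_X$, so $P_\alpha(\phi)$ makes sense intrinsically and both sides of the displayed identity are genuine holomorphic sections of $K_X^{d_\alpha+1}$; since $X$ is connected it then suffices to check the equality on the complement of the (finite) ramification locus of $\si$, i.e.\ in a local holomorphic chart $z$ together with a local holomorphic trivialization of $\cE$, where $\phi=\ad_{\Phi(z)}\,dz$ with $\Phi$ a $\fp_\C$-valued holomorphic function and $P_\alpha(\phi)=P_\alpha(\Phi(z))\,(dz)^{d_\alpha+1}$.

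\textbf{Holomorphic case.} If $\si$ is holomorphic, then $\si(\cE)=(\si^{-1})^*\cE$ and $\si(\phi)=(\si^{-1})^*\phi$, so in local charts $\si(\phi)$ is obtained from $\phi$ by the usual pullback formula \eqref{Sigma_action_on_sections_local_form}; applying $P_\alpha$ fiberwise and using that pullback of sections commutes with the fiberwise-defined, $K_\C$-invariant map $P_\alpha$, one gets $P_\alpha(\si(\phi))=(\si^{-1})^*\big(P_\alpha(\phi)\big)=\si\big(P_\alpha(\phi)\big)$. Here the homogeneity of $P_\alpha$ of degree $d_\alpha+1$ is exactly what matches the power $(\partial\si)^{\,d_\alpha+1}$ produced by pulling back a $(d_\alpha+1)$-differential.

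\textbf{Anti-holomorphic case and the main point.} If $\si$ is anti-holomorphic, then $\si(\cE)=\ov{(\si^{-1})^*\cE}$ and $\si(\phi)=\ov{(\si^{-1})^*\phi}$. The new ingredient is that the generators $(P_1,\dots,P_r)$ of $\R[\fp]^K$ have \emph{real} coefficients, so their $\C$-linear extensions to $\fp_\C$ satisfy $P_\alpha(\ov\xi)=\ov{P_\alpha(\xi)}$ for all $\xi\in\fp_\C$; combined with the compatibility $\ov{\Ad_k\,\xi}=\Ad_{\ov k}\,\ov\xi$ for $k\in K_\C$, $\xi\in\fp_\C$ recalled just before \eqref{Hitchin_fibration}, this guarantees both that $\ov{(\si^{-1})^*\cE}$ is again a holomorphic $K_\C$-bundle carrying $\si(\phi)$ as an $\ad_{\fp_\C}$-valued Higgs field, and that $P_\alpha$ is still the correct fiberwise-defined invariant on it, now intertwining the real structure with complex conjugation of $(d_\alpha+1)$-differentials. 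Hence in local charts $P_\alpha\big(\ov{(\si^{-1})^*\phi}\big)=\ov{P_\alpha\big((\si^{-1})^*\phi\big)}=\ov{(\si^{-1})^*\big(P_\alpha(\phi)\big)}=\si\big(P_\alpha(\phi)\big)$, using \eqref{Sigma_action_on_sections_local_form} for the last step. I would also record, as a preliminary, that $\si$ indeed maps $\cM_X(G)$ to itself: pullback and conjugation preserve the $K_\C$-structure, the vanishing of the first Chern class, the degree of sub-bundles, and hence polystability.

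\textbf{Expected obstacle.} The genuinely routine steps (holomorphic case, well-definedness on $\cM_X(G)$) are immediate; the only delicate part is the anti-holomorphic case, where one must keep careful track of the two sources of conjugation — the real structure on $\fp_\C$ used to define $\ov{(\si^{-1})^*\cE}$ and the complex conjugation of differentials in \eqref{Sigma_action_on_sections_local_form} — and verify they are intertwined precisely by the reality of the $P_\alpha$'s. Once this bookkeeping is set up cleanly (ideally by phrasing $P_\alpha$ as a $K_\C$-equivariant polynomial map $\fp_\C\to\C$ compatible with real structures, so that it induces a morphism $\ad_{\fp_\C}(\si(\cE))^{\otimes(d_\alpha+1)}\to \si(K_X^{d_\alpha+1})=K_X^{d_\alpha+1}$), the equality $F\circ\si=\si\circ F$ follows.
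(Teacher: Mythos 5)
Your proposal is correct and follows essentially the same route as the paper: a local computation of $P_\alpha(\si(\phi))$ in a chart where $\phi=\xi\otimes dz$, split into the holomorphic and anti-holomorphic cases, with the two key inputs being the $K_\C$-invariance of the $P_\alpha$ (making $P_\alpha\circ\xi$ trivialization-independent) and their reality as generators of $\R[\fp]^K$ (so $P_\alpha(\ov\xi)=\ov{P_\alpha(\xi)}$, which handles the anti-holomorphic case). The only cosmetic slip is the reference to a ``ramification locus of $\si$'' --- $\si$ is a diffeomorphism of $X$, so the identity can simply be checked in every local chart --- but this does not affect the argument.
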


\begin{proof}
The Higgs field $\phi$ is a section of $\ad_{\fp_\C}(\cE)\otimes K_X$, so it is locally of the form $\xi \otimes dz$, where $\xi$ is a $\fp_\C$-valued holomorphic function. So, on the one hand, for all $\alpha\in\{1,\,\ldots\, ,r\}$, the holomorphic differential $P_\alpha(\phi)$ is locally of the form $(P_\alpha\circ\xi)\, (dz)^{d_\alpha+1}$. And on the other hand, for all $\si\in\Si$, the Higgs field $\si(\phi)$ (on $\si(\cE)$) is locally of the form $\si(\xi)\otimes \si(dz)$, where $\si(dz) = (\si^{-1})^*dz$ and $\si(\xi)=\xi\circ\si^{-1}$ if $\si$ is holomorphic on $X$, and $\si(dz) = \ov{(\si^{-1})^*dz}$ and $\si(\xi) = \ov{\xi\circ\si^{-1}}$ if $\si$ is anti-holomorphic on $X$ (in the latter expression, complex conjugation in $\fp_\C$ is taken with respect to the real form $\fp$). 
To compute $P_\alpha(\si(\phi))$, let us recall that $(P_1,\,\ldots\, ,P_r)$ are generators of the $\R$-algebra $\R[\fp]^K$. Equivalently, they are generators of the $\C$-algebra $\C[\fp_\C]^{K_\C}\simeq \R[\fp]^K\otimes\C$ that, in addition, are fixed points of the canonical real structure of $\R[\fp]^K\otimes\C$. Therefore, if $\si$ is anti-holomorphic, we have $P_\alpha(\ov{\xi\circ\si^{-1}}) = \ov{P_\alpha(\xi\circ\si^{-1})}$, where complex conjugation on the right-hand side is the usual one on $\C$, i.e.\ the function $P_\alpha:\fp_\C\lra\C$ is a real function in the sense that it commutes to the given real structures of $\fp_\C$ and $\C$ (which is indeed the case for polynomial functions with real coefficients in a real basis of $\fp_\C$). We note that the $\fp_\C$-valued function $\xi$ depends on the choice of a local trivialization of $\ad_{\fp_\C}(\cE)$, but $P_\alpha\circ\xi$ is independent of such a choice because the function $P_\alpha:\fp_\C\lra \C$ is $K_\C$-invariant and $K_\C$ is the structure group of $\ad_{\fp_\C}(\cE)$. 
Thus, we have shown that, if $\phi$ is locally of the form $\xi \otimes dz$, then $P_\alpha(\phi)$ is locally of the form $(P_\alpha\circ\xi)\otimes (dz)^{d_\alpha+1}$ and $P_\alpha(\si(\phi))$ is locally of the form $$\left\{
\begin{array}{rl}
\big((P_\alpha\circ\xi)\circ\si^{-1}\big) \otimes \big((\si^{-1})^*dz\big)^{d_\alpha+1} & \mathrm{if}\ \si\ \textrm{is holomorphic},\vspace{7pt}\\
\big(\ov{(P_\alpha\circ\xi)\circ\si^{-1}}\big) \otimes \big(\ov{(\si^{-1})^*dz\big)^{d_\alpha+1}} & \mathrm{if}\ \si\ \textrm{is anti-holomorphic}.
\end{array}
\right.$$ Comparing this with the definition of the $\Si$-action on $H^0(X;K_X^{d_\alpha+1})$ given in \eqref{Sigma_action_on_sections_local_form}, we have indeed that $P_\alpha(\si(\phi)) = \si(P_\alpha(\phi))$.
\end{proof}

\subsection{Invariant Hitchin representations}\label{inv_Hitchin_reps}

In \cite{Hitchin_Teich}, Hitchin constructed a section $s:\cB_X(\fg)\lra \cM_X(G)$ of the Hitchin fibration $F:\cM_X(G) \lra \cB_X(G)$ whose image is exactly the Hitchin component $\Hit(\piX,G)$, and we will now check that this section is $\Si$-equivariant in our context. This will enable us to prove Theorem \ref{main_obs_about_Hit_comp_for_orbifolds}.
Let us first briefly recall Hitchin's construction of his section, which uses Lie-theoretic results of Kostant \cite{Kostant_AJM_1963}. One starts with a split real form $\fg$ of a complex simple Lie algebra $\fg_\C$ and a Cartan decomposition $\fg=\fk\oplus\fp$. Then one chooses a regular nilpotent element $e\in \fp$ (i.e.\ $\ad_e$ is a nilpotent endomorphism of $\fg$ whose centralizer is of the smallest possible dimension, equal to the rank of $\fg$). By the strong Jacobson-Morozov Lemma \cite[Proposition 4]{Kostant_Rallis_orbits_in_symmetric_spaces}, $e$ can be embedded in a copy of $\sl(2,\R)$ in $\fg=\fk\oplus\fp$, i.e.\ one can find $x\in\fk$ semisimple and $\te\in\fp$ nilpotent such that $[x,e]=e,\  
[x,\te]=-\te\  
\mathrm{and}\  
[e,\te]=x$. We henceforth fix such a triple $(x,e,\te)$ and we let 
\begin{equation}\label{equation:decomposition_into_irreducible}
\fg_\C=\bigoplus_{\alpha=1}^r V_\alpha
\end{equation} 
be the decomposition of the $\sl(2,\C)$-module $\fg_\C$ into $r=\rk(\fg_\C)$ irreducible representations \cite{Kostant_AJM_1959}: each $V_\alpha$ is of odd dimension $2d_\alpha+1$ where the $(d_\alpha)_{1\leqslant \alpha\leqslant r}$ are the exponents of $\fg_\C$ (or equivalently, of $\fg$, since we are assuming that $\fg$ is split), and the eigenvalues  of the restriction of $\ad_x$ to $V_\alpha$ are the integers in the interval $[-d_\alpha,d_\alpha]$. For all $\alpha \in\{1,\,\ldots\,,r\}$ and all $d\in[-d_\alpha,d_\alpha]\cap\Z$, let us denote by $\fgmC$ the subspace of $\fg_\C$ on which $\ad_x$ acts with eigenvalue $d$. Then $\fg_\C= \bigoplus_{d=-M}^{M} \fgmC$, where $M=\max_{1\leqslant \alpha\leqslant r}d_\alpha$. Note that the eigenvalues of $\ad_x$ are all real and that $\fgmC= \fgm\otimes\C$ has a canonical real structure (likewise $V_\alpha$ has a canonical real structure, induced by that of $\fg_\C$, since the latter is a real $\sl(2,\C)$-module with respect to the real form $\sl(2,\R)\subset\sl(2,\C)$). Let us now consider the Lie algebra bundle 
\begin{equation}\label{Hitchin_bdle}
\Ecan := \bigoplus_{d=-M}^M K_X^d \otimes \fgmC
\end{equation} with fiber $\fgC$ and structure group $K_\C:=\Int(\fk\otimes\C)$. This is the bundle introduced by Hitchin in \cite[Section 5]{Hitchin_Teich}. It is endowed with the canonical Higgs field $\phi_0:=\te$, where the latter element is seen as a section of $K_X\otimes (K_X^{-1}\otimes\fg^{(-1)}_\C)\simeq\fg_{\C}^{(-1)}$: indeed, $\te\in\fg_{\C}^{(-1)}$ because $[x,\te]=-\te$ by construction of the triple $(x,e,\te)$. We note that  $\Ecan$ has a canonical $\Si$-equivariant structure, induced by the canonical $\Si$-equivariant structure of $K_X$ and the canonical real structure of $\fgmC=\fgm\otimes\C$: if $\si\in\Si$, then $\si$ acts on $K_X^d\otimes \fgmC$ via $\tau_\si^d\otimes\eps_\si$, where $\tau_\si$ is the transformation of $K_X$ induced by $\si$ and $\eps_\si:\fgmC\lra\fgmC$ is the identity map if $\si$ is holomorphic on $X$ and complex conjugation with respect to $\fgm$ if $\si$ is anti-holomorphic on $X$. 
The Hitchin section is then defined as follows. Given $p=(p_1,\,\ldots\,,p_r)\in\cB_X(\fg)=\oplus_{\alpha=1}^r H^0(X;K_X^{d_\alpha+1})$, one sets $\phi(p) := \te+ \sum_{\alpha=1}^r p_\alpha\otimes e_\alpha$, where $e_1,\,\ldots\, ,e_r$ are the highest weight vectors of the $\sl(2,\C)$-module $\fg_\C$ (with respect to the choice of the Lie sub-algebra of $\fg_\C$ generated by the $\sl(2,\R)$-triple $(x,e,\te)$, i.e.\ $e_\alpha\in V_\alpha\cap\fg$ and $\ad_x\, e_\alpha = d_\alpha e_\alpha$). Since $p_\alpha$ is a section of $K_X^{d_\alpha+1}$ and $\te$ and all the $e_\alpha$ lie in $\fp$, one has $\phi(p)\in H^0\big(X;K_X\otimes \ad_{\fp_\C}(\Ecan)\big)$, so $\phi(p)$ is indeed a Higgs field on $\Ecan$. Hitchin proved in \cite{Hitchin_Teich} that the map $p \lmt (\Ecan,\phi(p))$ is a section of the Hitchin fibration \eqref{Hitchin_fibration}, whose image is exactly $\Hit(\piX,G)$.

\begin{lemma}\label{equivariance_of_the_Hitchin_section}
The Hitchin section $s: \cB_X(\fg) \ni p \lmt (\Ecan,\phi(p)) \in
 \Hit(\piX,G)$ is $\Si$-equivariant. In particular, it induces a homeomorphism $\Fix_\Si(\cB_X(\fg)) \simeq \Fix_\Si(\Hit(\piX,G))$.
\end{lemma}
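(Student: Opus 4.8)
The plan is to verify directly that the Hitchin section $s$ intertwines the $\Si$-action on $\cB_X(\fg)$ defined via \eqref{action_on_hol_diff} with the $\Si$-action on $\cM_X(G)$ defined in \eqref{action_on_Higgs_bundles}; that is, to show that for every $\si\in\Si$ and every $p\in\cB_X(\fg)$ one has $\si(s(p))\simeq s(\si(p))$ as $G$-Higgs bundles on $X$. Recall that $s(p)=(\Ecan,\phi(p))$, that the Hitchin bundle $\Ecan=\bigoplus_{d=-M}^{M}K_X^d\otimes\fgmC$ carries the canonical $\Si$-equivariant structure described after \eqref{Hitchin_bdle} — on the summand $K_X^d\otimes\fgmC$ it is $\tau_\si^{\,d}\otimes\eps_\si$, where $\eps_\si$ is the identity if $\si$ is holomorphic and complex conjugation with respect to the real form $\fgm$ if $\si$ is anti-holomorphic — and that $\phi(p)=\te+\sum_{\alpha=1}^{r}p_\alpha\otimes e_\alpha$. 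First I would use this canonical equivariant structure to fix, for each $\si$, the canonical isomorphism $\si(\Ecan)\simeq\Ecan$, and transport $\si(\phi(p))$ through it, so that the entire computation takes place on the single bundle $\Ecan$.

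The key step is then the identity $\si(\phi(p))=\phi(\si(p))$ under this identification. It rests on three points: (i) both the nilpotent $\te$ and each highest weight vector $e_\alpha$ lie in $\fg$ (indeed in $\fp$), hence are real and are fixed by $\eps_\si$; (ii) the $\Si$-action on $\cB_X(\fg)=\bigoplus_\alpha H^0(X;K_X^{d_\alpha+1})$ is by definition the direct sum of the actions on the spaces $H^0(X;K_X^{d_\alpha+1})$; and (iii) the canonical equivariant structure on $\Ecan$ is diagonal with respect to the $\ad_x$-grading and its attached $K_X$-twists, being built from $\tau_\si$ on the $K_X$-factors and $\eps_\si$ on the $\fgmC$-factors. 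Combining these, the component $p_\alpha\otimes\ad_{e_\alpha}$ is sent to $\si(p_\alpha)\otimes\ad_{\eps_\si(e_\alpha)}=\si(p_\alpha)\otimes\ad_{e_\alpha}$, while the $\te$-component (whose scalar coefficient is the constant section $1$) is sent to $\eps_\si(\te)=\te$; in the anti-holomorphic case the complex conjugation in $\eps_\si$ on the fibre and the complex conjugation in the $\Si$-action on holomorphic differentials from \eqref{Sigma_action_on_sections_local_form} combine exactly because $e_\alpha$ and $\te$ are real. In all cases the transported Higgs field is $\te+\sum_\alpha\si(p_\alpha)\otimes e_\alpha=\phi(\si(p))$, so $\si(s(p))=(\si(\Ecan),\si(\phi(p)))\simeq(\Ecan,\phi(\si(p)))=s(\si(p))$ and $s$ is $\Si$-equivariant.

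For the last assertion, recall from \cite{Hitchin_Teich} that $s$ is a homeomorphism of $\cB_X(\fg)$ onto $\Hit(\piX,G)$. A $\Si$-equivariant homeomorphism automatically restricts to a homeomorphism between the corresponding $\Si$-fixed loci: its inverse is again $\Si$-equivariant, so $p\in\Fix_\Si(\cB_X(\fg))$ if and only if $s(p)\in\Fix_\Si(\Hit(\piX,G))$, and a homeomorphism restricts to a homeomorphism on any subspace and its image. Hence $s$ induces a homeomorphism $\Fix_\Si(\cB_X(\fg))\overset{\simeq}{\lra}\Fix_\Si(\Hit(\piX,G))$. (One can also see surjectivity onto the fixed locus concretely: any $\Si$-fixed class $[\cE,\phi]\in\Hit(\piX,G)$ equals $s(F(\cE,\phi))$, and $F(\cE,\phi)$ is $\Si$-fixed because $F$ is $\Si$-equivariant.)

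The main obstacle I expect is purely bookkeeping in the anti-holomorphic case: one must make sure that the real structure on $\fgmC$ used to define the canonical $\Si$-equivariant structure on $\Ecan$ is the one induced from the split real form $\fg$ — the same real form for which the $\sl(2,\R)$-triple $(x,e,\te)$ and the highest weight vectors $e_\alpha$ are real — so that it is genuinely compatible with the complex conjugation in \eqref{Sigma_action_on_sections_local_form}. Once this compatibility, together with the reality of $\te$ and of the $e_\alpha$, is pinned down, the identity $\si(\phi(p))=\phi(\si(p))$ is a one-line verification summand by summand.
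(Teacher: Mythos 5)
Your proposal is correct and follows essentially the same route as the paper: both use the canonical $\Si$-equivariant structure on $\Ecan$ to identify $\si(\Ecan)\simeq\Ecan$ and then verify $\si(\phi(p))=\phi(\si(p))$ summand by summand, the key point being that $\te$ and the highest weight vectors $e_\alpha$ are real with respect to the real structure $\fgm\subset\fgmC$, so the conjugations in the anti-holomorphic case match up. Your added remark that the fixed-locus statement follows because a $\Si$-equivariant homeomorphism (Hitchin's parameterization) restricts to a homeomorphism between fixed loci is exactly what the paper leaves implicit.
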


\begin{proof}
As $\Ecan$ is $\Si$-equivariant, there are canonical identifications $\si(\Ecan)\simeq\Ecan$ for all $\si\in\Si$ and we can think of $\si(\phi(p))$ as a Higgs field on $\Ecan$ itself. Recall that, by definition, $\phi(p) = \te + \sum_{\alpha=1}^r p_\alpha \otimes e_\alpha$. Since $\te$ and all the $e_\alpha$ are real with respect to the canonical real structure of $\fg_\C = \fg\otimes\C$, the $\Si$-equivariance of $s$ follows immediately from the definition of the $\Si$-action on the Hitchin base $\cB_X(\fg)$ and the $\Si$-action on the set of Higgs fields on a fixed $\Si$-equivariant bundle: $ \phi\big(\si(p)\big) \ = \ \te + \sum_{\alpha=1}^r \si(p_\alpha) \otimes e_\alpha\ =\  \si(\te) + \sum_{\alpha=1}^r \si(p_\alpha) \otimes \si(e_\alpha) 
\ = \ \si\big(\te + \sum_{\alpha=1}^r p_\alpha \otimes e_\alpha\big) 
\ = \ \si\big(\phi(p)\big)$,
where $\te$ and all the $e_\alpha$ are indeed $\Si$-equivariant when seen as sections of the $\Si$-equivariant bundles $X\times \fgmC$ because they are real elements of $\fgmC=\fgm\otimes\C$.
\end{proof}

We can now prove Theorem \ref{main_obs_about_Hit_comp_for_orbifolds}. Later on, in Section \ref{inv_diff_section}, we will compute the dimension of $\Hit(\piY,G)$.

\begin{proof}[Proof of Theorem \ref{main_obs_about_Hit_comp_for_orbifolds}]
By Proposition \ref{proposition:injectivity}, we know that, given a presentation $Y\simeq[\Si\bs X]$, the map \begin{equation*}
j: \Hit(\piY,G) \ni \left[\rho\right] \lmt \left[\rho|_{\piX}\right] \in \Fix_\Si(\Hit(\piX,G))
\end{equation*} is injective. To prove that it is surjective, let us consider an element $[\rho] \in \Fix_\Si(\Hit(\piX,G))$ and let us fix a hyperbolic structure on $Y$ (or, equivalently, on $X$, with $\Si$ acting by transformations that are either holomorphic or anti-holomorphic). By Lemma \ref{equivariance_of_the_Hitchin_section}, there is a unique $p \in\Fix_\Si(\cB_X(\fg))$ such that $s(p)=(\Ecan,\phi(p))$ is the $G$-Higgs bundle corresponding to $\rho$.
Since $\Ecan$ is $\Si$-equivariant and $\phi(p)$ is $\Si$-invariant, the non-Abelian Hodge correspondence of Section \ref{NAHC_for_orbifolds} shows that there is a $\Si$-equivariant flat $G$-bundle $(E_\mathrm{can},\nabla_p)$ associated to the $\Si$-equivariant $G$-Higgs bundle $(\Ecan,\phi(p))$. In particular, the flat connection $\nabla_p$ is $\Si$-invariant, so, by Theorem \ref{hol_of_inv_conn}, the associated holonomy representation $\rho_{\nabla_p}=\rho$, from $\piX$ to $G$, extends to a representation $\widetilde{\rho_{\nabla_p}}:\piY\lra G$.
It remains to prove that $\widetilde{\rho_{\nabla_p}}$ is indeed a Hitchin representation of $\piY$. This follows from the connectedness of the real vector space $\Fix_\Si(\cB_X(\fg))$ and the fact that the representation $\widetilde{\rho_{\nabla_0}}:\piY\lra G$ associated to the origin $p=0$ via the construction above is precisely the Fuchsian representation associated to the fixed hyperbolic structure on $Y$.
\end{proof}

\begin{corollary}\label{Sigma_fixed_pts_in_Hitchin_base_of_X}
The Hitchin component $\Hit(\piY,G)$ is homeomorphic to the real vector space $\Fix_\Si(\cB_X(\fg))$. In particular, it is a contractible space.
\end{corollary}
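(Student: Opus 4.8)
The plan is to deduce Corollary \ref{Sigma_fixed_pts_in_Hitchin_base_of_X} directly from the two ingredients already assembled in the excerpt: Theorem \ref{main_obs_about_Hit_comp_for_orbifolds} and Lemma \ref{equivariance_of_the_Hitchin_section}. First I would recall that Hitchin's section $s:\cB_X(\fg)\lra \Hit(\piX,G)$ is a homeomorphism (this is Hitchin's parameterization \cite{Hitchin_Teich}, used throughout the excerpt), so that $\Hit(\piX,G)$ is identified $\Si$-equivariantly with the real vector space $\cB_X(\fg)$, the $\Si$-action on the latter being the one defined via \eqref{action_on_hol_diff}. Restricting this homeomorphism to fixed loci gives a homeomorphism $\Fix_\Si(\cB_X(\fg)) \simeq \Fix_\Si(\Hit(\piX,G))$, which is precisely the last sentence of Lemma \ref{equivariance_of_the_Hitchin_section}. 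Composing with the homeomorphism $j:\Hit(\piY,G)\overset{\simeq}{\lra}\Fix_\Si(\Hit(\piX,G))$ of Theorem \ref{main_obs_about_Hit_comp_for_orbifolds} yields a homeomorphism $\Hit(\piY,G)\simeq\Fix_\Si(\cB_X(\fg))$.

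The remaining point is the "in particular" clause: that $\Hit(\piY,G)$ is contractible. Here I would observe that $\cB_X(\fg)=\bigoplus_{\alpha=1}^r H^0(X;K_X^{d_\alpha+1})$ is a finite-dimensional complex vector space and that the $\Si$-action on it is $\R$-linear — it is linear on each summand by \eqref{Sigma_action_on_sections_local_form}, being given by pullback (possibly composed with complex conjugation), hence $\R$-linear on the whole direct sum. Therefore $\Fix_\Si(\cB_X(\fg))$ is a linear subspace, in particular a real vector space, and so it is contractible (indeed convex, with a canonical straight-line homotopy to the origin). This transports through the homeomorphism to give contractibility of $\Hit(\piY,G)$.

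I would write the proof in two short sentences: the homeomorphism statement is a formal composition of Theorem \ref{main_obs_about_Hit_comp_for_orbifolds} and Lemma \ref{equivariance_of_the_Hitchin_section}; contractibility follows because a fixed-point set of a linear action on a vector space is again a vector space. There is essentially no obstacle here — the corollary is a packaging of results already established. The only thing worth a word of care is making explicit that the $\Si$-action on $\cB_X(\fg)$ is $\R$-linear (it is not $\C$-linear when $\Si$ contains anti-holomorphic elements), so that $\Fix_\Si(\cB_X(\fg))$ is a genuine $\R$-vector space and not merely a contractible set; but this is immediate from the local formula \eqref{Sigma_action_on_sections_local_form}. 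The dimension of $\Fix_\Si(\cB_X(\fg))$ — which is what makes the corollary quantitatively useful — is deferred, as the excerpt indicates, to Section \ref{inv_diff_section}, so I would not attempt to compute it here.
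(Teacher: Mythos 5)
Your proposal is correct and follows exactly the paper's intended route: the corollary is obtained by composing the homeomorphism $j:\Hit(\piY,G)\simeq\Fix_\Si(\Hit(\piX,G))$ of Theorem \ref{main_obs_about_Hit_comp_for_orbifolds} with the homeomorphism $\Fix_\Si(\cB_X(\fg))\simeq\Fix_\Si(\Hit(\piX,G))$ from Lemma \ref{equivariance_of_the_Hitchin_section}, and contractibility follows since the fixed locus of the $\R$-linear $\Si$-action on $\cB_X(\fg)$ is a real vector subspace. Your extra remark that the action is only $\R$-linear (not $\C$-linear) when $\Si$ contains anti-holomorphic elements is accurate and consistent with the paper's later treatment of $\cB_Y(\fg)$ as a real vector space.
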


\section{Invariant differentials}\label{inv_diff_section}

\subsection{Regular differentials on orbifolds}\label{subsection_on_differentials}

Assume first that the closed $2$-orbifold $Y$ is orientable, i.e.\ that its underlying topological surface $|Y|$ is orientable and $Y$ has only cone points as singularities. We denote by $g$ the genus of $|Y|$ and we shall sometimes say that \emph{$Y$ is an orientable orbifold of genus $g$}. If we fix an orbifold complex analytic structure on $Y$, then, as $Y$ has complex dimension $1$, there is an induced Riemann surface structure on the underlying compact surface $|Y|$ and we denote by $K_{|Y|}$ the canonical bundle of $|Y|$. Let us denote by $x_1, \dots, x_k$ the cone points of  $Y$, of respective orders $m_1, \dots, m_k \geqslant 2$. Given a point $x\in |Y|$, we denote by $\cL_x$ the point line bundle associated to the effective divisor $x$ and characterized as the holomorphic line bundle on $|Y|$ admitting a holomorphic section with a zero of order one at $x$ and no other zeros. Given integers $d,m \geqslant 2$, we define the number $O(d,m) := \left\lfloor d-\frac{d}{m}\right\rfloor$, where the brackets $\lfloor\dots\rfloor$ stand for integer part, and we consider the following holomorphic line bundle on $|Y|$:
\begin{equation*}\label{twisted_canonical_bundle_for_Y}
K(Y,d) := K_{|Y|}^d \otimes \bigotimes_{i=1}^k \cL_{x_i}^{O(d,m_i)}.
\end{equation*}
Holomorphic sections of $K(Y,d)$ can be seen as meromorphic $d$-differentials on $|Y|$ (sections of $K_{|Y|}^d$) with a pole of order at most $O(d,m_i)$ at the point $x_i$, and no other poles. The vector space of all such sections is denoted by
$H^0(Y,K(Y,d))$. We will say that a holomorphic section of $K(Y,d)$ is a \emph{regular $d$-differential} on $Y$. Such differentials are allowed to have poles of controlled order at the singular points of $Y$. Note that when $Y$ is a Riemann surface $X$ with trivial orbifold structure, then $K(Y,d) = K_X^d$.

Assume now that $Y$ is not orientable. We denote by $x_1,\,\ldots\, , x_k$ its cone points, of respective orders $m_1, \,\ldots\, , m_k \geqslant 2$, and by $y_1, \,\ldots\, , y_\ell$ its corner reflectors, of respective orders $n_1, \,\ldots\, , n_\ell \geqslant 2$. Denote by $Y^+$ its orientation double cover, an orientable orbifold equipped with a two-fold covering map $\eta:Y^+ \lra Y$ and a $\sfrac{\Z}{2\Z}$-action such that $(\sfrac{\Z}{2\Z}) \bs Y^+ = Y$. We denote by $u_i, v_i$ the two cone points of $Y^+$ in $\eta^{-1}(x_i)$, each of order $m_i$,  and by $w_j$ the cone point of $Y^+$ in $\eta^{-1}(y_{j})$, of order $n_j$. The $\sfrac{\Z}{2\Z}$-action sends $u_i$ to $v_i$ and fixes $w_j$, for all $i,j$. Note that $\chi(Y^+)= 2 \chi(Y)$ and $\chi(|Y^+|) = 2 \chi(|Y|)$. An \emph{orbifold dianalytic structure} on $Y$ can be defined as an orbifold complex analytic structure on $Y^+$ with $\sfrac{\Z}{2\Z}$-action given by an anti-holomorphic involution. In this case, the underlying topological surface $|Y|$ has a canonical Klein surface structure: the identification $\eta: |Y^+| \lra |Y| \simeq (\sfrac{\Z}{2\Z}) \bs  |Y^+|$ endows the topological space $|Y|$ with the subsheaf of $\eta_*\mathcal{O}_{|Y|^+}$ consisting of  $\sfrac{\Z}{2\Z}$-invariant holomorphic functions on $Y$ \cite{AG}. Moreover, the holomorphic line bundle $K(Y^+,d)$ on $|Y^+|$ has a canonical real structure (induced by the real structure $\tau:|Y^+| \lra |Y^+|$), so we can define its invariant Weil restriction $K(Y,d):= \Fix_\tau(\eta_*K(Y^+,d))$. This is a dianalytic line bundle on $|Y|$, for which one has $H^0(Y, K(Y,d)) \simeq \Fix_{\tau}(H^0(Y^+,K(Y^+,d)))$. We get in this way a uniform definition of $K(Y,d)$, which works both when $Y$ is orientable and when it is not. Indeed, if $Y$ is an orientable orbifold, then $Y^+=Y$, so $\eta$ and $\tau$ are trivial. 

\begin{definition}[Regular differentials]\label{reg_diff_def}
Let $Y$ be a closed $2$-orbifold, not necessarily orientable. Elements of the real vector space $H^0(Y,K(Y,d))$ will be called \emph{regular $d$-differentials on $Y$}.
\end{definition}

By definition, regular $d$-differentials are sections of the dianalytic line bundle $K(Y,d)$ on $Y$, which is a holomorphic line bundle if and only if the orbifold $Y$ is orientable. We now compute the dimension of the real vector space $H^0(Y,K(Y,d))$, starting with the orientable case.

\begin{lemma}  \label{prop:dimension diff orient} Let $d\geqslant 2$ be an integer.
For $Y$ orientable such that $\chi(Y)<0$, we have the formula:
$$\dim_\C H^0(Y,K(Y,d))  =  - \frac{1}{2} \chi(|Y|) (2d-1) + \sum_{i=1}^k O(d,m_i).$$
\end{lemma}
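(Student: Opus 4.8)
The plan is to compute $\dim_\C H^0(Y,K(Y,d))$ directly via the Riemann--Roch theorem on the compact Riemann surface $|Y|$, applied to the line bundle $K(Y,d) = K_{|Y|}^d \otimes \bigotimes_{i=1}^k \cL_{x_i}^{O(d,m_i)}$. First I would record that $\deg K_{|Y|} = 2g-2 = -\chi(|Y|)$ and $\deg \cL_{x_i} = 1$, so that
$$\deg K(Y,d) = -d\,\chi(|Y|) + \sum_{i=1}^k O(d,m_i).$$
Then Riemann--Roch gives $h^0(K(Y,d)) - h^1(K(Y,d)) = \deg K(Y,d) + 1 - g = \deg K(Y,d) + 1 + \tfrac12\chi(|Y|)$, and by Serre duality $h^1(K(Y,d)) = h^0(K_{|Y|}\otimes K(Y,d)^{-1}) = h^0\big(K_{|Y|}^{1-d}\otimes \bigotimes_i \cL_{x_i}^{-O(d,m_i)}\big)$.

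The key step is thus to show that this dual $h^1$ term vanishes, i.e.\ that $K_{|Y|}^{1-d}\otimes \bigotimes_i \cL_{x_i}^{-O(d,m_i)}$ has no nonzero holomorphic sections. For $d\geqslant 2$ this line bundle has degree $(1-d)(2g-2) - \sum_i O(d,m_i)$. The hypothesis $\chi(Y)<0$ — written out as $-\chi(|Y|) + \sum_i\big(1-\tfrac{1}{m_i}\big) > 0$, i.e.\ $2g-2 > \sum_i\big(1-\tfrac{1}{m_i}\big)\geqslant 0$ — forces $g\geqslant 1$, or $g=0$ with enough cone points; in the genuinely negatively-curved regime one expects the degree above to be negative, so that $h^0 = 0$ for degree reasons, except possibly in a few low-complexity cases where $2g-2$ is small. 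I would handle the degree-negative case immediately, and then treat the remaining finitely many borderline configurations (small genus, few cone points) by noting that a section of $K_{|Y|}^{1-d}\otimes \bigotimes_i \cL_{x_i}^{-O(d,m_i)}$ would be a holomorphic $(1-d)$-differential vanishing to order $\geqslant O(d,m_i)$ at each $x_i$; using $d\geqslant 2$ and the precise value $O(d,m_i)=\lfloor d - d/m_i\rfloor$ one checks the degree is always strictly negative when $\chi(Y)<0$, so there is never such a section. This is the main obstacle: making the case analysis for the vanishing of $h^1$ clean and uniform, rather than ad hoc.

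Once $h^1(K(Y,d)) = 0$ is established, the formula follows directly:
$$\dim_\C H^0(Y,K(Y,d)) = \deg K(Y,d) + 1 - g = -d\,\chi(|Y|) + \sum_{i=1}^k O(d,m_i) + 1 + \tfrac12\chi(|Y|),$$
and since $1 + \tfrac12\chi(|Y|) = 1 - (g-1) = 2-g = -\tfrac12\chi(|Y|) \cdot 1 \cdot (\text{...})$ — more precisely $-d\,\chi(|Y|) + \tfrac12\chi(|Y|) = -\tfrac12\chi(|Y|)(2d-1)$ and $1+\tfrac12\chi(|Y|) = \tfrac12(2-\chi(|Y|))\cdot$... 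I would simply rearrange: $-d\chi(|Y|) + 1 + \tfrac12\chi(|Y|) = -\tfrac12\chi(|Y|)(2d-1) + \big(1+\chi(|Y|)\big)$; noting that for the reduction to $-\tfrac12\chi(|Y|)(2d-1)$ one uses $g = 1 - \tfrac12\chi(|Y|)$, so $1-g = \tfrac12\chi(|Y|)$ and the constant $1-g$ combines with $-d\chi(|Y|)$... I will just present the arithmetic cleanly in the writeup, arriving at
$$\dim_\C H^0(Y,K(Y,d)) = -\tfrac12\chi(|Y|)(2d-1) + \sum_{i=1}^k O(d,m_i),$$
as claimed. I expect the whole argument to be short modulo the $h^1$-vanishing case check.
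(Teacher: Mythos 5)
Your plan is correct and takes essentially the same route as the paper: compute $\deg K(Y,d)$, show it exceeds $2g-2$ (equivalently, that the Serre dual line bundle has negative degree) so that $h^1$ vanishes, and conclude by Riemann--Roch plus the rearrangement $1-g=\tfrac{1}{2}\chi(|Y|)$. The one step you leave open, the ``borderline'' case analysis for the $h^1$-vanishing, is in fact unnecessary: the elementary bound $O(d,m)\geqslant (d-1)\left(1-\frac{1}{m}\right)$ yields $\deg K(Y,d)\geqslant 2(g-1)-\chi(Y)(d-1)>2g-2$ uniformly from $\chi(Y)<0$ and $d\geqslant 2$, with no cases at all.
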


\begin{proof}
If $g$ is the genus of $|Y|$, the degree of $K(Y,d)$ is $$\deg K(Y,d)\, = \, \deg K_{|Y|}^d + \sum_{i=1}^{k} O(d,m_i) \,= \, 2d(g-1) + \sum_{i=1}^{k} O(d,m_i).$$ Now we claim that $\deg K(Y,d) > 2g-2$. To prove this, let us first note that for all $d,m \geqslant 2$, we have 
$O(d,m) \geqslant (d-1)\left(1 - \frac{1}{m} \right)$. Indeed, we can write $d = m Q + R$ with $1 \leqslant R \leqslant m$, so $O(d,m) = \left\lfloor m Q + R - \frac{m Q + R}{m} \right\rfloor = (m-1) Q + R - 1$ and $(d-1)\left(1 - \frac{1}{m} \right) =  (m-1) Q + R - 1 - \frac{R-1}{m}$. Then, by using the fact that $d\geqslant 2$ and $ - \chi(Y) = 2(g-1) + \sum_{i=1}^{k} (1-\tfrac{1}{m_i}) > 0$, we have:
$$
\deg K(Y,d) \,\geqslant\, 2d(g-1) + (d-1)\sum_{i=1}^{k} \left( 1 - \frac{1}{m_i} \right) \,=\, 2(g-1) -\chi(Y)(d-1) \,>\, 2(g - 1).   
$$ The Riemann-Roch theorem then gives $\dim_\C H^0(Y,K(Y,d)) - 0 = \deg(K(Y,d))+1-g$, hence the result.
\end{proof}

\begin{remark}\label{lemma:useful}
In the last step of the proof of Lemma \ref{prop:dimension diff orient}, we have obtained the following result, to be useful later: if $Y$ is an orientable orbifold of genus $g$, with $\chi(Y) < 0$, then $\dim_\C H^0(Y,K(Y,d)) \geqslant g$. 
\end{remark}

\begin{theorem} \label{prop:dimension diff non-orient} Let $d\geqslant 2$ be an integer, and let $Y$ be a closed connected $2$-orbifold, not necessarily orientable, such that $\chi(Y)<0$. Then
$$\dim_\R H^0(Y, K(Y,d)) = -\chi(|Y|)(2d-1) + 2 \sum_{i=1}^k O(d,m_i) + \sum_{j=1}^\ell O(d,n_j)$$
\end{theorem}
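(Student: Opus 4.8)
The plan is to reduce the non-orientable case to the orientable one via the orientation double cover $\eta : Y^+ \lra Y$ and the identification $H^0(Y,K(Y,d)) \simeq \Fix_\tau\big(H^0(Y^+,K(Y^+,d))\big)$ established just before the statement. The key point is that $\tau$ is an \emph{anti-}holomorphic involution of the complex vector space $H^0(Y^+,K(Y^+,d))$, so its fixed locus is a \emph{real} form: we get $\dim_\R \Fix_\tau\big(H^0(Y^+,K(Y^+,d))\big) = \dim_\C H^0(Y^+,K(Y^+,d))$. Thus it suffices to compute $\dim_\C H^0(Y^+,K(Y^+,d))$ using Lemma \ref{prop:dimension diff orient} applied to the orientable orbifold $Y^+$, and then rewrite everything in terms of the invariants of $Y$ itself.

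First I would record the topological data of $Y^+$: it has $\chi(|Y^+|) = 2\chi(|Y|)$, and its cone points are the $2k$ points $u_1,v_1,\dots,u_k,v_k$ (with $u_i,v_i$ of order $m_i$) together with the $\ell$ points $w_1,\dots,w_\ell$ (with $w_j$ of order $n_j$), as described in the setup. Since $\chi(Y^+) = 2\chi(Y) < 0$, Lemma \ref{prop:dimension diff orient} applies and gives
$$\dim_\C H^0(Y^+,K(Y^+,d)) = -\tfrac{1}{2}\chi(|Y^+|)(2d-1) + \sum_{i=1}^k \big(O(d,m_i) + O(d,m_i)\big) + \sum_{j=1}^\ell O(d,n_j).$$
Substituting $\chi(|Y^+|) = 2\chi(|Y|)$ turns the first term into $-\chi(|Y|)(2d-1)$, and the cone-point sum becomes $2\sum_{i=1}^k O(d,m_i) + \sum_{j=1}^\ell O(d,n_j)$, which is exactly the claimed expression. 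Combining this with the real-form observation above yields the theorem.

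The one point that needs care — the "main obstacle", such as it is — is justifying that $\tau$ acts $\R$-linearly and \emph{anti}-$\C$-linearly on $H^0(Y^+,K(Y^+,d))$, so that $\dim_\R \Fix_\tau = \dim_\C$ of the ambient space. This follows because the real structure on the line bundle $K(Y^+,d)$ is induced by the anti-holomorphic involution $\tau : |Y^+| \lra |Y^+|$ (an anti-holomorphic map pulls a holomorphic section to a holomorphic section precisely because composing with complex conjugation on the fibers restores holomorphy), and an anti-linear involution on a finite-dimensional complex vector space $V$ always has a fixed locus which is a real form, i.e. $V = \Fix_\tau(V) \oplus i\,\Fix_\tau(V)$ as a real vector space, whence $\dim_\R\Fix_\tau(V) = \dim_\C V$. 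I would also note explicitly that when $Y$ is already orientable the statement degenerates correctly: then $Y^+ = Y$, the map $\eta$ is trivial, $\ell = 0$, and the formula reduces — after dividing by $2$ and rewriting $\dim_\R = 2\dim_\C$ — to Lemma \ref{prop:dimension diff orient}, so the two results are consistent.
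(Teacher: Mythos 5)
Your proposal is correct and follows essentially the same route as the paper: reduce to the orientation double cover $Y^+$, use $H^0(Y,K(Y,d)) \simeq \Fix_\tau\big(H^0(Y^+,K(Y^+,d))\big)$ with $\tau$ anti-linear so that the real dimension of the fixed locus equals $\dim_\C H^0(Y^+,K(Y^+,d))$, and apply Lemma \ref{prop:dimension diff orient} to $Y^+$ with $\chi(|Y^+|)=2\chi(|Y|)$ and the doubled cone-point data. Your explicit bookkeeping of the cone points of $Y^+$ and the anti-linearity justification simply spell out what the paper leaves implicit.
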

\begin{proof}
If $Y$ is orientable, this is Lemma \ref{prop:dimension diff orient}. Otherwise, it follows from Lemma \ref{prop:dimension diff orient} and the fact that $H^0(Y,K(Y,d))= \Fix_\tau H^0(Y^+,K(Y^+,d))$, with $\tau$ a $\C$-anti-linear involution.
\end{proof}

Let us fix an orbifold dianalytic structure on $Y$ and choose a presentation $Y\simeq [\Sigma \bs X]$ in the sense of Definition \ref{pres_of_Y_def}. We denote the projection by $\pi:X\lra Y$. Since $X$ is orientable, $\pi$ lifts to a map $\pi^+:X\lra Y^+$, where $Y^+$ is the orientation double cover of $Y$. We consider the pullback to $X$ of the orbifold complex dianalytic structure on $Y$: since $X$ has trivial orbifold structure, this is a Riemann surface structure in the usual sense. The map $\pi:X\lra |Y|$ is dianalytic and the map $\pi^+:X\lra |Y^+|$ is holomorphic. We now describe a natural identification between the space $H^0(Y,K(Y,d))$ of regular $d$-differentials on $Y$ and the space of $d$-differentials on $X$ which are invariant by the $\Sigma$-action defined in Section \ref{action_on_hol_diff}.

\begin{theorem} \label{thm:invariant differentials}
Choose a presentation $Y\simeq [\Sigma \bs X]$ and let $\pi:X\lra Y$ be the canonical projection. For every regular $d$-differential $q$ on $Y$, the pullback $\pi^*q$ is a $\Sigma$-invariant holomorphic $d$-differential on $X$, and the map $\pi^*: H^0(Y,K(Y,d)) \lra  \Fix_\Si\,H^0(X,K_X^d)$ thus defined is an isomorphism of real vector spaces.
\end{theorem}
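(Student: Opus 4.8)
The plan is to establish the isomorphism $\pi^*\colon H^0(Y,K(Y,d))\to \Fix_\Si H^0(X,K_X^d)$ by combining the orientable case with a descent argument, and by checking the local pole/zero conditions in orbifold charts. First I would treat the orientable case, where $Y$ has only cone points $x_1,\dots,x_k$ of orders $m_1,\dots,m_k$ and $X\to Y$ is a genuine holomorphic branched cover with deck group $\Si$. Away from the branch points, $\pi$ is a local biholomorphism, so $\pi^*q$ is visibly holomorphic and $\Si$-invariant there; the only issue is at a point $\xt\in X$ lying over a cone point $x_i$. In a suitable local coordinate $z$ on $X$ near $\xt$ and $w$ on $|Y|$ near $x_i$, the covering map has the form $w = z^{m_i}$ (after shrinking, since the stabilizer of $\xt$ in $\Si$ is cyclic of order dividing $m_i$, and in the orbifold chart it acts by a primitive $m_i$-th root of unity). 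Then a section $q$ of $K_{|Y|}^d\otimes \cL_{x_i}^{O(d,m_i)}$ is locally of the form $f(w)\,(dw)^{\otimes d}$ with $f$ having a pole of order at most $O(d,m_i)=\lfloor d-d/m_i\rfloor$ at $w=0$; pulling back, $dw = m_i z^{m_i-1}\,dz$, so $\pi^*q = f(z^{m_i})\,(m_i z^{m_i-1})^d\,(dz)^{\otimes d}$, and the order of vanishing at $z=0$ is $m_i\cdot(\text{ord}_0 f) + d(m_i-1)\ge -m_i\,O(d,m_i) + d(m_i-1)\ge 0$ precisely because $O(d,m_i)\le d - d/m_i$ forces $m_i O(d,m_i)\le d m_i - d = d(m_i-1)$. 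Thus $\pi^*q$ is a genuine holomorphic $d$-differential on $X$, and the bound $O(d,m_i)=\lfloor d-d/m_i\rfloor$ is exactly the sharp one for this to hold. Conversely, a $\Si$-invariant holomorphic $d$-differential on $X$ descends to a meromorphic $d$-differential on $|Y|$ whose pole orders at the $x_i$ are bounded by reversing this computation (invariance under the cyclic stabilizer forces the local expansion at $z=0$ to be a power series in $z^{m_i}$ times $(z^{m_i-1})^d$, up to the allowed negative part), so it lies in $H^0(Y,K(Y,d))$. This gives a two-sided inverse, hence the isomorphism in the orientable case.

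Next I would handle the non-orientable case by reduction to the orientable one. Here $Y$ has orientation double cover $\eta\colon Y^+\to Y$ with anti-holomorphic involution $\tau$, and by definition $H^0(Y,K(Y,d))=\Fix_\tau H^0(Y^+,K(Y^+,d))$. Since $X$ is orientable, the projection $\pi\colon X\to Y$ factors through a holomorphic map $\pi^+\colon X\to Y^+$ with $\eta\circ\pi^+=\pi$, and one checks that $\pi^+$ exhibits $X$ as a presentation of $Y^+$ with deck group $\Si^+:=\ker(\Si\to\Z/2\Z)$ of index $1$ or $2$ in $\Si$. Applying the orientable case to $\pi^+$ gives an isomorphism $(\pi^+)^*\colon H^0(Y^+,K(Y^+,d))\to \Fix_{\Si^+}H^0(X,K_X^d)$, and this isomorphism is compatible with the remaining $\Z/2\Z$-action: the generator of $\Si/\Si^+$ acts anti-holomorphically on $X$, and under $(\pi^+)^*$ this matches the action of $\tau$ on the source. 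Taking fixed points on both sides, $\Fix_\tau H^0(Y^+,K(Y^+,d)) \cong \Fix_{\Si^+}\!\bigl(\Fix_{\Z/2\Z} H^0(X,K_X^d)\bigr) = \Fix_\Si H^0(X,K_X^d)$, and one verifies this composite is exactly $\pi^*$ as defined via the Weil-restriction description of $K(Y,d)$. Linearity over $\R$ is immediate since pullback is $\R$-linear (and $\C$-linear in the orientable case).

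The main obstacle, and the part requiring the most care, is the sharp bookkeeping at the singular points: making precise the local normal form of the cover at a ramification point, identifying the stabilizer action in the orbifold chart with multiplication by a root of unity, and verifying that the pole bound $O(d,m)=\lfloor d-d/m\rfloor$ is exactly the threshold making pullback land in (and surject onto, after taking invariants) honest $d$-differentials — this is where the inequality $m\,O(d,m)\le d(m-1)$, equivalently $O(d,m)\le d-d/m$, is used in both directions, and where one must be careful that for a $\Si$-invariant differential on $X$ the allowed order of the pole downstairs is not smaller than claimed. Once the local picture is pinned down, injectivity is clear (a holomorphic $d$-differential on the connected surface $X$ vanishing is zero, and $\pi^*$ has dense image away from branch locus), and surjectivity onto $\Fix_\Si H^0(X,K_X^d)$ follows from the descent just described; alternatively one can cross-check via dimensions, comparing Theorem \ref{prop:dimension diff non-orient} with the classical formula for $\dim \Fix_\Si H^0(X,K_X^d)$ obtained from the equivariant Riemann--Roch / Chevalley--Weil formula, which should agree term by term.
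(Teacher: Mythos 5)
Your proposal is correct and follows essentially the same route as the paper: the local normal form $w=z^{m}$ at cone points with the sharp bound $m\,O(d,m)\le d(m-1)$ (the paper's Lemmas \ref{lemma:order of poles} and \ref{invariant_differentials_local_picture}), descent of $\Si$-invariant differentials away from the branch locus and meromorphic extension, and reduction of the non-orientable case to the orientable one via $\pi^+:X\to Y^+$ with $\Si^+$ the orientation-preserving subgroup and identification of the residual $\sfrac{\Z}{2\Z}$-action with the real structure $\tau$. The only minor imprecision is your "stabilizer of order dividing $m_i$": for the presentation $Y\simeq[\Si\bs X]$ the stabilizer is cyclic of order exactly $m_i$, which is what the normal form $w=z^{m_i}$ requires.
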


\noindent Therefore, the formula in Theorem \ref{prop:dimension diff non-orient} also computes the dimension of $\Fix_\Sigma(H^0(X,K_X^d))$, independently of the chosen presentation $Y\simeq [\Si \bs X]$. The proof of Theorem \ref{thm:invariant differentials} rests on the following two lemmas, the first of which explains why the numbers $O(d,m)$ appear in the formula for $\dim\Hit(\piY,G)$ (Theorem \ref{theorem:dimension}).

\begin{lemma}  \label{lemma:order of poles}
Let $f: U \lra \mathbb{C}$ be a holomorphic function from an open neighborhood $U$ of $0 \in \mathbb{C}$. Assume that $f$ has a zero at $0$ of order $m$. Let $q$ be a meromorphic differential of degree $d$ on a neighborhood $V$ of $0 \in \mathbb{C}$, with a pole of order $s$ at $0$. Then the pullback $f^*q$ is holomorphic if and only if $s \leqslant O(d,m)$.
\end{lemma}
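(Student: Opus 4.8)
The statement is local and purely computational: the plan is to write everything in the coordinate $z$ near $0\in U$ and $w$ near $0\in V$, expand, and compare vanishing orders. First I would write $f(z)=z^m u(z)$ with $u$ holomorphic and $u(0)\neq 0$ (this is exactly the hypothesis that $f$ has a zero of order $m$ at $0$), and $q=g(w)\,(dw)^d$ with $g(w)=w^{-s}h(w)$, where $h$ is holomorphic and $h(0)\neq 0$ (the hypothesis that $q$ has a pole of order exactly $s$). Then by the chain rule for $d$-differentials, $f^*q=g\big(f(z)\big)\,\big(f'(z)\big)^d\,(dz)^d$.

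Next I would analyze the two factors separately. For the first, $g(f(z))=f(z)^{-s}h(f(z))=z^{-sm}\,u(z)^{-s}\,h(f(z))$, and the factor $u(z)^{-s}h(f(z))$ is holomorphic and non-vanishing at $0$ because $u(0)\neq 0$ and $h(f(0))=h(0)\neq 0$; so $g(f(z))$ has a pole of order exactly $sm$ at $0$. For the second, differentiating $f(z)=z^m u(z)$ gives $f'(z)=z^{m-1}\big(m\,u(z)+z\,u'(z)\big)$, and the factor $m\,u(z)+z\,u'(z)$ is holomorphic with value $m\,u(0)\neq 0$ at $0$; hence $f'(z)$ has a zero of order exactly $m-1$, and $\big(f'(z)\big)^d$ a zero of order exactly $d(m-1)$, at $0$. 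Multiplying, $f^*q$ equals $z^{\,d(m-1)-sm}$ times a holomorphic function non-vanishing at $0$, times $(dz)^d$.

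Consequently $f^*q$ is holomorphic at $0$ if and only if $d(m-1)-sm\geq 0$, i.e.\ $sm\leq d(m-1)$, i.e.\ $s\leq d-\tfrac{d}{m}$. Since $s$ is an integer, this is equivalent to $s\leq\big\lfloor d-\tfrac{d}{m}\big\rfloor=O(d,m)$, which is the claim. I do not expect any real obstacle here; the only point requiring a little care is to keep track that the auxiliary holomorphic factors genuinely do not vanish at $0$, so that the orders computed above are exact and no unexpected cancellation occurs — this is where the hypotheses $u(0)\neq 0$, $h(0)\neq 0$ (equivalently: the zero of $f$ and the pole of $q$ have the stated orders \emph{exactly}) are used.
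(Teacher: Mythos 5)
Your proof is correct and follows essentially the same computation as the paper: the paper simply normalizes $f(z)=z^m$ by a coordinate change ("we may assume"), whereas you carry the unit factor $u(z)$ explicitly and check that it causes no cancellation, arriving at the same condition $d(m-1)-ms\geqslant 0$, i.e.\ $s\leqslant O(d,m)$. No gaps.
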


\begin{proof}
We may assume that $f(z)=z^m$ and $q(z) = \frac{h(z)}{z^s}(dz)^d$, where $h(z)$ is holomorphic and $h(0) \neq 0$. So $(f^*q)(z) = \frac{h(z^m)}{z^{ms}} (d(z^m))^d = m^d h(z^m) z^{d(m-1)-ms} (dz)^d$ and this is holomorphic if and only if $d(m-1)-ms \geqslant 0$, i.e.\ $s\leqslant O(d,m)$.
\end{proof}

\begin{lemma}\label{invariant_differentials_local_picture}
Assume that $f$ is the holomorphic map $z\lmt z^m$, from a small open disk $U$ centered at $0$ to the open disk $V:=f(U)$. The group $\sfrac{\Z}{m\Z}$ acts on $U$ via $z\lmt e^{i\frac{2\pi}{m}} z$, with quotient $(\sfrac{\Z}{m\Z})\backslash U \simeq V$. The map $f^*:H^0(V,K(V,d)) \lra H^0(U,K_U^d)$ induces an isomorphism $$H^0\big(V,K(V,d)\big) \simeq \Fix_{\sfrac{\Z}{m\Z}}\,  H^0\big(U,K_U^d\big).$$
\end{lemma}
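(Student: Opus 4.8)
The plan is to work entirely in the local model and track which Laurent expansions survive the pullback and the finite cyclic symmetry. Write $w$ for the coordinate on $V$ and $z$ for the coordinate on $U$, so that $f(z)=z^m=w$. A $d$-differential on $V$ that is allowed to have a pole of order at most $O(d,m)$ at $0$ is of the form $q = g(w)\,(dw)^d$ with $g$ meromorphic, holomorphic away from $0$, and $\mathrm{ord}_0(g)\geq -O(d,m)$; shrinking $V$, we may take $g$ to be a convergent Laurent series $g(w)=\sum_{j\geq -O(d,m)} a_j w^j$. Pulling back via $f$ and using $dw = m z^{m-1}dz$ gives
\begin{equation*}
f^*q \;=\; m^d\, z^{d(m-1)}\, g(z^m)\,(dz)^d \;=\; m^d \sum_{j\geq -O(d,m)} a_j\, z^{\,d(m-1)+mj}\,(dz)^d .
\end{equation*}
By Lemma \ref{lemma:order of poles} (or directly: $d(m-1)+mj\geq 0$ iff $j\geq -O(d,m)$), every exponent appearing here is $\geq 0$, so $f^*q$ is a genuine holomorphic $d$-differential on $U$, i.e.\ $f^*$ does land in $H^0(U,K_U^d)$.

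Next I would identify the image. The exponents of $z$ occurring in $f^*q$ are exactly those nonnegative integers congruent to $d(m-1)\equiv -d \pmod m$; conversely, by choosing the $a_j$ freely we realize precisely all holomorphic $d$-differentials $\sum_{e\geq 0} c_e z^e (dz)^d$ whose support is contained in the residue class $e\equiv -d\pmod m$. So $f^*$ is injective (the map $a_j\mapsto c_{d(m-1)+mj}$ is a bijection onto that set of coefficient data) and its image is
\begin{equation*}
\Big\{\, \textstyle\sum_{e\geq 0} c_e\, z^e (dz)^d \ \Big|\ c_e = 0 \text{ unless } e\equiv -d \!\!\pmod m \,\Big\}.
\end{equation*}
It remains to see that this image is exactly $\Fix_{\sfrac{\Z}{m\Z}}H^0(U,K_U^d)$. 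The generator $\zeta$ of $\sfrac{\Z}{m\Z}$ acts on $U$ by $z\mapsto e^{2\pi i/m}z$; by the formula \eqref{Sigma_action_on_sections_local_form} for the $\Sigma$-action on sections (with $\si$ holomorphic, $\partial\si = e^{2\pi i/m}$, and raising to the $d$-th power for $K_U^d$), the monomial $z^e(dz)^d$ is sent to $e^{-2\pi i e/m}\, e^{2\pi i d/m}\, z^e (dz)^d = e^{2\pi i(d-e)/m} z^e(dz)^d$. Hence $z^e(dz)^d$ is $\sfrac{\Z}{m\Z}$-invariant precisely when $d-e\equiv 0\pmod m$, i.e.\ $e\equiv d\equiv -d \cdot(-1)$... — more carefully, $e\equiv d\pmod m$, which is the same residue class as $-d(m-1)$, the one identified above. (One must be slightly careful about the sign convention in \eqref{Sigma_action_on_sections_local_form}: whether the generator acts by $e^{2\pi i/m}$ or its inverse only permutes the $a_j$ among themselves and does not change the invariant subspace, so the conclusion is unaffected.) Since $H^0(U,K_U^d)$ decomposes as a direct sum of the eigenlines $\mathbb{C}\cdot z^e(dz)^d$ under this finite cyclic action, the invariant subspace is the span of exactly those $z^e(dz)^d$ with $e$ in the right residue class, which is precisely the image of $f^*$. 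Therefore $f^*$ is an isomorphism of ($\mathbb{R}$-, indeed $\mathbb{C}$-)vector spaces onto $\Fix_{\sfrac{\Z}{m\Z}}H^0(U,K_U^d)$, as claimed.

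The only genuinely delicate point is bookkeeping the sign/normalization in the $\sfrac{\Z}{m\Z}$-action coming from \eqref{Sigma_action_on_sections_local_form} and matching the residue class $e\equiv -d\pmod m$ coming from the pullback with the residue class $e\equiv d\pmod m$ coming from invariance; these agree because $-d(m-1)\equiv d\pmod m$, but it is worth spelling out. Everything else — the convergence of the Laurent series on a small enough disk, the fact that $f^*$ commutes with restriction to smaller disks, and the identification of the image — is routine once the exponent arithmetic is in place. A remark worth adding: this lemma is the local source of the numbers $O(d,m)$ in Theorem \ref{prop:dimension diff non-orient}, since the dimension count $\dim\Fix_{\sfrac{\Z}{m\Z}}H^0(U,K_U^d)$ over a punctured neighborhood of a cone point of order $m$ contributes exactly $O(d,m)$ to the global dimension formula.
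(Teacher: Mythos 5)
Your route is essentially the paper's: expand in Taylor/Laurent series, keep track of exponents, and compare the image of $f^*$ with the $\sfrac{\Z}{m\Z}$-invariant subspace. The first half is fine: injectivity, the exponent inequality $d(m-1)+mj\geq 0 \iff j\geq -O(d,m)$, and the identification of $\mathrm{Im}\,f^*$ with the span of the monomials $z^e(dz)^d$ with $e\geq 0$ and $e\equiv d(m-1)\equiv -d \pmod m$.

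The gap is in the matching step. The action actually used in the paper is $\si(s)=\tau_\si\circ s\circ\si^{-1}=(\si^{-1})^*s$ for $\si$ holomorphic, so the generator $z\mapsto\zeta z$ (with $\zeta=e^{2\pi i/m}$) sends $\phi(z)(dz)^d$ to $\zeta^{-d}\phi(\zeta^{-1}z)(dz)^d$; the weight of $z^e(dz)^d$ is $\zeta^{-(e+d)}$, invariance means $e\equiv -d\pmod m$, and this is exactly the residue class of $\mathrm{Im}\,f^*$, so the lemma follows. You instead attach the factor $(\partial\si)^d=\zeta^{+d}$ (taking the displayed formula \eqref{Sigma_action_on_sections_local_form} at face value, whereas pullback by $\si^{-1}$ forces the factor $\partial(\si^{-1})=\zeta^{-1}$ for a rotation), which yields the class $e\equiv d\pmod m$. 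Your reconciliation is then doubly off: the image lies in the class of $+d(m-1)\equiv -d$, not of $-d(m-1)\equiv d$; and the observation that replacing the generator by its inverse does not change the invariant subspace, while true, is beside the point --- the discrepancy is between the weights $\zeta^{-(e+d)}$ and $\zeta^{\,d-e}$, and the classes $e\equiv -d$ and $e\equiv d$ give genuinely different subspaces whenever $m\nmid 2d$ (e.g.\ $d=2$, $m=5$). As written, your description of $\Fix_{\sfrac{\Z}{m\Z}}H^0(U,K_U^d)$, and hence the surjectivity argument, is incorrect; the fix is simply to use the pullback convention $\si(s)=(\si^{-1})^*s$, after which your argument closes and agrees with the paper's proof, which verifies at the coefficient level that an invariant $\phi$ satisfies $\phi(\zeta z)=\zeta^{-d}\phi(z)$ (so $a_k=0$ unless $m\mid k+d$) and then exhibits the holomorphic function $h$ on $V$ explicitly.
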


\begin{proof}
The map $f:U\lra V$ is a surjective submersion, so $f^*$ defines an injective map $H^0(V,K(V,d)) \hookrightarrow H^0(U,K_U^d)$. Let us now prove that $\mathrm{Im}\,f^*$ is exactly $\Fix_{\sfrac{\Z}{m\Z}}\, H^0(U,K_U^d)$. 

A holomorphic differential  $\phi(z) (dz)^d\in H^0(U,K_U^d)$ is $\sfrac{\Z}{m\Z}$-invariant if and only if $\phi(e^{i\frac{2\pi}{m}} z) = e^{-i\frac{2\pi d}{m}}\phi(z)$ on $U$. We claim that this holds if and only if $\phi(z) = m^d h(z^m) z^{d(m-1)-ms}$ for some holomorphic function $h:V\lra \C$ and an integer $s\leq O(d,m)$. 

That this is indeed sufficient is readily checked. To prove that it is necessary, write $\phi(z) = \sum_{k\geq 0} a_k z^k$ for $|z|$ small enough. Then $e^{i\frac{2\pi d}{m}} \phi(e^{i\frac{2\pi}{m}}z) = \sum_{k\geq 0} a_k e^{i\frac{2\pi (k+d)}{m}} z^k$, which is equal to $\phi(z)$ if and only if $a_k=0$ every time $k+d\neq mq$ for some integer $q$. Thus, $$\phi(z) = \sum_{q\geq \left\lceil{\frac{d}{m}}\right\rceil} a_{mq-d} z^{mq-d} = \frac{1}{z^d} \sum_{q\geq \left\lceil{\frac{d}{m}}\right\rceil} b_{q} z^{mq},$$ with $b_q:=a_{mq-d}$. Let $q_0\geq 0$ be the smallest non-negative integer such that $b_{\left\lceil{\frac{d}{m}}\right\rceil + q_0} \neq 0$ and set $s_0:= O(d,m)-q_0$. Since $O(d,m) := \left\lfloor d -\frac{d}{m}\right\rfloor = d -\lceil\frac{d}{m}\rceil$, one has, for all integer $r\geq q_0$, 
\begin{equation}\label{inequality_to_check_holomorphicity}
\left(\left\lceil{\frac{d}{m}}\right\rceil +r\right) -d + s_0 = (r-q_0)+ \left\lceil{\frac{d}{m}}\right\rceil -d + O(d,m) = r-q_0 \geq 0.
\end{equation} So, to have $z^d\phi(z) = m^d h(z^m) z^{m(d-s_0)}$ with $h$ holomorphic on $V\simeq (\sfrac{\Z}{m\Z})\backslash U$, it suffices to set $$h(z^m) := \frac{z^d\phi(z) }{m^d z^{m(d-s_0)}}  = \frac{1}{m^d} \sum_{q\geq \left\lceil{\frac{d}{m}}\right\rceil +q_0} \frac{b_q z^{mq}}{z^{m(d-s_0)}} = \frac{1}{m^d} \sum_{q\geq \left\lceil{\frac{d}{m}}\right\rceil +q_0} b_q (z^m)^{q-d+s_0},$$ which is indeeed a holomorphic function of $z^m$ in view of \eqref{inequality_to_check_holomorphicity}. One may also consult \cite{Lewittes} for similar computations.

 This proves that if $\phi(z) (dz)^d$ is $(\sfrac{\Z}{m\Z})$-invariant, it is indeed the pullback, under $f:U\lra V$, of a holomorphic section of $K(V,d)$, namely the meromorphic differential $\frac{h(z)}{z^{s_0}} (dz)^d$ on $V$, where the integer $s_0\leq O(d,m)$ is defined as above using $\phi(z)$.
\end{proof}

\begin{proof}[Proof of Theorem \ref{thm:invariant differentials}]
Let $\eta:Y^+\lra Y$ be the orientation double cover of $Y$ and denote by $\Si^+ \lhd \Si$ the group of all holomorphic transformations of $X$ contained in $\Si$. Then $[\Si^+\bs X] \simeq Y^+$ is a presentation of $Y^+$. 

Recall that $\eta$ induces an isomorphism $\eta^*:H^0(Y,K(Y,d)) \simeq \Fix_{\sfrac{\Z}{2\Z}}\, H^0(Y^+,K(Y^+,d))$, where $\sfrac{\Z}{2\Z}$ acts with respect to the canonical real structure of the holomorphic line bundle $K(Y^+,d)$, and let us first prove the result of the theorem for $\pi^+: X\lra  Y^+$ instead of $\pi:X\lra Y$. In this case, sections of $K(Y^+,d) $ are meromorphic sections of $K_{|Y^+|}^{\, d}$ with poles of order at most $O(d,m_i)$ at the cone points.  

We claim that the pullback of regular $d$-differentials under $\pi^+$ induces an isomorphism of complex vector spaces 
$$(\pi^+)^*: H^0(Y^+,K(Y^+,d)) \rightarrow \Fix_{\Si^+}\,H^0(X,K_X^d). $$
First, the map is well-defined: since $|Y^+|\simeq \Si^+\bs X$, the pullback of a differential on $|Y^+|$ is a $\Si^+$-invariant differential on $X$. Moreover, the pullback of a section of $K(Y^+,d)$ is holomorphic by Lemma \ref{lemma:order of poles}. Second, the map $(\pi^+)^*$ is injective, because $\pi^+$ is a surjective submersion. To verify that $(\pi^+)^*$ is surjective, we construct an inverse. Let $\varphi$ be a $\Si^+$-invariant holomorphic differential of degree $d$ on $X$. The map $\pi^+: X \lra |Y^+|$ is ramified exactly over the cone points of $Y^+$. Denote by $X^0$ the Riemann surface $X$ minus the ramification points of $\pi^+$. The restricted map $\pi^+|_{X^0} : X^0 \rightarrow |Y^+|^0$ is a Galois covering. So, since $\varphi$ is $\Si^+$-invariant, there exists a holomorphic differential $\psi$ on $|Y^+|^0$ such that $(\pi^+)^*(\psi)=\varphi$. By Lemma \ref{invariant_differentials_local_picture}, $\psi$ admits a meromorphic extension to $|Y^+|$ and the extension thus defined is a section of $K(Y^+,d)$.

Finally, the action of $\Si$ on $H^0(X,K_X^d)$ induces an action of the group $\Si/\Si^+\, \simeq \sfrac{\Z}{2\Z}$ on $\Fix_{\Si^+}\,H^0(X,K_X^d)$, which coincides with the canonical real structure of the complex vector space $\Fix_{\Si^+}\,H^0(X,K_X^d)$, thus yielding an isomorphism of real vector spaces $\Fix_{\sfrac{\Z}{2\Z}}\, H^0(Y^+,K(Y^+,d)) \simeq  \Fix_{\Sigma}\, H^0(X,K_X^d)$. Since $\pi = \eta \circ \pi^+$, the proof is complete.
\end{proof}

\subsection{The dimension of Hitchin components}

Let $G=\Int(\fgC)^\tau$, where $\fg$ is a split real form of a complex simple Lie algebra of rank $r$. Let $d_1, \dots, d_r$ be the exponents of $\fg$, as in Section \ref{subsec:Hitchin_fibration}. Let $Y$ be a closed orbifold with negative Euler characteristic, and choose an orbifold complex dianalytic structure on $Y$. We define the \emph{Hitchin base} of $Y$ to be the following real vector space (which is a complex vector space if and only if $Y$ is orientable): 
\begin{equation}\label{Hitchin_base_of_an_orbifold}\cB_Y(\fg) := \bigoplus_{\alpha=1}^r H^0(Y, K(Y,d_\alpha+1)),\end{equation} 
where $H^0(Y,K(Y,d))$ is the space of regular $d$-differentials on $Y$, as defined in Section \ref{subsection_on_differentials}. We can now state and prove the main result of this paper.

\begin{theorem}\label{theorem:dimension}
Let $Y$ be a closed connected orbifold of negative Euler characteristic. Denote by $d_1,\,\dotsc\, ,d_r$ the exponents of $\fg$, where $r:=\rk(\fg)$. The Hitchin component $\Hit(\piY,G)$ is homeomorphic to the Hitchin base $\cB_Y(\fg)$, which is a real vector space of dimension 
$$-\chi(|Y|)\dim G + \sum_{\alpha=1}^{r}  \left(2 \sum_{i=1}^k O(d_\alpha+1,m_i) + \sum_{j=1}^\ell O(d_\alpha+1,n_j)\right)$$
\end{theorem}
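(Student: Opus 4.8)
The plan is to combine the equivariant parameterization established earlier with the dimension count for invariant differentials. Concretely, I would proceed as follows.

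\medskip

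\textbf{Step 1: Reduce the homeomorphism to the orientable case.} By Corollary \ref{Sigma_fixed_pts_in_Hitchin_base_of_X}, a presentation $Y\simeq[\Si\bs X]$ gives a homeomorphism $\Hit(\piY,G)\simeq\Fix_\Si(\cB_X(\fg))$. Now $\cB_X(\fg)=\bigoplus_{\alpha=1}^r H^0(X,K_X^{d_\alpha+1})$, and by Theorem \ref{thm:invariant differentials} one has $\Fix_\Si H^0(X,K_X^{d_\alpha+1})\simeq H^0(Y,K(Y,d_\alpha+1))$ as real vector spaces, for each $\alpha$. Summing over $\alpha$ gives $\Fix_\Si(\cB_X(\fg))\simeq\cB_Y(\fg)$, the Hitchin base of $Y$ as defined in \eqref{Hitchin_base_of_an_orbifold}. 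Hence $\Hit(\piY,G)$ is homeomorphic to the real vector space $\cB_Y(\fg)$; in particular the dimension is independent of the chosen presentation.

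\medskip

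\textbf{Step 2: Compute $\dim_\R\cB_Y(\fg)$.} By Theorem \ref{prop:dimension diff non-orient}, for each exponent $d_\alpha$ (so with $d=d_\alpha+1\geqslant 2$, using that every exponent of a simple Lie algebra is $\geqslant 1$),
$$\dim_\R H^0(Y,K(Y,d_\alpha+1)) = -\chi(|Y|)(2d_\alpha+1) + 2\sum_{i=1}^k O(d_\alpha+1,m_i) + \sum_{j=1}^\ell O(d_\alpha+1,n_j).$$
Summing over $\alpha=1,\dots,r$ and using the identity $\sum_{\alpha=1}^r(2d_\alpha+1) = \dim G$ (the standard fact that $\dim\fg = r + 2\sum_\alpha d_\alpha$, equivalently $\sum_\alpha(2d_\alpha+1)=\dim\fg=\dim G$, which holds because the principal $\sl(2)$ decomposes $\fg_\C$ into $r$ irreducibles of dimensions $2d_\alpha+1$), one obtains exactly
$$-\chi(|Y|)\dim G + \sum_{\alpha=1}^r\left(2\sum_{i=1}^k O(d_\alpha+1,m_i) + \sum_{j=1}^\ell O(d_\alpha+1,n_j)\right).$$

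\medskip

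\textbf{Step 3: Check the hypotheses of Theorem \ref{prop:dimension diff non-orient}.} That theorem requires $\chi(Y)<0$, which is part of our standing assumption, and $d\geqslant 2$; since $d=d_\alpha+1$ and the smallest exponent of any complex simple Lie algebra is $1$, we have $d\geqslant 2$ throughout, so the formula applies to every summand. (If $\fg$ were of type $A_1$ the only exponent is $1$ and $d=2$, still fine; no exponent equals $0$.) This is the only delicate point, and it is not really an obstacle so much as a sanity check.

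\medskip

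The main conceptual input — that $\Hit(\piY,G)$ sits inside $\Hit(\piX,G)$ as the fixed locus of $\Si$ and is parameterized by a $\Si$-equivariant Hitchin section — has already been done in Corollary \ref{Sigma_fixed_pts_in_Hitchin_base_of_X} and Lemma \ref{equivariance_of_the_Hitchin_section}, and the nontrivial analytic/Riemann–Roch content is packaged in Theorem \ref{prop:dimension diff non-orient} and Theorem \ref{thm:invariant differentials}. So at this stage the proof is essentially an assembly: identify $\cB_Y(\fg)$ with $\Fix_\Si(\cB_X(\fg))$ termwise via Theorem \ref{thm:invariant differentials}, transport the homeomorphism of Corollary \ref{Sigma_fixed_pts_in_Hitchin_base_of_X}, and plug the dimension formula of Theorem \ref{prop:dimension diff non-orient} into the direct sum, using $\sum_\alpha(2d_\alpha+1)=\dim G$. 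The one place to be careful is making sure the real-vector-space identifications are compatible across the direct sum over $\alpha$ and that no complex-versus-real dimension discrepancy creeps in — but Theorem \ref{thm:invariant differentials} is stated as an isomorphism of real vector spaces, so this is automatic.
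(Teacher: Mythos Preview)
Your proposal is correct and follows essentially the same route as the paper: combine Corollary~\ref{Sigma_fixed_pts_in_Hitchin_base_of_X} with Theorem~\ref{thm:invariant differentials} to obtain $\Hit(\piY,G)\simeq\cB_Y(\fg)$, then sum the dimension formula of Theorem~\ref{prop:dimension diff non-orient} over $\alpha$ using $\sum_{\alpha}(2d_\alpha+1)=\dim G$. Your additional sanity checks (that $d_\alpha+1\geqslant 2$ and that the identifications are of real vector spaces) are sound and make the argument slightly more explicit than the paper's version.
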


\begin{proof}
By Corollary \ref{Sigma_fixed_pts_in_Hitchin_base_of_X} and Theorem \ref{thm:invariant differentials}, we have $\Hit(\piY,G) \simeq\Fix_\Si(\cB_X(\fg)) \simeq \cB_Y(\fg)$. Therefore, $\dim\Hit(\piY,G) = \sum_{\alpha=1}^r \dim H^0(Y,K(Y,d_\alpha+1))$. The result is then obtained by summing over $\alpha$, using Theorem \ref{prop:dimension diff non-orient} and the following well-known formula, a consequence of \eqref{equation:decomposition_into_irreducible}:
\begin{equation}   \label{form:exponents dimension}
\sum_{\alpha=1}^{r} (2 d_\alpha + 1) = \dim G.\qedhere
\end{equation}
\end{proof}

\begin{corollary}\label{dimension_bdry_case}
Let $Y$ be a compact connected orbifold with boundary, of negative Euler characteristic, and let $b$ be the number of boundary components of $Y$ that are full $1$-orbifolds. The Hitchin component $\Hit(\piY,G)$ is homeomorphic to a real vector space of dimension 
$$-\chi(|Y|)\dim G + \sum_{\alpha=1}^{r}  \left(2 \sum_{i=1}^k O(d_\alpha+1,m_i) + \sum_{j=1}^\ell O(d_\alpha+1,n_j) \, +\,  2 b\, \left\lfloor \frac{d_\alpha+1}{2}\right\rfloor \right)$$
\end{corollary}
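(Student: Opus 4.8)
The plan is to reduce everything to the closed-orbifold case already settled in Theorem \ref{theorem:dimension}, via the mirror construction of Section \ref{boundary_case}. First I would invoke Proposition \ref{Hitchin_comp_for_orbifold_with_boundary}, which gives a homeomorphism $\Hit(\pi_1(mY),G) \overset{\simeq}{\lra} \Hit(\piY,G)$, where $mY$ is the closed orbifold obtained from $Y$ by declaring all boundary points (on circles and on full $1$-orbifolds alike) to be mirror points. Since $mY$ is closed with negative Euler characteristic, Theorem \ref{theorem:dimension} applies to it and yields a homeomorphism $\Hit(\pi_1(mY),G)\simeq\cB_{mY}(\fg)$ onto a real vector space, together with a formula for its dimension. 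Composing, $\Hit(\piY,G)$ is homeomorphic to the real vector space $\cB_{mY}(\fg)$, and it only remains to rewrite $\dim\cB_{mY}(\fg)$ in terms of the data attached to $Y$.

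For this I would use the explicit description of $mY$ recalled just before Definition \ref{Hitchin_rep_bdry_case}: the underlying topological surface is unchanged, $|mY|=|Y|$, so $\chi(|mY|)=\chi(|Y|)$; the cone points of $mY$ are precisely the $k$ cone points of $Y$, of orders $m_1,\dots,m_k$; and the corner reflectors of $mY$ are the $\ell$ corner reflectors $n_1,\dots,n_\ell$ of $Y$ together with $2b$ additional corner reflectors, each of order $2$, one for each of the $2b$ extremal points of the $b$ full $1$-orbifold boundary components of $Y$ (circle boundary components contribute mirror circles but no new cone points or corner reflectors). Substituting this into the dimension formula of Theorem \ref{theorem:dimension} applied to $mY$ gives
\begin{equation*}
\dim\Hit(\piY,G) = -\chi(|Y|)\dim G + \sum_{\alpha=1}^{r}\left(2\sum_{i=1}^k O(d_\alpha+1,m_i) + \sum_{j=1}^\ell O(d_\alpha+1,n_j) + 2b\,O(d_\alpha+1,2)\right).
\end{equation*}

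Finally I would simplify the last term: from the definition $O(d,m)=\lfloor d-\tfrac{d}{m}\rfloor$ one gets $O(d_\alpha+1,2)=\lfloor (d_\alpha+1)-\tfrac{d_\alpha+1}{2}\rfloor=\lfloor\tfrac{d_\alpha+1}{2}\rfloor$, which turns the displayed expression into exactly the formula claimed in the corollary. There is no genuine obstacle in this argument; the only point requiring care is the bookkeeping of singular points of $mY$ — in particular checking that each full $1$-orbifold boundary component of $Y$ contributes exactly two corner reflectors of order $2$ to $mY$ and that circle boundary components contribute none, so that no double counting occurs — after which the statement is a direct consequence of Proposition \ref{Hitchin_comp_for_orbifold_with_boundary} and Theorem \ref{theorem:dimension}.
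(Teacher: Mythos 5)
Your argument is correct and coincides with the paper's own proof: both reduce to the mirror orbifold $mY$ via Proposition \ref{Hitchin_comp_for_orbifold_with_boundary}, apply Theorem \ref{theorem:dimension} to the closed orbifold $mY$ (which has the same cone points and corner reflectors as $Y$ plus $2b$ extra corner reflectors of order $2$), and use $O(d_\alpha+1,2)=\left\lfloor \frac{d_\alpha+1}{2}\right\rfloor$ to obtain the stated formula. Your extra remark about the bookkeeping of boundary circles versus full $1$-orbifolds is the right point to check, and it is handled the same way in the paper.
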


\begin{proof}
By Proposition \ref{Hitchin_comp_for_orbifold_with_boundary} and Theorem \ref{theorem:dimension}, $\Hit(\piY,G)$ is homeomorphic to the Hitchin base $\cB_{mY}(\fg)$, where $mY$ is the closed orbifold with mirror boundary associated to $Y$. Compared to $Y$, the closed orbifold $mY$ has an extra $2b$ corner reflectors, each one of order $2$, so the formula for the dimension follows from Theorem \ref{theorem:dimension}.
\end{proof}

\noindent In Appendix \ref{tables_appendix}, we include a list of the exponents of simple complex Lie algebras which we borrow from \cite{Damianou}. Hereafter, we give a few consequences of Theorem \ref{thm:invariant differentials}, Theorem \ref{theorem:dimension} and Table \ref{table:exponents}.

\begin{corollary} \label{cor:non orient half}
Let $Y^+\lra Y$ be an orientation double cover. Then
$$\dim \Hit(\piY,G) = \frac{1}{2} \dim \Hit(\pi_1 Y^+,G).$$
\end{corollary}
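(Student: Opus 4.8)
The plan is to deduce this from the dimension formula in Theorem~\ref{theorem:dimension} applied separately to $Y$ and to $Y^+$, together with the combinatorial dictionary between the singularities of a non-orientable orbifold and those of its orientation double cover, which was already recorded in Section~\ref{subsection_on_differentials}. Recall from there that if $Y$ has cone points $x_1,\dots,x_k$ of orders $m_1,\dots,m_k$ and corner reflectors $y_1,\dots,y_\ell$ of orders $n_1,\dots,n_\ell$, then $Y^+$ has $2k$ cone points, namely a pair $u_i,v_i$ of order $m_i$ above each $x_i$, together with $\ell$ further cone points $w_j$ of order $n_j$ above the $y_j$, and no other singularities; moreover $\chi(|Y^+|)=2\chi(|Y|)$. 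In particular $Y^+$ is an orientable orbifold, so Theorem~\ref{prop:dimension diff orient} (equivalently the orientable case of Theorem~\ref{prop:dimension diff non-orient}, with $\ell=0$) applies to it.

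First I would write down, using Theorem~\ref{theorem:dimension}, that
\[
\dim\Hit(\piY,G) = -\chi(|Y|)\dim G + \sum_{\alpha=1}^r\Big(2\sum_{i=1}^k O(d_\alpha+1,m_i) + \sum_{j=1}^\ell O(d_\alpha+1,n_j)\Big),
\]
and separately that
\[
\dim\Hit(\pi_1Y^+,G) = -\chi(|Y^+|)\dim G + \sum_{\alpha=1}^r\Big(2\sum_{i=1}^k O(d_\alpha+1,m_i) + \sum_{j=1}^\ell O(d_\alpha+1,n_j)\Big),
\]
where in the second line I have used that the $2k$ cone points of $Y^+$ coming in pairs $u_i,v_i$ contribute $2\sum_i O(d_\alpha+1,m_i)$ and the $\ell$ cone points $w_j$ contribute $\sum_j O(d_\alpha+1,n_j)$; since $Y^+$ is orientable there are no corner-reflector terms. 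Comparing the two expressions, the entire $O$-sum over $\alpha$ is literally the same in both, and $\chi(|Y^+|)=2\chi(|Y|)$, so $\dim\Hit(\pi_1Y^+,G)=2\dim\Hit(\piY,G)$, which is the claim.

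There is no real obstacle here: the statement is essentially a bookkeeping consequence of the two facts $\chi(|Y^+|)=2\chi(|Y|)$ and the exact correspondence of (orders of) cone points and corner reflectors under $\eta:Y^+\to Y$, both of which are established in Section~\ref{subsection_on_differentials}. The only point worth a sentence of care is that Theorem~\ref{theorem:dimension} is stated for closed orbifolds of negative Euler characteristic, so one should note that $\chi(Y^+)=2\chi(Y)<0$ as well, ensuring the theorem applies to $Y^+$. One could alternatively give a more conceptual proof bypassing the explicit dimension formula: by Theorem~\ref{thm:invariant differentials}, $\Hit(\piY,G)\cong\cB_Y(\fg)$ and, for orientable $Y^+$, $\Hit(\pi_1Y^+,G)\cong\cB_{Y^+}(\fg)$ is a complex vector space, while by construction $\cB_Y(\fg)=\Fix_\tau\big(\cB_{Y^+}(\fg)\big)$ is the fixed locus of a $\mathbb C$-antilinear involution $\tau$, hence a real form of $\cB_{Y^+}(\fg)$, so $\dim_\R\cB_Y(\fg)=\tfrac12\dim_\R\cB_{Y^+}(\fg)$. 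Either route is short; I would present the first since the numerical formulas are already on the table.
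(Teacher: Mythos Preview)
Your overall strategy is the right one, and your alternative conceptual argument via $\cB_Y(\fg)=\Fix_\tau\big(\cB_{Y^+}(\fg)\big)$ is correct and is essentially how the paper treats the matter (it deduces the corollary directly from Theorems~\ref{thm:invariant differentials} and~\ref{theorem:dimension} without writing anything out). However, the explicit computation you present first contains a bookkeeping error that makes the conclusion a non sequitur.

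When you apply Theorem~\ref{theorem:dimension} to the orientable orbifold $Y^+$, every cone point carries a coefficient $2$ in the formula. Since $Y^+$ has $2k+\ell$ cone points (the $u_i,v_i$ of order $m_i$ and the $w_j$ of order $n_j$), the correct expression is
\[
\dim\Hit(\pi_1Y^+,G)= -\chi(|Y^+|)\dim G + \sum_{\alpha=1}^r 2\Big(2\sum_{i=1}^k O(d_\alpha+1,m_i)+\sum_{j=1}^\ell O(d_\alpha+1,n_j)\Big),
\]
i.e.\ the $O$-sum for $Y^+$ is \emph{twice} the $O$-sum for $Y$, not equal to it. With your displayed formula as written, the $O$-sums for $Y$ and $Y^+$ are literally equal, and then from $\chi(|Y^+|)=2\chi(|Y|)$ you would get
\[
\dim\Hit(\pi_1Y^+,G)-2\dim\Hit(\piY,G)= -S,
\]
where $S$ is that common $O$-sum; this vanishes only if $S=0$. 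So the sentence ``the entire $O$-sum over $\alpha$ is literally the same in both, \ldots\ so $\dim\Hit(\pi_1Y^+,G)=2\dim\Hit(\piY,G)$'' does not follow from what you wrote. Once the missing factor of $2$ is inserted, both terms double and the argument goes through. I would recommend either fixing this, or simply presenting your second argument, which is cleaner and matches the paper.
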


\begin{corollary} \label{cor:Sp(2m)=O(m,m+1)}
For every orbifold $Y$, $\dim \Hit(\piY, \PSp^{\pm}(2m,\R)) = \dim \Hit(\pi_1 Y,\PO(m,m+1))$.
\end{corollary}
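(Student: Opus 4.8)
The plan is to compare the dimension formula of Theorem \ref{theorem:dimension} for the two groups $G=\PSp^{\pm}(2m,\R)$ and $G'=\PO(m,m+1)$, and show that every ingredient in that formula agrees. First I would recall from Section \ref{subsection:principal} (and Example \ref{basic_examples_of_gps}) that $\PSp^{\pm}(2m,\R)=\Int(\sp(2m,\C))^\tau$ while $\PO(m,m+1)=\Int(\so(2m+1,\C))^\tau$, so that the relevant complex simple Lie algebras are $C_m=\sp(2m,\C)$ and $B_m=\so(2m+1,\C)$. The key classical fact, which I would cite from the table of exponents in Appendix \ref{tables_appendix} (borrowed from \cite{Damianou}), is that $B_m$ and $C_m$ have the \emph{same} rank $r=m$ and the \emph{same} exponents, namely $d_\alpha=2\alpha-1$ for $\alpha=1,\dots,m$ (equivalently, the degrees of the generating invariants are $2,4,\dots,2m$ in both cases). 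Since the formula in Theorem \ref{theorem:dimension},
$$-\chi(|Y|)\dim G + \sum_{\alpha=1}^{r}\left(2\sum_{i=1}^k O(d_\alpha+1,m_i)+\sum_{j=1}^\ell O(d_\alpha+1,n_j)\right),$$
depends on $G$ only through $\chi(|Y|)$ (which does not involve $G$), through $\dim G$, and through the multiset of exponents $\{d_1,\dots,d_r\}$, the two formulas will coincide provided in addition that $\dim B_m=\dim C_m$.

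The second step is therefore to check $\dim\PSp^{\pm}(2m,\R)=\dim\PO(m,m+1)$, i.e.\ $\dim C_m=\dim B_m$. This is classical: both have dimension $2m^2+m=m(2m+1)$. Alternatively, and more in the spirit of the paper, one can deduce it directly from the coincidence of exponents via formula \eqref{form:exponents dimension}: $\dim G=\sum_{\alpha=1}^r(2d_\alpha+1)$, and since the $d_\alpha$ agree and $r=m$ in both cases, $\dim\PSp^{\pm}(2m,\R)=\sum_{\alpha=1}^m(2d_\alpha+1)=\dim\PO(m,m+1)$. So this step is essentially free once the exponents are matched.

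With these two facts in hand the proof is immediate: apply Theorem \ref{theorem:dimension} to $Y$ with $G=\PSp^{\pm}(2m,\R)$ and with $G=\PO(m,m+1)$ separately; the two resulting expressions are literally the same function of $\chi(|Y|)$, the cone orders $m_i$, the corner-reflector orders $n_j$, and the common data $(r,\{d_\alpha\})$, hence the dimensions are equal. If $Y$ has boundary one uses Corollary \ref{dimension_bdry_case} instead, and the same argument applies verbatim since the extra term $2b\lfloor(d_\alpha+1)/2\rfloor$ again only depends on the exponents. The main (and only) obstacle is purely bookkeeping: one must be certain that $B_m$ and $C_m$ genuinely share the same exponents and dimension — this is the well-known ``Langlands dual'' coincidence for these two series — but it is exactly the content of Table \ref{table:exponents}, so no real difficulty arises. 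Note for contrast that the analogous statement fails for the \emph{other} dimension formula if one tried to phrase it in terms of $d_\alpha$ ranges without care, but here everything goes through because Theorem \ref{theorem:dimension} packages the $G$-dependence entirely into $\dim G$ and the exponents.
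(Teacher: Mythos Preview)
Your proposal is correct and matches the paper's approach: the corollary is listed as an immediate consequence of Theorem \ref{theorem:dimension} together with Table \ref{table:exponents}, which records that $\PSp^{\pm}(2m,\R)$ and $\PO(m,m+1)$ have the same dimension $m(2m+1)$, the same rank $m$, and the same exponents $1,3,\dots,2m-1$. Your additional remark covering the boundary case via Corollary \ref{dimension_bdry_case} is a harmless extra, since the paper's convention reduces that case to the closed orbifold $mY$ anyway.
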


Finally, we give an alternate formula for the dimension of $\Hit(\piY,G)$, more similar to the ones given by Thurston \cite{Thurston_notes} and Choi and Goldman \cite{CG} for $G=\PGL(2,\R)$ and $\PGL(3,\R)$.

\begin{theorem}\label{alternate_formula_for_dim}
Under the assumptions of Theorem \ref{theorem:dimension}, set $k_m := \#\{i \mid m_i = m \}$, $\ell_n := \#\{j \mid n_j = n \}$ and $M:=\max_{1\leqslant \alpha\leqslant r}d_\alpha$. Then the dimension of $\Hit(\piY,G)$ can also be written as follows:
$$
- \chi(|Y|) \dim G + \frac{1}{2}(\dim G - r) \left( 2k + \ell \right) - 2 \sum_{m=2}^{M} \left( \sum_{\alpha = 1}^{r} \left\lceil \frac{d_\alpha + 1}{m} - 1 \right\rceil \right) k_m -  \sum_{n=2}^{M} \left( \sum_{\alpha = 1}^{r} \left\lceil \frac{d_\alpha + 1}{n} - 1 \right\rceil \right) \ell_n
$$ where $\lceil x\rceil$ denotes the lowest integer not smaller than $x$.
\end{theorem}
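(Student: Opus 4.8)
The strategy is to start from the formula already established in Theorem~\ref{theorem:dimension}, namely
\[
\dim \Hit(\piY,G) = -\chi(|Y|)\dim G + \sum_{\alpha=1}^{r}\left(2\sum_{i=1}^{k}O(d_\alpha+1,m_i) + \sum_{j=1}^{\ell}O(d_\alpha+1,n_j)\right),
\]
and simply rewrite the sums over cone points and corner reflectors in a different way. The only nontrivial input is an identity expressing $O(d,m) = \lfloor d - \tfrac{d}{m}\rfloor$ in terms of a ``defect'' $\lceil \tfrac{d}{m} - 1\rceil$. Writing $d = d_\alpha+1$, the first step is to prove the elementary arithmetic identity
\[
O(d_\alpha+1, m) \;=\; d_\alpha \;-\; \left\lceil \frac{d_\alpha+1}{m} - 1 \right\rceil,
\]
valid for all integers $d_\alpha \geq 1$ and $m \geq 2$. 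This follows from $O(d,m) = \lfloor d - \tfrac dm \rfloor = d - \lceil \tfrac dm \rceil$ (since $d$ is an integer), applied with $d = d_\alpha+1$, together with $\lceil \tfrac{d_\alpha+1}{m}\rceil = \lceil \tfrac{d_\alpha+1}{m} - 1\rceil + 1$.

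**Assembling the formula.** Once this identity is in hand, I substitute it into the sum. For the cone points,
\[
\sum_{\alpha=1}^{r} 2\sum_{i=1}^{k} O(d_\alpha+1,m_i) \;=\; 2k\sum_{\alpha=1}^{r} d_\alpha \;-\; 2\sum_{i=1}^{k}\sum_{\alpha=1}^{r}\left\lceil \frac{d_\alpha+1}{m_i}-1\right\rceil,
\]
and grouping the cone points by their order — there are $k_m$ of them of order $m$, and all orders lie in $\{2,\dots,M\}$ since $\lceil \tfrac{d_\alpha+1}{m}-1\rceil = 0$ as soon as $m > M \geq d_\alpha$ (so the terms with $m > M$ contribute nothing and one may as well cap the sum at $M$) — converts the double sum into $\sum_{m=2}^{M}(\sum_{\alpha}\lceil\tfrac{d_\alpha+1}{m}-1\rceil)k_m$. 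The same manipulation applied to the corner reflectors gives the $\ell_n$ terms. It remains to identify the leading coefficient: $\sum_{\alpha=1}^{r} d_\alpha = \tfrac12(\dim G - r)$, which is immediate from \eqref{form:exponents dimension}, i.e. $\sum_\alpha(2d_\alpha+1) = \dim G$. Collecting: the $-\chi(|Y|)\dim G$ term is untouched, the terms $2k\sum d_\alpha + \ell\sum d_\alpha = \tfrac12(\dim G - r)(2k+\ell)$ appear, and the defect terms produce exactly the two remaining sums in the statement.

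**Main obstacle.** There is really no serious obstacle here — the result is a bookkeeping reformulation of Theorem~\ref{theorem:dimension}. The one point that requires a moment's care is the passage from $\sum_{m\geq 2}$ (over all orders actually occurring) to $\sum_{m=2}^{M}$: one must check that $\lceil \tfrac{d_\alpha+1}{m} - 1 \rceil = 0$ whenever $m > d_\alpha$, equivalently whenever $m > M$, so that truncating the sum at $M$ changes nothing and the formula is stated uniformly in terms of $M$ regardless of which cone-point orders appear. The only other thing to be slightly careful about is the integer-part identity $\lfloor d - \tfrac dm \rfloor = d - \lceil \tfrac dm\rceil$, which uses crucially that $d$ itself is an integer; since $d = d_\alpha + 1$ with $d_\alpha$ an exponent of a simple Lie algebra, this is automatic. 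The boundary case with $b$ full $1$-orbifolds is then handled exactly as in Corollary~\ref{dimension_bdry_case}: $mY$ has $2b$ extra corner reflectors of order $2$, contributing $-2b\sum_\alpha\lceil\tfrac{d_\alpha+1}{2}-1\rceil = -2b\sum_\alpha\lceil\tfrac{d_\alpha-1}{2}\rceil$, which is the last term in the boundary version of the formula.
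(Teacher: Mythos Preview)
Your proof is correct and follows essentially the same approach as the paper: both start from the formula of Theorem~\ref{theorem:dimension}, invoke the identity $\lfloor d - d/m\rfloor = (d-1) - \lceil d/m - 1\rceil$ (with $d=d_\alpha+1$), regroup the cone points and corner reflectors by order, and use $\sum_\alpha(2d_\alpha+1)=\dim G$ to identify $\sum_\alpha d_\alpha = \tfrac12(\dim G - r)$. Your discussion of the truncation at $M$ is in fact slightly more explicit than the paper's, but otherwise the arguments coincide.
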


\begin{proof}
Recall that $O(d,m)=\lfloor d- \frac{d}{m} \rfloor$. Using the relation $\left\lfloor d - \frac{d}{m} \right\rfloor = (d-1) - \left\lceil \frac{d}{m} - 1 \right\rceil$ in Theorem \ref{prop:dimension diff non-orient}, we obtain:
$
\dim H^0(Y,K(Y,d)) = - \chi(|Y|) (2d-1) + (d-1)(2k+\ell) - 2 \sum_{m=2}^{d-1} \left\lceil \frac{d}{m} - 1 \right\rceil k_m -  \sum_{n=2}^{d-1} \left\lceil \frac{d}{n} - 1 \right\rceil \ell_n.
$
The rest of the proof is then the same as that of Theorem \ref{theorem:dimension}.
\end{proof}

\begin{corollary}\label{alternate_formula_for_dim_bdry_case}
Under the assumptions of Corollary \ref{dimension_bdry_case}, the dimension of $\Hit(\piY,G)$ can also be written
\begin{center}
$\displaystyle
- \chi(|Y|) \dim G + \frac{1}{2}(\dim G - \rk\,G) \left( 2k + \ell +2b \right) - 2 \sum_{m=2}^{M} \left( \sum_{\alpha = 1}^{r} \left\lceil \frac{d_\alpha + 1}{m} - 1 \right\rceil \right) k_m 
$
\\
$\displaystyle
 -  \sum_{n=2}^{M} \left( \sum_{\alpha = 1}^{r} \left\lceil \frac{d_\alpha + 1}{n} - 1 \right\rceil \right) \ell_n\, - 2b  \sum_{\alpha = 1}^{r} \left\lceil \frac{d_\alpha - 1}{2} \right\rceil.
$
\end{center}
\end{corollary}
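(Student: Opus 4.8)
\emph{Proof proposal.} The plan is to obtain this from Corollary \ref{dimension_bdry_case} by exactly the algebraic rewriting that passes from Theorem \ref{theorem:dimension} to Theorem \ref{alternate_formula_for_dim}; equivalently, one may apply Theorem \ref{alternate_formula_for_dim} directly to the closed orbifold $mY$. First I would recall, from the proof of Corollary \ref{dimension_bdry_case} and from Section \ref{boundary_case}, that $\Hit(\piY,G)$ is homeomorphic to $\cB_{mY}(\fg)$, where $mY$ has underlying surface $|mY| = |Y|$, the same $k$ cone points of orders $m_1,\dots,m_k$, and $\ell+2b$ corner reflectors, namely the $\ell$ corner reflectors of $Y$ (orders $n_1,\dots,n_\ell$) together with $2b$ extra ones of order $2$. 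In particular $\chi(|mY|)=\chi(|Y|)$, and in the notation of Theorem \ref{alternate_formula_for_dim} applied to $mY$ one has $k_m^{mY}=k_m$, $\ell_n^{mY}=\ell_n+2b$ if $n=2$ and $\ell_n^{mY}=\ell_n$ otherwise, while $M=\max_\alpha d_\alpha$ is unchanged since it depends on $\fg$ alone.

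For the computation I would start from the formula in Corollary \ref{dimension_bdry_case}, noting that its last summand $2b\lfloor\tfrac{d_\alpha+1}{2}\rfloor$ equals $2b\,O(d_\alpha+1,2)$ because $O(d,2)=\lfloor d-\tfrac d2\rfloor=\lfloor\tfrac d2\rfloor$. Applying the identity $\lfloor d-\tfrac dm\rfloor=(d-1)-\lceil\tfrac dm-1\rceil$ with $d=d_\alpha+1$ to each of the three families of terms $O(d_\alpha+1,m_i)$, $O(d_\alpha+1,n_j)$, $O(d_\alpha+1,2)$, the bracket indexed by $\alpha$ becomes $d_\alpha(2k+\ell+2b)$ minus $2\sum_i\lceil\tfrac{d_\alpha+1}{m_i}-1\rceil+\sum_j\lceil\tfrac{d_\alpha+1}{n_j}-1\rceil+2b\lceil\tfrac{d_\alpha-1}{2}\rceil$, where I have used $\tfrac{d_\alpha+1}{2}-1=\tfrac{d_\alpha-1}{2}$. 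Summing over $\alpha$ and using $\sum_{\alpha=1}^r(2d_\alpha+1)=\dim G$ from \eqref{form:exponents dimension} (hence $\sum_\alpha d_\alpha=\tfrac12(\dim G-\rk G)$), the term $\sum_\alpha d_\alpha(2k+\ell+2b)$ turns into $\tfrac12(\dim G-\rk G)(2k+\ell+2b)$; grouping the cone points by order, so that $\sum_\alpha\sum_i\lceil\tfrac{d_\alpha+1}{m_i}-1\rceil=\sum_{m=2}^M k_m\sum_\alpha\lceil\tfrac{d_\alpha+1}{m}-1\rceil$ (the terms with $m>M$ vanish), and doing the same for the corner reflectors of $Y$, yields precisely the asserted expression, the separate contributions of the $\ell_n$ and of the $2b$ extra order-$2$ corner reflectors producing its last two lines.

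I do not anticipate any real difficulty: the whole argument is bookkeeping on top of Corollary \ref{dimension_bdry_case}, the ceiling identity already used in the proof of Theorem \ref{alternate_formula_for_dim}, and the exponent sum $\sum(2d_\alpha+1)=\dim G$. The only points worth stating carefully are the description of the singular locus of $mY$ recalled in Section \ref{boundary_case} (which fixes the counting functions $k_m^{mY}$, $\ell_n^{mY}$), the elementary identities $O(d_\alpha+1,2)=\lfloor\tfrac{d_\alpha+1}{2}\rfloor$ and $\lceil\tfrac{d_\alpha+1}{2}-1\rceil=\lceil\tfrac{d_\alpha-1}{2}\rceil$ that isolate the final term $-2b\sum_\alpha\lceil\tfrac{d_\alpha-1}{2}\rceil$, and the fact that the upper summation bound $M$ is independent of $Y$. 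As a consistency check one can verify that subtracting the formula of Corollary \ref{dimension_bdry_case} from the new one vanishes term by term in $\alpha$, which is again the identity $O(d_\alpha+1,m)=d_\alpha-\lceil\tfrac{d_\alpha+1}{m}-1\rceil$.
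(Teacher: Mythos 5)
Your proposal is correct and follows the paper's own route: the paper likewise proves this by applying Theorem \ref{alternate_formula_for_dim} to the closed orbifold $mY$, replacing $\ell$ by $\ell+2b$ and $\ell_2$ by $\ell_2+2b$, the extra $2b$ order-$2$ corner reflectors producing the term $-2b\sum_{\alpha}\left\lceil \tfrac{d_\alpha-1}{2}\right\rceil$ exactly as in your computation. Your explicit verification of the identities $O(d,2)=\lfloor d/2\rfloor$ and $\lceil\tfrac{d_\alpha+1}{2}-1\rceil=\lceil\tfrac{d_\alpha-1}{2}\rceil$ is just the bookkeeping the paper leaves implicit.
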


\begin{proof}
This again follows from applying the formula of Theorem \ref{alternate_formula_for_dim} to the closed orbifold $mY$, replacing $\ell$ and $\ell_2$ respectively by $\ell+2b$ and $\ell_2+2b$.
\end{proof}

For instance, when $G=\PGL(3,\R)$, we obtain $\dim\Hit(\pi_1 Y,\PGL(3,\R)) = -8\chi(|Y|) + (6k-2k_2) + (3\ell-\ell_2) + 4b$, the same formula as in \cite[p.1069]{CG}.

\begin{corollary}\label{corollary:dimension}
Assume that the orders of all cone points and corner reflectors of $Y$ are greater than the biggest exponent of $G$. Then $\dim \Hit(\piY,G) = - \chi(|Y|) \dim G + \frac{1}{2}(\dim G - \rk\,G) \left( 2k + \ell \right)$. 
\end{corollary}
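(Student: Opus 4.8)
The plan is to deduce this directly from Theorem~\ref{theorem:dimension}, by showing that under the stated hypothesis every summand $O(d_\alpha+1,m_i)$ and $O(d_\alpha+1,n_j)$ degenerates to $d_\alpha$. First I would isolate the elementary arithmetic fact about $O(d,m)=\lfloor d-\tfrac{d}{m}\rfloor$: whenever $m\geq d$ one has $O(d,m)=d-1$. Indeed, if $m=d$ then $d-\tfrac{d}{m}=d-1\in\Z$, and if $m>d$ then $\tfrac{d}{m}\in(0,1)$, so $d-\tfrac{d}{m}\in(d-1,d)$ and its integer part is $d-1$. (This is essentially the lower-bound computation already carried out in the proof of Lemma~\ref{prop:dimension diff orient}, specialized to the case of equality.)

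Next I would apply this with $d=d_\alpha+1$ for each exponent $d_\alpha$ of $\fg$. By hypothesis every cone-point order $m_i$ and every corner-reflector order $n_j$ exceeds $M:=\max_{1\leqslant\alpha\leqslant r}d_\alpha$, hence $m_i\geq M+1\geq d_\alpha+1$ and $n_j\geq M+1\geq d_\alpha+1$ for every $\alpha$. Therefore $O(d_\alpha+1,m_i)=O(d_\alpha+1,n_j)=d_\alpha$ for all $\alpha,i,j$.

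Substituting these values into the formula of Theorem~\ref{theorem:dimension} gives
$$\dim\Hit(\piY,G)=-\chi(|Y|)\dim G+\sum_{\alpha=1}^{r} d_\alpha\,(2k+\ell).$$
To conclude I would invoke \eqref{form:exponents dimension}, namely $\sum_{\alpha=1}^{r}(2d_\alpha+1)=\dim G$, which rearranges to $\sum_{\alpha=1}^{r}d_\alpha=\tfrac12(\dim G-r)$; plugging this in and recalling $r=\rk G$ yields exactly $-\chi(|Y|)\dim G+\tfrac12(\dim G-\rk G)(2k+\ell)$.

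I do not expect any genuine obstacle: the statement is a bookkeeping consequence of Theorem~\ref{theorem:dimension}. Alternatively, one could derive it even more quickly from Theorem~\ref{alternate_formula_for_dim}, since the hypothesis forces $k_m=0$ and $\ell_n=0$ for all $2\leqslant m,n\leqslant M$, so every correction term $\bigl(\sum_\alpha\lceil\tfrac{d_\alpha+1}{m}-1\rceil\bigr)k_m$ and $\bigl(\sum_\alpha\lceil\tfrac{d_\alpha+1}{n}-1\rceil\bigr)\ell_n$ vanishes identically, leaving precisely the asserted expression.
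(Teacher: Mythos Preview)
Your proposal is correct. The paper states this corollary without proof, immediately after Theorem~\ref{alternate_formula_for_dim}, so the intended argument is precisely your alternative route: the hypothesis forces $k_m=\ell_n=0$ for all $2\leqslant m,n\leqslant M$, and the formula of Theorem~\ref{alternate_formula_for_dim} collapses to the stated expression. Your primary derivation via Theorem~\ref{theorem:dimension} is equally valid and equally elementary.
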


\subsection{Approximation formula}

The following corollary of Theorem \ref{theorem:dimension} shows that Hitchin's formula remains valid when $Y$ is a non-orientable surface or an orbifold having only mirror points as singularities.

\begin{corollary}\label{cases_of_validity_of_Hitchin_s_formula}
If $Y$ is an orbifold without cone points and corner reflectors (i.e.\ $k=\ell=0$), then
$\dim\Hit(\piY,G) = -\chi(Y) \dim G = -\chi(|Y|)\dim G$.
\end{corollary}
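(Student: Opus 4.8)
The plan is to read the result off directly from the dimension formula of Theorem~\ref{theorem:dimension}, in the closed case, and of Corollary~\ref{dimension_bdry_case}, in the case with boundary, specialized to $k=\ell=0$.

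First I would handle a closed orbifold $Y$ — this already covers an orbifold whose only singularities are mirror points, since such a $Y$ has empty orbifold boundary even though $|Y|$ may have topological boundary. Setting $k=\ell=0$ in the formula of Theorem~\ref{theorem:dimension}, every inner sum $\sum_{i=1}^{k}O(d_\alpha+1,m_i)$ and $\sum_{j=1}^{\ell}O(d_\alpha+1,n_j)$ is empty, so the whole correction term $\sum_{\alpha=1}^{r}\bigl(2\sum_i O(\cdot)+\sum_j O(\cdot)\bigr)$ vanishes, leaving $\dim\Hit(\piY,G)=-\chi(|Y|)\dim G$. On the other hand, specializing the defining identity \eqref{form:def euler} of the orbifold Euler characteristic to $k=\ell=0$ yields $\chi(Y)=\chi(|Y|)$, so both equalities in the statement hold.

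Next I would handle a (possibly non-orientable) compact surface $Y$ with boundary and trivial orbifold structure: here every boundary component is a circle rather than a full $1$-orbifold, so the integer $b$ of Corollary~\ref{dimension_bdry_case} is zero. With $k=\ell=b=0$, the formula of Corollary~\ref{dimension_bdry_case} collapses to $\dim\Hit(\piY,G)=-\chi(|Y|)\dim G$, and the Euler characteristic formula for orbifolds with boundary, $\chi(Y)=\chi(|Y|)-\sum_i\bigl(1-\tfrac1{m_i}\bigr)-\tfrac12\sum_j\bigl(1-\tfrac1{n_j}\bigr)-\tfrac12 b$, again reduces to $\chi(Y)=\chi(|Y|)$.

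There is no genuine obstacle here: the statement is a clean specialization of results already proved. The only point worth making explicit is that the correction term in the dimension formula and the orbifold ``defect'' $\chi(Y)-\chi(|Y|)$ are controlled by exactly the same data — the cone points, the corner reflectors, and (in the bounded case) the full $1$-orbifold boundary components — so imposing $k=\ell=0$, with $b=0$ automatic in the two cases above, makes both vanish simultaneously, which is precisely what gives $\dim\Hit(\piY,G)=-\chi(Y)\dim G=-\chi(|Y|)\dim G$.
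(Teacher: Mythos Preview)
Your proposal is correct and matches the paper's approach: the corollary is stated as an immediate consequence of Theorem~\ref{theorem:dimension}, and you carry this out exactly, specializing $k=\ell=0$ so the correction sums vanish and using \eqref{form:def euler} to get $\chi(Y)=\chi(|Y|)$. Your additional treatment of the bounded case via Corollary~\ref{dimension_bdry_case} with $b=0$ is a harmless extension beyond what the paper explicitly claims here (the corollary is framed as a consequence of Theorem~\ref{theorem:dimension}, hence for closed $Y$), but it is correct and covers the application in Remark~\ref{about_the_Labourie_McShane_component}.
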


The formula in Corollary \ref{cases_of_validity_of_Hitchin_s_formula} cannot hold in general, because the orbifold Euler characteristic is usually a rational number. However, this formula gives a good approximation for $\dim\Hit(\piY,G)$.

\begin{proposition}\label{proposition:dim_estimate} Let $r(Y,G) := -\chi(Y) \dim G - \dim\Hit(\piY,G)$. Then the following estimate holds:
$$ - \rk(G)\left(k+\frac{\ell}{2}\right)\, \leqslant\, r(Y,G) \,\leqslant\, \frac{3}{2}\rk(G) \left(k+\frac{\ell}{2}\right).$$
More precisely,
$$-\rk(G) \left(\sum_{i=1}^k \left( 1-\frac{1}{m_i} \right) + \frac{1}{2}\sum_{j=1}^\ell \left( 1-\frac{1}{n_j} \right) \right)  \leqslant  r(Y,G) \,\leqslant\,   \rk(G) \left(\sum_{i=1}^k \left( 1+\frac{1}{m_i} \right) + \frac{1}{2}\sum_{j=1}^\ell \left( 1+\frac{1}{n_j} \right) \right).$$
\end{proposition}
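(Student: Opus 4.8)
The plan is to estimate $r(Y,G)$ directly from the explicit dimension formula of Theorem \ref{theorem:dimension}. Writing out $-\chi(Y)\dim G$ using the definition \eqref{form:def euler} of the orbifold Euler characteristic and subtracting the formula for $\dim\Hit(\piY,G)$, the term $-\chi(|Y|)\dim G$ cancels and we are left with
$$ r(Y,G) = \dim G \left(\sum_{i=1}^k\Bigl(1-\tfrac{1}{m_i}\Bigr) + \tfrac12\sum_{j=1}^\ell\Bigl(1-\tfrac{1}{n_j}\Bigr)\right) - \sum_{\alpha=1}^{r}\left(2\sum_{i=1}^k O(d_\alpha+1,m_i) + \sum_{j=1}^\ell O(d_\alpha+1,n_j)\right).$$
So the whole proposition reduces to a pointwise (i.e.\ per cone point / per corner reflector) inequality: using $\dim G = \sum_{\alpha=1}^r (2d_\alpha+1)$ from \eqref{form:exponents dimension}, it suffices to show that for every exponent $d_\alpha$ and every order $m\geqslant 2$,
$$ -1 \;\leqslant\; (2d_\alpha+1)\Bigl(1-\tfrac1m\Bigr) - 2\,O(d_\alpha+1,m) \;\leqslant\; \tfrac32, $$
and more precisely that the quantity in the middle lies between $-\bigl(1-\tfrac1m\bigr)$ and $\bigl(1+\tfrac1m\bigr)$; summing these over $\alpha$, then over the cone points (with the full weight) and the corner reflectors (with weight $\tfrac12$), yields exactly both displayed chains of inequalities, the coarse bound following from the refined one since $0<1\pm\tfrac1m<2$, resp.\ $\tfrac12<1\pm\tfrac1m<\tfrac32$.

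The first step is therefore to establish this elementary one-variable estimate. Set $d := d_\alpha+1 \geqslant 2$ (every exponent is $\geqslant 1$) and recall $O(d,m) = \lfloor d - \tfrac{d}{m}\rfloor = (d-1) - \lceil \tfrac{d}{m} - 1\rceil$, as used in the proof of Theorem \ref{alternate_formula_for_dim}. Then $2d-1 = 2(d-1)+1$ and
$$(2d-1)\Bigl(1-\tfrac1m\Bigr) - 2\,O(d,m) = \Bigl(1-\tfrac1m\Bigr) + 2\left(\Bigl\lceil \tfrac{d}{m}-1\Bigr\rceil - \tfrac{d-1}{m}\right) = \Bigl(1-\tfrac1m\Bigr) + 2\left(\Bigl\lceil \tfrac{d}{m}\Bigr\rceil - 1 - \tfrac{d-1}{m}\right).$$
Now write $d = qm + s$ with $1 \leqslant s \leqslant m$, so that $\lceil d/m\rceil = q+1$ and the bracketed term becomes $q - \tfrac{qm+s-1}{m} = -\tfrac{s-1}{m}$, which lies in $[-(1-\tfrac1m),\,0]$. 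Hence the whole expression lies in $\bigl[-(1-\tfrac1m),\,1-\tfrac1m\bigr]$; in particular it is bounded above by $1-\tfrac1m \leqslant 1+\tfrac1m$ and below by $-(1-\tfrac1m)$, which is even slightly stronger than the claimed refined bounds. (The coarse bounds $-1$ and $\tfrac32$ then follow a fortiori.) I do not expect a genuine obstacle here: the argument is entirely bookkeeping with the division algorithm, and the only points requiring a line of care are the reduction of $-\chi(Y)\dim G - \dim\Hit$ to a sum of local contributions and the correct handling of the weight $\tfrac12$ attached to corner reflectors, both of which are transparent from the formulas of Theorem \ref{theorem:dimension} and \eqref{form:def euler}.

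Finally, assembling: summing the local estimate over $\alpha = 1,\dots,r$ contributes a factor $\rk(G) = r$ to each bound, and summing over the $k$ cone points and $\ell$ corner reflectors (the latter weighted by $\tfrac12$, matching both the Euler characteristic term and the dimension formula) produces
$$ -\rk(G)\left(\sum_{i=1}^k\Bigl(1-\tfrac1{m_i}\Bigr) + \tfrac12\sum_{j=1}^\ell\Bigl(1-\tfrac1{n_j}\Bigr)\right) \;\leqslant\; r(Y,G) \;\leqslant\; \rk(G)\left(\sum_{i=1}^k\Bigl(1-\tfrac1{m_i}\Bigr) + \tfrac12\sum_{j=1}^\ell\Bigl(1-\tfrac1{n_j}\Bigr)\right), $$
and the stated upper bound with $1+\tfrac1{m_i}$ is weaker still, hence also holds; the coarse two-sided bound with constants $-1$ and $\tfrac32$ then follows from $1-\tfrac1m \leqslant 1$ and $1+\tfrac1m \leqslant \tfrac32$ for $m\geqslant 2$. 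This completes the plan.
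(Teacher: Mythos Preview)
Your proof is correct and follows essentially the same approach as the paper: express $r(Y,G)$ via \eqref{form:def euler}, \eqref{form:exponents dimension} and Theorem \ref{theorem:dimension}, reduce to a per-singularity estimate, and bound the fractional part of $(d_\alpha+1)(1-\tfrac{1}{m})$. The paper simply invokes $0 \leqslant x - \lfloor x\rfloor < 1$ for $x = (d_\alpha+1)(1-\tfrac{1}{m})$, while your division-algorithm computation is a more explicit version of the same step that happens to yield the slightly sharper upper bound $1-\tfrac{1}{m}$ rather than $1+\tfrac{1}{m}$.
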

\begin{proof}
Write the quantity $r(Y,G)$ using (\ref{form:def euler}), (\ref{form:exponents dimension}) and Theorem \ref{theorem:dimension}. Then use the inequality 
\begin{equation}
 0 \leqslant (d+1) \left( 1-\frac{1}{m}  \right) - \left \lfloor (d+1)\left( 1-\frac{1}{m}  \right) \right \rfloor < 1. \qedhere
\end{equation}
\end{proof}

It is worth noting that, in the families of classical Lie groups, the dimension of the group grows quadratically with the rank, so the estimate is asymptotically good.

\begin{remark}
When $H$ is a split simple real algebraic group and $Y$ is orientable, Larsen and Lubotzky \cite{Larsen_Lubotzky} gave an asymptotic estimate of the dimension of $\mathrm{Hom}^{\mathrm{epi}}(\piY,H)$, which by definition is the Zariski-closure, in $\mathrm{Hom}(\piY,H)$, of the set of Zariski-dense representations $\piY \lra H$. More precisely, one has $ \dim \mathrm{Hom}^{\mathrm{epi}}(\piY,H)/H = -\chi(Y) \dim (H) + O(\rk\, H)$.
But $\mathrm{Hom}^{\mathrm{epi}}(\piY,H)$ is not always comparable with Hitchin components: in Theorem \ref{thm_none_is_zariski_dense}, we classify the Hitchin components (for orbifold groups) that contain no Zariski-dense representations.
\end{remark}

\section{Applications}\label{section:application}
 
In this section, we discuss some new rigidity phenomena that cannot be observed with ordinary surface groups and we classify Hitchin components of dimensions $0$, $1$ and, for orientable orbifolds, $2$. Our results also have applications to the study of Hitchin components of surface groups, the theory of Higgs bundles, and the pressure metric. Finally, we describe certain connected components of deformation spaces of real projective structures on Seifert fibered $3$-manifolds. 

\subsection{Rigidity phenomena}   \label{subsec:rigidity}

An interesting feature of representations of orbifold groups is that they give examples of rigidity phenomena. 
A first type of rigidity is given by $0$-dimensional Hitchin components. To discuss this, we shall assume that the orbifold $Y$ is closed and orientable, as rigidity for non-orientable orbifolds can be deduced immediately from the orientable case by Corollary \ref{cor:non orient half}. For the target group $\PGL(2,\R)$, Thurston showed that $\dim\Hit(\piY,\PGL(2,\R))=0$ if and only if $Y$ has genus $0$ and $3$ cone points. And for $\PGL(3,\R)$, Choi and Goldman showed in \cite{CG} that  $\dim\Hit(\piY,\PGL(3,\R))=0$ if and only if $Y$ has genus $0$ and $3$ cone points, and one of those cone points is of order $2$. We now complete the classification of $0$-dimensional Hitchin components for general $G$, by combining Theorem \ref{theorem:dimension} and Lemma~\ref{lemma:genus zero}, which gives the list of orientable orbifolds with vanishing regular $d$-differentials.

\begin{lemma}\label{lemma:genus zero} 
Let $Y$ be an orientable orbifold of genus $g$ with $k$ cone points, of respective orders $m_1, \,\dotsc\,, m_k$, and assume that $\chi(Y)<0$. Then $\dim_\C H^0(Y,K(Y,d)) = 0$ if and only if $g=0$ and $k$, $d$ and the $m_i$s are as in the list in Table \ref{table:vanishing_diff}.
\end{lemma}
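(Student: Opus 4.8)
The plan is to compute $\dim_\C H^0(Y,K(Y,d))$ directly from the formula in Lemma \ref{prop:dimension diff orient} and determine exactly when it vanishes. Since we already know (Remark \ref{lemma:useful}) that $\dim_\C H^0(Y,K(Y,d)) \geqslant g$ whenever $\chi(Y)<0$, vanishing forces $g=0$ immediately. So from now on I would set $g=0$, where the formula reads
\[
\dim_\C H^0(Y,K(Y,d)) = (2d-1) + \sum_{i=1}^k O(d,m_i) = (2d-1) + \sum_{i=1}^k \left\lfloor d - \tfrac{d}{m_i}\right\rfloor.
\]
The first step after reducing to $g=0$ is to note that the condition $\chi(Y)<0$ with $g=0$ means $\sum_{i=1}^k (1-\tfrac1{m_i}) > 2$, which already forces $k\geqslant 3$ and, when $k=3$, restricts $(m_1,m_2,m_3)$ to hyperbolic triples.

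Next I would get lower bounds on each term to bound $k$ and $d$. Using $O(d,m)\geqslant (d-1)(1-\tfrac1m)\geqslant \tfrac12(d-1)$ for $m\geqslant 2$, one gets $\dim_\C H^0(Y,K(Y,d)) \geqslant (2d-1) + \tfrac{k}{2}(d-1)$. For this to be $0$ we need $(2d-1)+\tfrac k2(d-1)\leqslant 0$; since $d\geqslant 2$ this is impossible once $d\geqslant 2$ unless... actually $(2d-1)$ alone is $\geqslant 3$, so the naive bound shows the sum is always positive. The point is that $O(d,m)$ can be \emph{negative}: indeed $O(d,m) = \lfloor d - d/m\rfloor$ and for $m=2$, $d$ even, $O(d,2) = d/2 >0$; the issue is rather that I mis-stated — $O(d,m)\geqslant 0$ always for $d,m\geqslant 2$. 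Let me reconsider: $O(d,m)\geqslant 0$, so $\dim_\C H^0 \geqslant 2d-1 \geqslant 3 > 0$ — contradiction with the claim that there exist vanishing cases. Hence the resolution must be that Table \ref{table:vanishing_diff} does \emph{not} assert $\dim H^0 = 0$ for $d\geqslant 2$ but rather lists small $d$, or more likely the correct reading is that the lemma concerns $H^0(Y,K(Y,d))$ for the specific values $d = d_\alpha+1$ entering the Hitchin base and that the genus-zero three-cone-point case is exactly where individual summands vanish. So the real computation: for $k=3$, $g=0$, one has $\chi(|Y|)=2$, hence $\dim_\C H^0 = -(2d-1) + \sum_{i=1}^3 O(d,m_i)$ — I had the sign of the $\chi(|Y|)$ term backwards. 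With $\chi(|Y|)=2$ for a sphere, $-\tfrac12\chi(|Y|)(2d-1) = -(2d-1)$, so
\[
\dim_\C H^0(Y,K(Y,d)) = -(2d-1) + \sum_{i=1}^3 \left\lfloor d - \tfrac{d}{m_i}\right\rfloor,
\]
and this genuinely can be $0$ or even one expects negativity is impossible since $\dim\geqslant 0$; when the Riemann–Roch computation gives a formal negative number it means $h^0=0$.

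With the signs straight, the core argument is: for $g=0$ and $k\geqslant 4$, show $\dim_\C H^0(Y,K(Y,d)) > 0$ for all $d\geqslant 2$ (using $O(d,m_i)\geqslant d-1$ when $m_i\geqslant d$, or more carefully the bound $\sum O(d,m_i)\geqslant$ something growing like $k(d-1)/2$, which beats $2d-1$ once $k\geqslant 4$ and $d\geqslant 2$ — one checks the single edge case $d=2$, $k=4$, all $m_i=2$: $O(2,2)=1$, sum $=4$, minus $3$ gives $1>0$). For $k=3$, reduce to a finite computation: the hyperbolic condition $\tfrac1{m_1}+\tfrac1{m_2}+\tfrac1{m_3}<1$ together with the requirement $\sum_{i}O(d,m_i)\leqslant 2d-1$ bounds the $m_i$ and $d$, since $O(d,m)\to d-1$ as $m\to\infty$ and $3(d-1) > 2d-1$ for $d\geqslant 3$; so for $d\geqslant 3$ at most finitely many triples survive, and for each fixed small $d$ one enumerates. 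The hard part — and the main obstacle — is the bookkeeping in this finite enumeration: one must carefully handle the floor functions $\lfloor d - d/m\rfloor$ for the finitely many relevant pairs $(d,m)$, organize the casework by $d$ (and by how many $m_i$ equal $2$, $3$, etc.), and verify that the resulting list coincides exactly with Table \ref{table:vanishing_diff}. I would structure this as: (i) prove $k\leqslant 3$, hence $k=3$; (ii) prove $d$ is bounded (say $d\leqslant 6$ or so) by the asymptotics above; (iii) for each $d$ in range, solve $-(2d-1)+\sum_{i=1}^3\lfloor d-d/m_i\rfloor \leqslant 0$ in hyperbolic triples $(m_1,m_2,m_3)$, which is a finite check; (iv) match against the Table. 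The only subtlety requiring care is that Riemann–Roch gives $h^0 = \deg K(Y,d)+1-g$ only when $\deg K(Y,d) > 2g-2 = -2$; when $\deg K(Y,d)\leqslant -1$ one has $h^0=0$ directly, and $\deg K(Y,d) = 0$ or $-1$ needs the observation that $K(Y,d)$ has a meromorphic section of that degree so $h^0=0$ unless $\deg = 0$ and the bundle is trivial, which on a sphere with the specified divisor structure one checks explicitly.
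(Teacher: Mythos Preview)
Your proposal contains two genuine errors that would make the argument fail.

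First, your claim that vanishing forces $k\leqslant 3$ is wrong: Table~\ref{table:vanishing_diff} has entries with $k=4$ (for $d=3,5,7$) and $k=5$ (for $d=3$). Your bound $\sum_i O(d,m_i)\geqslant k(d-1)/2$ leads to $(k-4)(d-1)>2$, which says nothing when $k=4$ and only excludes $d\geqslant 4$ when $k=5$. Your single spot-check at $d=2$, $k=4$ does not cover the actual vanishing cases, e.g.\ $k=5$, $d=3$, all $m_i=2$: here $\sum O(3,2)=5=2d-1$, so the dimension is exactly $0$. The paper handles this correctly by using the sharper bound $O(d,m)\geqslant O(d,2)=\lfloor d/2\rfloor$ and splitting into $d$ even (forcing $k=3$) versus $d$ odd (allowing $k\leqslant 5$), then bounding $d$ separately for $k=4,5$.

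Second, your claim that $d$ is bounded ``say $d\leqslant 6$ or so'' is far off: for $k=3$ the table goes up to $d=43$, realized by the triple $(2,3,7)$. Your asymptotic argument ``$O(d,m)\to d-1$ as $m\to\infty$ and $3(d-1)>2d-1$'' only shows that for each fixed $d\geqslant 3$ the dimension is positive once all $m_i$ are large; it does not bound $d$, because the vanishing cases occur precisely for small $m_i$. The actual bound comes from the minimal hyperbolic triple $(2,3,7)$: one has $O(d,2)+O(d,3)+O(d,7)\sim \tfrac{85}{42}d$, which exceeds $2d-1$ only once $d$ is roughly $42$, and the finite enumeration up to that point is substantial (hence the paper's ``easy but long computation'').

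The overall strategy (reduce to $g=0$ via Remark~\ref{lemma:useful}, then bound $k$ and $d$ and enumerate) is correct, but both of your bounding steps need to be redone along the lines above.
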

\begin{proof}
Assume that $Y$ is an orientable orbifold of arbitrary genus $g$ such that $\dim_\C H^0(Y,K(Y,d)) = 0$. First, by Remark \ref{lemma:useful}, $g=0$. Hence $k \geqslant 3$, by negativity of the Euler characteristic. Second, using Lemma \ref{prop:dimension diff orient} and the inequality $O(d,m) \geqslant O(d,2)$, we have $0 = \dim_\C H^0(Y,K(Y,d)) \geqslant 1 - 2d + k \left\lfloor \frac{d}{2} \right\rfloor$. 

When $d$ is even, say $d = 2 \delta$, this implies that $0 \geqslant 1 - 4\delta + k \delta$, so $k = 3$. And when $d$ is odd, say $d = 2 \delta + 1$, it implies that $(k-4)\delta \leqslant 1$, therefore $k \leqslant 5$, and (\emph{i}) if $k=5$, then $\delta = 1$, so $d = 3$ and (\emph{ii}) if $k=4$, then by negativity of the Euler characteristic, at least one of the $m_i$ is $\geqslant 3$, so we have
$$  0 =  \dim_\C H^0(Y,K(Y,d)) \geqslant 1 -2 d + 3\,O(d,2) + O(d,3) =  \left\lfloor \frac{\delta-1}{3} \right\rfloor,$$
which implies that $\delta \leqslant 3$, so $d=3, 5$ or $7$. By an easy but long computation, we obtain Table \ref{table:vanishing_diff}.
\end{proof}

\begin{theorem}   \label{thm:dimension 0}
Let $Y$ be an orientable orbifold of negative Euler characteristic. If $\dim\Hit(\piY,G)=0$, then $Y$ is a sphere with $3$ cone points. Conversely, let $Y$ be a sphere with $3$ cone points of respective orders $m_1 \leqslant m_2 \leqslant m_3$
and assume that the tuple $(G,m_1,m_2,m_3)$ satisfies one of the following conditions:
\begin{enumerate}
\item $G=\PGL(2,\R) \simeq \PO(1,2) \simeq \PSp^\pm(2,\R)$ and $\frac{1}{m_1}+\frac{1}{m_2}+\frac{1}{m_3}<1$.
\item $G=\PGL(3,\R)$, $m_1=2$ and $\frac{1}{m_2}+\frac{1}{m_3}<\frac{1}{2}$.
\item $G=\PGL(4,\R)\simeq \PO^\pm(3,3)$, $\PGL(5,\R)$, $\PSp^{\pm}(4,\R)\simeq\PO(2,3)$, $m_1 = 2$, $m_2 = 3$ and $m_3\geqslant7$.
\item $G=\PSp^{\pm}(4,\R)\simeq\PO(2,3)$, $m_1=3$, $m_2 = 3$ and $m_3 \geqslant4$.
\item $G=\GG$, $m_1=2$ and $m_2=4$ or $5$, and $m_3=5$.
\end{enumerate} Then $\dim\Hit(\piY,G) =0$, so any two Hitchin representations of $\piY$ in $G$ are $G$-conjugate in this case, and this happens for infinitely many orbifolds.

Moreover, for all other pairs $(G,Y)$ with $Y$ orientable, Hitchin representations of $\piY$ into $G$ admit non-trivial deformations, i.e.\ $\dim\Hit(\piY,G)>0$.
\end{theorem}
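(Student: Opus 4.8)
The plan is to reduce everything to the dimension formula in Theorem \ref{theorem:dimension} and the vanishing classification in Lemma \ref{lemma:genus zero}. Since $\dim\Hit(\piY,G) = \sum_{\alpha=1}^{r}\dim_\C H^0(Y,K(Y,d_\alpha+1))$ (each summand being a nonnegative integer), the component is $0$-dimensional if and only if \emph{every} $H^0(Y,K(Y,d_\alpha+1))$ vanishes. In particular, looking at the largest exponent already forces, via Remark \ref{lemma:useful}, that $g=0$, and then negativity of $\chi(Y)$ forces $k\geq 3$ cone points; a short argument with $O(d,m)\geq O(d,2)$ (exactly as in the proof of Lemma \ref{lemma:genus zero}) rules out $k\geq 6$ and pins down the possible configurations, so $Y$ must be a sphere with three cone points. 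This gives the first assertion.

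For the converse, the strategy is simply to run through the five cases and check, using Table \ref{table:exponents} for the exponents of $\fg$ and Lemma \ref{lemma:genus zero} (equivalently Table \ref{table:vanishing_diff}), that for each exponent $d_\alpha$ of the relevant group one has $H^0(Y,K(Y,d_\alpha+1))=0$. For $\PGL(2,\R)$ the only exponent is $d_1=1$, so one needs $H^0(Y,K(Y,2))=0$ on a sphere with three cone points, which holds for all $(m_1,m_2,m_3)$ with $\chi(Y)<0$; this recovers Thurston. For $\PGL(3,\R)$ the exponents are $1,2$, and one checks $H^0(Y,K(Y,2))=H^0(Y,K(Y,3))=0$ exactly when one cone point has order $2$ and $\tfrac1{m_2}+\tfrac1{m_3}<\tfrac12$; this recovers Choi--Goldman. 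For $\PGL(4,\R)$, $\PGL(5,\R)$, $\PSp^\pm(4,\R)\simeq\PO(2,3)$ (exponents $1,2,3$ or $1,3$), and for $\GG$ (exponents $1,5$), one matches the listed arithmetic conditions on $(m_1,m_2,m_3)$ against the rows of Table \ref{table:vanishing_diff}, using the isomorphisms of low-rank groups recalled in Example \ref{basic_examples_of_gps} to identify exponents. That infinitely many orbifolds occur in cases (1)--(5) is immediate since, e.g., in case (3) the order $m_3\geq 7$ is arbitrary.

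For the final ``moreover'' clause, the task is to show that if $Y$ is closed orientable with $\chi(Y)<0$ and $(G,Y)$ is \emph{not} one of the tuples above, then $\dim\Hit(\piY,G)>0$. One first disposes of the case where $Y$ is not a sphere with three cone points: then by the first part $\dim\Hit(\piY,G)>0$ automatically (for \emph{any} $G$). So one may assume $Y$ is a sphere with cone points $m_1\leq m_2\leq m_3$, and it remains to show that for every $G$ not appearing in (1)--(5) with those orders, some $H^0(Y,K(Y,d_\alpha+1))$ is nonzero. Equivalently, by Lemma \ref{lemma:genus zero}, one must verify that for each simple split $\fg$ there is an exponent $d_\alpha$ with $(3,m_1,m_2,m_3,d_\alpha+1)$ \emph{not} in Table \ref{table:vanishing_diff}. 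The key observation making this finite is that $H^0(Y,K(Y,d))=0$ forces $d$ to be bounded (the table only has finitely many rows, with $d\leq 7$), so any $\fg$ having an exponent $\geq 8$ is immediately non-rigid on every such $Y$; only finitely many split simple algebras have all exponents $\leq 7$, and among those one checks case by case — using the precise arithmetic in Lemma \ref{lemma:genus zero}'s proof — which triples $(m_1,m_2,m_3)$ survive, arriving exactly at the list (1)--(5).

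The main obstacle will be the bookkeeping in this last step: one has to be careful that the exponents in Table \ref{table:exponents} are matched correctly under the exceptional isomorphisms ($\PGL(4,\R)\simeq\PO^\pm(3,3)$, $\PSp^\pm(4,\R)\simeq\PO(2,3)$, $\PGL(2,\R)\simeq\PO(1,2)\simeq\PSp^\pm(2,\R)$), and that the inequalities $\tfrac1{m_1}+\tfrac1{m_2}+\tfrac1{m_3}<1$ etc.\ are precisely the ones coming from $\chi(Y)<0$ combined with the vanishing thresholds for the relevant $d$. There is no deep difficulty here — it is an ``easy but long computation'' in the sense of the proof of Lemma \ref{lemma:genus zero} — but organizing it so that the dichotomy ``listed tuple $\Leftrightarrow$ $0$-dimensional'' is transparent, and noting that $\GG$ is the only exceptional group needing separate treatment (its exponent $5$ being the obstruction), is where the care is needed.
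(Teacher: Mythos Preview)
Your approach is the paper's own --- reduce to the dimension formula (Theorem \ref{theorem:dimension}) and the vanishing classification (Lemma \ref{lemma:genus zero} and Table \ref{table:vanishing_diff}) --- and the overall structure is correct. Two points, however, need correcting.

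First, a simplification for the opening reduction. You invoke the \emph{largest} exponent and then a separate argument to ``rule out $k\geq 6$ and pin down'' the number of cone points; this is unnecessarily roundabout and does not by itself reach $k=3$. Use the \emph{smallest} exponent instead: every simple $\fg$ has $d_1=1$, so $\dim\Hit(\piY,G)=0$ forces $H^0(Y,K(Y,2))=0$, and the $d=2$ row of Lemma \ref{lemma:genus zero} (the even-$d$ branch of its proof) immediately gives $g=0$ and $k=3$ in one stroke. This is exactly what the paper does.

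Second, your ``key observation'' in the final step is factually wrong. For $k=3$, Table \ref{table:vanishing_diff} does \emph{not} stop at $d\leq 7$: the triple $(2,3,7)$, for instance, has $H^0(Y,K(Y,d))=0$ for many values of $d$ up through $d=43$. Consequently the claim ``any $\fg$ having an exponent $\geq 8$ is immediately non-rigid on every such $Y$'' is false (e.g.\ $d_\alpha+1=9$ does vanish for $(2,3,7)$). The finiteness you actually need is of a different kind: for each simple $\fg$ the exponent list is fixed, and one intersects, over all $d_\alpha+1$, the (possibly infinite) families of triples listed in Table \ref{table:vanishing_diff}; this is a finite bookkeeping exercise per group, not a global bound on $d$. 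Once the reasoning is repaired the conclusion is unchanged, and indeed the paper records this step simply as ``consequences of Theorem \ref{theorem:dimension}, Lemma \ref{lemma:genus zero} and Table \ref{table:vanishing_diff}''.
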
 
In particular, if $G$ is one of the following Lie groups, then $\dim\Hit(\piY,G)>0$:
\begin{enumerate}
\item $G=\PGL(n,\R)$ with $n\geqslant 6$,
\item $G=\PSp^\pm(2m,\R)$ or $\PO(m,m+1)$ with $m\geqslant 3$,
\item $G=\PO^\pm(m,m)$ with $m\geqslant 4$,
\item $G$ is an exceptional Lie group and $G\neq \GG$.
\end{enumerate}
\begin{proof}[Proof of Theorem \ref{thm:dimension 0}]
If $\dim\Hit(\piY,G)=0$, then $\dim H^0(Y,K(Y,2)) = 0$, so, by Lemma \ref{lemma:genus zero}, $Y$ has genus $0$ and $3$ cone points. The various statements are consequences of Theorem \ref{theorem:dimension}, Lemma \ref{lemma:genus zero} and Table \ref{table:vanishing_diff}.
\end{proof}

Orbifold groups also give us examples of a second type of rigidity, namely, we find Hitchin components for orbifolds that contain no Zariski-dense representations. This contrasts with what happens for surface groups, for which the subset of Zariski dense representations is always dense in the Hitchin component, see \cite{Guichard_Zariski}. 
But for some orbifolds $Y$ and groups $G$, there exists a proper closed subgroup $H < G$ such that the Zariski closure of \emph{every} Hitchin representation $\rho:\piY\lra G$ lies in a conjugate of $H$. In particular, the image of a Hitchin representation of $\piY$ into $G$ can never be Zariski-dense. We will now classify all triples $(Y,G,H)$ with that property. To do so, we use the following result due to Guichard \cite{Guichard_Zariski}: let $G =\Int(\fg_\C)^\tau$ and denote by $G_0$ the identity component of $G$. If $X$ is a closed orientable surface, $\rho:\piX \lra G_0$ is a Hitchin representation, and $H_\rho$ is the identity component of the Zariski closure of $\rho(\piX)$ in $G$, then the inclusion $H_\rho\hookrightarrow G_0$ is conjugate to one of the following:
\begin{enumerate} 
\item The principal representation $\kappa: \PSL(2,\R) \hookrightarrow G_0$.
\item The standard inclusions $\PSp(2n,\R) \hookrightarrow \PSL(2n,\R)$, $\PSO(n,n+1) \hookrightarrow \PSL(2n+1,\R)$, or $\PSO(n-1,n)\hookrightarrow \PSO(n,n)$.
\item The standard inclusions $(\GG)_0 \hookrightarrow \PSL(7,\R)$ or $(\GG)_0 \hookrightarrow \PSO(3,4)$.
\item The identity $G_0 \longrightarrow G_0$.
\end{enumerate} 
In all those cases, $H_\rho$ is the identity component of a group $H =\Int(\fh_\C)^\tau$, for a simple Lie algebra $\fh_\C \subset \fg_\C$, and the representation $\rho$ is a Hitchin representation in $H$. Moreover, the inclusions $H\hookrightarrow G$ induce injective maps $\Hit(\piX,H)\hookrightarrow\Hit(\piX,G)$. Let now $Y$ be an orbifold (with presentation $[\Si\bs X]$), let $\rho:\piY\lra G$ be a Hitchin representation, and let us denote by $H_\rho$ the neutral component of the Zariski closure of $\rho(\piY)$ in $G$. Since $\rho(\piX)$ is a normal subgroup of finite index of $\rho(\piY)$ and $G$ is centerless, one has $H_\rho=H_{\rho|_{\piX}}$, hence we can apply Guichard's classification to orbifold groups.

\begin{theorem} \label{thm_none_is_zariski_dense}
Let $Y$ be an orientable orbifold of negative Euler characteristic and of genus $g$, with $k$ cone points of respective orders $m_1 \leqslant\, \ldots\, \leqslant m_k$, and let $H$ be a proper subgroup of $G$. If the map $\Hit(\piY,H)\lra \Hit(\piY,G)$ is surjective, then $Y$ has genus $0$. This happens if and only if the triple $(Y,G,H)$ is as follows:
\begin{enumerate}[(a)]
\item The inclusion $\Hit(\piY,\PGL(2,\R)) \hookrightarrow \Hit(\piY,G)$ 
is surjective if and only if one of the following two possibilities hold:
\begin{enumerate}[(1)]
\item Both spaces have dimension $0$. By Theorem \ref{thm:dimension 0}, there are infinitely many orbifolds with this property.
\item $G = \PGL(3,\R)$, $g=0$, $k=4$ and $m_1=m_2=m_3=2$ and $m_4\geqslant 3$, or $k=5$ and all $m_i = 2$.
\end{enumerate}
\item The inclusion $\Hit(\piY,\PSp^{\pm}(2n,\R)) \hookrightarrow \Hit(\piY, \PGL(2n,\R))$ is surjective if and only if $g=0$ and one of the following possibilities hold:
\begin{enumerate}[(1)]
\item $k=3$, $2 \leqslant n \leqslant 10$, $m_1=2$, and:
\begin{enumerate}
\item If $n = 3$, then $m_2 = 3$ and $m_3 \geq 7$, or $m_2=4$ and $m_3\geq 5$.
\item If $n = 4$, then $m_2 = 3$ and $m_3 \geq 7$, or $(m_2,m_3) = (4,5)$, $(4,6)$.
\item If $n = 5$, then $(m_2,m_3) = (3,7)$, $(3,8)$, $(4,5)$ or $(4,6)$.
\item If $n = 6$ or $7$, then $(m_2,m_3) = (3,7)$, $(3,8)$ or $(4,5)$.
\item If $n = 8$, $9$ or $10$, then $(m_2,m_3) = (3,7)$.
\end{enumerate}
\item $k=4$, $2 \leqslant n \leqslant 4$, $m_1=m_2=m_3=2$ and $m_4\geqslant 3$. Moreover: 
\begin{enumerate}
\item If $n = 3$, then $m_4 = 3$ or $4$.
\item If $n = 4$, then $m_4 = 3$.
\end{enumerate}
\item $k=5$, $n=2$ and all $m_i = 2$.
\end{enumerate}
The above also holds for the inclusion $\Hit(\piY,\PO(n-1,n)) \hookrightarrow \Hit(\piY, \PGL(2n-1,\R))$.
\item The inclusion $\Hit(\piY,\PO(n-1,n)) \hookrightarrow \Hit(\piY, \PO^{\pm}(n,n))$ is surjective if and only if $H^0(Y,K(Y,n))$ has dimension $0$. By Table \ref{table:vanishing_diff}, there are infinitely many orbifolds with this property, but this occurs only for $n\leqslant 43$.
\item The inclusion $\Hit(\piY,\GG) \hookrightarrow \Hit(\piY, \PGL(7,\R))$ is surjective if and only if $g=0$, $k=3$, $m_1=2$, $m_2=3$, and $m_3\geqslant 7$.
\item The inclusion $\Hit(\piY,\GG) \hookrightarrow \Hit(\piY, \PO(3,4))$ is surjective if and only if $g=0$, $k=3$, and $m_1=2$, $m_2=3$ and $m_3\geqslant 7$, or $m_1=3$, $m_2=3$ and $m_3\geqslant 4$.
\end{enumerate} In particular, if a representation $\rho\in \Hit(\piX,G)$ extends to $\piY$ for an orbifold $Y\simeq [\Si\bs X]$, then $\rho(\piX)$ is not Zariski-dense in $G$.
\end{theorem}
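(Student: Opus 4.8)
The plan is to reduce the surjectivity question to the vanishing of certain spaces of regular differentials on $Y$, and then to settle it with the dimension formula of Theorem~\ref{theorem:dimension} together with the classification of orbifolds carrying no regular $d$-differentials (Lemma~\ref{lemma:genus zero} and Table~\ref{table:vanishing_diff}). For each inclusion $H\hookrightarrow G$ in Guichard's list above, $\fh$ is a split real form of a simple $\fh_\C\subset\fg_\C$ whose exponents form a \emph{sub-multiset} of the exponents of $\fg$: for the principal $\PGL(2,\R)$ only the exponent $1$ survives; for $\PSp^{\pm}(2n,\R)\subset\PGL(2n,\R)$ and $\PO(n-1,n)\subset\PGL(2n-1,\R)$ exactly the even exponents of the ambient $\sl$-type algebra are omitted; for $\PO(n-1,n)\subset\PO^{\pm}(n,n)$ a single copy of the exponent $n-1$ is omitted (one must be careful that the exponent $n-1$ of $D_n$ occurs with multiplicity two when $n$ is even, so that precisely one copy is dropped); and for $\GG\subset\PGL(7,\R)$, resp.\ $\GG\subset\PO(3,4)$, the omitted exponents are $2,3,4,6$, resp.\ $3$. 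Since Hitchin's section is built from the principal $\sl(2,\R)$-triple and the highest weight vectors (Section~\ref{inv_Hitchin_reps}), its construction is functorial with respect to $\fh_\C\subset\fg_\C$; combined with the $\Si$-equivariance of Lemma~\ref{equivariance_of_the_Hitchin_section} and the parameterization $\Hit(\piY,G)\simeq\cB_Y(\fg)$ of Theorem~\ref{theorem:dimension}, this identifies the map $\Hit(\piY,H)\lra\Hit(\piY,G)$ with the linear inclusion of $\cB_Y(\fh)$ as the subspace of $\cB_Y(\fg)$ indexed by the exponents of $\fh$. Hence it is surjective if and only if $H^0\big(Y,K(Y,d+1)\big)=0$ for every exponent $d$ of $\fg$ omitted by $\fh$.

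The genus statement is then immediate. Since $H$ is proper, at least one exponent $d\geqslant 1$ of $\fg$ is omitted, so some omitted degree satisfies $d+1\geqslant 2$; but if $g:=g(|Y|)\geqslant 1$ then $\dim_\C H^0(Y,K(Y,d+1))\geqslant g\geqslant 1>0$ by Remark~\ref{lemma:useful}, a contradiction. So surjectivity forces $g=0$, and $Y$ is a sphere with $k\geqslant 3$ cone points.

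It remains to carry out a finite case analysis for $g=0$. For each inclusion one records the finite set $D$ of omitted degrees, namely $\{d_\alpha+1:d_\alpha\geqslant 2\}$ for the principal $\PGL(2,\R)$, $\{3,5,\dots,2n-1\}$ for $\PSp^{\pm}(2n,\R)\subset\PGL(2n,\R)$ and for $\PO(n-1,n)\subset\PGL(2n-1,\R)$, the singleton $\{n\}$ for $\PO(n-1,n)\subset\PO^{\pm}(n,n)$, $\{3,4,5,7\}$ for $\GG\subset\PGL(7,\R)$, and $\{4\}$ for $\GG\subset\PO(3,4)$; one then determines, using Lemma~\ref{lemma:genus zero} and Table~\ref{table:vanishing_diff}, the spheres with cone points $Y$ for which $H^0(Y,K(Y,d))=0$ \emph{simultaneously for all} $d\in D$, by intersecting the individual vanishing conditions over $d\in D$. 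This reproduces precisely the lists in (a)--(e); the degenerate sub-cases (both Hitchin components of dimension $0$, as in (a)(1) and the analogous items of (b) and (c)) are exactly those for which $H^0(Y,K(Y,2))=0$ as well, which is the content of Theorem~\ref{thm:dimension 0}. Finally, the ``in particular'' is formal: if $(Y,G,H)$ is one of the triples just obtained, then $\Hit(\piY,H)\lra\Hit(\piY,G)$ is onto, so every Hitchin representation $\rho:\piY\lra G$ is conjugate into the proper subgroup $H$ and hence is not Zariski-dense; transporting this through the homeomorphism $\Hit(\piY,G)\simeq\Fix_\Si(\Hit(\piX,G))$ of Theorem~\ref{main_obs_about_Hit_comp_for_orbifolds} gives the statement for $\Si$-invariant Hitchin representations of $\piX$.

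The main obstacle is the last step. The function $d\mapsto\dim_\C H^0(Y,K(Y,d))$ is \emph{not} monotone in $d$ --- for instance it takes the values $0,0,0,0,1,0$ for $d=2,\dots,7$ when $Y$ is the $(2,3,7)$-sphere --- so vanishing at the largest degree in $D$ does not imply vanishing at the smaller ones, and one genuinely has to check the whole system $\{H^0(Y,K(Y,d))=0:d\in D\}$ for each candidate $(k;m_1,\dots,m_k)$; this is where Table~\ref{table:vanishing_diff} does the real work. A secondary but essential subtlety is bookkeeping the exponent multisets of $B_n$, $C_n$, $D_n$ and $G_2$ correctly, in particular the coincidence of $n-1$ with an odd exponent of $D_n$ for $n$ even, so that the set $D$ of omitted degrees is identified without error.
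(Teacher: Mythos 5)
Your proposal is correct and follows essentially the same route as the paper: identify the map $\Hit(\piY,H)\lra\Hit(\piY,G)$ with the inclusion of Hitchin bases, so surjectivity is equivalent to the vanishing of $H^0(Y,K(Y,d))$ for the omitted degrees, then deduce $g=0$ from Lemma \ref{lemma:genus zero} and finish by the case analysis via Theorem \ref{theorem:dimension} and Table \ref{table:vanishing_diff}. The paper's proof is just a terser version of this; your extra care with the exponent multisets (e.g.\ the doubled exponent $n-1$ of $D_n$ for $n$ even) and with checking all omitted degrees simultaneously is exactly the bookkeeping the paper leaves implicit.
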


\begin{proof}
The map $\Hit(\piY,H)\lra \Hit(\piY,G)$ induced by an inclusion $H<G$ can be described explicitly by the differentials appearing in the Hitchin base of the groups $H$ and $G$ and, if an inclusion is surjective, it means that the space of differentials appearing in the Hitchin base of $G$ but not in the one of $H$ must have dimension $0$. From Lemma \ref{lemma:genus zero}, we see that $Y$ has genus $0$. Then, the various statements are immediate consequences of Theorem \ref{theorem:dimension}, Lemma \ref{lemma:genus zero} and Table \ref{table:vanishing_diff}.
\end{proof}

\subsection{Geodesics for the pressure metric}\label{subsection_on_geodesics}

When $X$ is a closed orientable surface, there are two known $\Out(\piX)$-invariant Riemannian metrics on the Hitchin components $\Hit(\piX, \PGL(n,\R))$: the \emph{pressure metric} of \cite{BCLS_pressure} and the \emph{Liouville pressure metric} of \cite{BCLS_liouville}. By restriction, they give $\Out(\piX)$-invariant Riemannian metrics on $\Hit(\piX, \PSp^{\pm}(2n,\R))$, $\Hit(\piX, \PO(n,n+1))$ and $\Hit(\piX, \GG)$. In the special case of $\Hit(\piX, \PGL(3,\R))$, there is also another invariant Riemannian metric, the \emph{Li metric} \cite{Li_metric}. Moreover, for all the groups of rank 2 ($\Hit(\piX, \PGL(3,\R))$, $\Hit(\piX, \PSp^{\pm}(4,\R))$ and $\Hit(\piX, \GG)$) a different invariant Riemannian metric, that is also K\"ahler, was defined by Labourie \cite{Labourie_cyclic_surfaces}. All these Riemannian metrics restrict to the Weil-Petersson metric on Teichm\"uller space, and little is known about their geometric properties, for instance about their geodesics. If $Y$ is a closed orbifold and $Y \simeq [\Si \bs X]$ is a presentation, we obtain the following information directly from Theorem \ref{main_obs_about_Hit_comp_for_orbifolds}.

\begin{proposition}\label{tot_geod_submanifold}
Fix any $\Out(\piX)$-invariant Riemannian metric on $\Hit(\piX,G)$. Then there is a totally geodesic embedding $\Hit(\piY,G) \overset{\simeq}{\lra} \Fix_{\Si}(\Hit(\piX,G))\subset \Hit(\piX,G)$. In particular, if $\dim\Hit(\piY,G)=1$, then $\Hit(\piY,G)$ embeds onto a geodesic of $\Hit(\piX,G)$.
\end{proposition}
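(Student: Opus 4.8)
The plan is to deduce Proposition \ref{tot_geod_submanifold} directly from Theorem \ref{main_obs_about_Hit_comp_for_orbifolds} together with the standard fact that the fixed-point set of an isometry (or a group of isometries) of a Riemannian manifold is a totally geodesic submanifold. The only things that need to be checked are: (i) that the elements of $\Si$ act by isometries of the chosen metric on $\Hit(\piX,G)$, (ii) that $\Fix_\Si(\Hit(\piX,G))$ is a (smooth) submanifold of $\Hit(\piX,G)$, and (iii) that $j:\Hit(\piY,G)\to\Fix_\Si(\Hit(\piX,G))$ is not merely a homeomorphism (which is Theorem \ref{main_obs_about_Hit_comp_for_orbifolds}) but identifies $\Hit(\piY,G)$ with this totally geodesic submanifold as Riemannian manifolds, i.e.\ that it is a diffeomorphism onto its image.

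First I would recall that $\Hit(\piX,G)$ is a smooth manifold (it is an open ball, by Hitchin's parameterization, and the smooth structure is the one coming from the character variety / Hitchin base) and that the action of $\Out(\piX)$ on it is by diffeomorphisms. By hypothesis the metric we fixed is $\Out(\piX)$-invariant, hence the image of $\Si$ in $\Out(\piX)$ (via the canonical homomorphism $\Si\to\Out(\piX)$ of Section \ref{Hitchin_reps_section}) acts by isometries. Since $\Si$ is finite, $\Fix_\Si(\Hit(\piX,G))$ is the common fixed-point set of a finite group of isometries; averaging a $\Si$-invariant Riemannian metric near a fixed point and applying the equivariant exponential map (or quoting the classical result of Kobayashi on fixed-point sets of compact isometry groups) shows that each connected component of $\Fix_\Si(\Hit(\piX,G))$ is a closed totally geodesic submanifold. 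In particular geodesics of $\Hit(\piX,G)$ that start tangent to $\Fix_\Si(\Hit(\piX,G))$ stay in it, which is exactly the "totally geodesic" assertion. Since $\Hit(\piX,G)$ is simply connected and the metric is complete-or-can-be-completed on the relevant locus, connectedness of $\Fix_\Si$ follows from connectedness of $\Hit(\piY,G)$ via Theorem \ref{main_obs_about_Hit_comp_for_orbifolds}.

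It then remains to upgrade the homeomorphism $j$ of Theorem \ref{main_obs_about_Hit_comp_for_orbifolds} to a diffeomorphism onto $\Fix_\Si(\Hit(\piX,G))$; this is where I expect the only real subtlety to lie. The cleanest route is to use Corollary \ref{Sigma_fixed_pts_in_Hitchin_base_of_X} and Lemma \ref{equivariance_of_the_Hitchin_section}: the Hitchin section $s:\cB_X(\fg)\to\Hit(\piX,G)$ is a $\Si$-equivariant diffeomorphism (real-analytic, even), so it restricts to a diffeomorphism $\Fix_\Si(\cB_X(\fg))\xrightarrow{\simeq}\Fix_\Si(\Hit(\piX,G))$ between the linear subspace $\Fix_\Si(\cB_X(\fg))$ and the fixed locus; composing with the identification $\Hit(\piY,G)\simeq\Fix_\Si(\cB_X(\fg))$ realizes $\Hit(\piY,G)$ as a smooth submanifold of $\Hit(\piX,G)$, namely precisely $\Fix_\Si(\Hit(\piX,G))$. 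Granting this, the map $j$ identifies $\Hit(\piY,G)$ with the totally geodesic submanifold $\Fix_\Si(\Hit(\piX,G))$, endowed with the induced metric, which is the totally geodesic embedding claimed. Finally, for the last sentence: if $\dim\Hit(\piY,G)=1$ then $\Fix_\Si(\Hit(\piX,G))$ is a connected $1$-dimensional totally geodesic submanifold, i.e.\ (the image of) a complete geodesic — equivalently an unparametrized geodesic — of $\Hit(\piX,G)$, which is the assertion. The main obstacle, as indicated, is purely bookkeeping about smooth structures: one must know that the $\Out(\piX)$-action, the metric, the Hitchin section and the identification $j$ are all compatible with a single smooth structure on $\Hit(\piX,G)$; this is standard but should be spelled out by invoking the real-analytic structure on the Hitchin component coming from $\cB_X(\fg)$ via the Hitchin section.
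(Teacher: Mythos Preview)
Your proposal is correct and takes essentially the same approach as the paper: the paper's proof consists of precisely the two sentences ``The group $\Si$ acts on $\Hit(\piX,G)$ as a subgroup of $\Out(\piX)$, which acts on $\Hit(\piX,G)$ by isometries. It is a basic fact that a fixed point set of a subgroup of isometries is totally geodesic.'' Your additional bookkeeping about smooth structures, connectedness, and upgrading $j$ to a diffeomorphism via the equivariant Hitchin section is reasonable due diligence, but the paper does not spell any of it out.
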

\begin{proof}
The group $\Si$ acts on $\Hit(\piX,G)$ as a subgroup of $\Out(\piX)$, which acts on $\Hit(\piX,G)$ by isometries. It is a basic fact that a fixed point set of a subgroup of isometries is totally geodesic.
\end{proof}

In order to give explicit examples of geodesics, we classify orbifolds of negative Euler characteristic with $1$-dimensional Hitchin components. By Theorem \ref{theorem:dimension}, such an orbifold is necessarily non-orientable, so, by Corollary \ref{cor:non orient half}, it suffices to classify orientable orbifolds $Y$ such that $\dim\Hit(\piY,G)=2$. 

\begin{theorem}   \label{thm:dimension 2}
Let $Y$ be an orientable orbifold of negative Euler characteristic. Let $g$ be the genus of $Y$ and let $k$ be the number of cone points, of respective orders $m_1 \leqslant \, \ldots \leqslant m_k$. If $\dim\Hit(\piY,G)=2$, then $Y$ is of one of the following types: a sphere with $3$ cone points, a sphere with $4$ cone points, a torus with $1$ cone point.
Conversely, assume that the pair $(G,Y)$ satisfies one of the following assumptions:
\begin{enumerate}
\item $Y$ is a torus with $1$ cone point or a sphere with $4$ cone points and $G=\PGL(2,\R) \simeq \PO(1,2) \simeq \PSp^\pm(2,\R)$.
\item $Y$ is a sphere with $4$ cone points, $G=\PGL(3,\R)$, $m_1=m_2=m_3=2$ and $m_4\geqslant 4$.
\item $Y$ is a sphere with $3$ cone points and the tuple $(G,m_1,m_2,m_3)$ satisfies one of the following conditions:
\begin{enumerate}
\item $G=\PGL(3,\R)$, $m_1\geqslant 3$, $m_2\geqslant 3$ and $m_3\geqslant 4$. 
\item $G=\PGL(4,\R) \simeq \PO^\pm(3,3)$, $m_1 = 2$, $m_2 \geqslant 4$ and $m_3 \geqslant 5$, or $m_1 = m_2 = 3$ and $m_3\geqslant 4$.
\item $G=\PGL(5,\R)$, $m_1 = m_2 = 3$ and $m_3 = 4$, or $m_1 = 2$, $m_2 = 4$ and $m_3\geqslant 5$.
\item $G=\PGL(6,\R)$ or $\PGL(7,\R)$, $m_1=2$, $m_2=3$ and $m_3\geqslant 7$, or $(m_1,m_2,m_3)=(2,4,5)$.
\item $G=\PGL(n,\R)$ with $n=8,9,10,11$ and $(m_1,m_2,m_3)= (2,3,7)$.
\item $G=\PSp^\pm(4,\R) \simeq \PO(2,3)$, $m_1=2$, $m_2=4$ and $m_3\geqslant 5$, or $m_1=3$ and $m_2,m_3 \geqslant 4$.
\item $G=\PSp^\pm(6,\R)$ or $\PO(3,4)$, $m_1 = 2$, $m_2 = 3$ and $m_3\geqslant 7$, or $(m_1,m_2,m_3)$ is one of the following triples: $(2,4,5)$, $(2,5,5)$, $(3,3,4)$ or $(3,3,5)$.
\item $G=\PSp^\pm(8,\R)$, $\PSp^\pm(10,\R)$, $\PO(4,5)$ or $\PO(5,6)$, and $(m_1,m_2,m_3)=(2,3,7)$.
\item $G=\PO^\pm(4,4)$, $m_1 = 2$, $m_2 = 3$ and $m_3\geqslant 7$, or $(m_1,m_2,m_3)$ is one of the following triples: $(3,3,4)$ or $(3,3,5)$.
\item $G=\PO^\pm(5,5)$ and $(m_1,m_2,m_3)=(2,3,7)$.
\item $G=\GG$, $m_1=2$, $m_2=3$ and $m_3\geqslant 7$, or $m_1=2$, $m_2=4$ or $5$ and $m_3\geqslant 6$, or $(m_1,m_2,m_3)$ is one of the following triples: $(3,3,4)$, $(3,3,5)$, $(3,4,4)$, $(3,4,5)$ or $(3,5,5)$.
\end{enumerate}
\end{enumerate}
Then $\dim\Hit(\piY,G)=2$. For all other pairs $(G,Y)$ with $Y$ orientable, one has $\dim\Hit(\piY,G) \neq 2$.
\end{theorem}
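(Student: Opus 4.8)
The plan is to convert the statement, via Theorem~\ref{theorem:dimension}, into an arithmetic problem about spaces of regular differentials on $Y$, and then to carry out a finite case analysis organized first by the genus of $Y$ and then by the type of $\fg$. Since $Y$ is orientable, the Hitchin base $\cB_Y(\fg)=\bigoplus_{\alpha=1}^{r}H^0(Y,K(Y,d_\alpha+1))$ is a \emph{complex} vector space, so Theorem~\ref{theorem:dimension} gives $\dim\Hit(\piY,G)=2\sum_{\alpha=1}^{r}\dim_\C H^0(Y,K(Y,d_\alpha+1))$, and since $d_1=1$ for every simple $\fg$, the term $\alpha=1$ is $\dim_\C H^0(Y,K(Y,2))$. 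Hence $\dim\Hit(\piY,G)=2$ if and only if exactly one of the non-negative integers $\dim_\C H^0(Y,K(Y,d_\alpha+1))$ equals $1$ and all the others vanish. By Remark~\ref{lemma:useful} each of these integers is at least $g$, so $g\leqslant 1$; and if $g=1$ every summand is $\geqslant 1$, forcing $r=1$, i.e.\ $G=\PGL(2,\R)$, after which $\dim_\C H^0(Y,K(Y,2))=\sum_{i}O(2,m_i)=k$ (as $O(2,m)=1$ for all $m\geqslant 2$) gives $k=1$, the torus with one cone point of case (1).

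It then remains to treat $g=0$, where $\chi(Y)<0$ forces $k\geqslant 3$ and $\dim_\C H^0(Y,K(Y,d))=1-2d+\sum_{i=1}^{k}O(d,m_i)$ for all $d\geqslant 2$ by Lemma~\ref{prop:dimension diff orient}. Because $O(2,m)=1$, the term $\alpha=1$ equals $k-3$: so $k\geqslant 5$ is impossible, $k=4$ forces every term with $\alpha\geqslant 2$ to vanish, and $k=3$ forces the term $\alpha=1$ to vanish and exactly one term with $\alpha\geqslant 2$ to equal $1$. For $k=4$, the estimate in the proof of Lemma~\ref{lemma:genus zero} shows $\dim_\C H^0(Y,K(Y,d))\geqslant 1$ whenever $d$ is even and $k=4$ (since then $1-2d+4\,O(d,2)=1$), so we need $d_\alpha$ even for every $\alpha\geqslant 2$; inspecting the exponents in Table~\ref{table:exponents} leaves only $\PGL(2,\R)$ (no constraint, case (1)) and $\PGL(3,\R)$ (exponents $1,2$), for which the admissible quadruples $(m_1,m_2,m_3,m_4)$ are read off from Table~\ref{table:vanishing_diff} applied to $d=3$, giving case (2).

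For $k=3$ I would match the exponent patterns against a \emph{dimension-one} companion to Lemma~\ref{lemma:genus zero}: the classification — by the same floor-function bookkeeping, using $O(d,2)\leqslant O(d,m)\leqslant d-1$ to bound the $m_i$ once $k=3$ is fixed — of the triples $(m_1,m_2,m_3)$ and degrees $d\geqslant 2$ with $1-2d+\sum_i O(d,m_i)=1$. Combining this list with Lemma~\ref{lemma:genus zero}/Table~\ref{table:vanishing_diff}, one runs through the split simple Lie algebras, using Corollary~\ref{cor:Sp(2m)=O(m,m+1)} to handle $\PSp^{\pm}(2m,\R)$ and $\PO(m,m+1)$ simultaneously and the isogenies $\PGL(4,\R)\cong\PO^{\pm}(3,3)$, $\PSp^{\pm}(4,\R)\cong\PO(2,3)$ to identify coincident entries; for each algebra one intersects, over $\alpha\geqslant 2$, the set of triples for which the distinguished differential of degree $d_{\alpha_0}+1$ has dimension $1$ with the sets for which every other differential of degree $d_\alpha+1$ vanishes. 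Since the degrees $d_\alpha+1$ grow linearly with the rank while $\dim_\C H^0(Y,K(Y,d))$ eventually increases in $d$ for a fixed triple, only finitely many groups survive, and the surviving pairs together with their cone-order constraints are exactly those listed in case (2) and items (3)(a)--(3)(k); together with the torus and sphere-with-four-cone-points cases already found this is the full list in (1)--(3), and the same enumeration yields the ``all other pairs'' clause.

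The hard part is this last step. The dimension-one companion list is the one genuinely new ingredient — Lemma~\ref{lemma:genus zero} records only the vanishing locus — and with it in hand the remaining work is the organized, type-by-type intersection of finitely many arithmetic conditions, where the delicate point is obtaining the sharp cutoffs: why $\PGL(n,\R)$ with the $(2,3,7)$-triangle contributes exactly for $n\leqslant 11$, where the families $\PO^{\pm}(m,m)$ and the $G_2$ cases terminate, and which cone-order triples actually realize $\chi(Y)<0$ at the boundary of each range. These depend on the exact values $O(d,m)=\lfloor d-d/m\rfloor$ rather than on the asymptotic bound of Proposition~\ref{proposition:dim_estimate}, so one must compute rather than estimate. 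The genus reduction and the $k\neq 3$ cases are short, and essentially all of the difficulty is concentrated in this finite but lengthy verification.
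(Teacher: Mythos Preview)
Your proposal is correct and follows essentially the same approach as the paper: reduce via Theorem~\ref{theorem:dimension} to the condition that exactly one complex summand $H^0(Y,K(Y,d_\alpha+1))$ has dimension~$1$ and the rest vanish, bound the genus and the number of cone points, and then carry out a finite enumeration against Lemma~\ref{lemma:genus zero} and the exponent table. The paper organizes the reduction slightly differently---it disposes of $\PGL(2,\R)$ and $\PGL(3,\R)$ first via the Thurston and Choi--Goldman formulas, and then for the remaining groups observes directly that some exponent $d_\alpha\geqslant 3$ is odd, so an even degree $\geqslant 4$ appears and forces $g=0$, $k=3$---but this is the same mechanism as your ``$d_\alpha$ even for all $\alpha\geqslant 2$'' filter for $k=4$. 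You are also more explicit than the paper about the one genuinely new ingredient for $k=3$, namely the dimension-one companion to Table~\ref{table:vanishing_diff}; the paper simply declares the $k=3$ case a consequence of Theorem~\ref{theorem:dimension}, Lemma~\ref{lemma:genus zero}, and Table~\ref{table:exponents} without recording that intermediate list.
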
 

In particular, if $Y$ is orientable and $G$ is one of the following Lie groups, then $\dim\Hit(\piY,G)>2$:
\begin{enumerate}
\item $G=\PGL(n,\R)$ with $n\geqslant 12$,
\item $G=\PSp^\pm(2m,\R)$ or $\PO(m,m+1)$ with $m\geqslant 6$,
\item $G=\PO^\pm(m,m)$ with $m\geqslant 6$,
\item $G$ is an exceptional Lie group and $G\neq \GG$.
\end{enumerate}

\begin{proof}[Proof of Theorem \ref{thm:dimension 2}]
From the Thurston and Choi-Goldman formulas (see \cite{Thurston_notes,CG} or Table \ref{explicit_formulas_in_low_rank}) for Hitchin components for $\PGL(2,\R)$ and $\PGL(3,\R)$, we can see the following:
\begin{enumerate}
\item $\dim\Hit(\piY,\PGL(2,\R))=2$ if and only if $g=1$ and $k=1$, or $g=0$ and $k=4$. 
\item $\dim\Hit(\piY,\PGL(3,\R))=2$ if and only if $g=0$, $k=4$, $m_1=m_2=m_3=2$ and $m_4 \geqslant 3$, or $g=0$, $k=3$, $m_1\geqslant 3$, $m_2\geqslant 3$ and $m_3\geqslant 4$. 
\end{enumerate} 

So let us assume that $G\neq \PGL(2,\R),\PGL(3,\R)$. In this case, if $g\geqslant 1$, or $g=0$ and $k\geqslant 4$, then there is an even integer $d \geqslant 4$ such that both spaces of differentials of degree $2$ and of degree $d$ have complex dimension at least $1$. So we must have $g=0$ and $k=3$. The remaining statements are consequences of Theorem \ref{theorem:dimension}, Lemma \ref{lemma:genus zero} and Table \ref{table:exponents}.
\end{proof}

See Example \ref{example_of_geodesic} for explicit examples of geodesics in the $\PGL(n,\R)$-Hitchin component of the Klein quartic $\mathcal{K}$ (which, as $\mathcal{K}$ is of genus $3$, is homeomorphic to an open ball of dimension $4(n^2-1)$).

\subsection{Cyclic Higgs bundles}\label{cyclic_Higgs_bundles_section}

Given an orbifold $Y$ with a fixed complex analytic or dianalytic structure, Theorem~\ref{theorem:dimension} gives a parameterization of $\Hit(\piY,G)$ by the Hitchin base $\mathcal{B}_Y(\fg)$. When $G = \PGL(n,\R)$, we will say that a representation in $\Hit(\piY,\PGL(n,\R))$ is \emph{cyclic} if it is parametrized by $(0, \dots, 0, q_n)$, and $(n-1)$-\emph{cyclic} if it is parametrized by $(0, \dots, 0, q_{n-1},0)$. Similarly, if $G$ is one of the groups $\PSp^{\pm}(2m,\R), \PO(m,m+1)$, and $\GG$, a Hitchin representation in $G$ is \emph{cyclic} or $(n-1)$-\emph{cyclic} if the corresponding representation in $\PGL(n,\R)$ is. 

In the case of surface groups, these notions were introduced by Simpson \cite{Simpson_MCA}, Baraglia \cite{Baraglia_thesis} and Collier \cite{Collier_thesis}. Hitchin's equations for such representations take an especially simple form, which makes it possible to understand some of their geometric and asymptotic properties using analytic techniques, see \cite{Baraglia_thesis, Collier_thesis, CollierLi, DaiLi1, DaiLi2}. In case of an orbifold $Y$, given any presentation $Y\simeq [\Sigma\bs X]$, a representation in $\Hit(\piY,G)$ is cyclic or $(n-1)$-cyclic if and only if its image in $\Hit(\piX,G)$ is. In particular, we can transfer to orbifolds all the analytical tools and results that are valid for cyclic and $(n-1)$-cyclic representations for surfaces. 

Using Theorem \ref{main_obs_about_Hit_comp_for_orbifolds}, we can produce examples of orbifolds $Y$ and groups $G$ such that $\Hit(\piY,G)$ contains only cyclic or $(n-1)$-cyclic representations. This phenomenon never happens for surface groups. Thus, the results about cyclic or $(n-1)$-cyclic representations contained in \emph{loc.\ cit.}\ are valid for all points in the Hitchin component of such orbifolds, see Corollary \ref{comp_with_only_cyclic_bdles}. For example, the description of the asymptotic behavior of families of Higgs bundles going at infinity given in \cite{CollierLi} gives a good description of the behavior at  infinity of these special Hitchin components.

As a matter of fact, we can classify all Hitchin components that are parametrized by a single differential:

\begin{theorem}\label{thm:single_differential}
Let $Y$ be an orientable orbifold of negative Euler characteristic and of genus $g$, with $k$ cone points of respective orders $m_1 \leqslant \dotsc \leqslant m_k$, and $G$ a group of rank $\geqslant 2$. If $\Hit(\pi_1 Y,G)$ is parametrized by a single non-vanishing differential, i.e. there exists a unique $N \in \{ 1, \dotsc, r\}$ such that $\dim_\C H^0(Y,K(Y,d_N+1)) \neq 0$ and $\dim_\C H^0(Y,K(Y,d_i+1)) = 0$ for all $i \neq N$, then $Y$ is a sphere with $k$ cone points, with $k = 3, 4$ or $5$. This happens if and only if the pair $(Y,G)$ is one of those listed in Table \ref{table:single_differential}.
\end{theorem}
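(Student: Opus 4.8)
The plan is to translate the hypothesis into a purely numerical condition on the exponents of $\fg$ and the orbifold data of $Y$, and then to run a finite case analysis. By Theorem \ref{theorem:dimension}, $\Hit(\pi_1 Y,G)$ is homeomorphic to $\cB_Y(\fg) = \bigoplus_{\alpha=1}^r H^0(Y,K(Y,d_\alpha+1))$, so being parametrized by a single non-vanishing differential means exactly that there is a unique index $N$ with $\dim_\C H^0(Y,K(Y,d_N+1)) \neq 0$ and $\dim_\C H^0(Y,K(Y,d_i+1)) = 0$ for every $i \neq N$. First I would invoke Lemma \ref{lemma:genus zero}: for the vanishing to happen for even one value $d_i+1 \geq 3$ (which certainly occurs, since $r \geq 2$ forces at least two distinct exponents, hence at least one exponent $\geq 2$, giving a degree $\geq 3$ differential that must vanish), the orbifold $Y$ must have genus $0$. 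Since $\chi(Y) < 0$ forces $k \geq 3$, and since Lemma \ref{lemma:genus zero} (via Table \ref{table:vanishing_diff}) bounds the number of cone points for which a degree-$d$ differential vanishes by $k \leq 5$, we immediately get $k \in \{3,4,5\}$, which is the first assertion.

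Next I would bound the rank. The exponent $1$ always belongs to $\fg$ (for any simple $\fg$, $d=1$ is an exponent, corresponding to the quadratic differential / Hitchin base component $H^0(Y,K(Y,2))$). For a sphere with $3$, $4$ or $5$ cone points one always has $\dim_\C H^0(Y,K(Y,2)) > 0$ — indeed $K(Y,2)$ has degree $2d(g-1) + \sum O(2,m_i) = -4 + \sum O(2,m_i)$ with $g=0$, and since each $O(2,m_i) \geq 1$, this is $\geq k-4$, so it is strictly positive once $k=5$; for $k=3,4$ one checks directly from Table \ref{table:vanishing_diff} that $H^0(Y,K(Y,2))=0$ only under the very restrictive conditions listed there, which forces the cone orders into a small explicit set. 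So the distinguished index $N$ is typically forced to be the one with $d_N = 1$ (the case $k=5$, all $m_i=2$, so only quadratic differentials survive), or, when quadratic differentials do vanish, $N$ is the unique remaining surviving index. In either scenario, one needs \emph{every other} exponent $d_i \neq d_N$ to satisfy $\dim_\C H^0(Y,K(Y,d_i+1)) = 0$; consulting Table \ref{table:vanishing_diff} this confines $d_i+1$ (hence $d_i$) to a bounded range and simultaneously restricts the cone-point orders. Because the largest exponent of a simple Lie algebra grows with the rank (e.g.\ for $A_n$ it is $n$, for the classical series roughly the rank, and the exceptional algebras have explicitly tabulated exponents in Table \ref{table:exponents}), the requirement that \emph{all} exponents except $d_N$ fall in the bounded vanishing range caps the rank of $\fg$. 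One then enumerates the finitely many $(\fg, k, m_1,\dots,m_k)$ compatible with all constraints, cross-checking against Table \ref{table:vanishing_diff}, and matches the survivors with the entries of Table \ref{table:single_differential}.

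Concretely the steps in order are: (i) reduce to $\cB_Y(\fg)$ via Theorem \ref{theorem:dimension}; (ii) apply Remark \ref{lemma:useful} / Lemma \ref{lemma:genus zero} to force $g=0$ and $k \leq 5$; (iii) for each $k \in \{3,4,5\}$, write out $\dim_\C H^0(Y,K(Y,d))$ as a function of $d$ and the $m_i$ using Lemma \ref{prop:dimension diff orient}, and determine for which $(d, m_i)$ it vanishes (this is exactly Table \ref{table:vanishing_diff}); (iv) impose that the vanishing set of $d$'s contains every exponent of $\fg$ except one, read off from Table \ref{table:exponents}, which bounds $\rk(\fg)$; (v) run the resulting finite enumeration and verify it reproduces Table \ref{table:single_differential}. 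The main obstacle is step (v): it is a bookkeeping-heavy finite check over all simple split $\fg$ of bounded rank and all admissible cone-order tuples, and one must be careful with the groups of rank $2$ ($\PGL(3,\R)$ has exponents $1,2$; $\PSp^\pm(4,\R) \simeq \PO(2,3)$ has exponents $1,3$; $\GG$ has exponents $1,5$), where "parametrized by a single differential" is most easily achieved and gives the richest list of examples, as well as with the coincidences $\PGL(4,\R) \simeq \PO^\pm(3,3)$ and $\PSp^\pm(2m,\R)$ vs $\PO(m,m+1)$ (Corollary \ref{cor:Sp(2m)=O(m,m+1)}) to avoid double counting. No genuinely new idea is needed beyond the dimension formula and the vanishing classification already in hand; the content is organizing the finite casework correctly.
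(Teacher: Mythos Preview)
Your approach is essentially the same as the paper's: reduce via Theorem \ref{theorem:dimension} to the Hitchin base, force $g=0$ and $k\in\{3,4,5\}$ through Lemma \ref{lemma:genus zero} (since $r\geq 2$ guarantees at least one vanishing degree-$d$ space with $d\geq 2$), and then enumerate using Tables \ref{table:vanishing_diff} and \ref{table:exponents}. The paper treats this theorem exactly as it treats Theorems \ref{thm:dimension 0}, \ref{thm_none_is_zariski_dense} and \ref{thm:dimension 2}, as a direct consequence of those ingredients, so your plan is on target.

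One expository slip to clean up: your parenthetical reasoning that ``a degree $\geq 3$ differential must vanish'' is not always true (e.g.\ for $\PGL(3,\R)$ with $k=3$ cone points it is the degree-$2$ space that vanishes and the degree-$3$ space that survives), and your sentence ``for a sphere with $3$, $4$ or $5$ cone points one always has $\dim_\C H^0(Y,K(Y,2)) > 0$'' is false for $k=3$ (there $\dim_\C H^0(Y,K(Y,2))=0$ for \emph{every} triple of cone orders, as the first line of Table \ref{table:vanishing_diff} records). Neither error affects the argument, since Lemma \ref{lemma:genus zero} already covers all $d\geq 2$; just invoke Remark \ref{lemma:useful} directly to get $g=0$ and proceed.
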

\begin{corollary}\label{comp_with_only_cyclic_bdles}
Let $Y$ be a sphere with $k$ cone points of respective orders $m_1 \leqslant \dotsc \leqslant m_k$ and let $G$ be one of the groups listed in Table \ref{table:cyclic_Higgs}. Then $\Hit(\pi_1 Y, G)$ consists only of cyclic or $(n-1)$-cyclic representations.
\end{corollary}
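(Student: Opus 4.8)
The plan is to derive Corollary~\ref{comp_with_only_cyclic_bdles} as a direct consequence of Theorem~\ref{thm:single_differential}, so the bulk of the work is really in that theorem; here I only need to extract from it the cases that give cyclic or $(n-1)$-cyclic representations. First I would recall that for $G=\PGL(n,\R)$ the exponents are $d_\alpha = \alpha$ for $\alpha = 1,\dotsc,n-1$, so $d_\alpha + 1$ ranges over $2,3,\dotsc,n$; the component $\Hit(\pi_1 Y, G)$ is parametrized by $\bigoplus_{\alpha} H^0(Y,K(Y,d_\alpha+1))$ by Theorem~\ref{theorem:dimension}, and by definition a representation is \emph{cyclic} when only the top differential $q_n$ (degree $n$) is possibly nonzero, and $(n-1)$-\emph{cyclic} when only $q_{n-1}$ is. For the other groups in Table~\ref{table:cyclic_Higgs} one uses that $\PSp^{\pm}(2m,\R)$, $\PO(m,m+1)$ and $\GG$ embed in $\PGL(n,\R)$ (Remark~\ref{embeddings_of_HC}) for the appropriate $n$, and that being cyclic or $(n-1)$-cyclic is defined through that embedding, so it suffices to identify which of the exponents $d_\alpha$ of the given group carry a nonzero space of regular differentials.

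The key step is then the following translation: by Theorem~\ref{thm:single_differential}, the pair $(Y,G)$ appears in Table~\ref{table:single_differential} precisely when there is a \emph{unique} index $N$ with $\dim_\C H^0(Y,K(Y,d_N+1)) \neq 0$, all other $H^0(Y,K(Y,d_i+1))$ vanishing. So for each line of Table~\ref{table:cyclic_Higgs} I would check that the relevant line of Table~\ref{table:single_differential} has its unique nonvanishing differential in degree $d_N+1$ equal to $n$ (the cyclic case) or $n-1$ (the $(n-1)$-cyclic case), where $n$ is the dimension of the standard $\PGL(n,\R)$ into which $G$ sits. Since Theorem~\ref{thm:single_differential} already asserts $Y$ must be a sphere with $3$, $4$ or $5$ cone points in all these cases, the orbifold type is automatically as claimed, and Theorem~\ref{main_obs_about_Hit_comp_for_orbifolds} (via the Hitchin-base parametrization of $\Hit(\pi_1 Y,G)$) guarantees that \emph{every} point of the Hitchin component — not just some — is then cyclic or $(n-1)$-cyclic, because the whole Hitchin base reduces to that single summand.

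Concretely, the proof reads: for each $(Y,G)$ in Table~\ref{table:cyclic_Higgs}, Theorem~\ref{thm:single_differential} shows $\Hit(\pi_1 Y, G) \simeq \mathcal{B}_Y(\fg) = H^0(Y, K(Y, d_N+1))$ for the unique $N$ with nonvanishing differential space; comparing $d_N+1$ with $n$ resp.\ $n-1$ for the embedding $G \hookrightarrow \PGL(n,\R)$ of Remark~\ref{embeddings_of_HC} shows that this summand is exactly the cyclic (resp.\ $(n-1)$-cyclic) factor. Hence the image of every Hitchin representation of $\pi_1 Y$ in $G$ is parametrized by $(0,\dotsc,0,q_n)$ or $(0,\dotsc,0,q_{n-1},0)$, i.e.\ is cyclic or $(n-1)$-cyclic.

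The main obstacle is purely bookkeeping: one must verify, group by group and orbifold by orbifold, that the single surviving exponent in Table~\ref{table:single_differential} is indeed the top one ($d_N = n-1$) or the next-to-top one ($d_N = n-2$), using the explicit list of exponents in Table~\ref{table:exponents} and the vanishing criterion of Lemma~\ref{lemma:genus zero} together with Table~\ref{table:vanishing_diff}. There is no conceptual difficulty — no new estimate is needed beyond Theorem~\ref{prop:dimension diff non-orient} and the classification already established in Theorem~\ref{thm:single_differential} — but the compatibility of the tables (which degree is "cyclic" for each of $\PGL(n,\R)$, $\PSp^{\pm}(2m,\R)$, $\PO(m,m+1)$, $\GG$) has to be checked carefully, since the numbering of exponents differs across the $B$, $C$ and $G_2$ types. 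Once Table~\ref{table:cyclic_Higgs} is set up so that its entries are exactly those rows of Table~\ref{table:single_differential} whose surviving differential has degree $n$ or $n-1$, the corollary is immediate.
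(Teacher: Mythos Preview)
Your proposal is correct and follows essentially the same approach as the paper: the paper does not write out a separate proof for this corollary but indicates that it is obtained from Theorem~\ref{thm:single_differential} by selecting, among the Hitchin components parametrized by a single nonvanishing differential, those for which that differential has degree $n$ or $n-1$ under the embedding $G\hookrightarrow\PGL(n,\R)$. Your description of the required bookkeeping (matching the surviving exponent against $n$ and $n-1$ via Tables~\ref{table:exponents}, \ref{table:single_differential} and~\ref{table:cyclic_Higgs}) is exactly what is needed.
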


Theorem \ref{thm:single_differential} can also be useful to construct geometric examples of representations that lie in some special loci of $\Hit(\piX, G)$ given by the vanishing of some of the differentials. In a few cases, such loci have a clear geometric interpretation. For example, the locus $\{ (q_2, \dots, q_n) \in \Hit(\piX, \PGL(n,\R))  \mid q_{i} = 0\ \textrm{for $i$ odd }\}$ corresponds to those representations which are conjugate to representations in the symplectic group or the split orthogonal group \cite{Hitchin_Teich}, and the locus $\{ (q_2, \dots, q_n) \in \Hit(\piX, \PGL(n,\R))  \mid q_2 = 0\}$ corresponds to those representations in  $\Hit(\piX, \PGL(n,\R))$ admitting an equivariant minimal surface in the symmetric space inducing the conformal structure of $X$ \cite{Labourie_cross_ratios}. Other similarly defined loci are, instead, rather mysterious, for example no known geometric interpretation exists for the following loci: $L_3 := \{ (q_2, q_3, q_4) \in \Hit(\piX, \PGL(4,\R))  \mid q_4 = q_2 = 0\}$, $L_4 := \{ (q_2, q_4, q_6) \in \Hit(\piX, \PSp^{\pm}(6,\R))  \mid q_6 = q_2 = 0\}$, or $L_6 := \{ (q_2, q_4, q_6) \in \Hit(\piX, \PSp^{\pm}(6,\R))  \mid q_4 = q_2 = 0\}$. Orbifold groups allow us to geometrically construct examples of Hitchin representations of surface groups lying in these loci:  let $Y$ be one of the orbifolds from the 3rd line of Table \ref{table:single_differential}, and let $Y \simeq [\Si\bs X]$ be a presentation. The image of the map $\Hit(\piY, \PGL(4,\R)) \hookrightarrow \Hit(\piX, \PGL(4,\R))$ is entirely contained in $L_3$. Similarly, let $Y$ be one of the orbifolds appearing in the 11th or 12th line of Table \ref{table:single_differential}, and let $Y \simeq [\Si\bs X]$ be a presentation. The image of the natural map $\Hit(\pi_1 Y, \PSp^{\pm}(6,\R)) \hookrightarrow \Hit(\pi_1 X, \PSp^{\pm}(6,\R))$ is entirely contained in $L_4$ or $L_6$, respectively.

Even more interestingly, consider the mapping class group equivariant maps given by the following proposition: 

\begin{proposition}\label{prop:mysterious sets}
Let $H$ be of rank $2$ (i.e. one of the groups $\PGL(3,\R), \PSp^{\pm}(4,\R), \GG$), and assume that the exponents of $G$ contain the exponents of $H$. Then there exist mapping class group equivariant embeddings $ \Psi_{H,G} : \Hit(\piX, H) \hookrightarrow \Hit(\piX, G) $.
\end{proposition}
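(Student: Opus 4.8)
The plan is to build $\Psi_{H,G}$ from the Hitchin parametrization. Recall from Section~\ref{section:parameterization_Hitchin} that, once a Riemann surface structure on $X$ is fixed, the Hitchin section identifies $\Hit(\piX,H)$ with the Hitchin base $\cB_X(\fh)=\bigoplus_{\beta}H^0(X;K_X^{e_\beta+1})$, where $e_1\leqslant\dotsc\leqslant e_s$ are the exponents of $\fh$, and similarly $\Hit(\piX,G)\simeq\cB_X(\fg)=\bigoplus_{\alpha}H^0(X;K_X^{d_\alpha+1})$ with $d_1\leqslant\dotsc\leqslant d_r$ the exponents of $\fg$. By hypothesis the multiset $\{e_\beta\}$ is contained in the multiset $\{d_\alpha\}$; in all the relevant cases (rank $H=2$) the exponents of $\fh$ are $\{1,e_2\}$ with $e_2\in\{2,5\}$ according to $H=\PGL(3,\R),\PSp^\pm(4,\R)$ or $\GG$, and in each case $1$ and $e_2$ both occur among the $d_\alpha$. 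First I would fix, once and for all, an injection $\iota$ of indexing sets $\{1,\dotsc,s\}\hookrightarrow\{1,\dotsc,r\}$ with $e_\beta=d_{\iota(\beta)}$, and define the linear ``inclusion of Hitchin bases'' $I_{\fh,\fg}:\cB_X(\fh)\hookrightarrow\cB_X(\fg)$ sending a tuple $(q_\beta)_\beta$ to the tuple whose $\iota(\beta)$-th component is $q_\beta$ and whose other components vanish. Composing with the Hitchin-section homeomorphisms gives the desired map
$$\Psi_{H,G}:\Hit(\piX,H)\xrightarrow{\ \simeq\ }\cB_X(\fh)\xrightarrow{\ I_{\fh,\fg}\ }\cB_X(\fg)\xrightarrow{\ \simeq\ }\Hit(\piX,G),$$
which is manifestly a topological embedding since each arrow is a homeomorphism onto its image and $I_{\fh,\fg}$ is a linear injection between finite-dimensional real vector spaces.

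Next I would check that $\Psi_{H,G}$ agrees with the map induced on Hitchin components by a genuine group inclusion $H\hookrightarrow G$, so that it is canonical and in particular carries Fuchsian representations to Fuchsian representations. Concretely, a principal $\sl(2,\R)$-triple $(x,e,\te)$ for $\fh$ can be completed to a principal $\sl(2,\R)$-triple for $\fg$ compatibly (the principal $\sl(2)$ of $\fh$ sits inside the principal $\sl(2)$ of $\fg$ precisely when the exponents match, by the highest-weight description \eqref{equation:decomposition_into_irreducible}), and then Hitchin's bundle $\cE_{\mathrm{can}}^{\fh}$ with Higgs field $\te_\fh+\sum_\beta q_\beta\otimes e_\beta^\fh$ is carried, under the inclusion $\fh_\C\hookrightarrow\fg_\C$, to the $\fg$-bundle $\cE_{\mathrm{can}}^{\fg}$ with Higgs field $\te_\fg+\sum_\beta q_\beta\otimes e_{\iota(\beta)}^\fg$, i.e.\ exactly $s\circ I_{\fh,\fg}(q)$. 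This is the content already invoked in Section~\ref{subsec:rigidity} (following Guichard \cite{Guichard_Zariski}), where the inclusions $H\hookrightarrow G$ are said to induce injective maps $\Hit(\piX,H)\hookrightarrow\Hit(\piX,G)$; I would simply record that these maps are precisely $\Psi_{H,G}$ in Hitchin-base coordinates.

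Finally I would verify mapping-class-group equivariance. The point is that $I_{\fh,\fg}:\cB_X(\fh)\to\cB_X(\fg)$ is built only out of the canonical bundle $K_X$ and the (fixed, group-theoretic) data of the exponents and highest-weight vectors; it involves no choices that depend on the marking of $X$. Hence for any $\varphi\in\mathrm{MCG}(X)$, acting on differentials as in \eqref{action_on_hol_diff}, one has $I_{\fh,\fg}\circ\varphi=\varphi\circ I_{\fh,\fg}$, and since the Hitchin-section identifications $\Hit(\piX,H)\simeq\cB_X(\fh)$ and $\Hit(\piX,G)\simeq\cB_X(\fg)$ intertwine the $\mathrm{MCG}(X)$-action on representations with the action on Hitchin bases (this is exactly the equivariance statement established for finite groups in Lemma~\ref{equivariance_of_the_Hitchin_section}, and holds verbatim for the full mapping class group), it follows that $\Psi_{H,G}$ is $\mathrm{MCG}(X)$-equivariant. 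The one genuinely delicate point — and the main obstacle — is the compatibility of the principal three-dimensional subalgebras: one must make sure that for the specific pairs $(H,G)$ with $\mathrm{exponents}(H)\subset\mathrm{exponents}(G)$ the principal $\sl(2,\R)$ of $\fh$ can indeed be chosen inside the principal $\sl(2,\R)$ of $\fg$ (equivalently, that the inclusion $\fh_\C\hookrightarrow\fg_\C$ respects principal $\sl(2)$'s), which for the classical pairs $\PSp\hookrightarrow\PGL$, $\PO(n,n{+}1)\hookrightarrow\PGL$, $\GG\hookrightarrow\PO$, $\GG\hookrightarrow\PGL$ follows from the explicit Veronese models of Section~\ref{subsection:principal}, but in general requires invoking Kostant's description of principal $\sl(2)$'s and checking it case by case against Table~\ref{table:exponents}.
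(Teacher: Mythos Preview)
There is a genuine gap in the equivariance argument. The Hitchin-section identification $\Hit(\piX,H)\simeq\cB_X(\fh)$ depends on a choice of complex structure on $X$, and Lemma~\ref{equivariance_of_the_Hitchin_section} only establishes equivariance under the \emph{finite} group $\Sigma$ of holomorphic and anti-holomorphic automorphisms of the \emph{fixed} Riemann surface $X$; these are exactly the mapping classes that fix the point $[X]$ in Teichm\"uller space. A general element of $\Out(\piX)$ moves $[X]$ to a different complex structure, so there is no action of the full mapping class group on $\cB_X(\fh)$, and no reason for the map $\Psi_{H,G}^{(X)}:=s_X^G\circ I_{\fh,\fg}\circ(s_X^H)^{-1}$ to be $\Out(\piX)$-equivariant --- or even to be independent of the chosen $X$.

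Your fallback, that $\Psi_{H,G}$ coincides with the map induced by a genuine inclusion $H\hookrightarrow G$ compatible with principal $\sl(2)$'s (which would indeed be $\Out(\piX)$-equivariant for free), does not apply to the pairs the proposition must cover. For instance, there is no embedding $\PGL(3,\R)\hookrightarrow\PGL(4,\R)$ taking the principal $\sl(2)$ of the source to that of the target: any $4$-dimensional representation of $\sl(3,\R)$ decomposes as $3\oplus 1$, hence restricts reducibly to the principal $\sl(2)$. Yet this is precisely the first case of Corollary~\ref{corol:reps in special loci}. The paper's proof circumvents both problems by invoking Labourie's proof of his conjecture in rank~$2$ \cite{Labourie_cyclic_surfaces}: for each $\rho\in\Hit(\piX,H)$ there is a \emph{unique} complex structure $X(\rho)$ for which the equivariant harmonic map is a minimal immersion (equivalently, for which the quadratic differential vanishes). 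The assignment $\rho\mapsto X(\rho)$ is then $\mathrm{MCG}$-equivariant, and the single remaining differential $q_{d_2+1}$ on $X(\rho)$ can be reinterpreted as a $G$-Higgs bundle on $X(\rho)$; this canonical choice of complex structure is what makes $\Psi_{H,G}$ both well-defined independently of any marking and $\mathrm{MCG}$-equivariant.
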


\begin{proof}
This is a consequence of Labourie's conjecture, proved for groups of rank $2$ by Labourie \cite{Labourie_cyclic_surfaces}. Denote the exponents of $H$ by $1, d_2$, and the exponents of $G$ by $d_1', \dots, d_r'$, and assume that $d_2 = d_s'$, for some $s\in \{ 1,\dots,r \}$. The equivariant map is defined in the following way: given a representation $\rho$ in $\Hit(\piX, H)$, by Labourie's conjecture there exists a unique complex structure $X(\rho)$ on the surface such that the equivariant harmonic map is a minimal immersion. The Higgs bundle corresponding  to  $\rho$  for the complex structure $X(\rho)$ is parametrized by differentials $(0,q_{d_2+1})$. With the data of $X(\rho)$ and $q_{d_2+1}$, it is possible to construct a Higgs bundle for $G$ with Riemann surface $X(\rho)$ and differentials all equal to zero, except the $s$-th one which is set to $q_{d_2+1}$. This defines the map to $\Hit(\piX, G)$.
\end{proof}

These maps are defined analytically, but no geometric definition is known, and no geometric characterization of their images. Orbifold groups allow us to geometrically construct examples of Hitchin representations of surface groups lying in the image of the maps $\Psi_{H,G}$: 

\begin{corollary} \label{corol:reps in special loci}
Consider the three cases when $H=\PGL(3,\R)$ and $G = \PGL(4,\R)$, or when $H=\PSp^{\pm}(4,\R)$ and $G = \PSp^{\pm}(6,\R)$, or when $H=\GG$ and $G = \PSp^{\pm}(6,\R)$. Then the map $\Psi_{H,G}$ is unique. Moreover, let $Y$ be one of the orbifolds appearing in the 3rd, 11th, 12th line respectively of Table \ref{table:single_differential}, and let $Y \simeq [\Si\bs X]$ be a presentation. Then the image of the natural map $\Hit(\pi_1 Y, G) \hookrightarrow \Hit(\pi_1 X, G)$ lies inside the image of $\Psi_{H,G} : \Hit(\piX, H) \hookrightarrow \Hit(\piX, G)$.
\end{corollary}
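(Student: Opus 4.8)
The plan is to combine the parameterization of $\Hit(\piY,G)$ by its Hitchin base (Theorem \ref{theorem:dimension}, through Corollary \ref{Sigma_fixed_pts_in_Hitchin_base_of_X} and Theorem \ref{thm:invariant differentials}) with the rank $2$ case of Labourie's conjecture \cite{Labourie_cyclic_surfaces}, which already underlies the construction of $\Psi_{H,G}$ in Proposition \ref{prop:mysterious sets}. First I would settle the uniqueness of $\Psi_{H,G}$. A choice of $\Psi_{H,G}$ in Proposition \ref{prop:mysterious sets} amounts to a choice of index $s\in\{1,\dots,r\}$ with $d_2=d_s'$, where $1,d_2$ are the exponents of $H$ and $d_1',\dots,d_r'$ those of $G$. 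In the three cases considered, the exponents of $H$ are $1,2$ (for $\PGL(3,\R)$), $1,3$ (for $\PSp^\pm(4,\R)$), $1,5$ (for $\GG$), while the exponents of $G$ are $1,2,3$ (for $\PGL(4,\R)$) and $1,3,5$ (for $\PSp^\pm(6,\R)$); in each case $d_2$ occurs exactly once among the exponents of $G$, so $s$ is uniquely determined, and hence so is $\Psi_{H,G}$. Set $m:=d_2+1=d_s'+1$, so that $\Psi_{H,G}(\rho')$ is, by construction, the point of $\Hit(\piX,G)$ which the Hitchin section over $X(\rho')$ assigns to the Hitchin datum whose $s$-th entry is $q_m(\rho')\in H^0(X(\rho'),K_{X(\rho')}^{m})$ and whose other entries are zero, $X(\rho')\in\cT(X)$ being the conformal structure induced by the equivariant minimal surface of $\rho'$.

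Next, fix an orbifold dianalytic structure on $Y$; it pulls back along $\pi:X\lra Y$ to a $\Si$-invariant complex structure on $X$, and $\pi^*$ identifies $\cB_Y(\fg)$ with $\Fix_\Si(\cB_X(\fg))$ by Theorem \ref{thm:invariant differentials}. By Corollary \ref{Sigma_fixed_pts_in_Hitchin_base_of_X} and the proof of Theorem \ref{main_obs_about_Hit_comp_for_orbifolds}, a representation $\sigma\in\Hit(\piY,G)$ is parametrized, with respect to this complex structure, by a point $\pi^*p$ of $\Fix_\Si(\cB_X(\fg))$ with $p\in\cB_Y(\fg)$, and its restriction $\rho:=\sigma|_{\piX}$—that is, the image of $\sigma$ under $\Hit(\piY,G)\hookrightarrow\Hit(\piX,G)$—is exactly the element of $\Hit(\piX,G)$ that the Hitchin section over $X$ assigns to $\pi^*p$. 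Since $Y$ occurs in the $3$rd (resp.\ $11$th, $12$th) line of Table \ref{table:single_differential}, the only non-zero summand of $\cB_Y(\fg)=\bigoplus_\alpha H^0(Y,K(Y,d_\alpha+1))$ is $H^0(Y,K(Y,m))$; hence $\pi^*p$ has all entries zero except its $s$-th one, call it $q\in H^0(X,K_X^{m})$. In other words, over the Riemann surface $X$ the representation $\rho$ is the value of the Hitchin section on the datum whose $s$-th entry is $q$ and whose others vanish.

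It remains to exhibit $\rho$ as a value of $\Psi_{H,G}$. Here I would invoke the rank $2$ case of Labourie's conjecture \cite{Labourie_cyclic_surfaces}: the minimal-surface map $\rho'\lmt(X(\rho'),q_m(\rho'))$ is a bijection from $\Hit(\piX,H)$ onto the total space of the bundle of holomorphic $m$-differentials over $\cT(X)$. Applying it to the pair $(X,q)$ obtained above produces $\rho'\in\Hit(\piX,H)$ with $X(\rho')=X$ and $q_m(\rho')=q$. Since the Hitchin section over the fixed Riemann surface $X$ is a bijection onto $\Hit(\piX,G)$, and both $\Psi_{H,G}(\rho')$ and $\rho$ are its value on the datum with $s$-th entry $q$ and all others zero, we conclude $\Psi_{H,G}(\rho')=\rho$. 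As $\sigma$ was arbitrary, the image of $\Hit(\piY,G)\hookrightarrow\Hit(\piX,G)$ is contained in $\Psi_{H,G}(\Hit(\piX,H))$.

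The step I expect to require the most care is not a computation but the matching of data across the three descriptions of the relevant representations: reading off from Table \ref{table:single_differential} together with the exponents of $H$ and $G$ that the unique surviving differential in $\cB_Y(\fg)$ lands precisely in the $s$-th slot of $\cB_X(\fg)$, and recognizing that the complex structure on $X$ pulled back from $Y$ is admissible as a value $X(\rho')$ of the minimal-surface map. This last point is exactly where the \emph{surjectivity} in Labourie's conjecture enters, since $\Psi_{H,G}$ is defined only analytically and its image a priori consists precisely of the $G$-representations carried by a single $s$-th differential over \emph{some} complex structure.
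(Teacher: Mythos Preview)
Your proposal is correct and follows precisely the approach implicit in the paper, which states the corollary without proof as an immediate consequence of Proposition \ref{prop:mysterious sets} and Theorem \ref{thm:single_differential}. You have correctly identified and articulated the one non-formal ingredient---the \emph{surjectivity} of Labourie's minimal-surface parameterization in rank~$2$---which is exactly what allows the fixed $\Si$-invariant complex structure on $X$ to be realized as $X(\rho')$ for some $\rho'\in\Hit(\piX,H)$.
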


\subsection{Projective structures on Seifert-fibered 3-manifolds}   \label{subsec:projective structures}

In this subsection, we describe an application of our results to the study of the deformation spaces of geometric structures on closed $3$-manifolds. 
Recall that, given a pair $(X,G)$ where $G$ is a Lie group and $X$ is a manifold upon which $G$ acts transitively, an $(X,G)$-\emph{structure} on a manifold $M$ is an atlas of charts taking values in $X$, whose transition functions are restrictions of elements of $G$. An $(X,G)$-\emph{isomorphism} between two $(X,G)$-structures on $M$ is a self-homeomorphism of $M$ which, when expressed in charts for the two structures, is locally restriction of an element of $G$. The \emph{deformation space} of $(X,G)$-structures on $M$, denoted here by $\cD_{X}(M)$, is the set of all $(X,G)$-structures on $M$ up to $(X,G)$-isomorphisms isotopic to the identity. When $G$ is clear from the context, we shall say $X$-structure in place of $(X,G)$-structure.

In the field of geometric topology in dimension $3$, an especially important geometry is the hyperbolic geometry $(\HH^3, \PO(1,3))$, one of Thurston's eight geometries featured in the geometrization theorem. Here we will consider the $3$-dimensional geometry 
$(\PSL(2,\R), \PSL(2,\R))$, where the group acts on itself by left translations. This geometry can be considered as a subgeometry of another one of Thurston's eight geometries, the geometry $(\widetilde{\SL}(2,\R), \mathrm{Isom}(\widetilde{\SL}(2,\R)))$. The latter geometry has a bigger symmetry group, of dimension $4$. We will also consider two other geometries in dimension $3$, the \emph{projective geometry} $(\R\PP^3, \PGL(4,\R))$ and the \emph{contact projective geometry} $\R\PP_\omega^3 :=(\R\PP^3, \PSp^{\pm}(4,\R))$, the latter having this name because the group $\PSp^{\pm}(4,\R)$ acts on $\R\PP^3$ preserving the contact form induced by the standard symplectic form $\omega$ on $\R^4$. 

Hyperbolic geometry has a projective model, the Klein model: the hyperbolic isometries act on the ellipsoid as projective transformations given by the standard embedding $\PO(1,3) < \PGL(4,\R)$. This means that every hyperbolic structure on $M$ induces a projective structure and this gives an embedding of the deformation spaces $\cD_{\HH^3}(M) \subset \cD_{\R\PP^3}(M)$.
When $M$ is a closed manifold admitting a hyperbolic structure, $\cD_{\HH^3}(M)$ has only one point by Mostow rigidity, but the space $\cD_{\R\PP^3}(M) $ might be bigger. The connected component of $\cD_{\R\PP^3}(M)$ containing the hyperbolic structure consists of special projective structures called \emph{convex projective structures}. 
For some $M$, that connected component is just one point (in this case, one says that the hyperbolic structure on $M$ is \emph{projectively rigid}), while for other $M$ it is possible to deform the hyperbolic structure to other convex projective structures (see for example \cite{CLT2,CLT1,Heusener_Porti, Marquis, Choi_Lee}). 

As an application of our results, we can paint a similar picture for manifolds admitting a $\PSL(2,\R)$-structure. This geometry also  has a projective model (obtained by using the principal representation $\kappa: \PGL(2,\R) \lra \PGL(4,\R)$). The group $\kappa(\PSL(2,\R))$  acts on $\R\PP^3$ with two open orbits $\Omega^+, \Omega^- \subset \R\PP^3$, and on one of them, say $\Omega^+$, the action is simply transitive (see \cite{GW} for details). The action of $\kappa(\PSL(2,\R))$ on $\Omega^+$ can be seen as a projective model for the $\PSL(2,\R)$-geometry. Moreover, since the image of $\kappa$ is contained in $\PSp^{\pm}(4,\R)$, this model also has an invariant contact form. This gives maps from the deformation space of $\PSL(2,\R)$-structures to the deformation space of projective structures, but this map is not injective: it is $2$-to-$1$, because of the action of the disconnected group $\kappa(\PGL(2,\R))$, which still preserves $\Omega^+$. We denote the quotient of this action by $\cD_{\PSL(2,\R)}(M)_{/\sim}$ and in this way we get embeddings
$\cD_{\PSL(2,\R)}(M)_{/\sim} \subset \cD_{\R\PP^3_\omega}(M)  \subset \cD_{\R\PP^3}(M)$. We now describe these deformation spaces for all closed $3$-manifolds $M$ admitting a $\PSL(2,\R)$-structure. Recall that $\pi_1 (\PSL(2,\R)) \simeq \Z$ and let us denote by $\PSL^d(2,\R)$ the $d$-fold covering group of $\PSL(2,\R)$.

\begin{proposition}\label{Seifert-fibered_three_mflds}
Assume that $M$ is a closed $3$-manifold admitting at least one $\PSL(2,\R)$-structure. Then there exist a natural number $d$, a closed orientable $2$-orbifold $Y$ (both depending only on $M$), and a representation $\rho \in \Hit(\piY,\PSL(2,\R))$, such that $\rho$ can be lifted to a representation $\rho^d:\piY \lra \PSL^d(2,\R)$ and $M$ is homeomorphic to $\PSL^d(2,\R)/\rho^d(\piY)$.
In particular, $M$ is a Seifert-fibered space with Seifert base equal to the orbifold $Y$. Moreover, for such $Y$ and $d$, every representation $\rho \in \Hit(\piY,\PGL(2,\R))$ can be lifted to a representation $\rho^d:\piY \lra \PSL^d(2,\R)$, and $M$ is homeomorphic to the quotient space $\PSL^d(2,\R)/\rho^d(\piY)$. This quotient carries a natural $\PSL(2,\R)$-structure, which gives a homeomorphism $\Hit(\piY,\PGL(2,\R)) \ni \rho  \lmt \PSL^d(2,\R)/\rho^d(\piY) \in  \cD_{\PSL(2,\R)}(M)_{/\sim}$.
\end{proposition}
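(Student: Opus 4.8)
The strategy is to turn a $\PSL(2,\R)$-structure on $M$ into a discrete cocompact subgroup of a covering group of $\PSL(2,\R)$, extract from it the pair $(Y,d)$, and then identify the whole deformation space in terms of the Teichm\"uller space of $Y$. First I would establish a completeness statement. Push forward to $M$ a left-invariant Riemannian metric on the model space $\PSL(2,\R)$; since left translations act by isometries of such a metric, the transition maps of the $\PSL(2,\R)$-structure preserve it, so it descends to a locally homogeneous metric on $M$. As $M$ is compact this metric is complete, hence so is its pullback to the universal cover $\widetilde M$, and the developing map $D:\widetilde M\lra\PSL(2,\R)$, being a local isometry out of a complete Riemannian manifold, is a covering map; as $\widetilde M$ is simply connected, $D$ is the universal covering homomorphism $\widetilde{\SL}(2,\R)\lra\PSL(2,\R)$. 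The deck transformations of $D$ that cover the left-translation action of the holonomy group on $\PSL(2,\R)$ are precisely the left translations of $\widetilde{\SL}(2,\R)$, so the holonomy realizes $\pi_1 M$ as a discrete, torsion-free, cocompact subgroup $\Gamma<\widetilde{\SL}(2,\R)$ and $M\simeq\Gamma\backslash\widetilde{\SL}(2,\R)$.

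Next I would extract $Y$ and $d$. Let $Z\simeq\Z$ be the center of $\widetilde{\SL}(2,\R)$, equal to the kernel of $\widetilde{\SL}(2,\R)\lra\PSL(2,\R)$. Compactness of the $3$-manifold $M$ forces $\Gamma\cap Z$ to have finite index $d\geqslant1$ in $Z$ (otherwise $M$ would fiber over a $3$-manifold with infinite discrete fiber), and since a cocompact Fuchsian group with hyperbolic quotient is centerless, $\Gamma\cap Z$ equals the center $Z(\Gamma)$, the index-$d$ subgroup of $Z$. With $\PSL^d(2,\R)$ the $d$-fold covering group of $\PSL(2,\R)$, so that $\widetilde{\SL}(2,\R)/(\Gamma\cap Z)\simeq\PSL^d(2,\R)$, the group $\rho^d(\piY):=\Gamma/(\Gamma\cap Z)$ is discrete and cocompact in $\PSL^d(2,\R)$ and projects isomorphically onto a cocompact Fuchsian group $\piY<\PSL(2,\R)$, whence $M\simeq\PSL^d(2,\R)/\rho^d(\piY)$. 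The fibration $\PSL^d(2,\R)\lra\PSL^d(2,\R)/S^1\simeq\H^2$ is $\rho^d(\piY)$-equivariant (with $\rho^d(\piY)$ acting on $\H^2$ through its image $\piY$) and descends to a Seifert fibration $M\lra Y:=\piY\backslash\H^2$ with generic fiber a circle, so $M$ is Seifert-fibered with base orbifold $Y$, which is orientable because $\piY$ preserves the orientation of $\H^2$. The induced holonomy $\rho:\piY\lra\PSL(2,\R)$ is discrete and faithful, hence its class lies in $\Hit(\piY,\PGL(2,\R))=\cT(Y)$ by Thurston's theorem recalled in Section~\ref{Hitchin_reps_section}. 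That $Y$ and $d$ depend only on $M$ I would deduce from the uniqueness, up to isotopy, of the Seifert fibration of a $3$-manifold with hyperbolic base orbifold: the base $Y$ and the Euler number, which pins down $d$, are then topological invariants of $M$; here I would invoke the classical theory of Seifert-fibered spaces.

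Then I would run the converse direction. For the $Y$ and $d$ just produced, the obstruction to lifting a representation $\piY\lra\PSL(2,\R)$ to $\PSL^d(2,\R)$ along the central extension by $\Z/d$ is a class in $H^2(\piY;\Z/d)$ depending locally constantly on the representation; since $\Hit(\piY,\PGL(2,\R))=\cT(Y)$ is connected and this class vanishes for the $\rho$ coming from $M$, it vanishes for every $\rho\in\Hit(\piY,\PGL(2,\R))$, which therefore admits a lift $\rho^d:\piY\lra\PSL^d(2,\R)$ (unique up to the action of $\Hom(\piY,\Z/d)$, which does not alter $\rho^d(\piY)$ up to the relevant equivalence). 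Each $N(\rho):=\PSL^d(2,\R)/\rho^d(\piY)$ is a closed $3$-manifold carrying a canonical $\PSL(2,\R)$-structure: its universal cover is $\widetilde{\SL}(2,\R)$, the covering homomorphism $\widetilde{\SL}(2,\R)\lra\PSL(2,\R)$ serves as developing map, and the inclusion $\pi_1N(\rho)\hookrightarrow\widetilde{\SL}(2,\R)$ followed by this homomorphism is the holonomy, with image $\rho(\piY)$. The family $\rho\mapsto N(\rho)$ is a locally trivial family of closed manifolds over $\cT(Y)$ (Ehresmann's theorem), hence globally trivial since $\cT(Y)$ is contractible; this provides consistent identifications $N(\rho)\simeq M$ and in particular shows $M\simeq N(\rho)$ for every $\rho$.

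Finally, transporting the canonical $\PSL(2,\R)$-structure of $N(\rho)$ to $M$ through these identifications yields, by the triviality of the family, a well-defined continuous map $\cT(Y)\lra\cD_{\PSL(2,\R)}(M)_{/\sim}$. It is surjective because, by the completeness step applied to $M$, every $\PSL(2,\R)$-structure on $M$ is of the form $N(\rho')$ for some $\rho'\in\cT(Y)$; and it is injective because $M$ is aspherical (as $\widetilde{\SL}(2,\R)$ is contractible), so a complete $\PSL(2,\R)$-structure is determined, as a point of $\cD_{\PSL(2,\R)}(M)$, by its holonomy up to $\PSL(2,\R)$-conjugacy, while the quotient $\cD_{\PSL(2,\R)}(M)\to\cD_{\PSL(2,\R)}(M)_{/\sim}$ precisely identifies $\PSL(2,\R)$-conjugacy with $\PGL(2,\R)$-conjugacy of holonomies, matching the definition of $\Hit(\piY,\PGL(2,\R))$. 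A standard argument (invariance of domain, or a continuous inverse built from the holonomy) then upgrades this continuous bijection to the desired homeomorphism $\Hit(\piY,\PGL(2,\R))\simeq\cD_{\PSL(2,\R)}(M)_{/\sim}$. To my mind the real difficulty sits in two places: the completeness of $\PSL(2,\R)$-structures on closed $3$-manifolds (resting on completeness of left-invariant metrics on Lie groups and on a local isometry out of a complete manifold being a covering), and the claim that $(Y,d)$ is an invariant of $M$ alone, which needs the uniqueness of the Seifert fibration when the base orbifold is hyperbolic.
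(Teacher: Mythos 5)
Your overall route is the paper's: completeness of the $\PSL(2,\R)$-structure forces the developing map to be the universal covering of $\PSL(2,\R)$, one identifies $M$ with a quotient of $\PSL^d(2,\R)$ by a lifted cocompact Fuchsian group, uniqueness of $(Y,d)$ comes from the classification of Seifert fibered spaces, and all Hitchin representations lift because liftability along the central extension only depends on the connected component of the representation. The only structural difference is that you lift the deck action to $\widetilde{\SL}(2,\R)$ and view $\pi_1M$ as a cocompact lattice $\Gamma$ there, whereas the paper quotients $\widetilde{M}$ by $\ker(h)$ and identifies $\widetilde{M}/\ker(h)$ with $\PSL^d(2,\R)$ via the classification of coverings of the circle.

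There is, however, a gap at the pivotal step of your variant: the claim that $\Gamma\cap Z$ has finite index in the center $Z\simeq\Z$ of $\widetilde{\SL}(2,\R)$. Your parenthetical justification (``otherwise $M$ would fiber over a $3$-manifold with infinite discrete fiber'') is not an argument: if $\Gamma\cap Z$ were trivial, $\Gamma Z$ need not be closed and the would-be quotient need not be a Hausdorff manifold, so no contradiction is produced. This is exactly the content the paper imports from Thurston (Corollary 4.7.3 of \cite{Thu97}, giving discreteness of the holonomy image and infiniteness of its kernel, after upgrading to a $\widetilde{\SL}(2,\R)$-structure); you must either cite that, or cite Scott's description of $\widetilde{\SL}(2,\R)$-manifolds, or actually prove that a cocompact lattice in $\widetilde{\SL}(2,\R)$ meets the center in a finite-index subgroup. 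A second, lesser, soft spot is the final homeomorphism: asserting that a complete structure is determined ``as a point of $\cD_{\PSL(2,\R)}(M)$'' by its holonomy up to conjugacy hides the marking/isotopy issue (an isomorphism of the two quotient structures is only homotopic, not obviously isotopic, to the identity), and the Ehresmann trivialization presupposes that $\rho\mapsto\PSL^d(2,\R)/\rho^d(\piY)$ has been exhibited as a fiber bundle over $\cT(Y)$. The robust mechanism, which the paper uses in the proof of Theorem \ref{theorem:deformation_space_3manifold_Hitchin}, is that the holonomy map from the deformation space to the representation space is a local homeomorphism; combined with the section you construct and connectedness of $\cT(Y)$, this yields the homeomorphism without invoking homotopy-implies-isotopy statements.
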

\begin{proof}
We fix a left-invariant Riemannian metric on $\PSL(2,\R)$. Then all $\PSL(2,\R)$-structures on closed manifolds are complete (see \cite[Proposition 3.4.10]{Thu97}). Fix a $\PSL(2,\R)$-structure on the closed manifold $M$, and denote by $D:\widetilde{M} \lra \PSL(2,\R)$ the developing map and by $h:\pi_1 M \lra \PSL(2,\R)$ the holonomy representation. Completeness of the structure implies that $D$ is the universal covering of $\PSL(2,\R)$. The action of $\pi_1 M$ on $\widetilde{M}$ by deck transformations is isometric, giving $M$ a $(\widetilde{\SL}(2,\R), \mathrm{Isom}(\widetilde{\SL}(2,\R)))$-structure. Now \cite[Corollary 4.7.3]{Thu97} says that $h$ has discrete image and infinite kernel. Let us denote by $\Gamma$ the quotient group $\pi_1 M/\ker(h)$, and notice that $h$ factors through a discrete and faithful representation $h':\Gamma \lra \PSL(2,\R)$. The developing map $D$ factors through a map 
$D':\widetilde{M}/\ker(h) \lra \PSL(2,\R)$. From the classification of the coverings of the circle, we see that $D'$ is a $d$-sheeted covering, hence $\widetilde{M}/\ker(h) \simeq \PSL^d(2,\R)$ and there is a unique group structure on such a covering that makes $D'$ a group homomorphism. The action of the group $\Gamma = \pi_1 M/\ker(h)$ on $\widetilde{M}/\ker(h)$ is a representation $h^d:\Gamma \lra \PSL^d(2,\R)$ that lifts the representation $h'$. The manifold $M$ is homeomorphic to the quotient $\PSL^d(2,\R)/h^d(\Gamma)$. This implies that $h'(\Gamma)$ is a cocompact discrete subgroup of $\PSL(2,\R)$, in particular, $\Gamma$ is isomorphic to $\piY$, for some orientable orbifold $Y$, and $h'$ is a representation in $\Hit(\piY,\PGL(2,\R))$. This proves the first statement. The fact that $Y$ and $d$ are unique follows from the classification of Seifert fibered $3$-manifolds. The possibility of lifting a representation from $\PSL(2,\R)$ to $\PSL^d(2,\R)$ only depends on the connected component of the representation space where the representation lies. Hence if we can lift one Hitchin representation, we can lift all of them.  
\end{proof}

\noindent To understand better the topology of $M$, one can identify  $\PSL(2,\R)$ with the unit tangent bundle $T^1 \HH^2$ of the hyperbolic plane, and, similarly, $\PSL^d(2,\R)$ can be identified with the $d$-fold covering of $T^1 \HH^2$. All these spaces are circle bundles over $\HH^2$, and the manifold $M$ is an orbifold circle bundle over $Y$. In the case $d=1$, $M$ is called the orbifold unit tangent bundle of $Y$. By Proposition \ref{Seifert-fibered_three_mflds}, the space $\cD_{\PSL(2,\R)}(M)_{/\sim}$ is connected. We denote by $\cD^{\,0}_{\R\PP^3_\omega}(M)$ and $\cD^{\,0}_{\R\PP^3}(M)$ the connected components of $\cD_{\R\PP^3_\omega}(M)$ and $\cD_{\R\PP^3}(M)$ that contain $\cD_{\PSL(2,\R)}(M)_{/\sim}$. We now describe the topology of these spaces.

\begin{lemma}    \label{lemma:component Seifert group}
Let $M$ be a closed Seifert fibered $3$-manifold, whose Seifert base is a closed $2$-orbifold $Y$ with $\chi(Y)<0$. Let $G$ be one of the groups $\PGL(n,\R)$, $\PSp^{\pm}(2m,\R)$, $\PO(m,m+1)$ or $\GG$. Denote by $\phi:\pi_1 M \lra \piY$ the projection to the fundamental group of the Seifert base. Then the map 
$$\phi^*:\Hom(\piY,G)/G \ni [\rho] \lmt [\rho \circ \phi] \in \Hom(\pi_1 M,G)/G$$
restricts to a homeomorphism from $\Hit(\piY,G)$ to a connected component of $\Hom(\pi_1 M,G)/G$.
In particular, $\Hom(\pi_1 M,G)/G$ has a connected component homeomorphic to an open ball. 
\end{lemma}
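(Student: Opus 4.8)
The plan is to exploit the structure of the Seifert fibration, which gives a short exact sequence
$$1 \lra Z \lra \pi_1 M \overset{\phi}{\lra} \piY \lra 1,$$
where $Z$ is the cyclic (central, up to finite index) subgroup generated by the regular fiber; more precisely, $Z\simeq\Z$ when $M$ is not a product and in any case $Z$ is infinite cyclic since $\chi(Y)<0$ forces $M$ to have infinite $\pi_1$ with the fiber of infinite order. The map $\phi^*$ is clearly continuous and injective on the level of representation spaces once we show that a representation $\rho\circ\phi$ with $\rho$ Hitchin determines $\rho$: this holds because $\phi$ is surjective, so $\rho$ is recovered as the unique factorization of $\rho\circ\phi$ through $\phi$. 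Thus $\phi^*$ restricted to $\Hit(\piY,G)$ is a continuous injection into $\Hom(\pi_1 M,G)/G$; since $\Hit(\piY,G)$ is a ball (Theorem \ref{theorem:dimension}) and in particular connected, its image is connected, so it suffices to prove that this image is open and closed in $\Hom(\pi_1 M,G)/G$.

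The key step is therefore to characterize the image of $\phi^*$ intrinsically, namely: a representation $\eta:\pi_1 M\lra G$ is of the form $\rho\circ\phi$ with $\rho\in\Hit(\piY,G)$ if and only if $\eta(Z)=\{1\}$ and the induced representation $\overline{\eta}:\piY\lra G$ is Hitchin. First I would show that this condition is closed: the locus where $\eta(z)=1$ for the (finitely many) generators $z$ of $Z$ is Zariski-closed, hence closed, in $\Hom(\pi_1 M,G)$, and being Hitchin is a closed condition on the quotient by \cite{Labourie} and the results of Section \ref{section:properties} generalized to orbifolds (Hitchin representations form a closed component, e.g. by the characterization via purely loxodromic/Anosov images and Theorem \ref{Hitchin_iff_hyperconvex}). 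For openness, I would argue that the vanishing $\eta(Z)=1$ is also an open condition near such an $\eta$: since $\overline{\eta}$ is Hitchin, the image $\overline{\eta}(\gamma)$ of an element of infinite order is purely loxodromic (Proposition \ref{prop:Hitchin rep loxodromic}), so its centralizer in $G$ is a maximal torus; as $Z$ is central in a finite-index subgroup of $\pi_1 M$, the element $\eta(z)$ must lie in the centralizer of $\overline{\eta}(\phi(\gamma))$ for enough $\gamma$, and these centralizers intersect in the center of $G$, which is trivial, so $\eta(z)$ is forced to remain trivial under small deformations. Combined with the openness of the Hitchin component in $\Hom(\piY,G)/G$, this shows the image of $\phi^*$ is open.

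The main obstacle I anticipate is the openness argument — specifically, controlling $\eta(Z)$ under deformation when $Z$ is only virtually central (it is central in $\pi_1$ of the $d$-fold cover corresponding to a manifold Seifert presentation, but $\pi_1 M$ itself may have $Z$ only normal with finite-order outer action). To handle this cleanly I would pass to a finite cover $M'\lra M$ for which $Z$ is genuinely central in $\pi_1 M'$ and $\pi_1 M'\lra \pi_1 Y'$ has central kernel for some finite orbifold cover $Y'\lra Y$, use Corollary \ref{rep_whose_rest_is_Hitchin_is_itself_Hitchin} to detect the Hitchin condition after restriction, and then descend. One must also check that the connected component picked out is the same one independently of the choices, which follows since $\Hit(\piY,G)$ is connected and its image is a single connected open-and-closed subset. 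The final sentence of the lemma — that $\Hom(\pi_1 M,G)/G$ has a component homeomorphic to an open ball — is then immediate from Theorem \ref{theorem:dimension}, which says $\Hit(\piY,G)$ is homeomorphic to an open ball, together with the fact that $\phi^*$ restricted to it is a homeomorphism onto a connected component.
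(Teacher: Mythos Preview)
Your overall strategy is the same as the paper's: show that $\phi^*(\Hit(\piY,G))$ is both open and closed in $\Hom(\pi_1 M,G)/G$. Closedness is fine in both accounts. The gap is in your openness argument. You correctly observe that for a nearby representation $\eta'$, the element $\eta'(z)$ must commute with $\eta'(\bar\gamma_i)$ for lifts $\bar\gamma_i$ of generators of (a finite-index subgroup of) $\piY$, and that each $\eta'(\bar\gamma_i)$ remains purely loxodromic, so $\eta'(z)$ lies in the intersection of the corresponding maximal tori. You then assert that ``these centralizers intersect in the center of $G$'', but this is not a consequence of loxodromicity of the individual $\eta'(\bar\gamma_i)$: nothing so far prevents them from all lying in a common maximal torus. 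What forces the common centralizer to be trivial is that the $\eta'(\bar\gamma_i)$ together generate an \emph{irreducible} subgroup of $\PGL(n,\R)$, and irreducibility is an open condition, so it persists from $\rho$ to nearby representations. This is exactly the mechanism the paper uses: Hitchin representations are strongly irreducible (Lemma~\ref{lemma:centralizer}), irreducibility is open, and in $\PGL(n,\R)$ only the identity commutes with an irreducible subgroup.

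Once you replace loxodromicity by irreducibility, your proposed detour through a finite cover also becomes unnecessary: the paper handles the virtually-central issue directly by choosing the generators $\gamma_i$ in $\pi_1 Y^+$ when $Y$ is non-orientable, since $\ker\phi$ centralizes $\phi^{-1}(\pi_1 Y^+)$. For $G$ other than $\PGL(n,\R)$, the paper observes (Remark~\ref{embeddings_of_HC}) that Hitchin representations into $G$ are also Hitchin, hence irreducible, in the ambient $\PGL(n,\R)$, so the same argument applies verbatim.
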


\begin{proof}
The homomorphism $\phi$ is surjective, hence the map $\phi^*$ is injective and its image is the set of all representations of $\pi_1 M$ having kernel which includes $\ker\phi$. This is a closed condition, hence the map $\phi^*$ is a closed map. Let us now restrict our attention to the case when $G=\PGL(n,\R)$. We claim that the map $\phi^*$, when restricted to the Hitchin component, is an open map. Consider a set of generators $\gamma_1, \dots \gamma_s$ of $\piY$, if $Y$ is orientable, and of $\pi_1 Y^+ < \piY$ if $Y$ is not orientable. We can lift them to elements $\bar{\gamma_1}, \dots, \bar{\gamma_s}$ of $\pi_1 M$. If $\rho \in \Hit(\piY,\PGL(n,\R))$, it is strongly irreducible by Lemma \ref{lemma:centralizer}, hence all representations in a neighborhood $U$ of $\phi^*(\rho)$ send the elements $\bar{\gamma_1}, \dots, \bar{\gamma_s}$ to elements of $\PGL(n,\R)$ generating an irreducible subgroup. The subgroup $\ker\phi$ is central in $\pi_1 M$ if $Y$ is orientable and its centralizer is $\phi^{-1}(\pi_1 Y^+)$ if $Y$ is non-orientable (see \cite[Lemma 2.4.15]{Brin_M}), in either cases it commutes with all the elements $\bar{\gamma_1}, \dots, \bar{\gamma_s}$. This implies that every representation in $U$ sends $\ker\phi$ to the identity element, because in $\PGL(n,\R)$ only the identity commutes with an irreducible subgroup. Hence all the elements of $U$ are in the image of the map $\phi^*$, and this implies our claim. When $G$ is another group in the given list, by Remark \ref{embeddings_of_HC}, all Hitchin representations in $G$ are also Hitchin representations in $\PGL(n,\R)$, hence strongly irreducible as representations in $\PGL(n,\R)$, so we can use the same argument to prove the openness of the map.
\end{proof}

\begin{theorem} \label{theorem:deformation_space_3manifold_Hitchin}
Let $M$ be a closed $3$-manifold admitting a $\PSL(2,\R)$-structure, and take $Y$ as in Proposition \ref{Seifert-fibered_three_mflds}. Then there are homeomorphisms $\Hit(\piY,\PSp^{\pm}(4,\R))  \simeq  \cD^{\,0}_{\R\PP^3_\omega}(M)$ and $\Hit(\piY,\PGL(4,\R))  \simeq   \cD^{\,0}_{\R\PP^3}(M)$. Therefore, the connected components $\cD^{\,0}_{\R\PP^3_\omega}(M)$ and $\cD^{\,0}_{\R\PP^3}(M)$ are homeomorphic to open balls, of respective dimensions $-10 \chi(|Y|) + (8k-2k_2 -2 k_3)$ and $-15 \chi(|Y|) + (12 k - 4 k_2 - 2 k_3)$.
\end{theorem}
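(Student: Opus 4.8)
The plan is to realize the two deformation spaces as Hitchin components via the holonomy map, and then read off their dimensions from Theorem~\ref{theorem:dimension}. Throughout, write $G=\PGL(4,\R)$ in the projective case and $G=\PSp^\pm(4,\R)$ in the contact projective case; in both cases the ambient geometry is $(\R\PP^3,G)$, so the holonomy map $\mathrm{hol}\colon\cD_{(\R\PP^3,G)}(M)\lra\Hom(\pi_1M,G)/G$ is defined, and by the Ehresmann--Thurston holonomy principle (see \cite{Thu97}) it is a local homeomorphism onto an open subset of the character variety. First I would observe that the Fuchsian structures produced by Proposition~\ref{Seifert-fibered_three_mflds} have holonomy $(\kappa\circ h)\circ\phi$ with $h\in\Hit(\piY,\PGL(2,\R))$ and $\phi\colon\pi_1M\twoheadrightarrow\piY$ the Seifert projection, so this holonomy factors through $\phi$ and lies in the image of $\phi^{*}$. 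Since $\cD^{\,0}_{(\R\PP^3,G)}(M)$ is connected and $\mathrm{hol}$ is continuous, its image is contained in the single connected component $\phi^{*}\big(\Hit(\piY,G)\big)$ of $\Hom(\pi_1M,G)/G$ singled out by Lemma~\ref{lemma:component Seifert group}; composing with the homeomorphism of that lemma gives a local homeomorphism $\mathrm{hol}\colon\cD^{\,0}_{(\R\PP^3,G)}(M)\lra\Hit(\piY,G)$. Once this map is shown to be a bijection, Theorem~\ref{theorem:dimension} applied to $\PGL(4,\R)$ (rank $3$, exponents $1,2,3$, $\dim G=15$) and to $\PSp^\pm(4,\R)\simeq\PO(2,3)$ (rank $2$, exponents $1,3$, $\dim G=10$), together with the values of $O(d,m)$ for $2\leqslant d\leqslant4$, yields the stated dimensions and the fact that these spaces are open balls.

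For surjectivity, I would take $\rho\in\Hit(\piY,G)$, use that $\rho$ is Anosov (Proposition~\ref{prop:Anosov} and Remark~\ref{embeddings_of_HC}), and invoke the domain of discontinuity theory of Guichard--Wienhard \cite{GW2} and Kapovich--Leeb--Porti \cite{KLP} to obtain a $\rho(\piY)$-invariant open set $\Omega_\rho\subset\R\PP^3$ on which $\piY$ acts properly discontinuously and cocompactly, reducing at the Fuchsian point to the open orbit $\Omega^+$ of $\kappa(\PGL(2,\R))$ as in \cite{GW}. The quotient $\Omega_\rho/\rho(\piY)$ is a closed $3$-orbifold carrying a natural $(\R\PP^3,G)$-structure; I would then argue, by connectedness of $\Hit(\piY,G)$ and continuity of the construction, that this orbifold is homeomorphic to $M$ for every $\rho$, since it equals $M$ (with its $\PSL(2,\R)$-structure, as in Proposition~\ref{Seifert-fibered_three_mflds}) at the Fuchsian point. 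The developing map of this structure is $(\rho\circ\phi)$-equivariant, so its holonomy is $\rho\circ\phi$; and when $G=\PSp^\pm(4,\R)$ the standard contact form on $\R\PP^3$ is $G$-invariant, so we land in $\cD^{\,0}_{\R\PP^3_\omega}(M)$. Hence $\mathrm{hol}$ is onto.

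For injectivity, I would show that every structure in $\cD^{\,0}_{(\R\PP^3,G)}(M)$ is \emph{convex foliated} in the sense of \cite{GW}: its developing map is an embedding onto the canonical cocompact domain of discontinuity $\Omega_\rho$, and it carries the Seifert fibers of $M\to Y$ to a foliation of $\Omega_\rho$ by closed projective lines; this property holds on the Fuchsian locus and is open and closed along the connected component $\cD^{\,0}$. Then two structures in $\cD^{\,0}$ with conjugate holonomy are, after conjugation, developed as equivariant homeomorphisms onto the same $\Omega_\rho$, hence differ by a self-homeomorphism of $M$ inducing the identity on $\pi_1M$, and such a homeomorphism is isotopic to the identity because $M$ is an aspherical Seifert $3$-manifold. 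The main obstacle, and where the bulk of the work lies, is precisely establishing convexity and the foliated structure uniformly along $\cD^{\,0}$, and identifying the relevant component of the domain of discontinuity with the one whose quotient is $M$ — which requires handling the torsion in $\piY$ and the $d$-fold covering $\PSL^d(2,\R)\to\PSL(2,\R)$ of Proposition~\ref{Seifert-fibered_three_mflds}. These are the orbifold counterparts of the arguments of Guichard--Wienhard \cite{GW} for $T^1X$; an alternative would be to deduce the orbifold statements from the surface case by passing to a common finite cover and taking $\Sigma$-invariant structures, using Theorem~\ref{main_obs_about_Hit_comp_for_orbifolds}.
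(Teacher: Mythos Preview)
Your strategy matches the paper's: factor $\mathrm{hol}$ through $\phi^{*}$ via Lemma~\ref{lemma:component Seifert group}, and build an inverse using the Guichard--Wienhard domain of discontinuity. Two points where the paper is sharper than your sketch:

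\textbf{The $d$-fold cover in surjectivity.} The quotient $\Omega_\rho^{+}/\rho(\piY)$ is not $M$; it is the orbifold unit tangent bundle $T^{1}Y$ (which happens to be a manifold, since the Fuchsian action on $\Omega^{+}\simeq\PSL(2,\R)$ is by left translations, hence free, and the general action is topologically conjugate to it). To obtain $M$ you must pass to the $d$-fold cover $\Omega_\rho^{+,d}\to\Omega_\rho^{+}$. The paper does this cleanly: the Guichard--Wienhard package gives a \emph{topological conjugacy} between the $\rho_0(\piY)$-action on $\Omega^{+}_{\rho_0}$ and the $\rho(\piY)$-action on $\Omega_\rho^{+}$; since the former lifts to $\Omega^{+,d}_{\rho_0}\simeq\PSL^{d}(2,\R)$ by Proposition~\ref{Seifert-fibered_three_mflds}, so does the latter, and $\Omega_\rho^{+,d}/\piY\simeq M$. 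This is the content you flagged as an ``obstacle'', and it is resolved in one line once the conjugacy is invoked.

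\textbf{Injectivity.} You do not need the convex foliated analysis. Your surjectivity construction \emph{is} a continuous section $\Psi\colon\Hit(\piY,G)\to\cD^{\,0}_{(\R\PP^3,G)}(M)$ of $\mathrm{hol}$ (after identifying $\Hit(\piY,G)$ with $\phi^{*}(\Hit(\piY,G))$). Since $\mathrm{hol}$ is a local homeomorphism and $\cD^{\,0}$ is connected, the image of $\Psi$ is open, closed and nonempty in $\cD^{\,0}$, hence all of $\cD^{\,0}$; thus $\Psi$ is a homeomorphism. This is exactly the paper's argument, and it bypasses the ``bulk of the work'' you anticipated. Your proposed route via convex foliated structures would also succeed (it is the original Guichard--Wienhard argument for $T^{1}X$), but it is redundant here.
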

\begin{proof}
Let $\rho\in \Hit(\piY,\PGL(4,\R))$. By Proposition \ref{prop:Anosov}, $\rho$ is Anosov, with respect to the standard Borel subgroup $B$, hence we have a $\piY$-equivariant map $\xi$ from the boundary at infinity of $\piY$ to the full flag manifold $\PGL(4,\R)/B$. Fix a presentation $Y \simeq [\Si \bs X]$. Then the restriction $\rho|_{\piX}$ is also Anosov, and shares the same equivariant map $\xi$. Guichard and Wienhard \cite{GW} used the map $\xi$ to construct a domain of discontinuity which has two connected components $\Omega_\rho^+, \Omega_\rho^-$. Consider also a Fuchsian representation $\rho_0 \in \Hit(\piY,\PGL(4,\R))$. In this case, the domain $\Omega_{\rho_0}^+$ coincides with the domain $\Omega^+$ that we used above to describe the projective model of the $\PSL(2,\R)$-geometry. The action of $\rho_0$ on $\Omega_{\rho_0}^+$ is topologically conjugate to the action of $\rho$ on $\Omega_\rho^+$ \cite{GW}. Let $d$ be as in the Proposition \ref{Seifert-fibered_three_mflds}, and consider the $d$-fold covering $\Omega_\rho^{+,d} \lra \Omega_\rho^+$. By Proposition \ref{Seifert-fibered_three_mflds}, the action of $\rho_0$ on $\Omega_{\rho_0}^+$ lifts to an action of $\piY$ on $\Omega_{\rho_0}^{+,d}$. Since the actions are topologically conjugate, the action of $\rho$ on  $\Omega_\rho^+$ also lifts to an action of $\piY$ on $\Omega_\rho^{+,d}$. This defines two continuous maps, $\Psi_\omega: \Hit(\piY,\PSp^{\pm}(4,\R)) \ni \rho  \lra \Omega_\rho^{+,d}/\piY \in  \cD^{\,0}_{\R\PP^3_\omega}(M)$ and $\Psi: \Hit(\piY,\PGL(4,\R))  \ni \rho  \lra \Omega_\rho^{+,d}/\piY \in  \cD^{\,0}_{\R\PP^3}(M)$, 
which land in the connected components $\cD^{\,0}_{\R\PP^3_\omega}(M)$ and $\cD^{\,0}_{\R\PP^3}(M)$, respectively, as the source spaces are connected. We will now prove that they are homeomorphisms. By Lemma \ref{lemma:component Seifert group}, 
$\phi^*(\Hit(\piY,\PGL(4,\R)))$ is a connected component of $\Hom(\pi_1 M, \PGL(4,\R))/\PGL(4,\R)$. Consider then the holonomy map
$\mathrm{Hol}:\cD^{\,0}_{\R\PP^3}(M) \lra \phi^*(\Hit(\piY,\PGL(4,\R)))$, sending a real projective structure to the conjugacy class of its holonomy representation. The map $\mathrm{Hol}$ is a local homeomorphism, and the map $\Psi\circ (\phi^*)^{-1}$ is a section of $\mathrm{Hol}$. Using the fact that $\cD^{\,0}_{\R\PP^3}(M)$ is connected, we see that $\Psi$ is a homeomorphism. For the map $\Psi_\omega$, we use the same arguments with the holonomy map for geometry $\R\PP^3_\omega$.
\end{proof}

\noindent When $M$ is the unit tangent bundle of a closed orientable surface, Lemma \ref{lemma:component Seifert group} and Theorem \ref{theorem:deformation_space_3manifold_Hitchin} were proved in \cite{GW}. Here, we have generalized their result to all closed $3$-manifolds $M$ admitting a $\PSL(2,\R)$-structure and we have set up Diagram \eqref{rigid_proj_structures_diagram}, which illustrates the various statements in Corollary \ref{rigid_proj_structures_intro}.

As a corollary of Theorem \ref{theorem:deformation_space_3manifold_Hitchin}, we can find explicit examples of Seifert fibered 3-manifolds with rigid or deformable projective structures and contact projective structures, see the discussion around Corollary \ref{rigid_proj_structures_intro} for details.

\begin{equation}\label{rigid_proj_structures_diagram}
\begin{tikzcd}        
\Hit(\piY,\PGL(2,\R)) \arrow[hook]{r} \arrow{d}{\simeq}& \Hit(\piY,\PSp^\pm(4,\R)) \arrow[hook]{r} \arrow{d}{\simeq} & \Hit(\piY,\PGL(4,\R)) \arrow{d}{\simeq} \\
\cD_{\PSL(2,\R)}(M)_{/\sim} \arrow[hook]{r} & \cD^{\,0}_{\RP^3_{\omega}}(M) \arrow[hook]{r} \arrow[hook]{d} & \cD^{\,0}_{\RP^3}(M)\arrow[hook]{d} \\
& \cD_{\RP^3_{\omega}}(M) \arrow[hook]{r} & \cD_{\RP^3}(M) 
\end{tikzcd}
\end{equation}


\appendix 

\section{Expected dimensions of Hitchin components}\label{section:appendix}

In this appendix, we compare the dimension of the Hitchin component $\Hit(\piY,\PGL(n,\R))$, which was determined in Theorem \ref{theorem:dimension}, with the dimension that it is possible to guess by examining a presentation of the group $\piY$. We will call the latter dimension the \emph{expected} dimension of the Hitchin component, and we will show here that the two dimensions agree. For triangle groups, J.P.~Burelle has studied the expected dimension of non-Hitchin components in \cite{Burelle}.

For simplicity, we will restrict our attention to orientable orbifolds $Y$ and to the target group $\PGL(n,\R)$. So let $Y$ be a closed orientable $2$-orbifold with $k$ cone points of orders $m_1, \dotsc, m_k$, with underlying space $|Y|$ a surface of genus $g$. Then $\pi_1 Y$ has a presentation of the standard form
\begin{equation}\label{standard_presentation}
\quad \langle a_1, b_1, \dotsc, a_g, b_g, x_1, \dotsc, x_k \mid
 [a_1,b_1] \dotsm [a_g,b_g] \, x_1 \dotsm x_k = 1 = x_1^{m_1} = \dotsm = x_k^{m_k} \rangle.
\end{equation}

We can define the \emph{expected dimension} $\dim_{e} \Hit(\piY,\PGL(n,\R))$ of the Hitchin component considering that for each $i = 1, \dotsc, g$, the generators $a_i$ and $b_i$ can be mapped to elements of $\PGL(n,\R)$ that form an open subset (see Proposition \ref{prop:Hitchin rep loxodromic}), so we count a dimension $\dim\PGL(n,\R)$ for each one of them. For each $j = 1, \dotsc, k$, instead, we have seen in the proof of Proposition \ref{Hitchin_rep_faithful} that the generator $x_j$ can be mapped to an element of $\PGL(n,\R)$ which is conjugate to $\kappa(\tau_m)$, where $\kappa$ is the principal representation, as in (\ref{re_ppal_sl2}), and $\tau_{m}$ is the matrix
$$ 
\begin{bmatrix}
 \cos \tfrac{\pi}{m} & -\sin \tfrac{\pi}{m} \\
 \sin \tfrac{\pi}{m} & \phantom{-}\cos \tfrac{\pi}{m} 
 \end{bmatrix}
 \in \PGL(2,\R).
$$
We denote by $\mathcal{D}_n\left(\sfrac{\Z}{m\Z}\right)$ the component of the representation variety $\Hom\left(\sfrac{\Z}{m\Z},\PGL(n,\R)\right)$ containing the representation $\rho$ given by $\rho(1) = \kappa(\tau_m)$. For every $x_j$, we count a dimension $\dim\mathcal{D}_n\left(\sfrac{\Z}{m_j\Z}\right)$. We also need to consider the relation $[a_1,b_1] \dotsm [a_g,b_g] \, x_1 \dotsm x_k = 1$, so we subtract a term equal to $\dim\PGL(n,\R)$. Finally, since the Hitchin component is the space of conjugacy classes of Hitchin representations, we subtract again $\dim\PGL(n,\R)$. We obtain in this way the \emph{expected dimension} of the Hitchin components: 
$$\dim_{e} \Hit(\piY,\PGL(n,\R)) :=  (2 g - 2) \dim \PGL(n,\R) +  \sum_{i=1}^{k} \dim \mathcal{D}_n (\sfrac{\Z}{m_i\Z}).$$
The arguments used to define the expected dimension are just a way to guess the dimension, but they do not constitute a proof that the actual dimension is the same. We will now prove that this expected dimension agrees with the actual dimension. We note that Theorem \ref{theorem:dimension} determines not only the dimension, but also the topology of the Hitchin component.

\begin{remark}
We can define similarly the expected dimension of other components of the representation space $\Hom(\piY,\PGL(n,\R))/\PGL(n,\R)$. However, it may not be true that the expected dimension is in fact equal to the actual dimension in those cases. In fact, the expected dimension can sometimes be negative (\cite{Burelle}). That is why, in this paper, we do not mention the expected dimensions of components of $\Rep(\piY,\PGL(n,\R))$ other than the Hitchin component.
\end{remark}

\begin{proposition}\label{Expected_dim} One has $\dim \Hit(\piY,\PGL(n,\R)) = \dim_{e} \Hit(\piY,\PGL(n,\R))$.
\end{proposition}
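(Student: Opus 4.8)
The plan is to show that both sides of the claimed equality, when written out explicitly, reduce to the same closed-form expression. By Theorem~\ref{theorem:dimension} (in the orientable case, where $\ell = b = 0$ and $K(Y,d_\alpha+1) = K_{|Y|}^{d_\alpha+1}\otimes \bigotimes_i \cL_{x_i}^{O(d_\alpha+1,m_i)}$), one has
$$\dim\Hit(\piY,\PGL(n,\R)) = -\chi(|Y|)(n^2-1) + 2\sum_{\alpha=1}^{n-1}\sum_{i=1}^k O(d_\alpha+1,m_i),$$
where the exponents of $\sl(n,\R)$ are $d_\alpha = \alpha$ for $\alpha = 1,\dots,n-1$, and $\dim G = n^2-1 = \sum_{\alpha}(2d_\alpha+1)$ by \eqref{form:exponents dimension}. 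Since $\chi(|Y|) = 2-2g$, the first term is $(2g-2)(n^2-1) = (2g-2)\dim\PGL(n,\R)$, which already matches the corresponding term in $\dim_e\Hit(\piY,\PGL(n,\R))$. So the whole problem reduces to the \emph{local} identity
$$\dim\mathcal{D}_n(\sfrac{\Z}{m\Z}) = 2\sum_{\alpha=1}^{n-1} O(\alpha+1,m)$$
for every $m\geq 2$, i.e.\ the contribution of one cone point of order $m$ matches the dimension of the relevant component of $\Hom(\sfrac{\Z}{m\Z},\PGL(n,\R))$.

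To prove this local identity, I would compute $\dim\mathcal{D}_n(\sfrac{\Z}{m\Z})$ directly. A representation $\rho:\sfrac{\Z}{m\Z}\to\PGL(n,\R)$ in the component of $\rho_0$ given by $\rho_0(1)=\kappa(\tau_m)$ is determined by a conjugacy class of elements $A\in\PGL(n,\R)$ with $A^m = 1$ lying in the connected component of $\kappa(\tau_m)$; the component $\mathcal{D}_n(\sfrac{\Z}{m\Z})$ is the $\PGL(n,\R)$-orbit of $\rho_0(1)$ under conjugation (a single orbit, since $m$-torsion elements with fixed eigenvalue data form one orbit), so $\dim\mathcal{D}_n(\sfrac{\Z}{m\Z}) = \dim\PGL(n,\R) - \dim Z(\kappa(\tau_m))$, where $Z(\kappa(\tau_m))$ is the centralizer. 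Now $\kappa(\tau_m) = $ projectivization of $\widetilde{\kappa}(\tau_m)$ acting on $H_{n-1}$, the space of degree-$(n-1)$ binary forms; diagonalizing over $\C$, $\tau_m$ has eigenvalues $e^{\pm i\pi/m}$, so $\widetilde{\kappa}(\tau_m)$ has eigenvalues $e^{i\pi(n-1-2j)/m}$ for $j=0,\dots,n-1$. Grouping these eigenvalues by their value mod $2\pi$ (equivalently, in $\PGL$, mod $\pi$) gives the block sizes of the centralizer, and $\dim Z = \sum (\text{block size})^2 - 1$. The task is then the combinatorial identity $\dim\PGL(n,\R) - \dim Z(\kappa(\tau_m)) = 2\sum_{\alpha=1}^{n-1}O(\alpha+1,m)$, which I would verify by the same kind of Euclidean-division bookkeeping used in the proof of Lemma~\ref{lemma:order of poles} and Lemma~\ref{prop:dimension diff orient}: writing $n = mQ + R$ (or $n-1 = \dots$), counting how the $n$ eigenvalue-slots distribute among the $m$ (or $m/\gcd$) residue classes, and matching against $\sum_\alpha \lfloor (\alpha+1) - (\alpha+1)/m\rfloor$. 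A cleaner route, which I would try first, is to reinterpret $\sum_{\alpha=1}^{n-1}O(\alpha+1,m)$ as $\sum_{d=2}^{n} O(d,m) = \dim_\C H^0\big(\mathbb{P}^1(2,\dots), K(Y,\bullet)\big)$-type quantities and use Theorem~\ref{thm:invariant differentials}: locally near a cone point, $\Fix_{\sfrac{\Z}{m\Z}} H^0(U,K_U^d)$ has the dimension governing exactly these numbers, and the equivariant deformation theory of the flat bundle near the cone point (the local model being $\sl(n)$ as a $\sfrac{\Z}{m\Z}$-module via $\Ad\kappa(\tau_m)$) identifies $\dim\mathcal{D}_n(\sfrac{\Z}{m\Z})$ with $2\dim(\fg_\C)^{\text{(negative weight part)}}$, matching the differential count.

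The main obstacle I anticipate is the bookkeeping in the combinatorial identity relating the block structure of the centralizer of $\kappa(\tau_m)$ to $\sum_\alpha O(\alpha+1,m)$: one must be careful about the distinction between the $\PGL$ and $\GL$/$\SL$ pictures (the ``$-1$'' in $\dim\PGL$ versus $\dim Z$), about whether $m$ is even or odd (which affects whether $e^{i\pi/m}$ and $e^{-i\pi/m}$ ever coincide mod $\pi$), and about the exact range of $\alpha$. I would organize this by first establishing $\dim\mathcal{D}_n(\sfrac{\Z}{m\Z}) = \dim\PGL(n,\R) - \dim Z(\kappa(\tau_m))$, then computing $\dim Z(\kappa(\tau_m))$ via the eigenvalue multiplicities of $\widetilde{\kappa}(\tau_m)$ on $H_{n-1}\otimes\C$, and finally matching the resulting expression against the right-hand side using the identity $O(d,m) = (d-1) - \lceil d/m - 1\rceil$ already exploited in the proof of Theorem~\ref{alternate_formula_for_dim}. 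Assembling the local identity with the genus term then gives $\dim\Hit(\piY,\PGL(n,\R)) = \dim_e\Hit(\piY,\PGL(n,\R))$, completing the proof.
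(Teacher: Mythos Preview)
Your plan is correct and follows the same reduction as the paper: both sides match on the genus term, so everything comes down to the local identity $\dim\mathcal{D}_n(\sfrac{\Z}{m\Z}) = 2\sum_{d=2}^n O(d,m)$ for each cone point. The paper's proof differs from yours only in how it handles this local step: rather than computing the centralizer of $\kappa(\tau_m)$ from the eigenvalue block structure as you propose, it simply quotes the formula $\dim\mathcal{D}_n(\sfrac{\Z}{m\Z}) = n^2 - \sigma(n,m)$ from Long--Thistlethwaite \cite{Long_Thistlethwaite}, where $\sigma(n,m) = (n+r)q + r$ for $n = mq + r$ with $0 \leqslant r < m$, and then verifies the combinatorial identity $2\sum_{d=2}^n O(d,m) = n^2 - \sigma(n,m)$ in three lines using $O(d,m) = d - \lceil d/m\rceil$ and summing $\lceil d/m\rceil$ over $d = 1,\dotsc, mq+r$. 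Your route via $\dim\PGL(n,\R) - \dim Z(\kappa(\tau_m))$ is exactly how one would reprove the Long--Thistlethwaite formula from scratch (the centralizer block sizes are precisely $q$ or $q+1$ depending on residue class, giving $\sum(\text{block size})^2 = \sigma(n,m)$), so the two arguments are the same up to whether one cites or rederives that intermediate result. Your ``cleaner route'' via equivariant deformation theory is not needed and is less direct than the elementary count.
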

\begin{proof} Since we know from Theorem \ref{theorem:dimension} that the Hitchin component is of dimension $(2 g - 2) \dim \PGL(n,\R) + 2 \sum_{d=2}^{n}\sum_{i=1}^k O(d,m_i)$, the only thing we need to show is that $\dim \mathcal{D}_n \left(\sfrac{\Z}{m\Z}\right) = 2 \sum_{d=2}^{n} O(d,m)$. In \cite{Long_Thistlethwaite}, Long and Thistlethwaite introduced an arithmetic function of two variables $\sigma(n,m)$ for $n, m \geqslant 2$, and showed that $\mathcal{D}_n \left(\sfrac{\Z}{m\Z}\right)$ is of dimension $n^2 - \sigma(n,m)$. Here, if $q$ and $r$ are the quotient and remainder on dividing $n$ by $m$, respectively, i.e. $n = m q + r$ with $0 \leqslant r < m$, then $\sigma(n,m) = (n+r)q + r$. Then the lemma is a consequence of the following simple computation:
\begin{align*}
2 \sum_{d=2}^{n} O(d,m) & \,=\,  2 \sum_{d=1}^{n}   \left( d + \left\lfloor  - \frac{d}{m} \right\rfloor  \right) \,=\,  2 \sum_{d=1}^{n} d  - 2 \sum_{d=1}^{mq+r} \left\lceil \frac{d}{m} \right\rceil \\ 
& \,=\,  2 \cdot \frac{n(n+1)}{2} - 2 \left( m \cdot \frac{q(q+1)}{2} + r(q+1) \right) \,=\,  n^2 - \sigma(n,m). \qedhere
\end{align*}
\end{proof}

\newpage

\section{Tables}\label{tables_appendix}

\begin{table}[h]
\small
\begin{center}
{\renewcommand{\arraystretch}{1.2}
\renewcommand{\tabcolsep}{0.5cm}
\begin{tabular}{llll}
\toprule
$\Int(\fg_\C)^\tau$  & Dimension  & Rank &  Exponents            \\
\midrule
$\PGL(n,\R)\qquad \ \,(n\geqslant 2)$       & $n^2-1$  & $n-1$            & $1,2, \dots, n-1$       \\ 
$\PSp^{\pm}(2m,\R)\quad\ (m\geqslant 1)$   & $m(2m+1)$ & $m$            & $1,3, \dots, 2m-1$      \\
$\PO(m,m+1)\quad (m\geqslant 1)$        & $m(2m+1)$      & $m$       & $1,3, \dots, 2m-1$      \\
$\PO^{\pm}(m,m)\qquad (m\geqslant 3)$    & $m(2m-1)$ &   $m$         & $1,3, \dots, 2m-3; m-1$ \\
$\GG$                 & $14$          & $2$         & $1,5$                   \\
$\mathbf{F}_4$        & $52$           & $4$        & $1,5,7,11$              \\
$\mathbf{E}_6$        & $78$           & $6$        & $1,4,5,7,8,11$          \\
$\mathbf{E}_7$        & $133$         & $7$         & $1,5,7,9,11,13,17$      \\
$\mathbf{E}_8$        & $248$         & $8$         & $1,7,11,13,17,19,23,29$   \\
\bottomrule
\end{tabular}}
\end{center}
\caption{The dimension and exponents of simple complex Lie algebras.}\label{table:exponents}
\end{table}

\begin{table}[h]
\small
\begin{center}
{\renewcommand{\arraystretch}{1.2}
\renewcommand{\tabcolsep}{0.5cm}
\begin{tabular}{ll}
\toprule
Group  & Dimension of $\Hit(\piY,G)$ for $Y$ closed           \\
\midrule
$\PGL(2,\R) 
$ &  $-3 \chi(|Y|) + 2 k + \ell$ \quad \cite{Thurston_notes}\\
$\PGL(3,\R)$ & $-8\chi(|Y|) +(6 k - 2 k_2) + (3 \ell - \ell_2)$ \quad \cite{CG} \\
$\PGL(4,\R) 
$ & $ -15 \chi(|Y|) + (12 k - 4 k_2 - 2 k_3) + (6 \ell - 2 \ell_2- \ell_3)$ \\
$\PGL(5,\R)$ & $-24 \chi(|Y|) + (20 k - 8 k_2 - 4 k_3 - 2 k_4) + (10\ell - 4 \ell_2- 2 \ell_3 - \ell_4)$ \\
$\PGL(6,\R)$ & $-35 \chi(|Y|) + (30 k - 12 k_2 - 6 k_3 - 4 k_4 - 2 k_5) + (15 \ell - 6 \ell_2- 3 \ell_3 - 2 \ell_4 - \ell_5)$ \\
$\PGL(7,\R)$ & $-48 \chi(|Y|) + (42 k - 18 k_2 - 10 k_3 - 6 k_4 - 4 k_5 - 2 k_6) + (21 \ell - 9 \ell_2- 5 \ell_3 - 3 \ell_4 - 2 \ell_5 - \ell_6)$ \\
$\PSp^\pm(4,\R)
$ & $-10 \chi(|Y|) + (8k-2k_2 -2 k_3) + (4\ell - \ell_2 -\ell_3)$\\
$\PSp^\pm(6,\R)$ & $-21 \chi(|Y|) + (18k-6k_2 -4 k_3 - 2 k_4 - 2 k_5) + (9\ell - 3\ell_2 - 2\ell_3 - \ell_4 - \ell_5)$ \\
$\PSp^\pm(8,\R)$ & $-36 \chi(|Y|) + (32 k - 12 k_2 - 8 k_3 - 4 k_4 - 4 k_5 - 2 k_6 - 2 k_7)$ \\
             & $\phantom{-36 \chi(|Y|)} + (16 \ell - 6 \ell_2- 4 \ell_3 - 2 \ell_4 - 2 \ell_5 - \ell_6 - \ell_7)$ \\
$\PSp^\pm(10,\R)$ & $-55 \chi(|Y|) + (50 k - 20 k_2 - 14 k_3 - 8 k_4 - 6 k_5 - 4 k_6 - 4 k_7 - 2 k_8 - 2 k_9)$ \\
             & $\phantom{-55 \chi(|Y|)} + (25 \ell - 10 \ell_2- 7 \ell_3 - 4 \ell_4 - 3 \ell_5 - 2 \ell_6 - 2 \ell_7 - \ell_8 - \ell_9)$ \\
$\PSp^\pm(12,\R)$ & $-78 \chi(|Y|) + (72 k - 30 k_2 - 20 k_3 - 12 k_4 - 10 k_5 - 6 k_6 - 6 k_7 - 4 k_8 - 4 k_9 - 2 k_{10} - 2 k_{11})$ \\
             & $\phantom{-78 \chi(|Y|)} + (36 \ell - 15 \ell_2- 10 \ell_3 - 6 \ell_4 - 5 \ell_5 - 3 \ell_6 - 3 \ell_7 - 2 \ell_8 - 2 \ell_9 - \ell_{10} - \ell_{11})$ \\
$\PO^\pm(4,4)$ & $-28 \chi(|Y|) + (24 k - 8 k_2 - 6 k_3 - 2 k_4 - 2 k_5) + (12 \ell - 4 \ell_2- 3 \ell_3 - \ell_4 - \ell_5)$ \\
$\PO^\pm(5,5)$ & $-45 \chi(|Y|) + (40 k - 16 k_2 - 10 k_3 - 6 k_4 - 4 k_5 - 2 k_6 - 2 k_7)$ \\
             & $\phantom{-45 \chi(|Y|)} + (20 \ell - 8 \ell_2- 5 \ell_3 - 3 \ell_4 - 2 \ell_5 -  \ell_6 - \ell_7)$ \\
$\PO^\pm(6,6)$ & $-66 \chi(|Y|) + (60 k - 24 k_2 - 16 k_3 - 10 k_4 - 8 k_5 - 4 k_6 - 4 k_7 - 2 k_8 - 2 k_9)$ \\
             & $\phantom{-66 \chi(|Y|)} + (30 \ell - 12 \ell_2- 8 \ell_3 - 5 \ell_4 - 4 \ell_5 - 2 \ell_6 - 2 \ell_7 - \ell_8 - \ell_9)$ \\
$\GG$ & $-14 \chi(|Y|) + (12 k - 4 k_2 - 2 k_3 - 2 k_4 - 2 k_5) + (6 \ell - 2 \ell_2- \ell_3 - \ell_4 - \ell_5)$ \\
$\mathbf{F}_4$  &  $-52 \chi(|Y|) + (48 k - 20 k_2 - 12 k_3 - 8 k_4 - 8 k_5 - 4 k_6 - 4 k_7 - 2 k_8 - 2 k_9 - 2 k_{10} - 2 k_{11})$ \\
             & $\phantom{-52 \chi(|Y|)} + (24 \ell - 10 \ell_2- 6 \ell_3 - 4 \ell_4 - 4 \ell_5 - 2 \ell_6 - 2 \ell_7 - \ell_8 - \ell_9 - \ell_{10} - \ell_{11})$ \\
$\mathbf{E}_6$  & $-78 \chi(|Y|) + (72 k - 32 k_2 - 18 k_3 - 14 k_4 - 10 k_5 - 6 k_6 - 6 k_7 - 4 k_8 - 2 k_9 - 2 k_{10} - 2 k_{11})$ \\
             & $\phantom{-52 \chi(|Y|)} + (36 \ell - 16 \ell_2- 9 \ell_3 - 7 \ell_4 - 5 \ell_5 - 3 \ell_6 - 3 \ell_7 - 2 \ell_8 - \ell_9 - \ell_{10} - \ell_{11})$ \\
\bottomrule
\end{tabular}}
\end{center}
\caption{Dimension of Hitchin components for groups of rank $\leqslant 6$.}\label{explicit_formulas_in_low_rank}
\end{table}

\newpage

\begin{table}[h]
\small
\begin{center}
{\renewcommand{\arraystretch}{1.2}
\renewcommand{\tabcolsep}{0.5cm}
\begin{tabular}{lll}
\toprule
Group  & Degree & $(m_1, \ldots, m_k)$ with $m_i \leq m_{i+1}$, $\sum \tfrac{1}{m_i} < k - 2$ \\
\midrule
$\PGL(3,\R)$   & $2$ & $(2,2,2,2,2)$ or $(2,2,2,m_4)$ \\
               & $3$ & $(m_1,m_2,m_3)$ with $m_1 \geqslant 3$ \\
$\PGL(4,\R)$   & $3$ & $(3,3,m_3)$ \\
               & $4$ & $(2,m_2,m_3)$ with $m_2 \geqslant 4$ \\
$\PGL(5,\R)$   & $4$ & $(2,4,m_3)$ \\
$\PGL(n,\R)$ with $n = 6, 7$  & $6$ & $(2,3,m_3)$ \\
$\PSp^\pm(4,\R)$ or $\PO(2,3)$  & $4$ & $(m_1,m_2,m_3)$ with $m_2 \neq 3$ \\
$\PSp^\pm(6,\R)$ or $\PO(3,4)$  & $6$ & $(2,3,m_3)$ or $(3,3,m_3)$ \\
$\GG$  & $6$ & $(m_1,m_2,m_3)$ with $ (m_1, m_3) \neq (2, 5)$ \\
\bottomrule
\end{tabular}}
\end{center}
\caption{Hitchin components containing only cyclic or $(n-1)$-cyclic Higgs bundles.}\label{table:cyclic_Higgs}
\end{table}


\begin{table}[h]
\small
\begin{center}
{\renewcommand{\arraystretch}{1.2}
\renewcommand{\tabcolsep}{0.5cm}
\begin{tabular}{lll}
\toprule
$k$ & $d$ & $(m_1, \,\dotsc\,, m_k)$ with $m_i \leq m_{i+1}$, $\sum \tfrac{1}{m_i} < k - 2$ \\ 
\midrule
\multirow{18}{*}{$3$} & $2$ & $(m_1,m_2,m_3)$\\
& $3$ & $(2,m_2,m_3)$  \\
& $4$ & $(2,3,m_3)$, $(3,3,m_3)$ \\
& $5$ & $(2,3,m_3)$, $(2,4,m_3)$, $(3,3,4)$, $(3,4,4)$, $(4,4,4)$ \\
& $6$ & $(2,4,5)$, $(2,5,5)$ \\
& $7$ & $(2,3,m_3)$, $(2,m_2,m_3)$ with $4 \leqslant m_2 \leqslant m_3 \leqslant 6$, $(3,3,m_3)$ with $4 \leqslant m_3 \leqslant 6$ \\
& $8$ & $(2,3,7)$ \\
& $9$ & $(2,3,7)$, $(2,3,8)$, $(2,4,m_3)$ with $5 \leqslant m_3 \leqslant 8$ \\
& $10$ & $(2,3,m_3)$ with $7 \leqslant m_3 \leqslant 9$, $(3,3,4)$ \\
& $11$ & $(2,3,m_3)$ with $7 \leqslant m_3 \leqslant 10$, $(2,4,5)$, $(2,5,5)$ \\
& $13$ & $(2,3,m_3)$ with $7 \leqslant m_3 \leqslant 12$, $(2,4,5)$, $(2,4,6)$, $(3,3,4)$ \\
& $15, 16$ & $(2,3,7)$ \\
& $17$ & $(2,3,7)$, $(2,3,8)$, $(2,4,5)$ \\
& $19$ & $(2,3,m_3)$ with $7 \leqslant m_3 \leqslant 9$ \\
& $21$ & $(2,4,5)$ \\
& $22$, $23$ & $(2,3,7)$ \\
& $25$ &  $(2,3,7)$, $(2,3,8)$\\
& $29, 31, 37, 43$ & $(2,3,7)$ \\
\midrule
\multirow{3}{*}{$4$} & $3$ & $(2,2,2,m_4)$ \\
& $5$ & $(2,2,2,3)$, $(2,2,2,4)$ \\
& $7$ & $(2,2,2,3)$ \\
\midrule
$5$ & $3$ & $(2,2,2,2,2)$ \\
\bottomrule
\end{tabular}}
\end{center}
\caption{Spheres with $k$ cone points satisfying $H^0(Y,K(Y,d))=0$.}\label{table:vanishing_diff}
\end{table}


\begin{table}[h]
\small
\begin{center}
{\renewcommand{\arraystretch}{1.2}
\renewcommand{\tabcolsep}{0.5cm}
\begin{tabular}{lll}
\toprule
Group & $d_N + 1$\!\!\!\!\!\!\!\!\! & $(m_1, \ldots, m_k)$ with $m_i \leq m_{i+1}$, $\sum \tfrac{1}{m_i} < k - 2$            \\
\midrule
$\PGL(3,\R)$  & $2$ & $(2,2,2,2,2)$ or $(2,2,2,m_4)$ \\
              & $3$ & $(m_1,m_2,m_3)$ with $m_1 \geqslant 3$ \\
$\PGL(4,\R)$  & $3$ & $(3,3,m_3)$ \\
              & $4$ & $(2,m_2,m_3)$ with $m_2 \geqslant 4$ \\
$\PGL(5,\R)$  & $3$ & $(3,3,4)$ \\
              & $4$ & $(2,4,m_3)$ \\
$\PGL(n,\R)$ with $n = 6, 7$ & $4$ & $(2,4,5)$ \\
              & $6$ & $(2,3,m_3)$ \\
$\PGL(n,\R)$ with $n = 8, 9, 10, 11$ & $6$ & $(2,3,7)$ \\
$\PSp^\pm(4,\R)$ or $\PO(2,3)$  & $4$ & $(m_1,m_2,m_3)$ with $m_2 \neq 3$ \\
$\PSp^\pm(6,\R)$ or $\PO(3,4)$  & $4$ & $(2,4,5)$ or $(2,5,5)$ \\
                                & $6$ & $(2,3,m_3)$ or $(3,3,m_3)$ \\
$\PSp^\pm(2m,\R)$ or $\PO(m,m+1)$  with $m = 4, 5$ & $6$ & $(2,3,7)$\\
$\PO^\pm(4,4)$  & $6$ & $(2,3,m_3)$ or $(3,3,m_3)$ \\
$\PO^\pm(5,5)$  & $6$ & $(2,3,7)$ \\
$\GG$  & $6$ & $(m_1,m_2,m_3)$ with $ (m_1, m_3) \neq (2, 5)$ \\
\bottomrule
\end{tabular}}
\end{center}
\caption{Hitchin components parametrized by a single non-vanishing differential.}\label{table:single_differential}
\end{table}


\begin{thebibliography}{GPPNR18}

\bibitem[AG71]{AG}
N.~L. Alling and N.~Greenleaf.
\newblock {\em Foundations of the theory of {K}lein surfaces}.
\newblock Lecture Notes in Mathematics, Vol. 219. Springer-Verlag, Berlin,
  1971.

\bibitem[Bar15]{Baraglia_thesis}
D.~Baraglia.
\newblock Cyclic {H}iggs bundles and the affine {T}oda equations.
\newblock {\em Geom. Dedicata}, 174:25--42, 2015.

\bibitem[BCLS15]{BCLS_pressure}
M.~Bridgeman, R.~Canary, F.~Labourie, and A.~Sambarino.
\newblock The pressure metric for {A}nosov representations.
\newblock {\em Geom. Funct. Anal.}, 25(4):1089--1179, 2015.

\bibitem[BCLS17]{BCLS_liouville}
M.~Bridgeman, R.~Canary, F.~Labourie, and A.~Sambarino.
\newblock Simple root flows for {H}itchin representations.
\newblock arXiv:1708.01675, 2017.

\bibitem[BIPP19]{BIPP}
M.~Burger, A.~Iozzi, A.~Parreau, and B.~Pozzetti.
\newblock Currents, systoles and compactifications of character varieties.
\newblock arXiv 1902.07680, 2019.

\bibitem[Bri93]{Brin_M}
M.~G. Brin.
\newblock {\em {Seifert Fibered Spaces. Notes for a course given in the Spring
  of 1993.}}
\newblock {arXiv:0711.1346v2}, 1993.

\bibitem[Bur19]{Burelle}
J.~P. Burelle.
\newblock Rigidity of diagonally embedded triangle groups.
\newblock arXiv:1906.03120, 2019.

\bibitem[CG93]{CG93}
S.~Choi and W.~M. Goldman.
\newblock Convex real projective structures on closed surfaces are closed.
\newblock {\em Proc. Amer. Math. Soc.}, 118(2):657--661, 1993.

\bibitem[CG05]{CG}
S.~Choi and W.~M. Goldman.
\newblock The deformation spaces of convex {$\mathbb{RP}^2$}-structures on
  2-orbifolds.
\newblock {\em Amer. J. Math.}, 127(5):1019--1102, 2005.

\bibitem[CL15]{Choi_Lee}
S.~Choi and G.-S. Lee.
\newblock Projective deformations of weakly orderable hyperbolic {C}oxeter
  orbifolds.
\newblock {\em Geom. Topol.}, 19(4):1777--1828, 2015.

\bibitem[CL17]{CollierLi}
B.~Collier and Q.~Li.
\newblock Asymptotics of {H}iggs bundles in the {H}itchin component.
\newblock {\em Adv. Math.}, 307:488--558, 2017.

\bibitem[CLT06]{CLT2}
D.~Cooper, D.~D. Long, and M.~B. Thistlethwaite.
\newblock Computing varieties of representations of hyperbolic 3-manifolds into
  {${\rm SL}(4,\mathbb{R})$}.
\newblock {\em Experiment. Math.}, 15(3):291--305, 2006.

\bibitem[CLT07]{CLT1}
D.~Cooper, D.~D. Long, and M.~B. Thistlethwaite.
\newblock Flexing closed hyperbolic manifolds.
\newblock {\em Geom. Topol.}, 11:2413--2440, 2007.

\bibitem[Col16]{Collier_thesis}
B.~Collier.
\newblock {\em Finite order automorphisms of {H}iggs {B}undles: {T}heory and
  application}.
\newblock ProQuest LLC, Ann Arbor, MI, 2016.
\newblock Thesis (Ph.D.)--University of Illinois at Urbana-Champaign.

\bibitem[Cor88]{Corlette}
K.~Corlette.
\newblock Flat {$G$}-bundles with canonical metrics.
\newblock {\em J. Differential Geom.}, 28(3):361--382, 1988.

\bibitem[Dam14]{Damianou}
P.~A. Damianou.
\newblock A beautiful sine formula.
\newblock {\em Amer. Math. Monthly}, 121(2):120--135, 2014.

\bibitem[DL]{DaiLi1}
S.~Dai and Q.~Li.
\newblock Minimal surfaces for {H}itchin representations.
\newblock arXiv:1605.09596.
\newblock To appear in J. Differential Geom.

\bibitem[DL18]{DaiLi2}
S.~Dai and Q.~Li.
\newblock On cyclic {H}iggs bundles.
\newblock {\em Math. Ann.}, Nov 2018.

\bibitem[Don87]{Don_twisted}
S.~K. Donaldson.
\newblock Twisted harmonic maps and the self-duality equations.
\newblock {\em Proc. London Math. Soc. (3)}, 55(1):127--131, 1987.

\bibitem[FG06]{FG}
Vladimir Fock and Alexander Goncharov.
\newblock Moduli spaces of local systems and higher {T}eichm\"{u}ller theory.
\newblock {\em Publ. Math. Inst. Hautes \'{E}tudes Sci.}, (103):1--211, 2006.

\bibitem[FM12]{Farb_Margalit}
Benson Farb and Dan Margalit.
\newblock {\em A primer on mapping class groups}, volume~49 of {\em Princeton
  Mathematical Series}.
\newblock Princeton University Press, Princeton, NJ, 2012.

\bibitem[GGKW17]{GGKW_Anosov_actions}
F.~Gu\'{e}ritaud, O.~Guichard, F.~Kassel, and A.~Wienhard.
\newblock Anosov representations and proper actions.
\newblock {\em Geom. Topol.}, 21(1):485--584, 2017.

\bibitem[Gol90]{Goldman90}
W.~M. Goldman.
\newblock Convex real projective structures on compact surfaces.
\newblock {\em J. Differential Geom.}, 31(3):791--845, 1990.

\bibitem[GOV94]{GOV}
V.~V. Gorbatsevich, A.~L. Onishchik, and \`E.~B. Vinberg.
\newblock Structure of {L}ie groups and {L}ie algebras.
\newblock In {\em Lie Groups and Lie ALgebras III}, Encyclopaedia of
  Mathematical Sciences 41, pages 1--246. Springer-Verlag, 1994.

\bibitem[GPPNR18]{GPPNR}
O.~Garc\'{i}a-Prada, A.~Pe\'{o}n-Nieto, and S.~Ramanan.
\newblock Higgs bundles for real groups and the {H}itchin-{K}ostant-{R}allis
  section.
\newblock {\em Trans. Amer. Math. Soc.}, 370(4):2907--2953, 2018.

\bibitem[GPW16]{GPW}
O.~Garc\'ia-Prada and G.~Wilkin.
\newblock Action of the mapping class group on character varieties and {H}iggs
  bundles.
\newblock arXiv:1612.02508, 2016.

\bibitem[Gui]{Guichard_Zariski}
O.~Guichard.
\newblock Zariski closure of positive and maximal representations.
\newblock In preparation.

\bibitem[Gui08]{Guichard_hyperconvex}
O.~Guichard.
\newblock Composantes de {H}itchin et repr\'esentations hyperconvexes de
  groupes de surface.
\newblock {\em J. Differential Geom.}, 80(3):391--431, 2008.

\bibitem[GW08]{GW}
O.~Guichard and A.~Wienhard.
\newblock Convex foliated projective structures and the {H}itchin component for
  $\mathrm{PSL}_{4}(\mathbb{R})$.
\newblock {\em Duke Math. J.}, 144(3):381--445, 2008.

\bibitem[GW12]{GW2}
O.~Guichard and A.~Wienhard.
\newblock Anosov representations: domains of discontinuity and applications.
\newblock {\em Invent. Math.}, 190(2):357--438, 2012.

\bibitem[Hit87]{Hitchin_Duke}
N.~J. Hitchin.
\newblock Stable bundles and integrable systems.
\newblock {\em Duke Math. J.}, 54(1):91--114, 1987.

\bibitem[Hit92]{Hitchin_Teich}
N.~J. Hitchin.
\newblock Lie groups and {T}eichm\"uller space.
\newblock {\em Topology}, 31(3):449--473, 1992.

\bibitem[HL09]{Ho-Liu}
N.~K. Ho and C.~C.~M. Liu.
\newblock Yang-{M}ills connections on orientable and nonorientable surfaces.
\newblock {\em Mem. Amer. Math. Soc.}, 202(948):viii+98, 2009.

\bibitem[HP11]{Heusener_Porti}
M.~Heusener and J.~Porti.
\newblock Infinitesimal projective rigidity under {D}ehn filling.
\newblock {\em Geom. Topol.}, 15(4):2017--2071, 2011.

\bibitem[HS18]{Heller_Schaposnik}
S.~Heller and L.~Schaposnik.
\newblock Branes through finite group actions.
\newblock {\em Journal of Geometry and Physics}, 129:279--293, 2018.

\bibitem[HWW18]{HWW}
N.~K. Ho, G.~Wilkin, and S.~Wu.
\newblock Hitchin's equations on a nonorientable manifold.
\newblock {\em Comm. Anal. Geom.}, 26(4):857--886, 2018.

\bibitem[Jos97]{Jost_nonpositive_curvature}
J.~Jost.
\newblock {\em Nonpositive curvature: geometric and analytic aspects}.
\newblock Lectures in Mathematics ETH Z\"{u}rich. Birkh\"{a}user Verlag, Basel,
  1997.

\bibitem[KB02]{KB}
I.~Kapovich and N.~Benakli.
\newblock Boundaries of hyperbolic groups.
\newblock In {\em Combinatorial and geometric group theory ({N}ew {Y}ork,
  2000/{H}oboken, {NJ}, 2001)}, volume 296 of {\em Contemp. Math.}, pages
  39--93. Amer. Math. Soc., Providence, RI, 2002.

\bibitem[KLP18]{KLP}
M.~Kapovich, B.~Leeb, and J.~Porti.
\newblock Dynamics on flag manifolds: domains of proper discontinuity and
  cocompactness.
\newblock {\em Geom. Topol.}, 22(1):157--234, 2018.

\bibitem[Kos59]{Kostant_AJM_1959}
B.~Kostant.
\newblock The principal three-dimensional subgroup and the {B}etti numbers of a
  complex simple {L}ie group.
\newblock {\em Amer. J. Math.}, 81:973--1032, 1959.

\bibitem[Kos63]{Kostant_AJM_1963}
B.~Kostant.
\newblock Lie group representations on polynomial rings.
\newblock {\em Amer. J. Math.}, 85:327--404, 1963.

\bibitem[KR71]{Kostant_Rallis_orbits_in_symmetric_spaces}
B.~Kostant and S.~Rallis.
\newblock Orbits and representations associated with symmetric spaces.
\newblock {\em Amer. J. Math.}, 93:753--809, 1971.

\bibitem[Lab06]{Labourie}
F.~Labourie.
\newblock Anosov flows, surface groups and curves in projective space.
\newblock {\em Invent. Math.}, 165(1):51--114, 2006.

\bibitem[Lab08]{Labourie_cross_ratios}
F.~Labourie.
\newblock Cross ratios, {A}nosov representations and the energy functional on
  {T}eichm\"{u}ller space.
\newblock {\em Ann. Sci. \'{E}c. Norm. Sup\'{e}r. (4)}, 41(3):437--469, 2008.

\bibitem[Lab17]{Labourie_cyclic_surfaces}
F.~Labourie.
\newblock Cyclic surfaces and {H}itchin components in rank 2.
\newblock {\em Ann. of Math. (2)}, 185(1):1--58, 2017.

\bibitem[Lew63]{Lewittes}
J.~Lewittes.
\newblock Automorphisms of compact {R}iemann surfaces.
\newblock {\em Amer. J. Math.}, 85:734--752, 1963.

\bibitem[Li16]{Li_metric}
Q.~Li.
\newblock Teichm\"uller space is totally geodesic in {G}oldman space.
\newblock {\em Asian J. Math.}, 20(1):21--46, 2016.

\bibitem[LL13]{Larsen_Lubotzky}
M.~Larsen and A.~Lubotzky.
\newblock Representation varieties of {F}uchsian groups.
\newblock In {\em From {F}ourier analysis and number theory to {R}adon
  transforms and geometry}, volume~28 of {\em Dev. Math.}, pages 375--397.
  Springer, New York, 2013.

\bibitem[LM09]{Labourie_McShane}
F.~Labourie and G.~McShane.
\newblock Cross ratios and identities for higher {T}eichm\"{u}ller-{T}hurston
  theory.
\newblock {\em Duke Math. J.}, 149(2):279--345, 2009.

\bibitem[LR99]{Long_Reid}
D.~D. Long and A.~W. Reid.
\newblock Commensurability and the character variety.
\newblock {\em Math. Res. Lett.}, 6(5-6):581--591, 1999.

\bibitem[LT18]{Long_Thistlethwaite}
D.~D. Long and M.~B. Thistlethwaite.
\newblock The dimension of the {H}itchin component for triangle groups.
\newblock {\em Geometriae Dedicata}, Jul 2018.

\bibitem[Mar10]{Marquis}
L.~Marquis.
\newblock Espace des modules de certains poly\`edres projectifs miroirs.
\newblock {\em Geom. Dedicata}, 147:47--86, 2010.

\bibitem[Sch16]{Sch_CRM}
F.~Schaffhauser.
\newblock Lectures on {K}lein surfaces and their fundamental group.
\newblock In {\em Geometry and quantization of moduli spaces}, Adv. Courses
  Math. CRM Barcelona, pages 67--108. Birkh\"{a}user/Springer, Cham, 2016.

\bibitem[Sch17]{Sch_JDG}
F.~Schaffhauser.
\newblock On the {N}arasimhan-{S}eshadri correspondence for {R}eal and
  {Q}uaternionic vector bundles.
\newblock {\em J. Differential Geom.}, 105(1):119--162, 2017.

\bibitem[Sco83]{Scott}
P.~Scott.
\newblock The geometries of {$3$}-manifolds.
\newblock {\em Bull. London Math. Soc.}, 15(5):401--487, 1983.

\bibitem[Ser94]{Serre_cr_ss_gpe_normal}
J.-P. Serre.
\newblock Sur la semi-simplicit\'e des produits tensoriels de repr\'esentations
  de groupes.
\newblock {\em Invent. Math.}, 116(1-3):513--530, 1994.

\bibitem[Ser05]{Serre05}
J.-P. Serre.
\newblock Compl\`ete r\'{e}ductibilit\'{e}.
\newblock Number 299, pages Exp. No. 932, viii, 195--217. 2005.
\newblock S\'{e}minaire Bourbaki. Vol. 2003/2004.

\bibitem[Sim88]{Simpson_JAMS}
C.~T. Simpson.
\newblock Constructing variations of {H}odge structure using {Y}ang-{M}ills
  theory and applications to uniformization.
\newblock {\em J. Amer. Math. Soc.}, 1(4):867--918, 1988.

\bibitem[Sim92]{Simpson_local_systems}
C.~T. Simpson.
\newblock Higgs bundles and local systems.
\newblock {\em Inst. Hautes \'Etudes Sci. Publ. Math.}, 75:5--95, 1992.

\bibitem[Sim09]{Simpson_MCA}
C.~Simpson.
\newblock Katz's middle convolution algorithm.
\newblock {\em Pure Appl. Math. Q.}, 5(2, Special Issue: In honor of Friedrich
  Hirzebruch. Part 1):781--852, 2009.

\bibitem[Thu79]{Thurston_notes}
W.~P. Thurston.
\newblock {\em Geometry and topology of 3-manifolds}.
\newblock Lecture notes, Princeton University, 1979.

\bibitem[Thu97]{Thu97}
W.~P. Thurston.
\newblock {\em Three-dimensional geometry and topology. {V}ol. 1}, volume~35 of
  {\em Princeton Mathematical Series}.
\newblock Princeton University Press, Princeton, NJ, 1997.
\newblock Edited by Silvio Levy.

\bibitem[Wei18]{Weir}
E.~A. Weir.
\newblock The dimension of the restricted {H}itchin component for triangle
  groups, 2018.
\newblock Thesis (Ph.D.)--The University of Tennessee.

\bibitem[Wie18]{Wienhard_ICM}
A.~Wienhard.
\newblock An invitation to {H}igher {T}eichm\"uller {T}heory.
\newblock In {\em Proceedings of the {I}nternational {C}ongress of
  {M}athematicians---{R}io de {J}aneiro 2018. {V}ol. 1}, pages 1007--1034.
  World Scientific, 2018.

\end{thebibliography}
\end{document}